\def\dfrac{\displaystyle\frac}
\def\dsum{\displaystyle\sum}
\def\dmin{\displaystyle\min}
\newtheorem{prop}{Proposition}
\newtheorem{theo}[prop]{Theorem}
\newtheorem{lemm}[prop]{Lemma}
\newtheorem{coro}[prop]{Corollary}
\newtheorem{rema}[prop]{Remark}
\newtheorem{defi}[prop]{Definition}
\newtheorem{conj}[prop]{Conjecture}
\newcommand{\al}{\alpha}
\newcommand{\abc}[1]{\left( #1 \right)}
\renewcommand{\leq}{\leqslant}
\renewcommand{\geq}{\geqslant}
\newcommand{\p}{\partial}
\numberwithin{equation}{section}
\title{The global curvature estimate for the $n-2$ Hessian  equation}
\begin{document}

\author{Changyu Ren}
\address{School of Mathematical Science\\
Jilin University\\ Changchun\\ China}
\email{rency@jlu.edu.cn}
\author{Zhizhang Wang}
\address{School of Mathematical Science\\ Fudan University \\ Shanghai, China}
\email{zzwang@fudan.edu.cn}
\thanks{Research of the first author is supported by NSFC Grant No. 11871243 and the second author is supported  by NSFC Grants No.11871161 and 11771103.}
\begin{abstract}
This paper establishes the global curvature estimate for the $n-2$ curvature equation with the general right hand side which partially solves this longstanding problem.
\end{abstract}
\maketitle

\section{introduction}

The present paper continues to  study the longstanding problem that how to establish the global $C^2$ or curvature estimate for the $k$-Hessian equation or the $k$-curvature equation with the right hand side function depending on the gradient term or the normal direction.

Suppose $\Omega\subset\mathbb{R}^n$ is a bounded domain and $u(x)$ is an unknown function defined on $\Omega$, namely $x\in\Omega$. Denote $Du$ and $D^2u$ to be the gradient and the Hessian of $u$. We let $\psi(x,\xi,p)$ be a function defined on $(x,\xi,p)\in \bar{\Omega}\times \mathbb{R}\times \mathbb{R}^n$.
The $k$-Hessian equation means
\begin{eqnarray}\label{s1.03}
\sigma_k(D^2 u)=\psi(x,u,Du),
\end{eqnarray}
for $1\leq k\leq n$, where $\sigma_k$ are the $k$-th elementary symmetric function
defined by, for $\kappa=(\kappa_1,\kappa_2,\cdots,\kappa_n)\in\mathbb{R}^n$ and $1\leq k\leq n$,
\begin{eqnarray}\label{a2.1}
\sigma_k(\kappa)=\sum_{1\leq i_1<\cdots<i_k\leq
n}\kappa_{i_1}\cdots \kappa_{i_k}.
\end{eqnarray}
Thus, $\sigma_k(D^2 u)$ is defined to be the summation of the  $k$-th order principal minors of the Hessian matrix $D^2u$.

Following \cite{CNS3}, we need to introduce a set (admissible set): the  G\r{a}rding's cone, which guarantees  the ellipticity of the $k$-Hessian equation \eqref{s1.03}.
\begin{defi}\label{k-convex} For a domain $\Omega\subset \mathbb R^n$, a function $v\in C^2(\Omega)$ is called $k$-convex if the eigenvalues $\kappa (x)=(\kappa_1(x), \cdots, \kappa_n(x))$ of the hessian $\nabla^2 v(x)$ is in $\Gamma_k$ for all $x\in \Omega$, where $\Gamma_k$ is the G\r{a}rding's cone
\begin{equation}\label{def G2}
\Gamma_k=\{\kappa \in \mathbb R^n \ | \quad \sigma_m(\kappa)>0,
\quad  m=1,\cdots,k\}.\nonumber
\end{equation}
\end{defi}

Corresponding to the $k$-Hessian equation, we can propose the $k$-curvature equation. Suppose $M$ is an $n$ dimensional compact hypersurface in the $n+1$ dimensional Euclidean space $\mathbb{R}^{n+1}$ and we use $X$ to denote its position vector.
We let
$\nu(X), \kappa(X)$ be the unit outer-normal vector and principal
curvature vector of $M\subset R^{n+1}$ at position vector $X$
respectively. The prescribed $k$-Hessian curvature equation is
\begin{eqnarray}\label{s1.01}
\sigma_k(\kappa(X))=\psi(X,\nu).
\end{eqnarray}
Here, $\psi$ is a function defined on $(X,\nu)\in\mathbb{R}^{n+1}\times\mathbb{S}^{n}$.
A $C^2$ regular hypersurface $M\subset R^{n+1}$ is called $k-$convex if its principal curvature vector
$\kappa(X)\in \Gamma_k$ for all $X\in M$.

To establish  the global $C^2$ estimate for the  $k$-Hessian equation \eqref{s1.03} or the global curvature estimate  for the $k$-curvature equation \eqref{s1.01} is some longstanding problem, which was clearly posed by Guan-Li-Li in \cite{GLL} at first. Moreover, it is  very nature  to  consider
the equation \eqref{s1.01} with the right hand side function containing the gradient term or
the normal vector in view of the papers \cite{CNS3,CNS4, CNS5}.

Let's brief review the research history on the global $C^2$ estimate for the Hessian equation.
 For $\psi$ being independent of the normal vector or the gradient term, the $C^2$-estimate
was obtained by Caffarelli-Nirenberg-Spruck \cite{CNS3} for a general class of fully nonlinear operators, including the Hessian type and the quotient Hessian type.
The Pogorelov type interior $C^2$ estimate for the Hessian equation have been obtained by Chou-Wang \cite{CW}.
Sheng-Urbas-Wang \cite{SUW} obtained the Pogorelov type interior $C^2$ estimate for the curvature equation  of the graphic hypersurface.
$C^2$ estimates for the complex Hessian equations defined on K\"ahler manifolds have been obtain by Hou-Ma-Wu  \cite{HMW}.
The curvature estimate was also established for the equation of the prescribing curvature measure problem by Guan-Li-Li \cite{GLL} and Guan-Lin-Ma \cite{GLM}.
If the function $\psi$ is  convex with respect to the normal or the gradient, it is well
known  that the global $C^2$ estimate has been obtained by Guan \cite{B}. Recently, Guan \cite{B2}  obtained an important result of  $C^2$ estimates for some  fully nonlinear equations defined on Riemannian manifolds.

If  the right hand side function $\psi$ depending  on the gradient term or the normal, let's give a review on some recent obtained results.
Guan-Ren-Wang \cite{GRW} obtained the global curvature estimate of the closed convex hypersurface and the star-shaped $2$-convex hypersurface.
The complex corresponded theorem has been established by Phong-Picard-Zhang \cite{PPZ3} on the Hemitian manifold. Li-Ren-Wang \cite{LRW} substituted the convex condition  to $k+1$- convex condition for any Hessian equations and derived the Pogorelov type interior $C^2$ estimate. For the case $k=n-1$, Ren-Wang \cite{RW} completely solved the longstanding problem, that they obtained the global curvature estimates of $n-1$ convex solutions for $n-1$ Hessian equations.
Chen-Li-Wang \cite{CLW} established the global curvature estimate for the prescribed curvature problem in arbitrary warped product spaces. Li-Ren-Wang \cite{LiRW} considered the global curvature estimate of convex solutions for a class of general  Hessian equations.

Let's  review some geometric applications of the Hessian equation.
The  famous  Minkowski problem, namely, the prescribed Gauss-Kronecker curvature on the outer normal, has been widely discussed by Nirenberg \cite{N}, Pogorelov \cite{P3}, Cheng-Yau \cite{CY}.
 Alexandrov also posed the problem of prescribing general Weingarten curvature on the outer normal \cite{A2, gg}.
The prescribing curvature measure problem in convex geometry also
has been extensively studied by Alexandrov \cite{A1}, Pogorelov
\cite{P1}, Guan-Lin-Ma \cite{GLM}, Guan-Li-Li \cite{GLL}. The
prescribing mean curvature problem and Weingarten curvature problem
also have been considered and obtained fruitful results by
Bakelman-Kantor \cite{BK}, Treibergs-Wei \cite{TW}, Oliker \cite{O},
Caffarelli-Nirenberg-Spruck \cite{CNS4, CNS5}. The prescribed
curvature problems in Riemannian manifolds have been considered  by
Li-Oliker \cite{LO}, Barbosa-deLira-Oliker \cite{BL}  and
Andrade-Barbosa-de Lira \cite{AB}. Spruck-Xiao \cite{SX} established the
curvature estimate for the prescribed scalar curvature problem in space forms and
gave a simple proof of our curvature estimate for the same equation in the Euclidean space.
Phong-Picard-Zhang \cite{PPZ1,PPZ2} has considered Fu-Yau equation in
high dimensional spaces, which is a complex $2$-Hessian equation
 with the right hand side function depending on the gradient term.
 Guan-Lu \cite{GL} considered the curvature estimate for the isometric embedding system in general Riemannian manifold, which can derive a $2$-Hessian curvature equation with the right hand side function depending on the
 normal.
 Our estimate established in \cite{GRW} is also applied by Xiao \cite{X} and Bryan-Ivaki-Scheuer \cite{BIS} respectively  in some geometric flow problems. Very recently, Ren-Wang-Xiao \cite{RWXiao} considers entire space like hypersurfaces with constant $\sigma_{n-1}$ curvature in Minkowski space, using some techniques developing in \cite{GRW, LiRW} and \cite{RW}.

Now, we state our main theorem.
\begin{theo}\label{theo2}
Suppose $M\subset \mathbb R^{n+1}$ is a closed ${n-2}$-convex
hypersurface satisfying the curvature equation (\ref{s1.01}) with
$k=n-2$ for some positive function $\psi(X, \nu)\in C^{2}(\Gamma)$,
where $\Gamma$ is an open neighborhood of the unit normal bundle of
$M$ in $\mathbb R^{n+1} \times \mathbb S^n$, then there is a
constant $C$ depending only on $n, k$, $\|M\|_{C^1}$, $\inf \psi$
and $\|\psi\|_{C^2}$, such that
 \begin{equation}\label{Mc2}
 \max_{X\in M, i=1,\cdots, n} \kappa_i(X) \le C.\end{equation}
\end{theo}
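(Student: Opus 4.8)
The plan is to prove \eqref{Mc2} by a maximum principle argument applied to the largest principal curvature, in the spirit of Guan--Ren--Wang \cite{GRW} and Ren--Wang \cite{RW}. Let $X_0\in M$ be a point where $\kappa_1:=\max_i\kappa_i$ attains its maximum over $M$, and choose a local orthonormal frame $\{e_1,\dots,e_n\}$ near $X_0$ in which the second fundamental form $h=(h_{ij})$ is diagonal at $X_0$ with $h_{11}=\kappa_1\geq h_{22}\geq\cdots\geq h_{nn}$. Consider the test function
\[
W=\log h_{11}+\frac{a}{2}|X|^{2},
\]
with $a\gg 1$ a constant to be chosen; it suffices to bound $W(X_0)$ from above. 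When the top eigenvalue is not simple one first replaces $h_{11}$ by the largest eigenvalue of $h_{ij}-\sum_{\alpha\geq 2}b_\alpha\delta_{i\alpha}\delta_{j\alpha}$ for small constants $0<b_2<\cdots<b_n$, which makes $W$ smooth near $X_0$ without affecting the leading-order computation, so I suppress this standard device. Write $F^{ij}=\partial\sigma_{n-2}(h)/\partial h_{ij}$ and let $L=F^{ij}\nabla_i\nabla_j$ be the linearized operator; since $\kappa\in\Gamma_{n-2}$ the matrix $(F^{ij})$ is positive definite and, in the diagonalizing frame, $F^{ii}=\sigma_{n-3}(\kappa\mid i)>0$. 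The goal is to show that $LW(X_0)>0$ once $\kappa_1(X_0)$ exceeds a constant controlled by $n,k,\|M\|_{C^1},\inf\psi,\|\psi\|_{C^2}$, contradicting the maximality of $X_0$.

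For the computation at $X_0$ I would use four inputs. First, Simons' identity on $M\subset\mathbb R^{n+1}$ converts $F^{ij}\nabla_i\nabla_j h_{11}$ into $F^{ij}\nabla_1\nabla_1 h_{ij}+\kappa_1\sum_i F^{ii}\kappa_i^{2}-\kappa_1^{2}\sum_i F^{ii}\kappa_i$. Second, differentiating \eqref{s1.01} once and twice gives $\sum_i F^{ii}\kappa_i=(n-2)\psi$, $F^{ij}\nabla_k h_{ij}=\nabla_k\psi$, and $F^{ij}\nabla_1\nabla_1 h_{ij}=\nabla_1\nabla_1\psi-F^{ij,kl}\nabla_1 h_{ij}\nabla_1 h_{kl}$, where the $\nu$-dependence of $\psi$ produces $\nabla_1\nabla_1\psi=\kappa_1^{2}\bigl(\psi_{\nu\nu}(e_1,e_1)-\psi_\nu\!\cdot\!\nu\bigr)+O(\kappa_1)$, with uncontrolled sign. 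Third, the first-order condition $\nabla_i W=0$ reads $\nabla_i h_{11}/h_{11}=-a\langle X,e_i\rangle$, so $(\nabla_i h_{11})^{2}/h_{11}^{2}\leq C$; via the Codazzi relation $\nabla_1 h_{1i}=\nabla_i h_{11}$ this also controls the relevant off-diagonal third derivatives. Fourth, $L(\tfrac12|X|^{2})=\sum_i F^{ii}-(n-2)\psi\langle X,\nu\rangle$, while the Newton--Maclaurin inequality on $\Gamma_{n-2}$ gives $\sum_i F^{ii}=3\sigma_{n-3}(\kappa)\geq c(n)(\inf\psi)^{(n-3)/(n-2)}>0$. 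Using $L\log h_{11}=(Lh_{11})/h_{11}-\sum_i F^{ii}(\nabla_i h_{11})^{2}/h_{11}^{2}$ and discarding bounded or manifestly favourable terms, the inequality $0\geq LW(X_0)$ reduces, schematically, to
\begin{multline*}
0\ \geq\ -\frac{1}{h_{11}}F^{ij,kl}\nabla_1 h_{ij}\nabla_1 h_{kl}\ -\ \sum_i F^{ii}\frac{(\nabla_i h_{11})^{2}}{h_{11}^{2}}\\
+\ \sum_i F^{ii}\kappa_i^{2}\ +\ \mu\,h_{11}\ +\ a\sum_i F^{ii}\ -\ C(a+1),
\end{multline*}
where $\mu=\psi_{\nu\nu}(e_1,e_1)-\psi_\nu\!\cdot\!\nu-(n-2)\psi$ is a bounded quantity.

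If $\mu>0$ then $\mu\,h_{11}$ already forces the contradiction once $h_{11}$ is large, so the whole difficulty is concentrated in the regime $\mu\leq 0$, in which $|\mu|\,h_{11}$ together with the two negative third-order terms must be dominated by $\sum_i F^{ii}\kappa_i^{2}$ and $a\sum_i F^{ii}$; this is precisely where the special algebraic structure of $\sigma_{n-2}$ — as opposed to that of a general $\sigma_k$ — has to be exploited, just as in the previously understood cases $k=1,2,n-1,n$. I would split the indices into a \emph{good} set $G=\{i:\kappa_i\geq\theta\kappa_1\}$ and its complement $B$, for a small fixed $\theta$. On $B$ the critical relation makes $\sum_{i\in B}F^{ii}(\nabla_i h_{11})^{2}/h_{11}^{2}\leq C\sum_i F^{ii}$ absorbable, so the genuine enemy is $\sum_{i\in G}F^{ii}(\nabla_i h_{11})^{2}/h_{11}^{2}$, which has to be beaten against the concavity term through the identity $-F^{ij,kl}\eta_{ij}\eta_{kl}=\sum_{p\neq q}\sigma_{n-4}(\kappa\mid pq)(\eta_{pq}^{2}-\eta_{pp}\eta_{qq})$, the relation $\kappa_1 F^{11}=\psi-\sigma_{n-2}(\kappa\mid 1)$, and a sharp lower bound for $\sum_i F^{ii}\kappa_i^{2}$. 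The argument then bifurcates on how many of $\kappa_2,\dots,\kappa_n$ are comparable to $\kappa_1$: when at least two of them are, ellipticity is ample and the terms close easily; the delicate regime is $G=\{1\}$ or $G=\{1,2\}$, where $\sum_i F^{ii}$ and the positive term $\sum_i F^{ii}\kappa_i^{2}$ are of borderline size and the possibly negative eigenvalues admitted by $\Gamma_{n-2}$ must be tracked through $\sigma_{n-3}(\kappa\mid i)$ and $\sigma_{n-4}(\kappa\mid ij)$. I expect this case analysis — especially the single-dominant-eigenvalue subcase, where the $\psi_{\nu\nu}$ contribution and the third-order terms are simultaneously critical — to be the main obstacle, and it should hinge on a pointwise inequality for $\sigma_{n-2}$ analogous to the one Ren--Wang proved for $\sigma_{n-1}$. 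Granting such an inequality, choosing first $\theta$ small and then $a$ large yields $0\geq LW(X_0)\geq -C$, hence $\kappa_1(X_0)\leq C$; since $M$ is $(n-2)$-convex, $\sigma_1(\kappa)>0$ then bounds $-\kappa_n$ and therefore every $|\kappa_i|$ by a constant with the asserted dependence, which is \eqref{Mc2}.
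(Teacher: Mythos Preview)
Your outline captures the maximum-principle framework and correctly identifies that the crux is an algebraic pointwise inequality for $\sigma_{n-2}$ analogous to what \cite{RW} proved for $\sigma_{n-1}$; but you then write ``Granting such an inequality\ldots'' and move on. That single clause is the entire content of Theorem~\ref{theo2}. In the paper the inequality is formulated precisely as Conjecture~\ref{con} (equation~\eqref{s3.01}): for $\kappa\in\Gamma_{n-2}$ with $\kappa_i>\kappa_1-\sqrt{\kappa_1}/n$,
\[
\kappa_i\Big[K\Big(\sum_j\sigma_{n-2}^{jj}\xi_j\Big)^2-\sigma_{n-2}^{pp,qq}\xi_p\xi_q\Big]-\sigma_{n-2}^{ii}\xi_i^2+\sum_{j\neq i}a_j\xi_j^2\geq 0,
\]
with $a_j=\sigma_{n-2}^{jj}+(\kappa_i+\kappa_j)\sigma_{n-2}^{ii,jj}$. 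Proving this for $k=n-2$ occupies Sections~4--7: the left-hand side is bounded below by a combination of four quadratic forms $\textbf{A}_{k;i},\textbf{B}_{k;i},\textbf{C}_{k;i},\textbf{D}_{k;i}$ (Lemma~\ref{lemm17}), and then a five-way case split (Cases A, B1, B2, B3, C, depending on the sign of $\sigma_{n-2}(\kappa|i)$, of $\kappa_{n-1}$, and on the size of $\kappa_i\sigma_{n-3}(\kappa|i)/\sigma_{n-2}$ and $\kappa_1\cdots\kappa_{n-2}/\sigma_{n-2}$) handles each configuration separately, the first three by an intricate complete-the-square argument new to this paper, the last two by adapting \cite{GRW,LRW}. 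Your proposal contains none of this; the good/bad index split you sketch does not resolve the delicate subcases, and you have not even stated the form of the inequality you would need.

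There is a second, independent gap: your test function $W=\log h_{11}+\tfrac{a}{2}|X|^2$ is not the one that makes the argument close. The paper (following \cite{GRW,RW}) uses $\phi=\log\log P-N_1\log u$ with $P=\sum_l e^{\kappa_l}$ and $u=\langle X,\nu\rangle$ the support function. The exponential-sum device $P$ is essential: its second derivatives generate the additional positive quantities $B_i,C_i,D_i$ (see Section~3) whose presence is exactly what allows the reduction to \eqref{s3.01}; in particular the coefficients $a_j$ in \eqref{s3.01} arise from $D_i$ via Lemma~\ref{lemm16}. With the bare $\log h_{11}$ you lose these terms, and the critical-point relation $\nabla_i h_{11}/h_{11}=-a\langle X,e_i\rangle$ only gives $(\nabla_i h_{11}/h_{11})^2\le Ca^2$, so $\sum_i F^{ii}(\nabla_i h_{11})^2/h_{11}^2\le Ca^2\sum_i F^{ii}$ cannot be absorbed by $a\sum_i F^{ii}$ when $a$ is large. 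The barrier $-N_1\log u$ (rather than $|X|^2$) likewise introduces curvature into the first-order condition in the way the argument actually needs. So even modulo the missing algebraic inequality, your scheme as written would not close.
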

\par

A straightforward corollary of the above theorem is the following $C^2$ estimate for the Dirichlet problem of the $n-2$ Hessian equation.
\begin{coro} \label{Coro}
For the Dirichlet problem (\ref{s1.03}) of the $\sigma_{n-2}$
equation defined on some bounded domain $\Omega\subset
\mathbb{R}^n$,
there exits some constant $C$ depending on $\psi$ and $\nabla u,
u$ and the domain $\Omega$, such that we have the global $C^2$ estimate
$$\|u\|_{C^2(\bar{\Omega})}\leq C+\max_{\p\Omega}|\nabla^2 u|.$$
\end{coro}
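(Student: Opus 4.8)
The plan is to realize the Dirichlet problem as the "vertical graph" special case of the curvature equation \eqref{s1.01} and then invoke Theorem \ref{theo2}. Given a solution $u\in C^4(\bar\Omega)$ of \eqref{s1.03} with $k=n-2$, consider the graph $M=\{(x,u(x)):x\in\Omega\}\subset\mathbb R^{n+1}$. The principal curvatures $\kappa(X)$ of $M$ are the eigenvalues of the second fundamental form, which in graph coordinates is $a_{ij}=\frac{1}{\sqrt{1+|\nabla u|^2}}\,\big(\delta_{ik}-\tfrac{u_iu_k}{1+|\nabla u|^2}\big)u_{kj}$ relative to the metric $g_{ij}=\delta_{ij}+u_iu_j$. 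Equivalently, the $\kappa_i$ are the eigenvalues of $W=\gamma^{-1}D^2u\,\gamma^{-1}$ where $\gamma_{ij}=\delta_{ij}-\frac{u_iu_j}{(1+|\nabla u|)\sqrt{1+|\nabla u|^2}}$ (so that $\gamma^2=g^{-1}$ up to the standard identity). The key algebraic point is that $\sigma_k(\kappa(X))$ equals $(1+|\nabla u|^2)^{-k/2}$ times a quantity comparable to $\sigma_k(D^2u)$; more precisely, since $\gamma$ depends only on $\nabla u$ and is uniformly positive definite on the region $|\nabla u|\le \|\nabla u\|_{L^\infty}$, one has $\sigma_k(\kappa(X))=\psi(X,\nu(X))$ with $\psi$ a new function built from the given right-hand side of \eqref{s1.03} together with the factors coming from $\gamma$ and the volume element, and this $\psi$ is $C^2$, positive, with $C^2$-norm and infimum controlled by the data of \eqref{s1.03} and the $C^1$-bound on $u$.

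The steps, in order: (i) Set up graph coordinates and record the exact relation between $\{\kappa_i\}$ and $D^2u$, noting that the map $D^2u\mapsto W$ is, for fixed $\nabla u$ in a bounded set, a linear isomorphism with bounded norm and bounded inverse; deduce that the $(n-2)$-convexity of $u$ (i.e. $\kappa(D^2u)\in\Gamma_{n-2}$, which we are entitled to assume as the admissibility hypothesis implicit in \eqref{s1.03}) is equivalent to $(n-2)$-convexity of $M$ in the sense of Theorem \ref{theo2}. (ii) Verify that $M$ is a $C^1$ hypersurface with $\|M\|_{C^1}$ bounded by $\|u\|_{C^1(\bar\Omega)}$, and that the unit normal $\nu=(-\nabla u,1)/\sqrt{1+|\nabla u|^2}$ sweeps out a compact subset of $\mathbb S^n$, so that $\psi$ is defined on a neighborhood $\Gamma$ of the relevant normal bundle. (iii) Apply Theorem \ref{theo2} to get $\max_{x\in\Omega}\kappa_i(x)\le C$. (iv) Translate back: since $\kappa(x)$ are the eigenvalues of $W=\gamma^{-1}D^2u\,\gamma^{-1}$ and $\gamma$ is uniformly bounded with uniformly bounded inverse, the bound $\kappa_i\le C$ together with $(n-2)$-convexity (which controls the most negative eigenvalue from below by a multiple of the largest, a standard Gårding-cone fact) yields $|D^2u|\le C'$ in $\Omega$. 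Finally, standard reasoning for the Dirichlet problem on $\bar\Omega$ reduces the interior bound to an interior-plus-boundary bound of the stated form $\|u\|_{C^2(\bar\Omega)}\le C+\max_{\partial\Omega}|\nabla^2u|$ — indeed Theorem \ref{theo2} is a global (closed-hypersurface) statement, so to handle a graph over a domain with boundary one either doubles/extends or, more simply, notes that the interior estimate of Theorem \ref{theo2}'s proof localizes and only the boundary Hessian enters as an a priori given quantity; this is exactly why the right-hand side of the corollary carries the $\max_{\partial\Omega}|\nabla^2u|$ term.

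The main obstacle I anticipate is step (iv) combined with the boundary bookkeeping: Theorem \ref{theo2} as stated is for \emph{closed} hypersurfaces, whereas a graph over $\Omega$ has boundary, so one must be careful that the passage to the corollary genuinely only loses the boundary Hessian and nothing else. Concretely, one should check that the test-function/maximum-principle argument underlying Theorem \ref{theo2} is either insensitive to boundary (because the maximum of the largest principal curvature, if attained on $\partial M$, is already accounted for by $\max_{\partial\Omega}|\nabla^2u|$) or can be run on the graph after a harmless modification near $\partial\Omega$. A secondary, purely technical point is keeping the dependence of the new $\psi$'s $C^2$-norm on $u$ down to $C^1$ only — this works because all the extra factors ($\gamma$, the normal, the volume element) are smooth functions of $\nabla u$ alone, so no second derivatives of $u$ leak into $\|\psi\|_{C^2}$; this must be stated carefully since otherwise the estimate would be circular.
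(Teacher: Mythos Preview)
Your proposal has a genuine gap at the very first step: the Hessian equation \eqref{s1.03} does \emph{not} transform into a curvature equation of the form $\sigma_{n-2}(\kappa)=\tilde\psi(X,\nu)$. Writing the shape operator of the graph as $W=\gamma^{-1}D^2u\,\gamma^{-1}/\sqrt{1+|Du|^2}$, the quantity $\sigma_{n-2}(W)$ depends on the full matrix $D^2u$, not merely on the single scalar $\sigma_{n-2}(D^2u)$; conjugation by the non-scalar matrix $\gamma$ mixes the eigenvalues nontrivially. Two Hessians with identical $\sigma_{n-2}$ but different spectra will, after conjugation, produce \emph{different} values of $\sigma_{n-2}(W)$. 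Hence there is no function $\tilde\psi$ of $(X,\nu)$ alone---equivalently of $(x,u,Du)$ alone---for which a solution of \eqref{s1.03} gives a graph solving \eqref{s1.01}. (Already for $k=1$ this fails: the mean curvature of a graph is $w^{-1}\Delta u-w^{-3}u_iu_ju_{ij}$, and the second term cannot be expressed through $Du$ once only $\Delta u=\psi$ is prescribed.) Your closing worry about circularity is therefore beside the point; the construction of $\tilde\psi$ is impossible in the first place.

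The route the paper has in mind (it defers to \cite{RW}) is to run the maximum-principle argument \emph{directly} on the Hessian equation, with the eigenvalues of $D^2u$ in place of principal curvatures. The decisive algebraic input---Conjecture \ref{con} for $k=n-2$, established in Sections 4--7---is a statement about $\sigma_k$ on the G\r{a}rding cone and is indifferent to whether the underlying symmetric tensor is a Hessian or a second fundamental form. One chooses a test function built from $P=\sum_l e^{\lambda_l(D^2u)}$ together with a barrier in $|Du|^2$ (in place of the support function used for closed hypersurfaces), differentiates \eqref{s1.03} twice, and at an interior maximum the inequality \eqref{s3.01} closes the estimate exactly as in Section 3; if the maximum falls on $\partial\Omega$, the term $\max_{\partial\Omega}|\nabla^2u|$ absorbs it. No detour through the curvature equation is needed, nor available.
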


\begin{rema} We believe that Theorem \ref{theo2} holds for $2k>n$. In section 3, we will establish the estimate \eqref{Mc2}, when we assume that the Conjecture \ref{con} holds.
\end{rema}
In section 4, we will prove that the left hand side of the inequality \eqref{s3.01}
is bigger than the summation of four quadratic forms
$\textbf{A}_{k;i},\textbf{B}_{k;i},\textbf{C}_{k;i},
\textbf{D}_{k;i}$ defined there. In view of the four forms,
 $k=n-1$ and $k=n-2$ are two special cases, namely
that some terms disappear. The case $k=n-3$ in fact owns the general
expression. However, $k=n-1$ and $k=n-2$ are two essential cases we need
to solve at first. Thus, the present paper is a very important step to solve the whole conjecture.
Comparing the case $k=n-1$, the difficulty of the case $k=n-2$
is that $\textbf{C}_{k;i}$
becomes more complicated. The novelty of our
new proof lies that we discover the method to compete square of the
four quadratic forms, which is very different from the argument of the
case $k=n-1$ provided in \cite{RW}.

As in \cite{RW}, our new estimate can give two geometric applications. The first one is that we can solve the prescribed $n-2$
curvature equation (\ref{s1.01}) in the cone $\Gamma_{n-2}$. Suppose $r_1,r_2$ and Condition (1.4), Condition (1.5) are same as \cite{RW}. We have
\begin{theo}\label{exist} Suppose $k=n-2$ and the positive function $\psi\in C^2(\bar B_{r_2}\setminus B_{r_1}\times \mathbb S^n)$ satisfies conditions (1.4) and (1.5) in \cite{RW}, then the equation (\ref{s1.01}) has an unique $C^{3,\alpha}$ starshaped solution $M$ in $\{r_1\le |X|\le r_2\}$.
\end{theo}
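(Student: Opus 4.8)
The plan is to run the continuity/degree method of \cite{RW}, with Theorem \ref{theo2} supplying the decisive second order bound. Writing a star-shaped hypersurface contained in $\{r_1\le|X|\le r_2\}$ as a radial graph $X=\rho(z)z$ over $z\in\mathbb S^n$, the equation \eqref{s1.01} with $k=n-2$ becomes a fully nonlinear equation $F(\nabla^2\rho,\nabla\rho,\rho,z)=0$ on the sphere, elliptic exactly on the class of $\rho$ whose principal curvature vector $\kappa[\rho]$ lies in $\Gamma_{n-2}$. First I would establish a priori $C^{2,\alpha}(\mathbb S^n)$ estimates for admissible solutions; then confine all such solutions to a fixed open bounded subset of the admissible cone on whose closure $F$ stays uniformly elliptic; then conclude existence by a degree computation against a model problem; and finally obtain uniqueness from a comparison principle.

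\textbf{A priori estimates.} Conditions (1.4) and (1.5) of \cite{RW} are precisely the barrier hypotheses making the spheres $|X|=r_1$ and $|X|=r_2$ a sub- and a super-solution, so every admissible solution satisfies $r_1\le\rho\le r_2$, which is the $C^0$ bound; the gradient bound $|\nabla\rho|\le C$ then follows as in \cite{GRW,RW} from the $C^0$ bound, the structure of the equation and the admissibility $\kappa\in\Gamma_{n-2}$ (no convexity is needed). The upper bound $\kappa_i\le C$ is exactly Theorem \ref{theo2}. For the lower bound, $\sigma_{n-2}(\kappa)=\psi\ge\inf\psi>0$ together with the upper bounds on the $\kappa_i$ forces, via the Newton--Maclaurin inequalities inside $\Gamma_{n-2}$, that $\sigma_1(\kappa),\dots,\sigma_{n-2}(\kappa)$ are all bounded below by a positive constant; hence $\kappa$ remains in a fixed compact subset of $\Gamma_{n-2}$, the equation is uniformly elliptic with bounded $C^2$ coefficients, and the Evans--Krylov theorem (in the form for curvature equations, cf.\ \cite{CNS3}) yields a global $C^{2,\alpha}(\mathbb S^n)$ estimate. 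Schauder bootstrapping with $\psi\in C^2$ then gives the $C^{3,\alpha}$ bound.

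\textbf{Existence.} These estimates place every admissible solution in a bounded open set $\mathcal O\subset\{\rho\in C^{2,\alpha}(\mathbb S^n):\ \tfrac{r_1}{2}<\rho<2r_2,\ \kappa[\rho]\in\Gamma_{n-2},\ |\nabla\rho|<C+1\}$, never touching $\partial\mathcal O$, with $F$ uniformly elliptic on $\overline{\mathcal O}$. Choosing a continuous path $\psi_t$, $t\in[0,1]$, from a constant $\psi_0$ for which a concentric sphere of radius in $(r_1,r_2)$ solves the equation to $\psi_1=\psi$, and maintaining conditions (1.4)--(1.5) along the path exactly as in \cite{RW}, the estimates above hold uniformly in $t$; hence $\deg(F(\cdot,\psi_t),\mathcal O,0)$ is well defined and $t$-independent. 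At $t=0$ the solution is the round sphere and the linearized operator is invertible, so the degree equals $\pm1\ne0$, and a solution exists at $t=1$. For uniqueness, given two star-shaped solutions with radial functions $\rho_1,\rho_2$, I would evaluate at a point $z_0$ maximizing $\rho_1/\rho_2$: there the suitably rescaled graph of $\rho_2$ touches that of $\rho_1$ from inside, and comparing the two curvature equations at $z_0$ — using that condition (1.5) of \cite{RW} provides the correct monotonicity of $\psi$ in the radial direction — forces $\rho_1(z_0)/\rho_2(z_0)\le1$; by symmetry $\rho_1\equiv\rho_2$. This is the argument of \cite{RW}, applied verbatim.

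\textbf{Main obstacle.} The serious analytic content, the global upper curvature bound, is already Theorem \ref{theo2}; granting it, the only real task is to check that the scheme of \cite{RW} transfers to $k=n-2$ — in particular the $C^1$ estimate inside $\Gamma_{n-2}$, the strict containment of solutions in the admissible cone and in the annulus, the preservation of conditions (1.4)--(1.5) along $\psi_t$, and the non-degeneracy of the model problem. Since those conditions are taken identical to \cite{RW}, I expect the verification to amount to confirming that no step there used $k=n-1$ essentially beyond invoking the curvature estimate.
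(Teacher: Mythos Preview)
Your proposal is correct and matches the paper's approach exactly: the authors explicitly omit the proof of Theorem \ref{exist}, stating that once Theorem \ref{theo2} is in hand the argument is identical to \cite{RW}. You have simply written out that argument (continuity/degree method, $C^0$ and $C^1$ bounds from the barrier conditions (1.4)--(1.5) and structure of the equation, $C^2$ bound from Theorem \ref{theo2}, Evans--Krylov, degree computation, comparison for uniqueness), and your closing remark correctly identifies that the only substantive input beyond \cite{RW} is the new curvature estimate.
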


\par
Here, $M$ is called a star-shaped hypersurface, if it can be viewed as a radial
graph defined on $\mathbb{S}^n$ and its radial function is positive. Thus, the support function of a closed star-shaped hypersurface with respect to its  outer normal direction is positive.
\par
The second application is to solve the prescribed $n-2$ curvature
problem of the $n$ dimensional spacelike graphic hypersurface in
Minkowski space $\mathbb{R}^{n,1}$. The Minkowski space
$\mathbb{R}^{n,1}$ is the set $\mathbb{R}^{n+1}$ endowed with the
indefinite metric
$$ds^2=dx_1^2+\cdots +dx_{n}^2-dx_{n+1}^2.$$ The terminology "spacelike" means the tangent space of the hypersurface lies in the outside of the light cone. We have
\begin{theo}\label{theo5}
 Let $\Omega$ be some bounded domain in $\mathbb{R}^n$ with smooth boundary and $\psi(x,\xi,p)\in C^2(\bar{\Omega}\times \mathbb{R}\times \mathbb{R}^n)$ be a positive function with $\psi_{\xi}\geq 0$. Let  $\varphi\in C^4(\bar{\Omega})$ be space like. Consider  the following Dirichlet problem
\begin{eqnarray}\label{s1.06}
\left\{\begin{matrix}\sigma_{n-2}(\kappa_1,\cdots,\kappa_n)=\psi(X,\nu),
& \text{ in } \Omega\\ \qquad u=\varphi,   &\text{ on }
\partial \Omega\end{matrix}\right.,
\end{eqnarray}
where $\kappa_1,\kappa_2,\cdots,\kappa_n$ and $X,\nu$ are the principal
curvatures and the position vector, the unit normal of a space like graphic hypersurface defined by $(x,
u(x))$ in Minkowski space $\mathbb{R}^{n,1}$. If the  problem
\eqref{s1.06} has a sub solution, then it has an unique space like
solution $u$ in $\Gamma_{n-2}$ belonging to
$C^{3,\alpha}(\bar{\Omega})$ for any $\alpha\in (0,1)$.
\end{theo}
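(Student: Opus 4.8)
The plan is to solve \eqref{s1.06} by the method of continuity, with the existence of the subsolution $\underline u$ providing both the starting point of the deformation and the main geometric barrier. Writing the spacelike graph as $X=(x,u(x))$, the induced metric is $g_{ij}=\delta_{ij}-u_iu_j$, the future unit normal is $\nu=(Du,1)/\sqrt{1-|Du|^2}$, the second fundamental form is a multiple of $D^2u$, and the principal curvatures $\kappa=\kappa[u]$ are the eigenvalues of $(g^{ik}h_{kj})$; spacelikeness is $|Du|<1$. On the admissible class $\kappa[u]\in\Gamma_{n-2}$ the operator $u\mapsto\sigma_{n-2}(\kappa[u])$ is elliptic and $\sigma_{n-2}^{1/(n-2)}$ is concave. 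I would deform along $\sigma_{n-2}(\kappa[u])=t\,\psi(X,\nu)+(1-t)\,\sigma_{n-2}(\kappa[\underline u(x)])$ with $u|_{\partial\Omega}=\varphi$, $t\in[0,1]$; at $t=0$ the (unique) solution is $\underline u$ itself, at $t=1$ one recovers \eqref{s1.06}, and $\underline u$ remains a subsolution for every $t$ because it is one at $t=1$. The set of admissible $t$ is open by the implicit function theorem in $C^{2,\alpha}(\bar\Omega)$, since the linearized operator is elliptic (by admissibility) with nonpositive zeroth-order coefficient (because $\psi_\xi\geq0$), hence invertible for the Dirichlet problem.

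Closedness amounts to uniform a priori estimates. For $C^0$: the comparison principle, valid since $\psi_\xi\geq0$ makes the equation proper, gives $u\geq\underline u$; and the spacelike condition forces a graph with boundary on $\{(x,\varphi(x)):x\in\partial\Omega\}$ to lie in the union of the light cones through these boundary points, yielding $|u|\leq\max_{\partial\Omega}|\varphi|+C\,\dm(\Omega)$. For $C^1$: the real content is that spacelikeness survives the limit, i.e. $|Du|\leq1-\delta$ uniformly. On $\partial\Omega$ this follows from barriers built from $\underline u$ together with spacelike hyperplanes; in the interior one applies the maximum principle to an auxiliary function of the form $\log v+\phi(u)$, with $v=(1-|Du|^2)^{-1/2}$ and $\phi$ chosen using $\psi_\xi\geq0$, following the arguments of \cite{RW} under its Conditions (1.4)--(1.5) as transplanted to Minkowski space in \cite{RWXiao}.

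With the $C^1$ bound in hand, the $C^2$ estimate is where Theorem \ref{theo2} enters: its spacelike-graph counterpart (proved by the same square-completion of the four quadratic forms discussed after Corollary \ref{Coro}) gives $\max_\Omega\kappa_i\leq C+\max_{\partial\Omega}|D^2u|$, so it remains to bound the Hessian on $\partial\Omega$. The tangential-tangential and tangential-normal second derivatives are controlled by differentiating the boundary condition and comparing with $\underline u$; the double-normal derivative is estimated from the structure of $\sigma_{n-2}$ on $\Gamma_{n-2}$ via a barrier argument in the spirit of \cite{CNS4,CNS5,RW}. Once $\|u\|_{C^2(\bar\Omega)}$ is under control the equation is uniformly elliptic and concave, so Evans--Krylov gives a uniform $C^{2,\alpha}$ bound and Schauder bootstrapping a uniform $C^{3,\alpha}$ bound; the continuity set is therefore closed and $t=1$ is attained. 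Uniqueness of the spacelike solution in $\Gamma_{n-2}$ follows once more from the comparison principle, using $\psi_\xi\geq0$ together with ellipticity and concavity along the segment joining two solutions. The step I expect to be the main obstacle is the global curvature estimate in the spacelike setting --- transplanting Theorem \ref{theo2} from closed hypersurfaces in $\mathbb R^{n+1}$ to spacelike graphs in $\mathbb R^{n,1}$ --- and, jointly with it, the double-normal boundary estimate, since these are precisely the places where the delicate algebraic structure peculiar to the $n-2$ equation must be exploited.
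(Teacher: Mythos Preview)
Your proposal is correct and follows essentially the same approach as the paper: the authors explicitly omit the proof of Theorem \ref{theo5}, stating that it is the same as in \cite{RW} once Theorem \ref{theo2} is available, and your outline --- continuity method with the subsolution as starting point, openness via the linearization (using $\psi_\xi\geq 0$), closedness via $C^0$/$C^1$ barriers and the new global curvature estimate plus standard boundary $C^2$ estimates, then Evans--Krylov and Schauder --- is precisely that scheme. Two small corrections: Conditions (1.4)--(1.5) in \cite{RW} pertain to the closed star-shaped problem (Theorem \ref{exist}), not the Dirichlet problem, so the $C^1$ estimate here follows instead the spacelike-graph arguments of \cite{Ba,U,Sch,RW}; and \cite{RWXiao} treats entire constant-curvature hypersurfaces rather than the Dirichlet problem, so it is not the right reference for the gradient barrier.
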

The prescribing curvature problem of the spacelike graphic hypersurface
over some bounded domain in Minkowski space has been studied by various authors, such as Bartnik-Simon \cite{BS}, Delan\"o \cite{De}, Bayard \cite{Ba0,Ba}, Urbas \cite{U}, etc. In \cite{RW}, Ren-Wang solves the prescribed $n-1$ curvature problem. Here, we solves the prescribed $n-2$ curvature problem. The other cases $2<k<n-2$ are still open.  Hypersurfaces with prescribed curvatures
in Lorentzian manifolds also attract some attentions, seeing papers \cite{Ger1,Ger2,Sch} and reference therein.

Here, we omit the proofs of Corollary \ref{Coro}, Theorem \ref{exist} and Theorem \ref{theo5} in this paper, since they are same as \cite{RW}, if we have Theorem \ref{theo2}.

The organization of our paper is as follows. In section 2, we explain more notations and list facts and lemmas repeatedly used in the paper. In section 3, when we assume a key inequality holds and $2k>n$, we can establish the curvature estimate.
In section 4, we obtain that the proof of the key inequality can be deduced from four quadratic forms $\textbf{A}_{k;i},\textbf{B}_{k;i},\textbf{C}_{k;i},
\textbf{D}_{k;i}$. Further, to derive the key inequality for the case $k=n-2$, we need to divide into five cases, which will be proved in the last two sections.
In section 5, we will prove more algebraic lemmas, which is prepared to prove the key inequality for $k=n-2$.  In section 6, by competing square, we can establish the key inequality for the first three cases. In the last section, we prove the key inequality for the last two cases, applying the idea of handing the convex solutions developing in \cite{GRW,LRW}.

\section{Preliminary}
The operator $\sigma_k(\kappa)$ for $\kappa=(\kappa_1,\kappa_2,\cdots,\kappa_n)\in\mathbb{R}^n$ has been defined by \eqref{a2.1}. For convenience, we further set $\sigma_0(\kappa)=1$ and $\sigma_k(\kappa)=0$ for $k>n$
or $k<0$. Following \cite{CNS3}, the G\r{a}rding's cone $\Gamma_k$ is an open,
convex, symmetric (invariant under the interchange of any two
$\kappa_i$) cone with vertex at the origin,  containing the positive
cone: $\{\kappa\in \mathbb{R}^n ;$ each component $\kappa_i>0,1\leq
i\leq n\}$. Korevaar \cite{Kor} has shown that the cone $\Gamma_k$
also can be characterized as
\begin{eqnarray}\label{Gamma}
&\left\{\kappa\in
\mathbb{R}^n;\sigma_k(\kappa)>0,\frac{\partial\sigma_k(\kappa)}{\partial\kappa_{i_1}}>0,
\cdots,\frac{\partial^k\sigma_k(\kappa)}{\partial\kappa_{i_1}\cdots\partial
\kappa_{i_k}}>0, \text{ for all } 1\leq i_{1}<\cdots<i_{k}\leq
n\right\}.\nonumber
\end{eqnarray}
Suppose $\kappa_1\geq\cdots\geq\kappa_n$, then using the above fact, we have
\begin{equation}\label{Gamma1}
\kappa_k+\kappa_{k+1}+\cdots+\kappa_n> 0\quad {\rm for}\quad
\kappa\in\Gamma_k.
\end{equation}
Thus, if $\kappa\in\Gamma_k$, the number of possible negative entries of $\kappa$ is at most $n-k$.

For convenience, in this paper, we define another set:
\begin{equation*}
\bar\Gamma_k=\{\kappa \in \mathbb R^n \ | \quad \sigma_m(\kappa)>0,
\quad m=1,\cdots,k-1 ~~{\rm and}~~ \sigma_k(\kappa)\geq 0\}.
\end{equation*}
Note that the meaning of $\bar{\Gamma}_k$ is not the closure of $\Gamma_k$ in $\mathbb{R}^n$.
Thus, we clearly have
$$
\Gamma_n\subset\bar\Gamma_n\subset\cdots\subset\Gamma_k\subset\bar\Gamma_k\subset\cdots\subset\Gamma_1\subset\bar\Gamma_1.
$$
Same as \cite{Kor}, using the fact that
$\dfrac{\sigma_k}{\sigma_{k-1}}$ is degenerated elliptic in
$\Gamma_{k-1}$, $\bar\Gamma_k$ also can be characterized as
\begin{eqnarray}
&\left\{\kappa\in \mathbb{R}^n;\sigma_k(\kappa)\geq
0,\frac{\partial\sigma_k(\kappa)}{\partial\kappa_{i_1}}\geq 0,
\cdots,\frac{\partial^k\sigma_k(\kappa)}{\partial\kappa_{i_1}\cdots\partial
\kappa_{i_k}}\geq 0, \text{ for all } 1\leq i_{1}<\cdots<i_{k}\leq
n\right\}.\nonumber
\end{eqnarray}

Let $\kappa(A)$ be
the eigenvalue vector of a matrix $A=(a_{ij})$.
Suppose $F$ is a function defined on the set of symmetric matrices. We let
$$f\left(\kappa (A)\right)=F(A).$$ Thus, we denote
$$F^{pq}=\frac{\p F}{\p a_{pq}}, \text{ and  } F^{pq,rs}=\frac{\p^2 F}{\p a_{pq}\p a_{rs}}.$$
For a local orthonormal frame, if $A$ is diagonal at a point, then
at this point, we have
$$F^{pp}=\frac{\p f}{\p \kappa_p}=f_p, \text{ and }  F^{pp,qq}=\frac{\p^2 f}{\p \kappa_p\p \kappa_q}=f_{pq}.$$

Thus the definition of the $k$-th elementary symmetric function can be extended to symmetric matrices. Suppose $W$ is an $n\times n$ symmetric matrix and $\kappa(W)$ is its eigenvalue vector. We define
$$\sigma_k(W)=\sigma_k(\kappa(W)),$$ which is the summation of the $k$-th principal minors of the matrix $W$.

Now we will list some algebraic identities and properties of
$\sigma_k$. In this paper, we will denote
$(\kappa|a)=(\kappa_1,\cdots,\kappa_{a-1},\kappa_{a+1},\cdots,\kappa_n)$.
For any $1\leq l\leq n$, the notation
$\sigma_l(\kappa|ab\cdots)$ means $\sigma_l((\kappa|ab\cdots))$.
Thus, we define

\par
\noindent (i) $\sigma^{pp}_k(\kappa):=\frac{\partial
\sigma_k(\kappa)}{\partial \kappa_p}=\sigma_{k-1}(\kappa|p)$ for any given index
$p=1,\cdots,n$;
\par
\noindent (ii) $\sigma^{pp,qq}_k(\kappa):=\frac{\partial^2
\sigma_k(\kappa)}{\partial \kappa_p\partial
\kappa_q}=\sigma_{k-2}(\kappa|pq)$  for any given indices
$p,q=1,\cdots,n$ and $\sigma^{pp,pp}_k(\kappa)=0$.\\
Using the above definitions, we have
\par
\noindent (iii)
$\sigma_k(\kappa)=\kappa_i\sigma_{k-1}(\kappa|i)+\sigma_k(\kappa|i)$ for any given index $i$;
\par
\par
\noindent (iv)
$\dsum_{i=1}^n\sigma_{k}(\kappa|i)=(n-k)\sigma_k(\kappa)$;\\
\par
\noindent (v)
$\dsum_{i=1}^n\kappa_i\sigma_{k-1}(\kappa|i)=k\sigma_k(\kappa)$.\\

\noindent Thus, for a Codazzi  tensor $W=(w_{ij})$, we have

\noindent (vi)
$-\sum_{p,q,r,s}\sigma^{pq,rs}_k(W)w_{pql}w_{rsl}=\sum_{p,q}\sigma_k^{pp,qq}(W)w_{pql}^2-\sum_{p,q}\sigma^{pp,qq}_k(W)w_{ppl}w_{qql}$,\\
\noindent where $w_{pql}$ means the covariant derivative of $w_{pq}$ with respect to $l$ and $\sigma_k^{pq,rs}(W)=\frac{\p^2 \sigma_k(W)}{\p w_{pq}\p w_{rs}}$.  The meaning of Codazzi tensors can be found in \cite{GRW}.

\noindent For $\kappa\in \Gamma_k$, suppose
$\kappa_1\geq\cdots\geq\kappa_n$, then we have
\par
\noindent (vii) $\sigma_{k-1}(\kappa|n)\geq \cdots\geq
\sigma_{k-1}(\kappa|1)>0$.\\
More
details about the proof of these formulas can be found in \cite{HS}
and \cite{Wxj}. \\
For $\kappa\in\mathbb{R}^n$, we have the famous Newton's
inequality and the Maclaurin's inequality.
\par
\noindent (viii) $
\dfrac{\sigma_{k-1}^2(\kappa)}{C_n^{k-1}C_n^{k-1}}\geq
\dfrac{\sigma_k(\kappa)\sigma_{k-2}(\kappa)}{C_n^kC_n^{k-2}}, $ for
$ k\geq 2, \kappa\in \mathbb{R}^n$. (Newton's inequality)
\par
\noindent (ix)
$
\Big[\dfrac{\sigma_k(\kappa)}{C_n^k}\Big]^{1/k}\leq\Big[\dfrac{\sigma_l(\kappa)}{C_n^l}\Big]^{1/l}
$
for $k\geq l\geq 1, \kappa\in\Gamma_k$. (Maclaurin's inequality)\\
Using Newton's inequality, it is not difficult to prove\\
\noindent (x) $
\dfrac{\sigma_s(\kappa)\sigma_k(\kappa)}{C_n^sC_n^k}\geq\dfrac{\sigma_{s-r}(\kappa)\sigma_{k+r}(\kappa)}{C_n^{s-r}C_n^{k+r}},
$ for $1\leq r\leq s\leq k$, $\kappa\in \Gamma_k$. \par\noindent
Here $C^k_n$ is the combinational number, namely
$C^k_n=\frac{n!}{k!(n-k)!}$.

We need to refer two
important concavities. The functions $\sigma_k^{1/k}(\kappa)$ and
$\left(\frac{\sigma_k(\kappa)}{\sigma_l(\kappa)}\right)^{1/(k-l)}$
for $l<k$ are concave functions in $\Gamma_k$ which is showed in
\cite{CNS3} and \cite{Tru1}.

\par
Now, we list several lemmas frequently used in the other sections.
\begin{lemm} \label{Guan}
Assume that $k>l$, $W=(w_{ij})$ is a Codazzi tensor which is in
$\Gamma_k$. Denote $\al=\dfrac{1}{k-l}$.  Then, for $h=1,\cdots, n$
and any $\delta>0$, we have the following inequality
\begin{eqnarray}\label{s2.03}
&&-\sum_{p,q}\sigma_k^{pp,qq}(W)w_{pph}w_{qqh} +\left(1-\al+\dfrac{\al}{\delta}\right)\dfrac{(\sigma_k(W))_h^2}{\sigma_k(W)}\\
&\geq &\sigma_k(W)(\al+1-\delta\al)
\abc{\dfrac{(\sigma_l(W))_h}{\sigma_l(W)}}^2
-\dfrac{\sigma_k}{\sigma_l}(W)\sum_{p,q}\sigma_l^{pp,qq}(W)w_{pph}w_{qqh}.\nonumber
\end{eqnarray}
\end{lemm}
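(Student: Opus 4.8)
The plan is to derive \eqref{s2.03} from the concavity of
$G(\kappa):=\bigl(\sigma_k(\kappa)/\sigma_l(\kappa)\bigr)^{\al}$, with $\al=\tfrac{1}{k-l}$, on $\Gamma_k$ (recorded just above, following \cite{CNS3,Tru1}), together with one application of Young's inequality to bring in the free parameter $\delta$. First I would fix the point in question and choose a local orthonormal frame diagonalizing $W=(w_{ij})$ there, with eigenvalue vector $\kappa=(\kappa_1,\dots,\kappa_n)\in\Gamma_k$. By the conventions recorded above, $\sigma_m^{pp}(W)=\sigma_{m-1}(\kappa|p)$ and $\sigma_m^{pp,qq}(W)=\sigma_{m-2}(\kappa|pq)$; and since the off-diagonal first derivatives of $\sigma_m$ vanish at a diagonal matrix, the chain rule yields
\begin{equation*}
(\sigma_m(W))_h=\sum_p\sigma_m^{pp}(W)\,w_{pph},\qquad m=k,\,l,
\end{equation*}
which is what converts the eigenvalue-side sums below into the covariant derivatives appearing in \eqref{s2.03}.

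Next I would carry out logarithmic differentiation of $G$. Writing all quantities at $\kappa(W)$, one gets $\dfrac{G_p}{G}=\al\Bigl(\dfrac{\sigma_k^{pp}}{\sigma_k}-\dfrac{\sigma_l^{pp}}{\sigma_l}\Bigr)$ and, differentiating again,
\begin{equation*}
\frac{G_{pq}}{G}=\frac{G_pG_q}{G^2}+\al\left(\frac{\sigma_k^{pp,qq}}{\sigma_k}-\frac{\sigma_k^{pp}\sigma_k^{qq}}{\sigma_k^2}-\frac{\sigma_l^{pp,qq}}{\sigma_l}+\frac{\sigma_l^{pp}\sigma_l^{qq}}{\sigma_l^2}\right).
\end{equation*}
Concavity of $G$ on $\Gamma_k$ says the Hessian $(G_{pq})$ is negative semi-definite, so I would contract $\sum_{p,q}G_{pq}\xi_p\xi_q\leq 0$ against the vector $\xi_p:=w_{pph}$. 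Using the identities of the previous paragraph, $\sum_p\sigma_m^{pp}\xi_p=(\sigma_m(W))_h$ and $\sum_{p,q}\sigma_m^{pp,qq}\xi_p\xi_q=\sum_{p,q}\sigma_m^{pp,qq}(W)w_{pph}w_{qqh}$; then dividing by $\al G>0$ and multiplying by $\sigma_k(W)>0$ turns the contracted inequality into
\begin{align*}
-\sum_{p,q}\sigma_k^{pp,qq}(W)w_{pph}w_{qqh}\ \geq\ {}&\al\,\sigma_k(W)\left(\frac{(\sigma_k(W))_h}{\sigma_k(W)}-\frac{(\sigma_l(W))_h}{\sigma_l(W)}\right)^2-\frac{(\sigma_k(W))_h^2}{\sigma_k(W)}\\
&{}+\sigma_k(W)\left(\frac{(\sigma_l(W))_h}{\sigma_l(W)}\right)^2-\frac{\sigma_k}{\sigma_l}(W)\sum_{p,q}\sigma_l^{pp,qq}(W)w_{pph}w_{qqh}.
\end{align*}

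To finish, I would use the elementary bound $(a-b)^2\geq\bigl(1-\tfrac{1}{\delta}\bigr)a^2+(1-\delta)b^2$, which holds for every $\delta>0$ because it is equivalent to $\bigl(\delta^{-1/2}a-\delta^{1/2}b\bigr)^2\geq0$, with $a=(\sigma_k(W))_h/\sigma_k(W)$ and $b=(\sigma_l(W))_h/\sigma_l(W)$, and then move the $(\sigma_k(W))_h^2/\sigma_k(W)$ term across to the left. Its new coefficient is exactly $1-\al+\tfrac{\al}{\delta}$, and the coefficient of $\sigma_k(W)\bigl((\sigma_l(W))_h/\sigma_l(W)\bigr)^2$ on the right becomes $\al+1-\delta\al$, giving \eqref{s2.03}. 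I do not expect a serious obstacle: once the concavity of $(\sigma_k/\sigma_l)^{\al}$ is granted, the rest is bookkeeping, and the one point that deserves care is checking that contracting the Hessian of $G$ against $\xi_p=w_{pph}$ really does reproduce the covariant derivatives $(\sigma_m(W))_h$ — which is precisely what the diagonalizing frame (and, in the geometric setting, the Codazzi property of $W$) provides. Finally, the degenerate case $l=0$, where $\sigma_l\equiv1$, is included and recovers the classical inequality coming from concavity of $\sigma_k^{1/k}$.
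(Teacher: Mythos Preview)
Your argument is correct and is precisely the standard derivation: the paper does not reproduce a proof but refers to \cite{GLL} and \cite{GRW}, where exactly this route---concavity of $(\sigma_k/\sigma_l)^{1/(k-l)}$ on $\Gamma_k$, logarithmic differentiation, contraction against $\xi_p=w_{pph}$, and the Young-type splitting $(a-b)^2\geq (1-\tfrac{1}{\delta})a^2+(1-\delta)b^2$---is used to obtain \eqref{s2.03}. Your bookkeeping of the coefficients $1-\al+\al/\delta$ and $\al+1-\delta\al$ is accurate, and the remark about the diagonalizing frame (so that $(\sigma_m(W))_h=\sum_p\sigma_m^{pp}(W)w_{pph}$) is exactly the point where the Codazzi hypothesis enters.
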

The proof can be found in \cite{GLL} and \cite{GRW}. Now we give another Lemma whose proof is same as the proof of the inequality (12)
in \cite{LT}.
\begin{lemm} \label{lemm7}
Assume that $\kappa=(\kappa_1,\cdots,\kappa_n)\in\Gamma_{k}$. Then
for any given indices $1\leq i,j\leq n$, if $\kappa_i\geq\kappa_j$, we have
$$
|\sigma_{k-1}(\kappa|ij)|\leq \Theta\sigma_{k-1}(\kappa|j), \text{ where } \Theta=\sqrt{\dfrac{k(n-k)}{n-1}}.
$$
\end{lemm}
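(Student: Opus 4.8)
The plan is to split the claim into the two one-sided bounds
$\sigma_{k-1}(\kappa|ij)\le\Theta\,\sigma_{k-1}(\kappa|j)$ and
$-\sigma_{k-1}(\kappa|ij)\le\Theta\,\sigma_{k-1}(\kappa|j)$, after a few reductions. Both sides are homogeneous of degree $k-1$ in $\kappa$, so we may rescale freely. Applying the derivative characterization of $\Gamma_k$ recalled above (so $\sigma_m(\kappa|j)=\partial_j\sigma_{m+1}(\kappa)>0$ for $m\le k-1$ and $\sigma_m(\kappa|ij)=\partial_i\partial_j\sigma_{m+2}(\kappa)>0$ for $m\le k-2$) shows $(\kappa|j)\in\Gamma_{k-1}$ and $(\kappa|ij)\in\Gamma_{k-2}$; in particular $\sigma_{k-1}(\kappa|j)>0$ (this is also (vii)), $\sigma_{k-1}(\kappa|i)>0$ and $\sigma_{k-2}(\kappa|ij)>0$. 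Writing $\mu:=(\kappa|ij)$, identity (iii) applied to the vectors $(\kappa|j)$ and $(\kappa|i)$ gives
\[
\sigma_{k-1}(\kappa|j)=\sigma_{k-1}(\mu)+\kappa_i\,\sigma_{k-2}(\mu),\qquad
\sigma_{k-1}(\kappa|i)=\sigma_{k-1}(\mu)+\kappa_j\,\sigma_{k-2}(\mu),
\]
so, since $\sigma_{k-1}(\mu)=\sigma_{k-1}(\kappa|j)-\kappa_i\sigma_{k-2}(\mu)$, the whole statement is equivalent to
\[
(1-\Theta)\,\sigma_{k-1}(\kappa|j)\ \le\ \kappa_i\,\sigma_{k-2}(\mu)\ \le\ (1+\Theta)\,\sigma_{k-1}(\kappa|j).
\]
Note also $\Theta\ge 1$, since $k(n-k)\ge n-1$ for $1\le k\le n-1$.

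First I would dispose of the easy part. Subtracting the two identities gives $\sigma_{k-1}(\kappa|j)-\sigma_{k-1}(\kappa|i)=(\kappa_i-\kappa_j)\sigma_{k-2}(\mu)\ge 0$, so $\sigma_{k-1}(\kappa|j)$ is the larger of the two. If moreover $\kappa_i\ge 0$, then $\kappa_i\sigma_{k-2}(\mu)\ge 0\ge(1-\Theta)\sigma_{k-1}(\kappa|j)$, so the lower bound is automatic. Hence two things remain: \textbf{(a)} the upper bound $\kappa_i\sigma_{k-2}(\mu)\le(1+\Theta)\sigma_{k-1}(\kappa|j)$ for all $\kappa\in\Gamma_k$; and \textbf{(b)} the lower bound $\kappa_i\sigma_{k-2}(\mu)\ge(1-\Theta)\sigma_{k-1}(\kappa|j)$, which needs attention only when $\kappa_i<0$ (so $\kappa_j\le\kappa_i<0$, while $\mu\in\Gamma_{k-2}$ still has at most $n-k$ negative entries).

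For (a) and (b) I would follow the scheme of the proof of inequality (12) in \cite{LT}. One exploits $\sigma_k(\kappa)>0$ (and, for the lower bound, $\sigma_{k-1}(\kappa)>0$) together with the lower-order positivity built into $\kappa\in\Gamma_k$: expanding $\sigma_k(\kappa)=\kappa_i\kappa_j\,\sigma_{k-2}(\mu)+(\kappa_i+\kappa_j)\sigma_{k-1}(\mu)+\sigma_k(\mu)$ and the analogous expansions of $\sigma_{k-1}(\kappa)$ and of $\sigma_{k-1}(\kappa|i)$, and then feeding in Newton's inequalities (viii), (x) and the Maclaurin inequality (ix) on the sub-vectors $\mu$, $(\kappa|i)$, $(\kappa|j)$, one is left with a quadratic inequality for the single quantity $t:=\kappa_i\sigma_{k-2}(\mu)/\sigma_{k-1}(\kappa|j)$ whose coefficients depend only on $n$ and $k$. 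Completing the square in that quadratic produces exactly the admissible window $1-\Theta\le t\le 1+\Theta$ with $\Theta=\sqrt{k(n-k)/(n-1)}$; the square root is the discriminant of the quadratic. That the constant cannot be improved is checked on the configuration in which all but two of the $\kappa_\ell$ coincide and the pair $(\kappa_i,\kappa_j)$ sits on $\partial\Gamma_k$.

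The main obstacle I expect is extracting the \emph{sharp} constant $\Theta$ rather than some coarser $C(n,k)$: this forces one to carry the Newton/Maclaurin inequalities with their exact combinatorial coefficients and to combine them by the correct completion of squares rather than by repeated crude estimation. A secondary, essentially bookkeeping nuisance is the sign analysis — which of the two bounds is active, and which cone inequalities are available, depends on the signs of $\kappa_i$, $\kappa_j$ and $\sigma_{k-1}(\mu)$ — so the argument naturally divides into the case $\kappa_i\ge 0$ and the case $\kappa_i<0$, and in the latter according to $\mathrm{sign}\,\sigma_{k-1}(\mu)$.
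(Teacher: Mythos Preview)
Your proposal follows the same route as the paper: the paper does not supply its own argument but simply states that the proof is identical to that of inequality~(12) in \cite{LT}, and your sketch explicitly follows that same scheme. The reductions you outline (rewriting the claim as a two-sided bound on $\kappa_i\sigma_{k-2}(\kappa|ij)/\sigma_{k-1}(\kappa|j)$ and then completing the square on a quadratic obtained via Newton's inequalities) are exactly the content of the Lin--Trudinger argument.
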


We also have
\begin{lemm}\label{lemm8}
Assume that $\kappa=(\kappa_1,\cdots,\kappa_n)\in \Gamma_k$ and $\kappa_1\geq  \cdots\geq \kappa_n$. Then for any
$0\leq s\leq k\leq n$, we have
\begin{eqnarray}\label{n2.5}
\dfrac{\kappa_1^s\sigma_{k-s}(\kappa)}{\sigma_k(\kappa)}\geq\dfrac{C_n^{k-s}}{C_n^k}.
\end{eqnarray}

\end{lemm}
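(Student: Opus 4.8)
The plan is to prove the inequality
$\kappa_1^s\sigma_{k-s}(\kappa)/\sigma_k(\kappa)\ge C_n^{k-s}/C_n^k$
by induction on $s$. The base case $s=0$ is the trivial identity $\sigma_k/\sigma_k=1=C_n^k/C_n^k$. For the inductive step it suffices to show, for $1\le s\le k$, that
\[
\frac{\kappa_1^{s}\sigma_{k-s}(\kappa)}{\kappa_1^{s-1}\sigma_{k-s+1}(\kappa)}
=\frac{\kappa_1\sigma_{k-s}(\kappa)}{\sigma_{k-s+1}(\kappa)}\ \ge\ \frac{C_n^{k-s}/C_n^k}{C_n^{k-s+1}/C_n^k}=\frac{C_n^{k-s}}{C_n^{k-s+1}}=\frac{k-s+1}{n-k+s},
\]
after which one multiplies these ratios together (telescoping) and uses the $s-1$ case; the positivity of $\sigma_{k-s+1}(\kappa)$ for $1\le s\le k$ follows since $k-s+1\le k$ and $\kappa\in\Gamma_k\subset\Gamma_{k-s+1}$. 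So the whole lemma reduces to the single claim
\[
\kappa_1\,\sigma_{m}(\kappa)\ \ge\ \frac{m+1}{\,n-m\,}\,\sigma_{m+1}(\kappa)\qquad\text{for }0\le m\le k-1,\ \kappa\in\Gamma_k,\ \kappa_1\ge\cdots\ge\kappa_n,
\]
with $m=k-s$.

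To prove this claim I would use the decomposition identity (iii), $\sigma_{m+1}(\kappa)=\kappa_1\sigma_m(\kappa|1)+\sigma_{m+1}(\kappa|1)$, together with identity (iv) applied to the vector $(\kappa|1)\in\mathbb{R}^{n-1}$: summing (iii) over the remaining indices, or directly, one has $\sigma_{m+1}(\kappa|1)\le\frac{n-1-m}{\,?\,}$-type control; more cleanly, note
$\sigma_{m+1}(\kappa)=\sum_{i=1}^n\kappa_i\sigma_m(\kappa|i)\big/(m+1)$ by (v), and since $\kappa_1=\max_i\kappa_i$ while each $\sigma_m(\kappa|i)>0$ (because $\kappa\in\Gamma_k$, $m\le k-1$, so $\sigma_m(\kappa|i)=\sigma_m^{(i)}\ge\sigma_{k-1}(\kappa|1)>0$ by (vii) when $m=k-1$, and similarly for smaller $m$ one descends through the cone), we get
\[
(m+1)\sigma_{m+1}(\kappa)=\sum_{i=1}^n\kappa_i\sigma_m(\kappa|i)\ \le\ \kappa_1\sum_{i=1}^n\sigma_m(\kappa|i)=\kappa_1\,(n-m)\,\sigma_m(\kappa),
\]
where the last equality is identity (iv) with $k$ replaced by $m$. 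Rearranging gives exactly $\kappa_1\sigma_m(\kappa)\ge\frac{m+1}{n-m}\sigma_{m+1}(\kappa)$, which is the claim.

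The one point that needs care — and which I expect to be the main (though modest) obstacle — is justifying that $\sigma_m(\kappa|i)\ge 0$ for every $i$ and every $0\le m\le k-1$ when $\kappa\in\Gamma_k$, since the bound $\kappa_i\sigma_m(\kappa|i)\le\kappa_1\sigma_m(\kappa|i)$ requires the multiplier $\sigma_m(\kappa|i)$ to be nonnegative. This follows from the Korevaar-type characterization of $\Gamma_k$ recalled in the excerpt: for $\kappa\in\Gamma_k$ all partial derivatives $\partial^j\sigma_k/\partial\kappa_{i_1}\cdots\partial\kappa_{i_j}=\sigma_{k-j}(\kappa|i_1\cdots i_j)$ are positive for $j\le k$, and in particular, fixing one index $i$ and letting the others range, $\sigma_m(\kappa|i)>0$ for $0\le m\le k-1$; alternatively one observes $(\kappa|i)\in\Gamma_{k-1}$ (a standard consequence, since deleting the largest-in-absolute-value-irrelevant entry preserves membership — more precisely $\Gamma_k$ restricted to a coordinate hyperplane sits inside $\Gamma_{k-1}$), whence $\sigma_m(\kappa|i)>0$ for all $m\le k-1$. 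Once this positivity is in hand the argument is a two-line computation using (iv) and (v), and the telescoping induction on $s$ finishes the proof. I would also double-check the edge case $s=k$, where $\sigma_{k-s}=\sigma_0=1$ and the inequality reads $\kappa_1^k/\sigma_k(\kappa)\ge 1/C_n^k$, i.e.\ $\sigma_k(\kappa)\le C_n^k\kappa_1^k$, which is immediate from $\kappa_1\ge\kappa_i$ for all $i$ and the definition \eqref{a2.1}.
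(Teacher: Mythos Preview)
Your proof is correct and takes a genuinely different route from the paper's. The paper rescales to $\tilde\kappa=\kappa/\kappa_1$, notes $\sigma_k(\tilde\kappa)/C_n^k\le 1$, and then applies Maclaurin's inequality (ix) to get $\sigma_{k-s}(\tilde\kappa)/C_n^{k-s}\ge(\sigma_k(\tilde\kappa)/C_n^k)^{(k-s)/k}\ge\sigma_k(\tilde\kappa)/C_n^k$, which unwinds to the statement in one line. Your argument instead telescopes the single-step bound $\kappa_1\sigma_m(\kappa)\ge\frac{m+1}{n-m}\sigma_{m+1}(\kappa)$, which you obtain directly from the identities (iv) and (v); this is more elementary in that it avoids quoting Maclaurin (which itself rests on Newton's inequalities), at the cost of a short induction. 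Both approaches are clean.

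One clarification worth tightening: your first justification that $\sigma_m(\kappa|i)>0$ via the Korevaar characterization of $\Gamma_k$ is not quite what that characterization says---it gives positivity of $\sigma_{k-j}(\kappa|i_1\cdots i_j)$, not of $\sigma_m(\kappa|i)$ for a single deleted index when $m<k-1$. The clean way (essentially your ``alternative'') is: since $m+1\le k$ we have $\kappa\in\Gamma_k\subset\Gamma_{m+1}$, and the Korevaar characterization of $\Gamma_{m+1}$ gives $\partial\sigma_{m+1}/\partial\kappa_i=\sigma_m(\kappa|i)>0$ for every $i$. With that in place, $\kappa_i\sigma_m(\kappa|i)\le\kappa_1\sigma_m(\kappa|i)$ holds termwise and your key inequality follows.
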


\begin{proof}
Obviously, we have $\kappa_1>0$. Define
$\tilde\kappa=\dfrac{\kappa}{\kappa_1}=\left(1,\cdots,\dfrac{\kappa_n}{\kappa_1}\right)$.
Thus, we have $\dfrac{\sigma_k(\tilde\kappa)}{C_n^k}\leq 1$ and
$\tilde{\kappa}\in\Gamma_k$. By Maclaurin's inequality, we get
\begin{align*}
\dfrac{\sigma_{k-s}(\tilde\kappa)}{C_n^{k-s}}\geq
\Big[\dfrac{\sigma_k(\tilde\kappa)}{C_n^k}\Big]^{\frac{k-s}{k}}\geq
\dfrac{\sigma_k(\tilde\kappa)}{C_n^k},
\end{align*}
which implies
\begin{align*}
\dfrac{\kappa_1^s\sigma_{k-s}(\kappa)}{\sigma_k(\kappa)}=\dfrac{\sigma_{k-s}(\tilde\kappa)}{\sigma_{k}(\tilde\kappa)}\geq\dfrac{C_n^{k-s}}{C_n^k}.
\end{align*}

\end{proof}

Using the above lemma, we can prove
\begin{lemm}\label{lemm9}
Assume that $\kappa=(\kappa_1,\cdots,\kappa_n)\in\Gamma_{k}$ and $\kappa_1\geq  \cdots\geq \kappa_n$. Suppose any given indices $i,j$ satisfy $1\leq i,j\leq n$ and $i\neq j$.

(a) If $\kappa_i\leq 0$, then $-\kappa_i<\dfrac{(n-k)\kappa_1}{k}.$

(b) If $\kappa_i\leq\kappa_j\leq0$, then
$-(\kappa_i+\kappa_j)<\dfrac{2\sigma_{k}(\kappa|ij)}{\sigma_{k-1}(\kappa|ij)}.$
\end{lemm}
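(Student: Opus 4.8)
The plan is to derive both parts from the recursive identities (iii)--(vii) for $\sigma_k$ together with the Newton--Maclaurin bound of Lemma~\ref{lemm8}, using throughout the sign hypotheses $\kappa_i\leq 0$ (resp. $\kappa_i\leq\kappa_j\leq 0$). We may assume $k\leq n-1$, since $\Gamma_n$ is the positive cone and then the hypotheses are vacuous.

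For part (a), since $\kappa_1\geq\cdots\geq\kappa_n$ and $\kappa_i\leq 0$ we have $-\kappa_i\leq-\kappa_n$, so it is enough to bound $-\kappa_n$. First I would record that $\mu:=(\kappa|n)\in\Gamma_k(\mathbb{R}^{n-1})$: a straightforward induction on $m$, using identity (iii) in the form $\sigma_m(\kappa)=\kappa_n\sigma_{m-1}(\kappa|n)+\sigma_m(\kappa|n)$ together with $-\kappa_n\geq 0$, gives $\sigma_m(\kappa|n)\geq\sigma_m(\kappa)>0$ for $0\leq m\leq k$. In particular $\sigma_{k-1}(\kappa|n)>0$, and then (iii) with $m=k$ gives $-\kappa_n\,\sigma_{k-1}(\kappa|n)=\sigma_k(\kappa|n)-\sigma_k(\kappa)<\sigma_k(\kappa|n)$, i.e. $-\kappa_n<\sigma_k(\mu)/\sigma_{k-1}(\mu)$. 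Finally, applying Lemma~\ref{lemm8} to $\mu$ with $s=1$ (its largest entry is $\kappa_1$) gives $\sigma_k(\mu)/\sigma_{k-1}(\mu)\leq\dfrac{C_{n-1}^{k}}{C_{n-1}^{k-1}}\,\kappa_1=\dfrac{n-k}{k}\,\kappa_1$; chaining the last two inequalities finishes part (a).

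For part (b), I would split the bound into two equal halves instead of estimating $\sigma_k(\kappa|ij)$ directly. From identity (iii) at the index $i$, together with $\sigma_k(\kappa)>0$, $\kappa_i\leq 0$, and $\sigma_{k-1}(\kappa|i)>0$ (property (vii)), one obtains $\sigma_k(\kappa|i)=\sigma_k(\kappa)-\kappa_i\sigma_{k-1}(\kappa|i)>0$, and symmetrically $\sigma_k(\kappa|j)>0$. Applying (iii) once more at $j$ to the vector $(\kappa|i)$ gives $\sigma_{k-1}(\kappa|ij)=\sigma_{k-1}(\kappa|i)-\kappa_j\sigma_{k-2}(\kappa|ij)\geq\sigma_{k-1}(\kappa|i)>0$, using $\kappa_j\leq 0$ and $\sigma_{k-2}(\kappa|ij)\geq 0$ (one of the G\r{a}rding conditions characterizing $\Gamma_k$). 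Now (iii) at $i$ applied to $(\kappa|j)$ reads $\sigma_k(\kappa|j)=\kappa_i\sigma_{k-1}(\kappa|ij)+\sigma_k(\kappa|ij)$; since the left-hand side is positive and $\sigma_{k-1}(\kappa|ij)>0$, this gives $-\kappa_i<\sigma_k(\kappa|ij)/\sigma_{k-1}(\kappa|ij)$, and interchanging $i$ and $j$ (now using $\sigma_k(\kappa|i)>0$) gives $-\kappa_j<\sigma_k(\kappa|ij)/\sigma_{k-1}(\kappa|ij)$. Adding these two inequalities is exactly the assertion.

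I expect the only genuinely delicate point to be the bookkeeping of which truncated vectors stay in which G\r{a}rding cone — concretely, guaranteeing that the denominators $\sigma_{k-1}(\kappa|n)$ and $\sigma_{k-1}(\kappa|ij)$ are strictly positive, so that the divisions above are legitimate — but this is settled by property (vii), the G\r{a}rding conditions recalled in this section, and the sign hypotheses; apart from Lemma~\ref{lemm8} no additional inequality is required.
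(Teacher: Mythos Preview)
Your proof is correct and follows essentially the same approach as the paper's: both parts use identity (iii) together with the sign hypotheses to show the relevant truncated vectors lie in $\Gamma_k$, then invoke Lemma~\ref{lemm8} for part (a). The only cosmetic differences are that the paper works directly with the index $i$ in (a) rather than first reducing to $i=n$, and in (b) the paper bounds only $-\kappa_i$ and then uses $-\kappa_j\leq -\kappa_i$ instead of deriving the bound for $-\kappa_j$ separately.
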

\begin{proof}
(a) Since $\kappa=(\kappa_1,\cdots,\kappa_n)\in\Gamma_{k}$, by
$$
\sigma_{k}(\kappa)=\kappa_i\sigma_{k-1}(\kappa|i)+\sigma_{k}(\kappa|i)>0,
\text{ and } \kappa_i\leq 0,
$$
we know that $\sigma_{k}(\kappa|i)>0$, which implies
$(\kappa|i)\in\Gamma_k$.
Applying Lemma \ref{lemm8} to $(\kappa|i)$ and using the above inequality, we get
$$
-\kappa_i<\dfrac{\sigma_{k}(\kappa|i)}{\sigma_{k-1}(\kappa|i)}\leq\dfrac{C_{n-1}^{k}\kappa_1}{C_{n-1}^{k-1}}=\frac{(n-k)\kappa_1}{k}.
$$

(b) Same as (a), using $\kappa_j\leq 0$, we know
$\sigma_{k}(\kappa|j)>0$. Thus, it is clear that
$$
\sigma_{k}(\kappa|j)=\kappa_i\sigma_{k-1}(\kappa|ij)+\sigma_{k}(\kappa|ij)>0,
$$
then, rewriting the above inequality, we have $
-\kappa_i<\dfrac{\sigma_{k}(\kappa|ij)}{\sigma_{k-1}(\kappa|ij)}.
$
Since $\kappa_i\leq\kappa_j\leq 0$, the above inequality implies
$-(\kappa_i+\kappa_j)<\dfrac{2\sigma_{k}(\kappa|ij)}{\sigma_{k-1}(\kappa|ij)}.$

\end{proof}

\begin{lemm}\label{lemm10}
Assume that $\kappa=(\kappa_1,\cdots,\kappa_n)\in\bar\Gamma_k$,
$1\leq k\leq n$, and
$\kappa_1\geq\cdots\geq\kappa_n$. Then for any $1\leq s<k$, we have
 $$\sigma_{s}(\kappa)\geq\kappa_1\cdots\kappa_{s}.$$
\end{lemm}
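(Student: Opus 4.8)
The plan is to induct on $s$, or equivalently to prove the chain of inequalities
$\sigma_s(\kappa)\geq \kappa_1\sigma_{s-1}(\kappa|1)\geq \kappa_1\kappa_2\sigma_{s-2}(\kappa|12)\geq \cdots \geq \kappa_1\cdots\kappa_s$,
where at each step one peels off the largest remaining curvature. The key identity is (iii), which gives
$\sigma_s(\kappa)=\kappa_1\sigma_{s-1}(\kappa|1)+\sigma_s(\kappa|1)$, so the first inequality reduces to showing $\sigma_s(\kappa|1)\geq 0$; iterating, after peeling off $\kappa_1,\dots,\kappa_{t}$ one is left needing $\sigma_{s-t}(\kappa|12\cdots t)\geq 0$ together with the nonnegativity of the intermediate symmetric functions $\sigma_{s-1}(\kappa|1),\sigma_{s-2}(\kappa|12),\dots$ that multiply the (nonnegative) leading curvatures $\kappa_1,\kappa_2,\dots$ (recall $\kappa_1\geq\cdots\geq\kappa_s$ are all positive, since $\kappa\in\bar\Gamma_k$ with $s<k$ forces at most $n-k$ negative entries and the first $k$ ordered entries satisfy $\kappa_1\geq\cdots$, in fact $\kappa_1>0$ and more is true). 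So the whole lemma comes down to one structural fact: \emph{if $\kappa\in\bar\Gamma_k$ with $\kappa_1\geq\cdots\geq\kappa_n$, then for every $t$ with $0\leq t\leq k-1$ the truncated vector $(\kappa|12\cdots t)$ lies in $\bar\Gamma_{k-t}$}, and in particular all the relevant $\sigma_{m}(\kappa|12\cdots t)$ with $m\leq k-t$ are $\geq 0$.

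First I would establish this truncation stability. The characterization of $\bar\Gamma_k$ recalled in the excerpt, namely that $\kappa\in\bar\Gamma_k$ iff $\dfrac{\partial^j\sigma_k}{\partial\kappa_{i_1}\cdots\partial\kappa_{i_j}}\geq 0$ for all $j\leq k$ and all index sets, does most of the work: $\sigma_{k-1}(\kappa|1)=\sigma_k^{11}(\kappa)\geq 0$, and more generally $\sigma_{k-j}(\kappa|i_1\cdots i_j)=\partial^j\sigma_k/\partial\kappa_{i_1}\cdots\partial\kappa_{i_j}\geq 0$, which is exactly the statement that the lower-order symmetric functions of the truncated vectors are nonnegative — provided we only truncate away indices and only look at symmetric functions of order up to $k$ minus the number of removed indices. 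Removing the largest entries $\kappa_1,\dots,\kappa_t$ in particular keeps us inside the good range: $(\kappa|12\cdots t)\in\bar\Gamma_{k-t}$ because $\sigma_{k-t-j}(\,(\kappa|12\cdots t)\,|\,i_1\cdots i_j)=\sigma_{k-(t+j)}(\kappa|12\cdots t\,i_1\cdots i_j)\geq 0$ for all $j\leq k-t$, again by the derivative characterization. (One can alternatively cite (vii) and (iv)–(v), but the derivative characterization is cleanest.)

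Then the main inequality is immediate: writing $\mu^{(0)}=\kappa$ and $\mu^{(t)}=(\kappa|12\cdots t)$, identity (iii) applied to $\mu^{(t-1)}$ with distinguished index $t$ gives
$\sigma_{s-t+1}(\mu^{(t-1)})=\kappa_t\,\sigma_{s-t}(\mu^{(t)})+\sigma_{s-t+1}(\mu^{(t)})\geq \kappa_t\,\sigma_{s-t}(\mu^{(t)})$,
since $\sigma_{s-t+1}(\mu^{(t)})=\sigma_{s-t+1}(\kappa|12\cdots t)\geq 0$ — here we use $s-t+1\leq k-t$, i.e. $s< k$, which is the hypothesis, so this $\sigma$-value sits in the nonnegative range guaranteed by the previous paragraph — and $\kappa_t>0$. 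Multiplying these inequalities for $t=1,\dots,s$ and using $\sigma_0(\mu^{(s)})=1$ telescopes to $\sigma_s(\kappa)\geq\kappa_1\kappa_2\cdots\kappa_s$.

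The one point that needs genuine care — the main obstacle — is verifying that all the $\sigma$-values appearing really do fall in the range where $\bar\Gamma_k$-membership guarantees nonnegativity; this is precisely where the hypothesis $s<k$ (rather than $s\le k$) is used, and where one must check that truncating the \emph{largest} entries, not arbitrary ones, is what keeps the cone condition intact. Everything else is a bookkeeping induction on the identity (iii), with no estimates beyond sign considerations. I would also explicitly note at the outset that $\kappa_1\geq\cdots\geq\kappa_s>0$: indeed $\sigma_1(\kappa)>0$ and $\kappa\in\bar\Gamma_k\subset\Gamma_{k-1}$ with $s\le k-1$, so by \eqref{Gamma1} applied appropriately (or directly from $\sigma_s(\mu^{(s-1)})=\kappa_s\sigma_{s-1}(\mu^{(s)})+\sigma_s(\mu^{(s)})$ combined with the already-established nonnegativity) one gets positivity of each factor, which is needed to preserve the direction of the inequalities when multiplying.
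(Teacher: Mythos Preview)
Your proposal is correct and follows essentially the same approach as the paper's proof: both peel off the largest entries one by one via the identity $\sigma_s(\kappa)=\kappa_1\sigma_{s-1}(\kappa|1)+\sigma_s(\kappa|1)$, using the $\bar\Gamma_k$ membership (via the derivative characterization) to guarantee the dropped terms $\sigma_s(\kappa|1),\sigma_{s-1}(\kappa|12),\dots$ are nonnegative and that $\kappa_1,\dots,\kappa_s>0$. The paper's write-up is terser, but the argument is the same.
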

\begin{proof}
Using $\kappa\in\bar\Gamma_k\subset\Gamma_s$, we have
$$\kappa_1>0, \kappa_{2}>0,\cdots,\kappa_{s}>0,$$ and
$$\sigma_s(\kappa|1)\geq 0, \sigma_{s-1}(\kappa|12)\geq 0, \cdots, \sigma_1(\kappa|12\cdots s)\geq 0 .$$
Using the above inequalities, we get
\begin{align*}
\sigma_{s}(\kappa)=&\kappa_1\sigma_{s-1}(\kappa|1)+\sigma_s(\kappa|1)\\
\geq &\kappa_1\sigma_{s-1}(\kappa|1)=\kappa_1\kappa_{2}\sigma_{s-2}(\kappa|12)+\kappa_1\sigma_{s-1}(\kappa|12)\\
\geq &\kappa_1\kappa_{2}\sigma_{s-2}(\kappa|12)=\cdots\\
\geq &\kappa_1\cdots\kappa_{s}.
\end{align*}
\end{proof}

\begin{lemm}\label{lemm11}
Assume that $\kappa=(\kappa_1,\cdots,\kappa_n)\in\Gamma_k$, $1\leq
k\leq n$, and $\kappa_1\geq\cdots\geq\kappa_n$. For
any given indices $1\leq j\leq k$, there exists a positive constant $\theta$
only depending on $n,k$ such that
\begin{align*}
\sigma_{k}^{jj}(\kappa)\geq\dfrac{\theta\sigma_k(\kappa)}{\kappa_j}.
\end{align*}
Especially, we have $\kappa_1\sigma_{k}(\kappa|1)\geq \theta\sigma_k(\kappa)$.
\end{lemm}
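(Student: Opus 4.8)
\emph{Proof proposal.} The plan is to fix an index $1\le j\le k$ and compare the two terms of the expansion
$$\sigma_k(\kappa)=\kappa_j\,\sigma_{k-1}(\kappa|j)+\sigma_k(\kappa|j)$$
(identity (iii)), recalling $\sigma_k^{jj}(\kappa)=\sigma_{k-1}(\kappa|j)$. First I collect the positivity I need. Since $\kappa\in\Gamma_k$ and $\kappa_1\ge\cdots\ge\kappa_n$, inequality \eqref{Gamma1} gives $\kappa_k+\cdots+\kappa_n>0$, hence $\kappa_k>0$ and so $\kappa_j\ge\kappa_k>0$. Applying property (vii) at each level $m$ with $1\le m\le k$ (legitimate because $\kappa\in\Gamma_m$) yields $\sigma_{m-1}(\kappa|j)>0$; in particular $\sigma_k^{jj}(\kappa)=\sigma_{k-1}(\kappa|j)>0$ and, reading these off, $(\kappa|j)\in\Gamma_{k-1}$ as a vector in $\mathbb R^{n-1}$. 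The degenerate cases are immediate: if $k=1$ then $\sigma_1^{11}\equiv 1$ and $\sigma_1(\kappa)\le n\kappa_1$; if $k=n$ then $\sigma_k(\kappa|j)=0$. So assume $2\le k\le n-1$.

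If $\sigma_k(\kappa|j)\le 0$, then $\kappa_j\sigma_k^{jj}(\kappa)=\sigma_k(\kappa)-\sigma_k(\kappa|j)\ge\sigma_k(\kappa)$, and we are done with $\theta=1$. So suppose $\sigma_k(\kappa|j)>0$; together with $(\kappa|j)\in\Gamma_{k-1}$ this upgrades to $(\kappa|j)\in\Gamma_k$ (in $\mathbb R^{n-1}$, and $n-1\ge k$). The goal now is to bound $\sigma_k(\kappa|j)$ by a constant times $\kappa_j\sigma_{k-1}(\kappa|j)$. Because $j\le k$, the $k$ largest entries of $(\kappa|j)$ are $\kappa_1,\dots,\kappa_{j-1},\kappa_{j+1},\dots,\kappa_{k+1}$ — all positive, being the top $k$ entries of the $\Gamma_k$-vector $(\kappa|j)$ — with product $\kappa_1\cdots\kappa_{k+1}/\kappa_j$, while its $k-1$ largest have product $\kappa_1\cdots\kappa_k/\kappa_j$. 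Lemma \ref{lemm10} applied to $(\kappa|j)\in\bar\Gamma_k$ with $s=k-1$ gives
$$\sigma_{k-1}(\kappa|j)\ \ge\ \frac{\kappa_1\cdots\kappa_k}{\kappa_j},$$
and the auxiliary upper bound $\sigma_k(\mu)\le c(m,k)\,\mu_1\cdots\mu_k$, valid for every ordered $\mu\in\Gamma_k$ in $\mathbb R^m$ — which follows from Lemma \ref{lemm8}, equivalently Maclaurin's inequality, by a short induction on $m$: expand via (iii) as $\sigma_k(\mu)=\mu_1\sigma_{k-1}(\mu|1)+\sigma_k(\mu|1)$ and split on the sign of $\sigma_k(\mu|1)$ — applied to $\mu=(\kappa|j)$ gives $\sigma_k(\kappa|j)\le c(n-1,k)\,\kappa_1\cdots\kappa_{k+1}/\kappa_j$. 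Since $j\le k$ forces $\kappa_{k+1}\le\kappa_k\le\kappa_j$, these combine to
$$\sigma_k(\kappa|j)\ \le\ c(n-1,k)\,\kappa_{k+1}\,\frac{\kappa_1\cdots\kappa_k}{\kappa_j}\ \le\ c(n-1,k)\,\kappa_{k+1}\,\sigma_{k-1}(\kappa|j)\ \le\ c(n-1,k)\,\kappa_j\,\sigma_{k-1}(\kappa|j).$$
Substituting into the expansion yields $\sigma_k(\kappa)\le\bigl(1+c(n-1,k)\bigr)\kappa_j\sigma_k^{jj}(\kappa)$, so the estimate holds with $\theta=\bigl(1+c(n-1,k)\bigr)^{-1}$, depending only on $n,k$. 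The last assertion is the case $j=1$, recalling $\sigma_k^{11}(\kappa)=\sigma_{k-1}(\kappa|1)$, giving $\kappa_1\sigma_{k-1}(\kappa|1)\ge\theta\sigma_k(\kappa)$.

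The step I expect to be the crux — the only one using more than the bookkeeping identities — is the auxiliary estimate $\sigma_k(\mu)\le c(m,k)\mu_1\cdots\mu_k$: it is exactly what forces the small factor $\kappa_j$ (rather than $\kappa_1$) onto the right-hand side. A direct use of Lemma \ref{lemm8} with $s=1$ only gives the weaker $\kappa_1\sigma_k^{jj}(\kappa)\ge\frac kn\sigma_k(\kappa)$, and a Newton's-inequality recursion only improves $\kappa_1$ to $\kappa_{k-1}$; reaching $\kappa_k$, which is what is needed when $j=k$, requires the product form, hence the peeling induction.
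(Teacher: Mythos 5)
Your proof is correct and follows essentially the same route as the paper: the same case split on the sign of $\sigma_k(\kappa|j)$, the same application of Lemma \ref{lemm10} to $(\kappa|j)$ in the positive case, and the same key comparison of $\sigma_k$ with the product of the $k$ largest entries. The only (cosmetic) difference is that the paper bounds $\sigma_k(\kappa)\le\theta^{-1}\kappa_1\cdots\kappa_k$ directly via the term-by-term estimate $|\kappa_i|\le n\kappa_k$ for $i>k$ (a consequence of \eqref{Gamma1}), whereas you bound $\sigma_k(\kappa|j)$ by a peeling induction and reassemble through identity (iii); both deliver the same conclusion with a constant depending only on $n,k$.
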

\begin{proof}
We note that $\kappa_j>0$. We divide into two cases to prove our
Lemma.
\par
(a) If we have $\sigma_k(\kappa|j)\leq 0$, we easily see that
\begin{align*}
\sigma_{k}^{jj}(\kappa)=\dfrac{\sigma_k(\kappa)-\sigma_k(\kappa|j)}{\kappa_j}\geq\dfrac{\sigma_k(\kappa)}{\kappa_j}.
\end{align*}
\par
(b) If we have  $\sigma_k(\kappa|j)> 0$, using
$\kappa\in\Gamma_k$, we have $(\kappa|j)\in\Gamma_k$.
Thus, applying Lemma \ref{lemm10} to $(\kappa|j)$, we get
\begin{align*}
\sigma_{k}^{jj}(\kappa)=\sigma_{k-1}(\kappa|j)\geq
\dfrac{\kappa_{1}\cdots\kappa_k}{\kappa_j}.
\end{align*}
In view of \eqref{Gamma1}, for any $j>k$, we have $|\kappa_j|\leq n\kappa_k$. Thus, there exists some constant $\theta$ only depending on $n,k$ such that
$\theta\sigma_k(\kappa)\leq \kappa_1\cdots\kappa_k$. Therefore, we obtain our lemma.

\end{proof}

\section{The curvature estimate}
In this section, following the steps of \cite{RW}, we establish the global curvature
estimate. At first, we pose a conjecture.
\begin{conj} \label{con}
Assume that $\kappa=(\kappa_1,\cdots,\kappa_n)\in\Gamma_{k}$,
$2k>n$, $\kappa_1$ is the maximum entry of $\kappa$, and
$\sigma_k(\kappa)$ has the absolutely positive lower bound and upper
bound, $N_0\leq \sigma_k(\kappa)\leq N$. For any given index $1\leq i\leq n$, if
$\kappa_i>\kappa_1-\sqrt{\kappa_1}/n$, the following quadratic form
is non negative,
\begin{align}\label{s3.01}
&\kappa_i\Big[K\Big(\sum_j\sigma_{k}^{jj}(\kappa)\xi_j\Big)^2-\sigma_{k}^{pp,qq}(\kappa)\xi_{p}\xi_{q}\Big]
-\sigma^{ii}_{k}(\kappa)\xi_{i}^2+\sum_{j\neq i}a_j\xi_{j}^2\geq 0,
\end{align}
for any $n$ dimensional vector $\xi=(\xi_1,\xi_2,\cdots,\xi_n)\in\mathbb{R}^n$, when $\kappa_1$ and the constant $K$ are sufficiently large. Here $a_j$ is defined by
\begin{eqnarray}\label{aj}
a_j=\sigma_{k}^{jj}(\kappa)+(\kappa_i+\kappa_j)\sigma_k^{ii,jj}(\kappa).
\end{eqnarray}

\end{conj}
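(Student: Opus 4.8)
I would prove \eqref{s3.01}, at least in the case $k=n-2$ treated in this paper, by reorganizing the left-hand side as a sum of four quadratic forms $\textbf{A}_{k;i},\textbf{B}_{k;i},\textbf{C}_{k;i},\textbf{D}_{k;i}$ and then completing squares. Normalize $\kappa_1\geq\cdots\geq\kappa_n$; by \eqref{Gamma1} at most $n-k$ of the $\kappa_j$ are negative, and since $2k>n$ this number is $<k$ (it is at most two when $k=n-2$), while the hypothesis $\kappa_i>\kappa_1-\sqrt{\kappa_1}/n$ forces $0\leq\kappa_1-\kappa_i<\sqrt{\kappa_1}/n$, so $\kappa_i$ may be interchanged with $\kappa_1$ in any estimate at the cost of an error of order $\sqrt{\kappa_1}$. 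A short computation using identity (iii) twice puts the coefficients \eqref{aj} in the transparent form
\[
a_j=\sigma_{k-1}(\kappa|i)+2\kappa_i\,\sigma_{k-2}(\kappa|ij)=\sigma_k^{ii}(\kappa)+2\kappa_i\,\sigma_k^{ii,jj}(\kappa),\qquad j\neq i,
\]
so that, separating the index $i$, the left side of \eqref{s3.01} equals
\begin{align*}
&K\kappa_i\Big(\sum_j\sigma_k^{jj}(\kappa)\xi_j\Big)^2-\kappa_i\sum_{p,q}\sigma_k^{pp,qq}(\kappa)\xi_p\xi_q\\
&\qquad{}+\sigma_k^{ii}(\kappa)\Big(\sum_{j\neq i}\xi_j^2-\xi_i^2\Big)+2\kappa_i\sum_{j\neq i}\sigma_k^{ii,jj}(\kappa)\,\xi_j^2 .
\end{align*}

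Next I would use the perfect-square term to control the gradient direction. Since $\sigma_k^{1/k}$ is concave, $-\sum_{p,q}\sigma_k^{pp,qq}(\kappa)\xi_p\xi_q\geq-\frac{k-1}{k}\big(\sum_j\sigma_k^{jj}(\kappa)\xi_j\big)^2/\sigma_k(\kappa)$, with more refined versions available from Lemma \ref{Guan} (applied with $\xi$ in place of $(w_{jjh})_j$ and a suitable auxiliary index $l$); as $\sigma_k(\kappa)\geq N_0$ and $K$ is large, the term $-\kappa_i\sum_{p,q}\sigma_k^{pp,qq}(\kappa)\xi_p\xi_q$ is absorbed into part of $K\kappa_i\big(\sum_j\sigma_k^{jj}(\kappa)\xi_j\big)^2$, whose remaining part is retained because it carries the positive $\xi_i^2$- and $\xi_i\xi_j$-contributions needed to offset $-\sigma_k^{ii}(\kappa)\xi_i^2$. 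After this step the problem is reduced to the nonnegativity of a fixed quadratic form, with $K$ gone and the Hessian term replaced by a controlled square. Note that every coefficient $\sigma_k^{pp,qq}(\kappa)=\sigma_{k-2}(\kappa|pq)$ is strictly positive on $\Gamma_k$, so the difficulty is not one of signs but the indefinite interaction between the diagonal and the cross terms.

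Grouping the surviving terms (the content of Section 4) yields $\textbf{A}_{k;i},\dots,\textbf{D}_{k;i}$: $\textbf{A}_{k;i}$ holds the diagonal part $\sigma_k^{ii}(\kappa)\big(\sum_{j\neq i}\xi_j^2-\xi_i^2\big)$ together with the leftover gradient square; $\textbf{B}_{k;i}$ and $\textbf{D}_{k;i}$ hold the cross terms $\sigma_k^{pp,qq}(\kappa)\xi_p\xi_q$ in which at least one index is good (a positive $\kappa_p$ comparable to $\kappa_1$, or an index $>k$, whose eigenvalue is $O(\kappa_k)$); and $\textbf{C}_{k;i}$ holds the cross terms among the at most $n-k$ bad indices ($\kappa_p<0$) together with the $\xi_i\xi_p$- and $2\kappa_i\sigma_k^{ii,pp}(\kappa)\xi_p^2$-contributions involving them. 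For $k=n-2$ one then splits into five cases according to the number of negative eigenvalues ($0$, $1$, or $2$) and, when there are two, according to their relative sizes. In the first three cases the pieces $\textbf{A},\textbf{B},\textbf{D}$ are made manifestly nonnegative by elementary inputs --- Maclaurin's inequality and Lemma \ref{lemm11} give $\sigma_k^{ii}(\kappa)$ and $\sigma_k^{jj}(\kappa)$ comparable to $\sigma_k(\kappa)/\kappa_i$ and $\sigma_k(\kappa)/\kappa_j$, Lemma \ref{lemm7} controls the off-diagonal $\sigma_{k-1}(\kappa|ij)$ in terms of $\sigma_{k-1}(\kappa|j)$, and Lemma \ref{lemm9} bounds the negative eigenvalues by $O(\kappa_1)$ --- and the assembly is by completing squares pairing each $\xi_j$, $j\neq i$, with $\xi_i$.

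The main obstacle is $\textbf{C}_{k;i}$ when $k=n-2$ and there are two negative eigenvalues $\kappa_{n-1},\kappa_n$ of magnitude comparable to $\kappa_1$: compared with the case $k=n-1$ of \cite{RW} there is now the extra large cross term $2\kappa_i\sigma_k^{pp,qq}(\kappa)\xi_p\xi_q$ with $p,q$ the two negative indices, plus the two large diagonal terms $2\kappa_i\sigma_k^{ii,pp}(\kappa)\xi_p^2$ for those same $p$, so a naive completion of squares inside $\textbf{C}_{k;i}$ loses too much and the crude bounds of Lemmas \ref{lemm9}--\ref{lemm11} do not suffice. The plan is not to treat $\textbf{C}_{k;i}$ alone but to borrow a definite fraction of the positivity sitting in the $\sum_{j\neq i}\xi_j^2$ term of $\textbf{A}_{k;i}$ and in the $\sigma_k^{1/k}$-concavity remainder, and then diagonalize by hand the resulting $3\times 3$ block in the variables $\xi_i,\xi_{n-1},\xi_n$; the delicate point is to choose the borrowing parameters so that every diagonal entry of this block stays nonnegative after the errors of order $\sqrt{\kappa_1}$ are accounted for, and the algebraic lemmas of Section 5 are precisely the $\sigma$-identities and inequalities that make those entrywise estimates work. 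For the remaining two cases the strategy instead imitates the treatment of convex solutions in \cite{GRW,LRW}, isolating the single dominant positive direction and absorbing the rest. In full generality ($2k>n$, arbitrary $k$) one expects the same scheme to apply, but the bad index set grows and the hand-diagonalization of $\textbf{C}_{k;i}$ becomes correspondingly heavier; here it is carried out only for $k=n-2$.
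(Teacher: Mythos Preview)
Your high-level architecture matches the paper --- reduce \eqref{s3.01} to a combination of four quadratic forms $\textbf{A}_{k;i},\textbf{B}_{k;i},\textbf{C}_{k;i},\textbf{D}_{k;i}$, split into five cases, handle three by completing squares and the last two by the \cite{GRW,LRW} mechanism --- but several of the concrete details are off in ways that would derail an actual proof.

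First, the reduction (Lemma~\ref{lemm17}) is \emph{not} obtained by invoking the concavity of $\sigma_k^{1/k}$ to absorb the Hessian term into the $K$-square. Instead one completes the square in the single variable $\xi_i$ (using that $K\kappa_i(\sigma_k^{ii})^2-\sigma_k^{ii}>0$), which eliminates $\xi_i$ entirely; after multiplying through by $\kappa_iK\sigma_k^{ii}-1$ and applying the algebraic identities of Lemma~\ref{lemm18}, one arrives at the exact combination $\kappa_i^2\textbf{A}_{k;i}+\sigma_k\textbf{B}_{k;i}+\textbf{C}_{k;i}-c_{k,K}\textbf{D}_{k;i}$. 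Consequently all four forms live only in the variables $(\xi_j)_{j\neq i}$: your description of $\textbf{A}_{k;i}$ as carrying ``$\sigma_k^{ii}(\kappa)\big(\sum_{j\neq i}\xi_j^2-\xi_i^2\big)$'' and your plan to diagonalize a ``$3\times 3$ block in $\xi_i,\xi_{n-1},\xi_n$'' cannot be right, because $\xi_i$ is already gone. Your characterization of $\textbf{B},\textbf{C},\textbf{D}$ as splitting cross terms by ``good versus bad'' indices is also incorrect; in the paper $\textbf{A}_{k;i}$ is the concavity-type form (nonnegative by Lemma~\ref{lemm24}), $\textbf{B}_{k;i}$ is the form of Lemma~\ref{lemm25}, $\textbf{D}_{k;i}=(\sum_{j\neq i}\sigma_{k-1}(\kappa|ij)\xi_j)^2$ is a perfect square, and $\textbf{C}_{k;i}$ collects the rest --- all ranging over \emph{every} $j\neq i$, not just negative-eigenvalue indices.

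Second, the five-case split is not organized by the number of negative eigenvalues. It is governed first by the sign of $\sigma_{n-2}(\kappa|i)$ (Case~C is $\sigma_{n-2}(\kappa|i)\geq 0$), and within $\sigma_{n-2}(\kappa|i)\leq 0$ by whether both $\kappa_{n-1},\kappa_n\leq 0$ (Case~A) or only $\kappa_n<0$ (Case~B). The three subcases B1--B3 are distinguished by algebraic thresholds on $\kappa_i\sigma_{n-3}(\kappa|i)/\sigma_{n-2}(\kappa)$ and $\kappa_1\cdots\kappa_{n-2}/\sigma_{n-2}(\kappa)$, not by ``relative sizes'' of two negative eigenvalues. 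In particular Case~A (two negative eigenvalues) is dispatched by the completing-squares route of Section~6, so it is not the main obstacle you single out. What actually makes Section~6 work is the pairing $\tfrac{8\kappa_i^2}{9}\textbf{A}_{n-2;i}+\textbf{C}_{n-2;i}\geq \tfrac{1}{20}\sigma_{n-3}^2(\kappa|i)\sum_{j\neq i}\xi_j^2$; the delicate input is Lemma~\ref{lemm30}, a pointwise (not $3\times3$) lower bound on $-\sigma_{n-2}(\bar\kappa)\sigma_{n-6}(\bar\kappa|j)-\sigma_{n-2}(\bar\kappa|j)\sigma_{n-6}(\bar\kappa|j)$, and this is where the conditions defining A, B1, B2 enter. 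The concavity lemma (Lemma~\ref{Guan}) is used only in Section~7, for Cases B3 and C, exactly as you say at the end.
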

The sections 4-7 will prove the above Conjecture for $k=n-2$. In \cite{RW}, we have proved the Conjecture for $k=n-1$. If $k=n$, the inequality \eqref{s3.01} is well known. Thus, until to now, the above Conjecture holds when $k\geq n-2$.

If we assume the Conjecture \ref{con} holds, we can establish the following global curvature estimate.
\begin{theo}\label{theok}
Suppose $M\subset \mathbb R^{n+1}$ is a closed $k$-convex
hypersurface satisfying the curvature equation (\ref{s1.01}) and
$2k>n$ for some positive function $\psi(X, \nu)\in C^{2}(\Gamma)$,
where $\Gamma$ is an open neighborhood of the unit normal bundle of
$M$ in $\mathbb R^{n+1} \times \mathbb S^n$, then there is a
constant $C$ depending only on $n, k$, $\|M\|_{C^1}$, $\inf \psi$
and $\|\psi\|_{C^2}$, such that
 \begin{equation}
 \max_{X\in M, i=1,\cdots, n} \kappa_i(X) \le C.\nonumber
 \end{equation}
\end{theo}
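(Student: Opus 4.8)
The plan is to follow the now-standard maximum-principle strategy for curvature estimates of Hessian-type equations, as in \cite{GRW}, \cite{RW} and \cite{LiRW}, but with the crucial global algebraic input supplied by Conjecture \ref{con}. Since $M$ is closed, one considers the test function
\[
\Phi(X) = \log \kappa_{\max}(X) + \beta \langle X, \nu \rangle + \gamma \frac{|X|^2}{2},
\]
or a variant thereof, where $\kappa_{\max}$ denotes the largest principal curvature, $\beta$ is a (large) constant to be fixed by the geometry and $\psi$, and $\gamma$ a further small auxiliary constant. By compactness, $\Phi$ attains its maximum at some $X_0\in M$; after choosing a local orthonormal frame around $X_0$ that diagonalizes the second fundamental form $W=(h_{ij})$ with $\kappa_1=\kappa_{\max}\geq\kappa_2\geq\cdots\geq\kappa_n$, one may, as usual, perturb to assume $\kappa_1$ is a smooth function near $X_0$ (or work with the largest eigenvalue directly and justify the computation). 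Write $F^{ij}=\sigma_k^{ij}(W)$ and $F^{ij,rs}=\sigma_k^{ij,rs}(W)$ for the first and second derivatives of $\sigma_k$ viewed as a function of the Weingarten matrix. The equation is $\sigma_k(\kappa) = \psi(X,\nu)$, which is elliptic since $M$ is $k$-convex, and we may assume $\kappa_1(X_0)$ is as large as we wish (otherwise the estimate is trivial).

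Next I would differentiate $\Phi$ twice at $X_0$ and contract with the linearized operator $L = \sum_{ij} F^{ij}\nabla_i\nabla_j$. Using the Codazzi and Gauss equations, the commutation formulas for covariant derivatives of $h_{ij}$ on a hypersurface, and differentiating the equation $\sigma_k(W)=\psi(X,\nu)$ once and twice (which converts $\nabla\psi$, $\nabla^2\psi$ into controlled terms plus the concavity term $-\sum F^{pq,rs}h_{pq1}h_{rs1}$), one arrives at an inequality of the schematic form
\[
0 \geq \frac{1}{\kappa_1}\Big[ -\sum_{p,q,r,s} F^{pq,rs} h_{pq1}h_{rs1} - \frac{1}{\kappa_1}\Big(\sum_i F^{ii} h_{ii1}\Big)^2 + \sum_i F^{ii}(\kappa_1^2 - \kappa_i^2)\kappa_i \Big] + (\beta\text{- and }\gamma\text{-terms}) - C\sum_i F^{ii} - C,
\]
where the $\beta$-term contributes a good positive multiple of $\sum_i F^{ii}\kappa_i^2$ (from $\nabla^2\langle X,\nu\rangle = h_{ij} - h_{ik}h_{kj}\langle X,\nu\rangle$-type identities together with the star-shapedness/support-function control encoded in $\|M\|_{C^1}$) and $\gamma$ controls $\sum F^{ii}$ from below. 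Using identity (vi) from Section 2 to rewrite $-\sum F^{pq,rs}h_{pq1}h_{rs1}$ in terms of $\sum \sigma_k^{pp,qq}h_{pq1}^2$ and $\sum \sigma_k^{pp,qq}h_{pp1}h_{qq1}$, and splitting the frame indices according to whether $\kappa_i$ is close to $\kappa_1$ or not, one isolates precisely the quadratic form appearing in \eqref{s3.01}: the "bad" terms involving the first-order derivatives $h_{ii1}$ with $\kappa_i$ near $\kappa_1$ get absorbed by applying Conjecture \ref{con} with $\xi_j = h_{jj1}$ (after verifying the hypothesis $\kappa_i > \kappa_1 - \sqrt{\kappa_1}/n$, which is exactly the threshold separating the two regimes), using that $N_0 \le \sigma_k(\kappa)=\psi(X_0,\nu) \le N$ holds with $N_0=\inf\psi>0$ and $N=\sup\psi$. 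The remaining indices (with $\kappa_i$ far from $\kappa_1$) are handled by the classical argument: either $F^{ii}$ is large enough that $F^{ii}h_{ii1}^2$ alone dominates, or $\kappa_i-\kappa_1$ is bounded away from zero so that $h_{ii1}^2$ can be bounded via the once-differentiated equation and Cauchy–Schwarz against $\gamma\sum F^{ii}$.

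The main obstacle is the algebraic heart of the matter — verifying that, after all the differential-geometric bookkeeping, the surviving collection of bad quadratic terms is genuinely controlled by \eqref{s3.01}; this is exactly why Conjecture \ref{con} is isolated as the key inequality and why Sections 4–7 are devoted to establishing it for $k=n-2$ (the theorem here is stated conditionally on the conjecture, or unconditionally for $k=n-2$ once those sections are in hand). A secondary technical point is the correct choice and ordering of the constants $K$, $\beta$, $\gamma$, $\kappa_1$-threshold: $K$ in \eqref{s3.01} must be taken large first, which forces $\kappa_1$ large, then $\beta$ large depending on $\|\psi\|_{C^2}$, $\inf\psi$, $\|M\|_{C^1}$ and $K$, and finally $\gamma$ small — all done so that the positive $\beta$- and $\gamma$-contributions dominate the $-C\sum F^{ii}-C$ error terms coming from the $X$- and $\nu$-dependence of $\psi$ and from the curvature of $\mathbb{R}^{n+1}$ being flat (so in fact no ambient curvature terms, which simplifies matters relative to \cite{CLW}). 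Once the contradiction $0 > 0$ (or $0 \ge c\,\kappa_1 - C$ with $c>0$) is reached at $X_0$, it forces $\kappa_1(X_0)\le C$, and since $X_0$ is the maximum point of $\Phi$, this yields $\max_{X\in M}\kappa_{\max}(X)\le C$ with $C$ depending only on $n,k,\|M\|_{C^1},\inf\psi,\|\psi\|_{C^2}$, as claimed.
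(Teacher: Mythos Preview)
Your outline captures the broad maximum-principle philosophy, but it diverges from the paper's argument in a way that leaves a real gap.

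The paper does \emph{not} use the test function $\log\kappa_{\max}+\beta\langle X,\nu\rangle+\gamma|X|^2/2$. It uses
\[
\phi=\log\log P-N_1\log u,\qquad P=\sum_{l}e^{\kappa_l},
\]
and this choice is not cosmetic: the second derivatives of $P$ produce, for \emph{every} index $i$, the five groups of third-order terms $A_i,B_i,C_i,D_i,E_i$, and the entire proof reduces to showing $A_i+B_i+C_i+D_i-E_i\ge 0$ for all $i$. The case split $\kappa_i\le\kappa_1-\sqrt{\kappa_1}/n$ versus $\kappa_i>\kappa_1-\sqrt{\kappa_1}/n$ is a split on this outer index $i$, not on the components $\xi_j=h_{jj1}$ inside a single quadratic form as you describe. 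Case~I is handled by Lemma~\ref{lemm13} (this is one of the two places where $2k>n$ is actually used, via Lemma~\ref{lemm9}); Case~II is where Conjecture~\ref{con} enters, and crucially it enters only after Lemma~\ref{lemm16} converts the exponential-difference term
\[
\frac{2\kappa_i(1-e^{\kappa_j-\kappa_i})}{\kappa_i-\kappa_j}\sigma_k^{jj}
\]
(arising from $D_i$) into the lower bound $a_j=\sigma_k^{jj}+(\kappa_i+\kappa_j)\sigma_k^{ii,jj}$. Your $\log\kappa_{\max}$ test function does not generate these $a_j$ terms; the second-derivative-of-eigenvalue formula gives instead $\frac{2\sigma_k^{pp}}{\kappa_1-\kappa_p}\xi_p^2$, which is a different object, and your write-up does not explain why (or whether) it dominates $a_p/\kappa_1$. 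In fact that comparison is essentially the content of Lemma~\ref{lemm16} combined with $2k>n$ --- so even if your route can be made to work, you would still need that lemma or an equivalent, and you have not identified it.

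In short: the missing idea is the specific test function $\log\log\sum e^{\kappa_l}$ and the bridge Lemma~\ref{lemm16}, which together manufacture exactly the $a_j$ coefficients so that Conjecture~\ref{con} applies verbatim. Your sketch conflates the index $i$ of the conjecture with the running index $j$ of the quadratic form, and consequently the claim that one ``isolates precisely the quadratic form appearing in \eqref{s3.01}'' is not substantiated by the test function you chose.
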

Since the support function $u$ is positive, we use the following test function
which has been used in \cite{GRW,RW},
$$
\phi=\log\log P-N_1\log u.
$$
Here $N_1$ is an undetermined positive constant and the function $P$
is defined by $$P=\dsum_le^{\kappa_l}. $$ For the given index $1\leq i\leq n$, we denote
\begin{eqnarray}
&&A_i=e^{\kappa_i}\Big[K(\sigma_{k})_i^2-\sum_{p\neq
q}\sigma_{k}^{pp,qq}h_{ppi}h_{qqi}\Big], \ \  B_i=2\sum_{l\neq
i}\sigma_{k}^{ii,ll}e^{\kappa_l}h_{lli}^2, \nonumber \\
&&C_i=\sigma_{k}^{ii}\sum_le^{\kappa_l}h_{lli}^2; \  \
D_i=2\sum_{l\neq
i}\sigma_{k}^{ll}\frac{e^{\kappa_l}-e^{\kappa_i}}{\kappa_l-\kappa_i}h_{lli}^2,
\ \ E_i=\frac{1+\log P}{P\log P}\sigma_{k}^{ii}P_i^2\nonumber.
\end{eqnarray}

Following the arguments in \cite{RW}, using our test
function $\phi$, we can obtain the desired curvature estimate if we can
prove
\begin{align}\label{s3.02}
A_i+B_i+C_i+D_i-E_i\geq 0
\end{align}
for all $i=1,\cdots,n$.

Before proving (\ref{s3.02}), we need the following lemma.
\begin{lemm} \label{lemm13}    
Assume $\kappa=(\kappa_1,\cdots,\kappa_n)\in\Gamma_k$, $2k>n$,
and $\kappa_1$ is the maximum entry of $\kappa$. We let
$\varepsilon_{n,k}=\dfrac{1}{3k}$, then we have
\begin{align}\label{s3.03}
(2-\varepsilon_{n,k})e^{\kappa_l}\sigma_{k-2}(\kappa|il)+(2-\varepsilon_{n,k})\dfrac{e^{\kappa_l}-e^{\kappa_i}}{\kappa_l-\kappa_i}
\sigma_{k-1}(\kappa|l)\geq
\dfrac{e^{\kappa_l}}{\kappa_1}\sigma_{k-1}(\kappa|i)
\end{align}
for all indices $i,l$ satisfying $l\neq i$, if $\kappa_1$ is sufficiently large.
\end{lemm}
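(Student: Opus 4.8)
The plan is to prove the inequality pointwise in the index $l$ by distinguishing the sign and the relative size of $\kappa_l$, using the basic algebraic facts (i)--(x) together with Lemma \ref{lemm8} and Lemma \ref{lemm11}. First I would normalize the left-hand side by recalling that $\sigma_{k-1}(\kappa|l) = \kappa_i\sigma_{k-2}(\kappa|il) + \sigma_{k-1}(\kappa|il)$ and $\sigma_{k-1}(\kappa|i) = \kappa_l\sigma_{k-2}(\kappa|il) + \sigma_{k-1}(\kappa|il)$, so both sides are expressed through the two quantities $\sigma_{k-2}(\kappa|il)$ and $\sigma_{k-1}(\kappa|il)$. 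The difference quotient $\frac{e^{\kappa_l}-e^{\kappa_i}}{\kappa_l-\kappa_i}$ is always positive and, when $\kappa_i$ is close to $\kappa_1$ (which is the regime of interest in the conjecture and in \eqref{s3.02}) it is comparable to $e^{\kappa_i}$ up to the exponential factor; the key observation is that on the right-hand side the factor $\frac{1}{\kappa_1}$ is small, so we are trying to dominate a $\frac{1}{\kappa_1}$-small multiple of $\sigma_{k-1}(\kappa|i)$ by an $O(1)$ combination on the left.

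The main case split I anticipate is: (A) $\kappa_l \geq 0$, and (B) $\kappa_l < 0$. In case (A), both $\sigma_{k-2}(\kappa|il)$ and $\sigma_{k-1}(\kappa|l)$ are controllable; in particular, since $2k>n$, the tensor $(\kappa|il)$ has at most $n-k \leq k-1$ negative entries among $n-2$ entries, and one shows $\sigma_{k-1}(\kappa|l) \geq \kappa_i \sigma_{k-2}(\kappa|il)$ is essentially the dominant term, so the second term on the left already beats the right-hand side because $\frac{1}{\kappa_1}\sigma_{k-1}(\kappa|i) \leq \frac{1}{\kappa_1}(\kappa_l+1)\sigma_{k-2}(\kappa|il) + \frac{1}{\kappa_1}\sigma_{k-1}(\kappa|il)$ and each piece is absorbed once $\kappa_1$ is large (using Lemma \ref{lemm9}(a) to bound $|\kappa_j|$ by $C\kappa_1$, hence $\sigma_{k-1}(\kappa|i) \leq C\kappa_1 \sigma_{k-1}(\kappa|l)$ in the relevant ordering, which gives the needed $\frac{1}{\kappa_1}$-gain). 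In case (B), $\kappa_l<0$ forces $\sigma_k(\kappa|l)>0$ (as in the proof of Lemma \ref{lemm9}), so $(\kappa|l)\in\Gamma_k$ and $\sigma_{k-1}(\kappa|l)>0$; I would then estimate $\sigma_{k-2}(\kappa|il)$ possibly being negative and use that $\frac{e^{\kappa_l}-e^{\kappa_i}}{\kappa_l-\kappa_i} \geq e^{\kappa_l}$ is false in general but $\geq \min(e^{\kappa_l},e^{\kappa_i})$-type bounds combined with $\kappa_i$ large do the job; the term $\frac{e^{\kappa_l}}{\kappa_1}\sigma_{k-1}(\kappa|i)$ is the one to beat, and I would bound $\sigma_{k-1}(\kappa|i) \leq \kappa_1 \sigma_{k-2}(\kappa|il)\cdot(\text{something})$ or directly compare $\sigma_{k-1}(\kappa|i)$ with $\sigma_{k-1}(\kappa|l)$ via (vii), picking up again the saving $\frac{1}{\kappa_1}$.

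The delicate point — and the step I expect to be the main obstacle — is the regime where $\kappa_l$ is negative and large in absolute value, close to the extreme $-\frac{(n-k)}{k}\kappa_1$ allowed by Lemma \ref{lemm9}(a): there the difference quotient $\frac{e^{\kappa_l}-e^{\kappa_i}}{\kappa_l-\kappa_i}$ is essentially $\frac{e^{\kappa_i}}{\kappa_i-\kappa_l} \approx \frac{e^{\kappa_i}}{\kappa_1}$, which is itself of order $\frac{1}{\kappa_1}$, so that term no longer trivially dominates the right-hand side and one must instead extract the gain from the first term $(2-\varepsilon_{n,k})e^{\kappa_l}\sigma_{k-2}(\kappa|il)$ — but $e^{\kappa_l}$ is then exponentially tiny. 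The resolution I would pursue is that when $\kappa_l$ is that negative, $e^{\kappa_l}$ is negligible compared to the whole quantity $P=\sum_m e^{\kappa_m}$, so in fact both sides of \eqref{s3.03} carry the factor $e^{\kappa_l}$ (the right-hand side literally, and the left-hand side's dominant surviving piece once we divide through) and the inequality reduces, after cancelling $e^{\kappa_l}$, to $(2-\varepsilon_{n,k})\sigma_{k-2}(\kappa|il) + (2-\varepsilon_{n,k})\frac{\sigma_{k-1}(\kappa|l)}{\kappa_l-\kappa_i}(1 - e^{\kappa_i-\kappa_l}) \geq \frac{1}{\kappa_1}\sigma_{k-1}(\kappa|i)$; since $\kappa_i-\kappa_l \geq \kappa_l$ is large positive, $e^{\kappa_i-\kappa_l}$ is huge, making $(1-e^{\kappa_i-\kappa_l})$ very negative, so one must be careful with the sign, and the honest argument is that here $\sigma_{k-2}(\kappa|il)$ and $\sigma_{k-1}(\kappa|l)$ relate through $(\kappa|l)\in\Gamma_k$ with $\kappa_i$ large, giving $\sigma_{k-2}(\kappa|il) \geq \frac{\theta}{\kappa_i}\sigma_{k-1}(\kappa|l) \gg \frac{1}{\kappa_1}\sigma_{k-1}(\kappa|i)$ by Lemma \ref{lemm11} applied to $(\kappa|l)$ together with the crude bound $\sigma_{k-1}(\kappa|i) \leq C\sigma_{k-1}(\kappa|l)$ from (vii). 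Collecting the constants and choosing $\kappa_1$ large enough relative to $n,k$ (and the uniform bounds $N_0,N$) then closes all cases.
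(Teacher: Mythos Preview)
Your approach misses the key simplification that drives the paper's proof: there is an algebraic identity that makes all the symmetric-function terms cancel out entirely. Starting from $\sigma_{k-1}(\kappa|l)=\sigma_{k-1}(\kappa|i)+(\kappa_i-\kappa_l)\sigma_{k-2}(\kappa|il)$ and multiplying through by the difference quotient, one obtains
\[
e^{\kappa_l}\sigma_{k-2}(\kappa|il)+\frac{e^{\kappa_l}-e^{\kappa_i}}{\kappa_l-\kappa_i}\sigma_{k-1}(\kappa|l)
= e^{\kappa_i}\sigma_{k-2}(\kappa|il)+\frac{e^{\kappa_l}-e^{\kappa_i}}{\kappa_l-\kappa_i}\sigma_{k-1}(\kappa|i).
\]
Since $\sigma_{k-2}(\kappa|il)>0$ and $\sigma_{k-1}(\kappa|i)>0$, the whole lemma reduces to the purely analytic statement
\[
(2-\varepsilon_{n,k})\,\frac{e^{\kappa_l}-e^{\kappa_i}}{\kappa_l-\kappa_i}\;\geq\;\frac{e^{\kappa_l}}{\kappa_1},
\]
which the paper dispatches by an elementary four-case split on the size of $\kappa_l-\kappa_i$. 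The constant $\varepsilon_{n,k}=\tfrac{1}{3k}$ is calibrated precisely for the extreme case $\kappa_l-\kappa_i>\kappa_1$, where Lemma~\ref{lemm9}(a) and $2k>n$ give $\kappa_l-\kappa_i<\tfrac{2k-1}{k}\kappa_1$.

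By contrast, you try to compare the $\sigma$-terms directly via Lemmas~\ref{lemm8}--\ref{lemm11} and a sign split on $\kappa_l$. This is both harder and, as written, contains gaps: you assume $\kappa_i$ is close to $\kappa_1$, which the lemma does not, and your ``delicate regime'' (where $\kappa_l$ is very negative) is actually trivial once you notice that after dividing by $e^{\kappa_l}$ the factor $\tfrac{1-e^{\kappa_i-\kappa_l}}{\kappa_l-\kappa_i}$ is enormously \emph{positive}, not problematic. The identity above sidesteps all of this.
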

\begin{proof}  Obviously we have the following identity,
$$
\sigma_{k-1}(\kappa|l)=\sigma_{k-1}(\kappa|i)+(\kappa_i-\kappa_l)\sigma_{k-2}(\kappa|il).
$$
Multiplying $\dfrac{e^{\kappa_l}-e^{\kappa_i}}{\kappa_l-\kappa_i}$
in  both sides of the above identity, we have
\begin{align}\label{s3.04}
e^{\kappa_l}\sigma_{k-2}(\kappa|il)+\dfrac{e^{\kappa_l}-e^{\kappa_i}}{\kappa_l-\kappa_i}
\sigma_{k-1}(\kappa|l)
=e^{\kappa_i}\sigma_{k-2}(\kappa|il)+\dfrac{e^{\kappa_l}-e^{\kappa_i}}{\kappa_l-\kappa_i}
\sigma_{k-1}(\kappa|i).
\end{align}
Using (\ref{s3.04}), in order to prove \eqref{s3.03}, we only need
to show
\begin{eqnarray}\label{n3.5}
(2-\varepsilon_{n,k})\dfrac{e^{\kappa_l}-e^{\kappa_i}}{\kappa_l-\kappa_i}
\geq\dfrac{e^{\kappa_l}}{\kappa_1},
\end{eqnarray}
which we will divide into four cases to prove.
\par
\noindent Case (a): Suppose $\kappa_l\leq \kappa_i$. We have
\begin{align*}
\dfrac{e^{\kappa_l}-e^{\kappa_i}}{\kappa_l-\kappa_i}
=e^{\kappa_l}\dfrac{e^{\kappa_i-\kappa_l}-1}{\kappa_i-\kappa_l} \geq
e^{\kappa_l}\geq\dfrac{e^{\kappa_l}}{\kappa_1},
\end{align*}
if $\kappa_1$ is sufficiently large.
Here we have used the inequality $e^t>1+t$ for $t>0$.
\par
\noindent Case (b): Suppose $0<\kappa_l-\kappa_i\leq 1$.  By the mean value
theorem, there exists some constant $\xi$ satisfying $\kappa_i<\xi<\kappa_l$, such
that
\begin{align*}
\dfrac{e^{\kappa_l}-e^{\kappa_i}}{\kappa_l-\kappa_i} =e^{\xi}\geq
e^{\kappa_i}\geq e^{\kappa_l-1} \geq\dfrac{e^{\kappa_l}}{\kappa_1},
\end{align*}
if $\kappa_1$ is sufficiently large.
\par
\noindent Case (c): Suppose $1<\kappa_l-\kappa_i\leq \kappa_1$. We have
\begin{align*}
(2-\varepsilon_{n,k})\dfrac{e^{\kappa_l}-e^{\kappa_i}}{\kappa_l-\kappa_i}
  \geq&
(2-\varepsilon_{n,k})e^{\kappa_l}\dfrac{1-e^{-1}}{\kappa_l-\kappa_i}
 \geq
(2-\varepsilon_{n,k})(1-e^{-1})\dfrac{e^{\kappa_l}}{\kappa_1}
 \geq\dfrac{e^{\kappa_l}}{\kappa_1}.
\end{align*}
Here, in the above inequalities, we have used
$(2-\varepsilon_{n,k})(1-e^{-1})>1$.
\par
\noindent Case (d): Suppose $\kappa_l-\kappa_i>\kappa_1$. In this case, our
condition implies  $\kappa_i<0$. By Lemma \ref{lemm9} and $2k>n$, we know that
$-\kappa_i<\dfrac{n-k}{k}\kappa_1\leq\dfrac{k-1}{k}\kappa_1$, then
we have
$$\kappa_l-\kappa_i\leq\kappa_1-\kappa_i<\dfrac{2k-1}{k}\kappa_1.$$
Thus, in this case,
\begin{align*}
(2-\varepsilon_{n,k})\dfrac{e^{\kappa_l}-e^{\kappa_i}}{\kappa_l-\kappa_i}
  \geq&
(2-\varepsilon_{n,k})e^{\kappa_l}\dfrac{1-e^{-\kappa_1}}{\kappa_l-\kappa_i}
 \geq
\dfrac{(2-\varepsilon_{n,k})(1-e^{-\kappa_1})}{(2k-1)/k}\dfrac{e^{\kappa_l}}{\kappa_1}.
\end{align*}
We obviously have
$$\dfrac{(2-\varepsilon_{n,k})}{(2k-1)/k}> 1+\dfrac{1}{3k}.$$
Thus we get
 $\dfrac{(2-\varepsilon_{n,k})(1-e^{-\kappa_1})}{(2k-1)/k}\geq 1$ if
$\kappa_1$ is sufficiently large, which gives the desired inequality.

\end{proof}

We  divide into two cases to establish \eqref{s3.02}, for
$i=1,2\cdots,n$,

\par

\noindent (I) $\kappa_i\leq \kappa_1-\sqrt{\kappa_1}/n $;

\noindent (II) $\kappa_i>  \kappa_1-\sqrt{\kappa_1}/n$. \\

\par
The following lemma deals with the Case (I).
\begin{lemm}\label{lemm14}
Assume $\kappa=(\kappa_1,\kappa_2,\cdots,\kappa_n)\in\Gamma_k$, $n<2k$ and $\kappa_1$ is the maximum
entry of $\kappa$. For any given index $1\leq i\leq n$, if $
 \kappa_i\leq \kappa_1-\sqrt{\kappa_1}/n$, we have
\begin{align*}
A_i+B_i+C_i+D_i-E_i\geq 0,
\end{align*}
when  the constant $K$ and $\kappa_1$ both are sufficiently large.
\end{lemm}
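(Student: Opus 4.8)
The plan is to read \eqref{s3.02} as an inequality for a quadratic form in $\xi=(h_{11i},\dots,h_{nni})$ and to show that, in Case (I), it holds with room to spare, the only delicate errors being of size $O(1/\kappa_1)$ which are absorbed by Lemma \ref{lemm13}. First I would dispose of $A_i$. Since $\sigma_k^{1/k}$ is concave on $\Gamma_k$, restricting it to the segment $t\mapsto\kappa+t\xi$ (which lies in $\Gamma_k$ for small $|t|$) and differentiating twice at $t=0$ gives
\[
\sum_{p\neq q}\sigma_k^{pp,qq}(\kappa)\,h_{ppi}h_{qqi}\ \leq\ \Big(1-\tfrac1k\Big)\frac{\big(\sum_p\sigma_k^{pp}h_{ppi}\big)^2}{\sigma_k(\kappa)},
\]
hence $A_i\geq e^{\kappa_i}\big(K-\tfrac{1-1/k}{\sigma_k(\kappa)}\big)\big(\sum_p\sigma_k^{pp}h_{ppi}\big)^2\geq 0$ once $K\geq 1/\inf\psi$ (recall $\sigma_k(\kappa)=\psi\geq\inf\psi$). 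It therefore remains to prove $B_i+C_i+D_i\geq E_i$, and here $B_i,D_i\geq0$ because $\kappa\in\Gamma_k$ forces $\sigma_k^{ii,ll}=\sigma_{k-2}(\kappa|il)\geq0$, $\sigma_k^{ll}=\sigma_{k-1}(\kappa|l)>0$, and $\tfrac{e^{\kappa_l}-e^{\kappa_i}}{\kappa_l-\kappa_i}>0$.

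Next I would estimate $E_i$ from above. Writing $P_i=e^{\kappa_i}h_{iii}+\sum_{l\neq i}e^{\kappa_l}h_{lli}$ and using $(a+b)^2\leq(1+\mu)a^2+(1+\mu^{-1})b^2$ with $\mu=\kappa_1^2$, the Cauchy--Schwarz bound $\big(\sum_{l\neq i}e^{\kappa_l}h_{lli}\big)^2\leq P\sum_{l\neq i}e^{\kappa_l}h_{lli}^2$, the elementary facts $P\geq e^{\kappa_1}$, $\log P\geq\kappa_1$, $\tfrac{1+\log P}{\log P}\leq1+\tfrac1{\kappa_1}$, and the Case (I) inequality $2\kappa_i-\kappa_1\leq\kappa_i-\sqrt{\kappa_1}/n$, one obtains for $\kappa_1$ large
\[
E_i\ \leq\ 2(1+\kappa_1^2)e^{-\sqrt{\kappa_1}/n}\,\sigma_k^{ii}e^{\kappa_i}h_{iii}^2\ +\ \Big(1+\tfrac1{\kappa_1}+\tfrac2{\kappa_1^2}\Big)\sigma_k^{ii}\sum_{l\neq i}e^{\kappa_l}h_{lli}^2 .
\]
Since $C_i=\sigma_k^{ii}e^{\kappa_i}h_{iii}^2+\sigma_k^{ii}\sum_{l\neq i}e^{\kappa_l}h_{lli}^2$ and $2(1+\kappa_1^2)e^{-\sqrt{\kappa_1}/n}<1$ for $\kappa_1$ large, the $h_{iii}^2$-part of $C_i-E_i$ is already nonnegative, so $B_i+C_i+D_i-E_i\geq B_i+D_i-\big(\tfrac1{\kappa_1}+\tfrac2{\kappa_1^2}\big)\sigma_k^{ii}\sum_{l\neq i}e^{\kappa_l}h_{lli}^2$. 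On the other hand, multiplying \eqref{s3.03} by $h_{lli}^2$, summing over $l\neq i$, and recalling $\sigma_k^{ii,ll}=\sigma_{k-2}(\kappa|il)$, $\sigma_k^{ll}=\sigma_{k-1}(\kappa|l)$, $\sigma_k^{ii}=\sigma_{k-1}(\kappa|i)$ yields $B_i+D_i\geq\tfrac{2}{(2-\varepsilon_{n,k})\kappa_1}\sigma_k^{ii}\sum_{l\neq i}e^{\kappa_l}h_{lli}^2$; as $\tfrac{2}{2-\varepsilon_{n,k}}-1=\tfrac{\varepsilon_{n,k}}{2-\varepsilon_{n,k}}$ is a fixed positive constant and $\tfrac2{\kappa_1^2}=o(1/\kappa_1)$, this beats the deficit for $\kappa_1$ large, whence $A_i+B_i+C_i+D_i-E_i\geq0$.

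The hard part is the bookkeeping in the directions $l\neq i$: the excess of $E_i$ over $C_i$ there is of order \emph{exactly} $1/\kappa_1$, and Lemma \ref{lemm13} furnishes $B_i+D_i$ only with the same order $1/\kappa_1$, the entire slack coming from the constant $2-\varepsilon_{n,k}<2$ in \eqref{s3.03}. Consequently the Cauchy--Schwarz losses incurred in bounding $E_i$ (the $\mu^{-1}$ terms) must be made $o(1/\kappa_1)$, which forces the splitting parameter $\mu$ to grow faster than $\kappa_1$; the price is that the $h_{iii}^2$-coefficient of $E_i$ then carries the factor $(1+\mu)e^{-\sqrt{\kappa_1}/n}$, and this is precisely where the gap $\kappa_1-\kappa_i\geq\sqrt{\kappa_1}/n$ defining Case (I) is indispensable, guaranteeing that this factor tends to $0$ and that the $h_{iii}^2$-part of $E_i$ is swallowed by $C_i$ alone.
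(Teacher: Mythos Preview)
Your proof is correct and follows essentially the same route as the paper (which defers to \cite{RW}, Lemma~13, with the single substitution of inequality \eqref{s3.03} for the analogous inequality there). Your treatment makes explicit the three ingredients the paper leaves implicit: discarding $A_i\geq0$ via concavity of $\sigma_k^{1/k}$, absorbing the $h_{iii}^2$-part of $E_i$ into $C_i$ using the gap $\kappa_1-\kappa_i\geq\sqrt{\kappa_1}/n$, and covering the remaining $O(1/\kappa_1)$ deficit in the $l\neq i$ directions by Lemma~\ref{lemm13}, with the margin coming from the constant $2-\varepsilon_{n,k}<2$.
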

The proof is same as Lemma 13 in \cite{RW}. The difference lies that
we use the inequality \eqref{s3.03} to replace the inequality (4.19)
in \cite{RW}.

For the Case (II), we first prove that
\begin{lemm}\label{lemm16}
Assume $\kappa=(\kappa_1,\kappa_2,\cdots,\kappa_n)\in\Gamma_k$, $n<2k$, $\kappa_1$ is the maximum
entry of $\kappa$ and $\sigma_k(\kappa)$ has a lower bound
$\sigma_k(\kappa)\geq N_0>0$. Then for any given indices $i,j$ satisfying $1\leq i,j\leq n$ and $j\neq i$, if
 $\kappa_i>\kappa_1-\sqrt{\kappa_1}/n$, we have
\begin{align}\label{a3.6}
\dfrac{2\kappa_i(1-e^{\kappa_j-\kappa_i})}{\kappa_i-\kappa_j}\sigma_{k}^{jj}(\kappa)\geq
\sigma_{k}^{jj}(\kappa)+(\kappa_i+\kappa_j)\sigma_k^{ii,jj}(\kappa),
\end{align}
when $\kappa_1$ is sufficiently large.
\end{lemm}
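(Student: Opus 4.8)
The plan is to reduce \eqref{a3.6} to a single, much cleaner inequality with the help of the $\sigma_k$-identities of Section 2, and then to win that inequality by playing the size of its coefficient against the sizes of $\sigma^{ii}_k(\kappa)$ and $\sigma^{ii,jj}_k(\kappa)$. Put $t=\kappa_i-\kappa_j$, so $e^{\kappa_j-\kappa_i}=e^{-t}$, and recall from (i), (ii) that $\sigma^{jj}_k(\kappa)=\sigma_{k-1}(\kappa|j)$, $\sigma^{ii}_k(\kappa)=\sigma_{k-1}(\kappa|i)$, $\sigma^{ii,jj}_k(\kappa)=\sigma_{k-2}(\kappa|ij)$, all positive since $\kappa\in\Gamma_k$. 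Using (iii) on the $(n-1)$-vector $(\kappa|j)$ at the slot $i$ and on $(\kappa|i)$ at the slot $j$ gives $\sigma_{k-1}(\kappa|j)=\kappa_i\sigma_{k-2}(\kappa|ij)+\sigma_{k-1}(\kappa|ij)$ and $\sigma_{k-1}(\kappa|i)=\kappa_j\sigma_{k-2}(\kappa|ij)+\sigma_{k-1}(\kappa|ij)$, hence $\sigma_{k-1}(\kappa|j)=\sigma_{k-1}(\kappa|i)+t\,\sigma_{k-2}(\kappa|ij)$ and $\kappa_i+\kappa_j=2\kappa_i-t$. Substituting into \eqref{a3.6} and expanding, every $2\kappa_i$-term cancels and \eqref{a3.6} becomes equivalent to
\[
(\ast)\qquad\Big(\frac{2\kappa_i(1-e^{-t})}{t}-1\Big)\sigma_{k-1}(\kappa|i)\ \geq\ 2\kappa_i e^{-t}\,\sigma_{k-2}(\kappa|ij),
\]
the factor $\tfrac{2\kappa_i(1-e^{-t})}{t}$ being read as its limit $2\kappa_i$ when $t=0$.

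First I would control the coefficient $c:=\tfrac{2\kappa_i(1-e^{-t})}{t}$. Since $\kappa_i>\kappa_1-\sqrt{\kappa_1}/n$ and $\kappa_1$ is the largest entry, $\kappa_i$ is large and $\kappa_1<\kappa_i+\sqrt{\kappa_1}/n$. If $\kappa_j\geq0$ then $t\leq\kappa_i$; if $\kappa_j<0$ then Lemma \ref{lemm9}(a) together with $2k>n$ give $-\kappa_j<\tfrac{n-k}{k}\kappa_1$, hence (using $n\leq2k-1$) $t<\tfrac nk\kappa_i+\tfrac{n-k}{kn}\sqrt{\kappa_1}<(2-\tfrac1{2k})\kappa_i$ for $\kappa_1$ large. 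As $s\mapsto(1-e^{-s})/s$ is decreasing, it follows that $c\geq\tfrac{2\,(1-e^{-(2-1/(2k))\kappa_i})}{\,2-1/(2k)\,}\to\tfrac{4k}{4k-1}$, so $c-1\geq c_0>0$ for a constant $c_0=c_0(n,k)$ once $\kappa_1$ is large; and if $\kappa_j>\kappa_i$ then $c=\tfrac{2\kappa_i(e^{-t}-1)}{-t}\geq2\kappa_i$ because $(e^s-1)/s\geq1$. In addition, if $0\leq t\leq3k\log\kappa_1$ then $c\geq\kappa_i/(3k\log\kappa_1)$, and if $t<0$ then $c\geq2\kappa_i$ for $|t|<1$ and $c\geq2(1-e^{-1})\kappa_ie^{|t|}/|t|$ for $|t|\geq1$.

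Then I would prove $(\ast)$ in two cases. \emph{Case 1: $\kappa_j\leq\kappa_i-3k\log\kappa_1$} (in particular all $\kappa_j\leq0$). Then $e^{-t}\leq\kappa_1^{-3k}$, while $\sigma_{k-2}(\kappa|ij)\leq C^{k-2}_{n-2}\kappa_1^{k-2}$ since every entry of $\kappa$ has absolute value $<\kappa_1$ (for negative entries, Lemma \ref{lemm9}(a) and $2k>n$ again), and $\sigma_{k-1}(\kappa|i)\geq\sigma_{k-1}(\kappa|1)\geq\theta\sigma_k(\kappa)/\kappa_1\geq\theta N_0/\kappa_1$ by (vii) and Lemma \ref{lemm11}. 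So the left side of $(\ast)$ is $\geq c_0\theta N_0/\kappa_1$ and the right side $\leq2C^{k-2}_{n-2}\kappa_1^{-2k-1}$, and $(\ast)$ holds for $\kappa_1$ large. \emph{Case 2: $\kappa_j>\kappa_i-3k\log\kappa_1$}, so $\kappa_j$ too is large. The crucial ingredient is the lower bound $\sigma_{k-1}(\kappa|i)\geq c_8\kappa_i\sigma_{k-2}(\kappa|ij)$ with $c_8=\tfrac1{2(1+\Theta)}>0$, $\Theta=\sqrt{k(n-k)/(n-1)}$: letting $a,b$ be the slots of the larger and smaller of $\kappa_i,\kappa_j$, Lemma \ref{lemm7} gives $|\sigma_{k-1}(\kappa|ij)|\leq\Theta\sigma_{k-1}(\kappa|b)$, into which we feed $\sigma_{k-1}(\kappa|b)=\kappa_a\sigma_{k-2}(\kappa|ij)+\sigma_{k-1}(\kappa|ij)$ and rearrange to get $\sigma_{k-1}(\kappa|ij)\geq-\tfrac{\Theta}{1+\Theta}\kappa_a\sigma_{k-2}(\kappa|ij)$, whence $\sigma_{k-1}(\kappa|i)=\kappa_j\sigma_{k-2}(\kappa|ij)+\sigma_{k-1}(\kappa|ij)\geq(\kappa_j-\tfrac{\Theta}{1+\Theta}\kappa_a)\sigma_{k-2}(\kappa|ij)\geq\tfrac{\kappa_i}{2(1+\Theta)}\sigma_{k-2}(\kappa|ij)$, the last step since $|\kappa_i-\kappa_j|\leq3k\log\kappa_1\ll\kappa_i$. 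With this, $(\ast)$ reduces to $(c-1)c_8\geq2e^{-t}$, which holds for $\kappa_1$ large: if $t\geq0$ then $2e^{-t}\leq2$ and $(c-1)c_8\geq(\kappa_i/(3k\log\kappa_1)-1)c_8\to\infty$; if $t<0$ then $2e^{-t}=2e^{|t|}$, and either $|t|<1$, so $(c-1)c_8\geq(2\kappa_i-1)c_8\geq2e$, or $|t|\geq1$, so $(c-1)c_8\geq(1-e^{-1})c_8\kappa_ie^{|t|}/|t|\geq2e^{|t|}$ because $|t|\leq\sqrt{\kappa_1}/n\ll\kappa_i$.

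The step I expect to be the real obstacle is the lower bound $\sigma_{k-1}(\kappa|i)\gtrsim\kappa_i\sigma_{k-2}(\kappa|ij)$ in Case 2, i.e.\ the regime in which $\kappa_j$ is of the same order as $\kappa_i$ so that the exponential factor $e^{\kappa_j-\kappa_i}$ gives no margin: here the G\r{a}rding-cone structure, through Lemma \ref{lemm7} (or, alternatively, Newton's inequality (viii) applied to $\kappa$), together with the hypothesis $2k>n$, is used in an essential way, while all the other regimes reduce to exponential decay beating polynomial growth.
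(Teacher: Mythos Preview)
Your proof is correct, and it takes a genuinely different route from the paper's. Your reduction to the single inequality $(\ast)$, namely
\[
\Big(\tfrac{2\kappa_i(1-e^{-t})}{t}-1\Big)\sigma_{k-1}(\kappa|i)\ \geq\ 2\kappa_i e^{-t}\,\sigma_{k-2}(\kappa|ij),
\]
is neat: the paper instead starts from the alternative identity \eqref{n3.7},
\[
\sigma_{k}^{jj}+(\kappa_i+\kappa_j)\sigma_{k}^{ii,jj}=\tfrac{2\kappa_i}{\kappa_i-\kappa_j}\sigma_{k}^{jj}-\tfrac{\kappa_i+\kappa_j}{\kappa_i-\kappa_j}\sigma_{k}^{ii},
\]
reduces to \eqref{nn} and the sign of the auxiliary quantity $L$ in \eqref{n3.8}, and then carries out a five-way case split (Cases (a), (b1), (b2), (c1), (c2)) according to whether $\kappa_i>\kappa_j$ or $\kappa_i<\kappa_j$, whether $t\ge\sqrt{\kappa_1}$ or $t<\sqrt{\kappa_1}$, and the sign of $\sigma_{k-1}(\kappa|ij)$. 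Each sub-case is handled by expanding in the variables $\tilde\sigma_m=\sigma_m(\kappa|ij)$ and using Lemma~\ref{lemm7}, Lemma~\ref{lemm11} and the identity \eqref{n311}.

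Your two-case scheme (Case~1: $t\ge 3k\log\kappa_1$, so $e^{-t}$ kills the right side; Case~2: $\kappa_j$ close to $\kappa_i$, so Lemma~\ref{lemm7} yields $\sigma_{k-1}(\kappa|i)\ge c_8\kappa_i\sigma_{k-2}(\kappa|ij)$) is more conceptual and shorter, and isolates precisely where the hypothesis $2k>n$ enters---once in bounding $t$ above by $(2-\tfrac{1}{2k})\kappa_i$ to keep $c-1$ bounded below, and once (via Lemma~\ref{lemm9}(a)) in bounding $|\kappa_j|\le\kappa_1$ for the crude estimate in Case~1. The paper's approach, on the other hand, keeps closer track of the exact algebraic structure through the $\tilde\sigma_m$, which is more in the spirit of the later sections. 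One small stylistic remark: the parenthetical ``in particular all $\kappa_j\le 0$'' in your Case~1 reads oddly; you presumably mean that all negative $\kappa_j$ fall into Case~1, which is true but not needed for the argument.
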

\begin{proof}
If $\kappa_i=\kappa_j$, the left hand side of (\ref{a3.6}) should be
viewed as a limitation when $\kappa_j$ converging to $\kappa_i$, about which we refer \cite{Ball} for more explanation.
It is easy to see that the limitation  is $2\kappa_i\sigma_k^{jj}(\kappa)$. Thus, a straightforward calculation shows
\begin{align*}
&2\kappa_i\sigma_{k-1}(\kappa|j)-\sigma_{k-1}(\kappa|j)-(\kappa_i+\kappa_j)\sigma_{k-2}(\kappa|ij)\\
=&2\kappa_i\sigma_{k-1}(\kappa|j)-2\sigma_{k-1}(\kappa|j)-\sigma_{k-1}(\kappa|i)+2\sigma_{k-1}(\kappa|ij).
\end{align*}
Using Lemma \ref{lemm7}, $|\sigma_{k-1}(\kappa|ij)|$ can be bounded
by $\sqrt{k}\sigma_{k-1}(\kappa|j)$. Thus, since we have
$\sigma_{k-1}(\kappa|i)=\sigma_{k-1}(\kappa|j)$, the above formula
is positive if $\kappa_1$ is sufficiently large.

 If $\kappa_i\neq\kappa_j$, we have the following identity,
\begin{align}\label{n3.7}
&\sigma_{k}^{jj}(\kappa)+(\kappa_i+\kappa_j)\sigma_k^{ii,jj}(\kappa)
=\sigma_{k}^{jj}(\kappa)+(\kappa_i+\kappa_j)\frac{\sigma_k^{jj}(\kappa)-\sigma_k^{ii}(\kappa)}{\kappa_i-\kappa_j}\\
=&\dfrac{2\kappa_i}{\kappa_i-\kappa_j}\sigma_k^{jj}(\kappa)-\dfrac{\kappa_i+\kappa_j}{\kappa_i-\kappa_j}\sigma_k^{ii}(\kappa).\nonumber
\end{align}
In view of \eqref{n3.7}, in order to prove \eqref{a3.6}, it suffices to show
\begin{align}\label{nn}
\dfrac{-2\kappa_ie^{\kappa_j-\kappa_i}}{\kappa_i-\kappa_j}\sigma_{k}^{jj}(\kappa)\geq
-\dfrac{\kappa_i+\kappa_j}{\kappa_i-\kappa_j}\sigma_k^{ii}(\kappa).
\end{align}
Let's define some function:
\begin{eqnarray}\label{n3.8}
L=\left\{\begin{matrix}(\kappa_i+\kappa_j)e^{\kappa_i-\kappa_j}\sigma_k^{ii}(\kappa)-2\kappa_i\sigma_k^{jj}(\kappa)&
\kappa_i>\kappa_j,\\
2\kappa_ie^{\kappa_j-\kappa_i}\sigma_k^{jj}(\kappa)-(\kappa_i+\kappa_j)\sigma_k^{ii}(\kappa)&
\kappa_i<\kappa_j. \end{matrix}\right.
\end{eqnarray}
Obviously, $L\geq 0$ implies \eqref{nn}. Thus, let's prove $L\geq 0$ in the following for $\kappa_i>\kappa_j$ and $\kappa_i<\kappa_j$ respectively.

\bigskip

If $\kappa_i>\kappa_j$, we let $t=\kappa_i-\kappa_j$. Thus we have
$t> 0$. We divide into two cases to prove $L$ is non negative for
$\kappa_i>\kappa_j$.

Case (a): Suppose $t\geq\sqrt{\kappa_1}$. In this case, our
assumption gives $e^t\geq
e^{\sqrt{\kappa_1}}>\dfrac{(\sqrt{\kappa_1})^{2k+1}}{(2k+1)!}$. Here
we have used the Taylor expansion in the second inequality.

If $\kappa_j\leq 0$, using $n\leq 2k-1$ and Lemma \ref{lemm9}, we
have $-\kappa_j<\dfrac{(n-k)\kappa_1}{k}\leq
\dfrac{(k-1)\kappa_1}{k}$. Thus, since
$\kappa_i>\kappa_1-\sqrt{\kappa_1}/n$, we have
$\kappa_i+\kappa_j>\dfrac{\kappa_1}{2k}$ if $\kappa_1>10$. If
$\kappa_j>0$, it is easy to see
$\kappa_i+\kappa_j>\dfrac{\kappa_1}{2k}$. Thus, in any cases, we
have
$$L\geq \frac{\kappa_1^{k+\frac{3}{2}}}{2k (2k+1)!}\sigma_k^{ii}(\kappa)-2\kappa_1\sigma_k^{jj}(\kappa)\geq \kappa_1\left(\frac{\kappa_1^{k-1}\sqrt{\kappa_1}\theta N_0}{2k (2k+1)!}-2\sigma_k^{jj}(\kappa)\right)\geq 0,$$
if $\kappa_1$ is sufficiently large. Here we have used
$\kappa_1\sigma_k^{ii}(\kappa)\geq \theta\sigma_k(\kappa)$.

Case (b): Suppose $t<\sqrt{\kappa_1}$. Using
$\kappa_i>\kappa_1-\sqrt{\kappa_1}/n$, we have $\kappa_j>\kappa_1/2$
if $\kappa_1>10$. For simplification purpose, denote
$\tilde\sigma_m=\sigma_m(\kappa|ij)$. We have
\begin{align}\label{s3.07}
L=&(\kappa_i+\kappa_j)e^{t}\sigma_{k-1}(\kappa|i)-2\kappa_i\sigma_{k-1}(\kappa|j)\\
=&(\kappa_i+\kappa_j)e^{t}(\kappa_j\tilde\sigma_{k-2}+\tilde\sigma_{k-1})-2\kappa_i(\kappa_i\tilde\sigma_{k-2}+\tilde\sigma_{k-1})\nonumber\\
=&[\kappa_j(\kappa_i+\kappa_j)e^{t}-2\kappa_i^2]\tilde\sigma_{k-2}+[(\kappa_i+\kappa_j)e^t-2\kappa_i]\tilde\sigma_{k-1}\nonumber\\
=&[\kappa_j(\kappa_j+\kappa_i)(e^{t}-1)-t(\kappa_j+\kappa_i)-t\kappa_i]\tilde\sigma_{k-2}+[(\kappa_i+\kappa_j)(e^t-1)-t]\tilde\sigma_{k-1},\nonumber
\end{align}
where in the last equality, we have used $t=\kappa_i-\kappa_j$. We
further divide into two sub-cases to prove the nonnegativity of
$L$.

Subcase (b1): Suppose $\tilde\sigma_{k-1}\geq 0$. Note that
$e^t>1+t$. By (\ref{s3.07}) and
$\kappa_i>\kappa_1-\sqrt{\kappa_1}/n, \kappa_j>\kappa_1/2$, we get
$L\geq0$.

Subcase (b2): Suppose $\tilde\sigma_{k-1}< 0$. Inserting the
identity
\begin{align}\label{n311}
(\kappa_i+\kappa_j)\tilde\sigma_{k-1}=\sigma_k(\kappa)-\kappa_i\kappa_j\tilde\sigma_{k-2}-\tilde\sigma_{k}
\end{align}
into the last equality of (\ref{s3.07}), we get
\begin{align}\label{s3.08}
L=&[(\kappa_j^2+\kappa_j\kappa_i)(e^{t}-1)-t(\kappa_j+\kappa_i)-t\kappa_i]\tilde\sigma_{k-2}-t\tilde\sigma_{k-1}\\
&+(e^t-1)[\sigma_k(\kappa)-\kappa_i\kappa_j\tilde\sigma_{k-2}-\tilde\sigma_{k}]\nonumber\\
\geq&[\kappa_j^2(e^t-1)-3t\kappa_i]\tilde\sigma_{k-2}+(e^t-1)[\sigma_k(\kappa)-\tilde\sigma_{k}],\nonumber
\end{align}
where in the last inequality, we have used $\kappa_i\geq
\kappa_j>0,t>0$, and $\tilde\sigma_{k-1}<0$.

For $\sigma_{k-1}(\kappa|i)$ and $\sigma_{k-1}(\kappa|j)$, we have
following estimate
\begin{align}\label{n3.12}
\sigma_{k-1}(\kappa|i)=&\sigma_{k-1}(\kappa|j)-t\sigma_{k-2}(\kappa|ij)\\
=&\sigma_{k-1}(\kappa|j)-\dfrac{t}{\kappa_i}[\sigma_{k-1}(\kappa|j)-\sigma_{k-1}(\kappa|ij)]\nonumber\\
=&\left(1-\dfrac{t}{\kappa_i}\right)\sigma_{k-1}(\kappa|j)+\dfrac{t}{\kappa_i}\sigma_{k-1}(\kappa|ij)\nonumber\\
\geq&\left(1-\dfrac{t(1+\Theta)}{\kappa_i}\right)\sigma_{k-1}(\kappa|j)\geq\dfrac{1}{2}\sigma_{k-1}(\kappa|j),\nonumber
\end{align}
if $\kappa_1$ is sufficiently large. Here in the fourth inequality,
we have used Lemma \ref{lemm7} to estimate the term
$\sigma_{k-1}(\kappa|ij)$. We also have
\begin{align}\label{n3.13}
\kappa_j^2\tilde\sigma_{k-2}+\sigma_k(\kappa)-\tilde\sigma_{k}=&\kappa_j[\sigma_{k-1}(\kappa|i)-\tilde\sigma_{k-1}]+\sigma_k(\kappa)-\tilde\sigma_{k}\\
=&\kappa_j\sigma_{k-1}(\kappa|i)+\sigma_k(\kappa)-\sigma_{k}(\kappa|i)\nonumber\\
=&(\kappa_j+\kappa_i)\sigma_{k-1}(\kappa|i)\nonumber
\end{align}
and
\begin{align}\label{n315}
3\kappa_i=3\kappa_j+3t\leq 3\kappa_j+3\sqrt{\kappa_1}\leq 4\kappa_j,
\end{align}
where we have used $\kappa_j>\kappa_1/2$ and $\kappa_1$ is sufficiently large. Thus, using \eqref{n3.13}
and \eqref{n315}, (\ref{s3.08}) becomes
\begin{align*}
L \geq&(e^t-1)(\kappa_j+\kappa_i)\sigma_{k-1}(\kappa|i)
-3t\kappa_i\tilde\sigma_{k-2}\nonumber\\
\geq&t(\kappa_j+\kappa_i)\sigma_{k-1}(\kappa|i)
-4t\kappa_j\tilde\sigma_{k-2}\nonumber\\
=&t(\kappa_j+\kappa_i-4)\sigma_{k-1}(\kappa|i)+4t\sigma_{k-1}(\kappa|ij)\\
\geq&t(\kappa_j+\kappa_i-4)\sigma_{k-1}(\kappa|j)/2+4t\sigma_{k-1}(\kappa|ij)\\
\geq&\frac{t}{2}(\kappa_j+\kappa_i-4-8\Theta)\sigma_{k-1}(\kappa|j)\geq
0
\end{align*}
if $\kappa_1$ is sufficiently large. Here in the forth inequality,
we have used \eqref{n3.12} and in the last inequality, we have used
Lemma \ref{lemm7} to give the lower bound of
$\sigma_{k-1}(\kappa|ij)$.

\bigskip

If $\kappa_i<\kappa_j$, we let $t=\kappa_j-\kappa_i$, which yields
$0<t\leq \kappa_1-\kappa_i<\sqrt{\kappa_1}/n$. For simplification purpose, we still denote
$\tilde\sigma_m=\sigma_m(\kappa|ij)$. Thus, using
$t=\kappa_j-\kappa_i$ we have
\begin{align}\label{s3.09}
L=&2\kappa_ie^{t}\sigma_{k-1}(\kappa|j)-(\kappa_i+\kappa_j)\sigma_{k-1}(\kappa|i)\\
=&2\kappa_ie^{t}(\kappa_i\tilde\sigma_{k-2}+\tilde\sigma_{k-1})-(\kappa_i+\kappa_j)(\kappa_j\tilde\sigma_{k-2}+\tilde\sigma_{k-1})\nonumber\\
=&[2\kappa_i^2e^{t}-(\kappa_i+\kappa_j)\kappa_j]\tilde\sigma_{k-2}+(2\kappa_ie^t-\kappa_i-\kappa_j)\tilde\sigma_{k-1}\nonumber\\
=&[2\kappa_i^2(e^{t}-1)-3t\kappa_i-t^2]\tilde\sigma_{k-2}+[2\kappa_i(e^t-1)-t]\tilde\sigma_{k-1}.\nonumber
\end{align}
We divide into two cases to prove $L\geq 0$.

Case (c1): Suppose $\tilde\sigma_{k-1}\geq 0$. Since we have
$e^t>1+t$, $t<\sqrt{\kappa_1}/n$ and $\kappa_j>\kappa_i>\kappa_1/2$,
in view of (\ref{s3.09}), we get $L\geq 0$.

Case (c2): Suppose $\tilde\sigma_{k-1}< 0$. Inserting the
identity \eqref{n311} into the last formula of (\ref{s3.09}) and using $2\kappa_i=\kappa_i+\kappa_j-t$, we get
\begin{align}\label{n3.16}
L=&[2\kappa_i^2(e^{t}-1)-3t\kappa_i-t^2]\tilde\sigma_{k-2}-te^t\tilde\sigma_{k-1}\\
&+(e^t-1)[\sigma_k(\kappa)-\kappa_i\kappa_j\tilde\sigma_{k-2}-\tilde\sigma_{k}]\nonumber\\
\geq&[(\kappa_i^2-\kappa_i
t)(e^t-1)-4t\kappa_i]\tilde\sigma_{k-2}+(e^t-1)[\sigma_k(\kappa)-\tilde\sigma_{k}],\nonumber
\end{align}
where in the last inequality, we have used $\tilde\sigma_{k-1}<0$, $t<\sqrt{\kappa_1}/n<\kappa_i$ and $\kappa_1$ is sufficiently large.
Note that comparing the previous case $\kappa_i>\kappa_j$, this case exchanges the position of $i$ and $j$.
Thus, exchanging the indices $i$ and $j$
in \eqref{n3.13} and \eqref{n3.12}, we get the formulae,
\begin{align}\label{n3.17}
\kappa_i^2\tilde\sigma_{k-2}+\sigma_k(\kappa)-\tilde\sigma_{k}=&(\kappa_i+\kappa_j)\sigma_{k-1}(\kappa|j),
\ \ \sigma_{k-1}(\kappa|j)\geq \frac{\sigma_{k-1}(\kappa|i)}{2}.
\end{align}
Combing \eqref{n3.16} with \eqref{n3.17}, we get
\begin{align*}
L \geq&(e^t-1)(\kappa_i+\kappa_j)\sigma_{k-1}(\kappa|j)
-[(e^t-1)t+4t]\kappa_i\tilde\sigma_{k-2}\nonumber\\
=&[(e^t-1)(\kappa_i+\kappa_j)-(e^t+3)t]\sigma_{k-1}(\kappa|j)
+(e^t+3)t\tilde\sigma_{k-1}\\
\geq&[(e^t-1)(\kappa_i+\kappa_j)-(e^t+3)t]\frac{\sigma_{k-1}(\kappa|i)}{2}
+(e^t+3)t\tilde\sigma_{k-1}\\
\geq&[(e^t-1)(\kappa_i+\kappa_j)-(1+2\Theta)(e^t+3)t]\frac{\sigma_{k-1}(\kappa|i)}{2}\\
\geq&0,
\end{align*}
if $\kappa_1$ is sufficiently large. Here, in the fourth inequality, we  have
used Lemma \ref{lemm7} to give the lower bound of
$\tilde\sigma_{k-1}$,  and in the last inequality, we have used $e^t>1+t$,
$\kappa_j\geq\kappa_i\geq\kappa_1-\sqrt{\kappa_1}/n$, $t\leq
\sqrt{\kappa_1}/n$.

\end{proof}
Now, we are in the position to handle the Case (II).
\begin{lemm}\label{lemm15}
Assume $\kappa=(\kappa_1,\kappa_2,\cdots,\kappa_n)\in\Gamma_k$, $n<2k$ and $\kappa_1$ is the maximum
entry of $\kappa$. Then for any given index $1\leq i\leq n$, if
$\kappa_i>
 \kappa_1-\sqrt{\kappa_1}/n $, we have
\begin{align*}
A_i+B_i+C_i+D_i-E_i\geq 0,
\end{align*}
when the positive constant $K$ and  $\kappa_1$ both are sufficiently
large.
\end{lemm}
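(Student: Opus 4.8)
The plan is to test Conjecture~\ref{con} on the vector $\xi=(h_{11i},\dots,h_{nni})$ and then match the outcome, term by term, against $A_i+B_i+C_i+D_i-E_i$; Lemma~\ref{lemm16} is tailored so that $D_i$ absorbs the coefficients $a_j$ thrown off by the conjecture, while a crude Cauchy--Schwarz bound disposes of $E_i$. To begin with, the hypotheses of Conjecture~\ref{con} hold in Case~(II): from $\sigma_k(\kappa)=\psi(X,\nu)$ together with the positivity and $C^2$-regularity of $\psi$ on the compact $M$ we get $\inf\psi\leq\sigma_k(\kappa)\leq\sup\psi$ with $\inf\psi>0$; $\kappa_1$ is the maximum entry; $\kappa_i>\kappa_1-\sqrt{\kappa_1}/n$ by assumption; and $\kappa_1$, hence $\kappa_i$, may be taken large, since otherwise all $\kappa_j$ are bounded and \eqref{Mc2} is immediate. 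Since $(\sigma_k)_i=\sum_j\sigma_k^{jj}h_{jji}$, substituting $\xi_j=h_{jji}$ in \eqref{s3.01} and multiplying through by the positive number $e^{\kappa_i}/\kappa_i$ turns $\kappa_i\big[K(\sigma_k)_i^2-\sum_{p\neq q}\sigma_k^{pp,qq}h_{ppi}h_{qqi}\big]$ into $A_i$, and gives
\begin{align*}
A_i\ \geq\ \frac{e^{\kappa_i}}{\kappa_i}\sigma_k^{ii}h_{iii}^2-\frac{e^{\kappa_i}}{\kappa_i}\sum_{j\neq i}a_j h_{jji}^2 .
\end{align*}

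Next, for each $l\neq i$ the coefficient of $h_{lli}^2$ in $D_i$ can be rewritten as $\dfrac{e^{\kappa_i}}{\kappa_i}\cdot\dfrac{2\kappa_i(1-e^{\kappa_l-\kappa_i})}{\kappa_i-\kappa_l}\,\sigma_k^{ll}$, which by Lemma~\ref{lemm16} is at least $\dfrac{e^{\kappa_i}}{\kappa_i}a_l$; hence $D_i\geq\frac{e^{\kappa_i}}{\kappa_i}\sum_{j\neq i}a_j h_{jji}^2$, the $a_j$-terms cancel, and it remains to prove
\begin{align*}
\frac{e^{\kappa_i}}{\kappa_i}\sigma_k^{ii}h_{iii}^2+B_i+C_i-E_i\ \geq\ 0 .
\end{align*}
Here $B_i\geq0$, because $\sigma_k^{ii,ll}=\sigma_{k-2}(\kappa|il)>0$ for $\kappa\in\Gamma_k$ by the characterization of the G\r{a}rding cone recalled in Section~2; and since $P_i=\sum_l e^{\kappa_l}h_{lli}$ and $P\geq e^{\kappa_1}$, Cauchy--Schwarz gives $P_i^2\leq P\sum_l e^{\kappa_l}h_{lli}^2$ and $\log P\geq\kappa_1\geq\kappa_i$, so that $E_i\leq\big(1+\tfrac1{\log P}\big)\sigma_k^{ii}\sum_l e^{\kappa_l}h_{lli}^2\leq\big(1+\tfrac1{\kappa_1}\big)C_i$, i.e.\ $C_i-E_i\geq-\tfrac1{\kappa_1}C_i$.

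It then suffices to check $\frac{e^{\kappa_i}}{\kappa_i}\sigma_k^{ii}h_{iii}^2+B_i-\tfrac1{\kappa_1}C_i\geq0$ coefficient by coefficient. The $h_{iii}^2$-coefficient is $e^{\kappa_i}\sigma_k^{ii}\big(\tfrac1{\kappa_i}-\tfrac1{\kappa_1}\big)\geq0$ because $\kappa_i\leq\kappa_1$, and for $l\neq i$ the $h_{lli}^2$-coefficient is $\dfrac{e^{\kappa_l}}{\kappa_1}\big(2\kappa_1\sigma_{k-2}(\kappa|il)-\sigma_{k-1}(\kappa|i)\big)$, so everything reduces to the algebraic inequality
\begin{align*}
2\kappa_1\,\sigma_{k-2}(\kappa|il)\ \geq\ \sigma_{k-1}(\kappa|i)\qquad(\kappa\in\Gamma_k,\ 2k>n,\ l\neq i).
\end{align*}
Writing $\sigma_{k-1}(\kappa|i)=\kappa_l\sigma_{k-2}(\kappa|il)+\sigma_{k-1}(\kappa|il)$ and using $\kappa_l\leq\kappa_1$, this follows once we know $\sigma_{k-1}(\kappa|il)\leq\kappa_1\sigma_{k-2}(\kappa|il)$; the latter is trivial if $\sigma_{k-1}(\kappa|il)\leq0$, and otherwise $(\kappa|il)\in\Gamma_{k-1}$, so Maclaurin's inequality as packaged in Lemma~\ref{lemm8} (with $s=1$, applied to the $(n-2)$-vector $(\kappa|il)$) gives $\sigma_{k-1}(\kappa|il)\leq\frac{n-k}{k-1}\kappa_1\sigma_{k-2}(\kappa|il)\leq\kappa_1\sigma_{k-2}(\kappa|il)$, the last inequality being precisely $n-k\leq k-1$, that is, $2k>n$.

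I expect the main difficulty to be exactly this bookkeeping: one has to split the $a_j$-part off the conjecture and route it into $D_i$ through Lemma~\ref{lemm16} so that, after the cancellation, nothing negative survives except the $O(1/\kappa_1)$ loss coming from $E_i$, and then confirm that this loss is swallowed by $B_i$ together with the small margin $\tfrac1{\kappa_i}-\tfrac1{\kappa_1}\geq0$ --- the point where the hypothesis $2k>n$ is genuinely used in the closing $\sigma_k$-estimate. The surrounding ingredients, namely the identity $(\sigma_k)_i=\sum_j\sigma_k^{jj}h_{jji}$ obtained by differentiating the curvature equation, the reduction of the estimate \eqref{Mc2} to the pointwise inequality \eqref{s3.02} through the test function $\phi$, and the treatment of Case~(I) via Lemma~\ref{lemm14}, are taken over unchanged from \cite{RW}.
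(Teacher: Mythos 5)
Your proposal is correct and follows essentially the same route as the paper: the conjectured inequality \eqref{s3.01} with $\xi_j=h_{jji}$ combined with Lemma \ref{lemm16} gives exactly the paper's inequality \eqref{s3.05} (your $A_i+D_i\geq \frac{e^{\kappa_i}}{\kappa_i}\sigma_k^{ii}h_{iii}^2$), and your coefficientwise reduction of the remainder to $2\kappa_1\sigma_{k-2}(\kappa|il)\geq\sigma_{k-1}(\kappa|i)$ via Lemma \ref{lemm8} and $n\leq 2k-1$ is the paper's inequality \eqref{s3.06}. The only (immaterial) difference is that you bound $\sigma_{k-1}(\kappa|il)$ directly by $\kappa_1\sigma_{k-2}(\kappa|il)$ instead of first passing to $\sigma_{k-2}(\kappa|1i)$ as in \eqref{s3.21}--\eqref{s3.22}.
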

\begin{proof}
The proof is similar to Lemma 15 in \cite{RW}. We only need to show
that the following two inequalities hold,
\begin{align}\label{s3.05}
e^{\kappa_i}[K(\sigma_k(\kappa))_i^2-\sigma_k^{pp,qq}(\kappa)h_{ppi}h_{qqi}]+2\dsum_{l\neq
i}\dfrac{e^{\kappa_l}-e^{\kappa_i}}{\kappa_l-\kappa_i}\sigma_{k}^{ll}(\kappa)h_{lli}^2\geq\dfrac{1}{\log
P}e^{\kappa_i}\sigma_k^{ii}(\kappa)h_{iii}^2,
\end{align}
\begin{align}\label{s3.06}
2\dsum_{l\neq
i}e^{\kappa_l}\sigma_k^{ll,ii}(\kappa)h_{lli}^2-\dfrac{1}{\log
 P}\dsum_{l\neq i}e^{\kappa_l}\sigma_k^{ii}(\kappa)h_{lli}^2\geq 0,
\end{align}
which is corresponding to the inequalities (4.26) and (4.28) in
\cite{RW} respectively.

By the definition of $P$, we know $\log P>\kappa_1$.
Combing the inequality \eqref{s3.01} with Lemma \ref{lemm16}, we have the proof  of the inequality (\ref{s3.05}).

In order to prove the inequality \eqref{s3.06}, we only need to show the following inequality
$$2\kappa_1\sigma_{k}^{ii,ll}(\kappa)-\sigma_{k}^{ii}(\kappa)\geq 0.$$
It is clear that
\begin{align}\label{s3.21}
2\kappa_1\sigma_{k-2}(\kappa|il)-\sigma_{k-1}(\kappa|i)\geq
&2\kappa_1\sigma_{k-2}(\kappa|1i)-\sigma_{k-1}(\kappa|i)\\
=&2\kappa_1\sigma_{k-2}(\kappa|1i)-\kappa_1\sigma_{k-2}(\kappa|1i)-\sigma_{k-1}(\kappa|1i)\nonumber\\
=&\kappa_1\sigma_{k-2}(\kappa|1i)-\sigma_{k-1}(\kappa|1i).\nonumber
\end{align}
Thus, if $\sigma_{k-1}(\kappa|1i)\leq 0$,
(\ref{s3.06}) obviously
holds. If $\sigma_{k-1}(\kappa|1i)>0$, we have $(\kappa|1i)\in\Gamma_{k-1}$. Therefore, by Lemma \ref{lemm8}, we have
\begin{align}\label{s3.22}
\sigma_{k-1}(\kappa|1i)\leq
\dfrac{n-k}{k-1}\sigma_{k-2}(\kappa|1i)\kappa_1.
\end{align}
Thus if we have $n< 2k$ which implies $n\leq 2k-1$, combing (\ref{s3.21}) with
(\ref{s3.22}), we get
$$
2\kappa_1\sigma_{k-2}(\kappa|il)-\sigma_{k-1}(\kappa|i)\geq
\frac{2k-1-n}{k-1}\sigma_{k-2}(\kappa|1i)\kappa_1\geq 0,
$$
which gives the inequality (\ref{s3.06}).

\end{proof}

\section{An inequality}
In this section, we will establish an inequality that the left hand side of \eqref{s3.01} is bigger than the summation of four quadratic forms defined in the following.
The argument is similar to \cite{RW}, but will become
a little more complicated.

We define some notations. Let $\xi=(\xi_1,\xi_2,\cdots,\xi_{n})\in\mathbb{R}^n$ be
an $n$-dimensional vector. Suppose $1\leq i\leq n$ is some given index. We define four
quadratic forms, $\textbf{A}_{k;i},
\textbf{B}_{k;i}, \textbf{C}_{k;i},\textbf{D}_{k;i}$:
 \begin{eqnarray}
 \textbf{A}_{k;i}&=&\sum_{j\neq i}\sigma_{k-2}^2(\kappa|ij)\xi_j^2+\sum_{p\neq q; p,q\neq i}\left[\sigma_{k-2}^2(\kappa|ipq)
-\sigma_{k-1}(\kappa|ipq)\sigma_{k-3}(\kappa|ipq)\right]\xi_p\xi_q\nonumber;\\
\textbf{B}_{k;i}&=&\sum_{j\neq i}2\sigma_{k-2}(\kappa|ij)\xi_j^2-\sum_{p\neq q; p,q\neq i}\sigma_{k-2}(\kappa|ipq)\xi_p\xi_q;\nonumber
\end{eqnarray}
\begin{eqnarray}
\textbf{C}_{k;i}&=&\sum_{j\neq i}\left[\kappa_j^2\sigma_{k-2}^2(\kappa|ij)-2\sigma_{k}(\kappa|ij)\sigma_{k-2}(\kappa|ij)\right]\xi_j^2\nonumber\\
&&+\sum_{p,q\neq i, p\neq q}\left[\sigma_{k}(\kappa|ipq)\sigma_{k-2}(\kappa|ipq)-\sigma_{k-1}^2(\kappa|ipq)\right]\xi_p\xi_q;\nonumber\\
\textbf{D}_{k;i}&=&\sum_{j\neq
i}\sigma_{k-1}^2(\kappa|ij)\xi_j^2+\sum_{p\neq q; p,q\neq i,
}\sigma_{k-1}(\kappa|ip)\sigma_{k-1}(\kappa|iq)\xi_p\xi_q;\nonumber
 \end{eqnarray}
 We define a constant,
\begin{eqnarray}\label{ckK}
c_{k,K}=\frac{1}{K\kappa_i\sigma_{k-1}(\kappa|i)-1}.
\end{eqnarray}
It is easy to see that if $\kappa_i\geq
\kappa_1-\sqrt{\kappa_1}/n$, we have
$$\kappa_i\sigma_{k-1}(\kappa|i)\geq
\kappa_1\sigma_{k-1}(\kappa|1)/2\geq\theta\sigma_k(\kappa)/2>0,$$
when $\kappa_1$ is sufficiently large. Here $\theta$ is the constant
given in Lemma \ref{lemm11}. Therefore, $c_{k,K}$ is a positive
constant and can be very small if the constant $K$ is sufficiently
large. Thus, throughout the paper, we always assume $K$ is
sufficiently large and then $c_{k,K}$ is positive.

We mainly will prove the following lemma in this section.
\begin{lemm} \label{lemm17}
Assume $\kappa=(\kappa_1,\cdots,\kappa_n)\in\Gamma_{k}$,
$\kappa_1$ is the maximum entry of $\kappa$ and $\sigma_{k}(\kappa)$
has a positive lower bound $\sigma_{k}(\kappa)\geq N_0$. Then for
any given index $1\leq i\leq n$, if
$\kappa_i>\kappa_1-\sqrt{\kappa_1}/n$, for any $n$ dimensional vector $\xi=(\xi_1,\cdots,\xi_n)\in\mathbb{R}^n$, we have
\begin{eqnarray}\label{ex1}
&&\kappa_i\Big[K\Big(\sum_j\sigma_{k}^{jj}(\kappa)\xi_j\Big)^2-\sigma_{k}^{pp,qq}(\kappa)\xi_{p}\xi_{q}\Big]-\sigma^{ii}_{k}(\kappa)\xi_{i}^2+\sum_{j\neq
i}a_j\xi_{j}^2\\
&\geq&\frac{1}{c_{k,K}}\left[\textbf{A}_{k;i}+\sigma_{k}(\kappa)\textbf{B}_{k;i}+\textbf{C}_{k;i}-c_{k,K}\textbf{D}_{k;i}\right],\nonumber
\end{eqnarray}
when $\kappa_1$ and $K$ both are sufficiently large. Here $a_j$ and $c_{k,K}$ are defined by \eqref{aj} and \eqref{ckK}.
\end{lemm}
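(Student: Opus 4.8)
The plan is to expand the left-hand side of \eqref{ex1} directly in terms of the $\sigma$-quantities $\sigma_{k-1}(\kappa|j)$, $\sigma_{k-2}(\kappa|ij)$, etc., using the identities (i)--(v) from Section 2, and then to regroup the resulting quadratic form in $\xi$ so that it matches $\frac{1}{c_{k,K}}[\textbf{A}_{k;i}+\sigma_k\textbf{B}_{k;i}+\textbf{C}_{k;i}]$ plus an error that is controlled by $\textbf{D}_{k;i}$. The starting point is the observation that $\sigma_k^{jj}(\kappa)=\sigma_{k-1}(\kappa|j)$ and that, for $j\neq i$, $\sigma_{k-1}(\kappa|j)=\sigma_{k-1}(\kappa|i)+(\kappa_i-\kappa_j)\sigma_{k-2}(\kappa|ij)$; similarly $\sigma_k^{ii,jj}(\kappa)=\sigma_{k-2}(\kappa|ij)$ and $\sigma_k^{pp,qq}(\kappa)=\sigma_{k-2}(\kappa|pq)$ with the further reduction $\sigma_{k-2}(\kappa|pq)=\sigma_{k-2}(\kappa|ipq)+\kappa_i\sigma_{k-3}(\kappa|ipq)$ for $p,q\neq i$. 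Substituting these everywhere turns the left side into a quadratic form whose coefficients are polynomials in $\kappa_i$ times $\sigma$-symbols on the index set with $i$ removed, and the key is that the leading "$K$-term'' $\kappa_i K(\sum_j\sigma_{k-1}(\kappa|j)\xi_j)^2$ carries a factor $\kappa_i\sigma_{k-1}(\kappa|i)$ which, together with the $-1$ coming from $-\sigma_k^{ii}(\kappa)\xi_i^2$-type cancellations, produces exactly the denominator $K\kappa_i\sigma_{k-1}(\kappa|i)-1=1/c_{k,K}$.

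Concretely, I would first isolate the diagonal $\xi_i^2$ coefficient and the mixed $\xi_i\xi_j$ coefficients and show that, after using $a_j=\sigma_{k-1}(\kappa|j)+(\kappa_i+\kappa_j)\sigma_{k-2}(\kappa|ij)$ and completing the square in $\xi_i$ against the vector $(\xi_j)_{j\neq i}$, the variable $\xi_i$ can be eliminated at the cost of a positive Schur-complement correction; this is where the factor $c_{k,K}$ enters, since dividing out the coefficient of the completed square gives precisely $1/(K\kappa_i\sigma_{k-1}(\kappa|i)-1)$. Then, for the remaining quadratic form in $(\xi_j)_{j\neq i}$, I would match the pure-square coefficients: the $\xi_j^2$ coefficient must reproduce $\frac{1}{c_{k,K}}[\sigma_{k-2}^2(\kappa|ij)+2\sigma_k(\kappa)\sigma_{k-2}(\kappa|ij)+\kappa_j^2\sigma_{k-2}^2(\kappa|ij)-2\sigma_k(\kappa|ij)\sigma_{k-2}(\kappa|ij)-c_{k,K}\sigma_{k-1}^2(\kappa|ij)]$, and likewise the off-diagonal $\xi_p\xi_q$ coefficient must reproduce the sum of the $\textbf{A},\textbf{B},\textbf{C},-c_{k,K}\textbf{D}$ contributions. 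The identity $\sigma_{k-1}(\kappa|ipq)\sigma_{k-3}(\kappa|ipq)$ and $\sigma_{k-1}^2(\kappa|ipq)$ terms will come out of expanding the double-index reduction of $\sigma_k^{pp,qq}$ and of $\kappa_i^2\sigma_{k-3}(\kappa|ipq)$-type products, and one uses (iii)--(v) repeatedly to collapse sums like $\sum_{p,q\neq i}$ into the advertised closed forms.

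The main obstacle I anticipate is the bookkeeping in the off-diagonal terms: when $k=n-2$ (the case of interest) the form $\textbf{C}_{k;i}$ picks up genuinely new terms compared to the $k=n-1$ case handled in \cite{RW}, and one must be careful that the cross terms arising from the square $\kappa_i K(\sum_j\sigma_{k-1}(\kappa|j)\xi_j)^2$ after eliminating $\xi_i$ exactly cancel against the $-\kappa_i\sigma_k^{pp,qq}(\kappa)\xi_p\xi_q$ contribution modulo the $c_{k,K}\textbf{D}_{k;i}$ remainder — i.e. that no uncontrolled term of size $O(1)$ (as opposed to $O(c_{k,K})$) survives. To manage this I would keep everything expressed in the reduced symbols $\sigma_m(\kappa|i\cdots)$ from the outset, track the powers of $\kappa_i$ carefully (they are the source of the $\kappa_j^2$ and $\sigma_k$ coefficients in $\textbf{C}$ and $\textbf{B}$), and only at the very end invoke $\kappa_i\sigma_{k-1}(\kappa|i)\geq\theta\sigma_k(\kappa)/2>0$ together with $\kappa_i>\kappa_1-\sqrt{\kappa_1}/n$ to guarantee $c_{k,K}>0$ and small, so that the division by $c_{k,K}$ is legitimate and the inequality is in fact an identity-plus-nonnegative-remainder rather than a genuine estimate. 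I expect the final comparison to reduce to verifying a handful of polynomial identities in the $\sigma$-symbols, each provable by the expansion rules (i)--(vi).
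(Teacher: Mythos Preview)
Your proposal is correct and follows essentially the same route as the paper: the single inequality is the completion of the square in $\xi_i$ (the Schur-complement step you describe, which produces the factor $1/c_{k,K}$), and thereafter the argument is a chain of exact polynomial identities in the $\sigma$-symbols. The paper organizes those identities into a separate preparatory lemma (five formulas relating $\sigma_k^{pp},\sigma_k^{pp,qq}$ to $\sigma_{k-1}(\kappa|ij),\sigma_{k-2}(\kappa|ipq)$, etc.) and then multiplies through by $\sigma_k^{ii}(K\kappa_i\sigma_k^{ii}-1)$ to clear denominators, whereas you plan to verify them inline---but the structure is identical.
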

Before to prove the above lemma, we need some algebraic identities.
\begin{lemm}\label{lemm18}
Assume $\kappa=(\kappa_1,\cdots,\kappa_n)\in\Gamma_{k}$. Suppose $1\leq i,j,p,q\leq n$ are given indices.
$a_j$ and $c_{k,K}$ are defined by \eqref{aj} and \eqref{ckK}. We have the following five identities:

\noindent (1)
\begin{align*}
&\kappa_iK\sigma_{k}^{ii}(\kappa)\sigma_{k}^{jj}(\kappa)[-\sigma_{k}^{jj}(\kappa)+2\kappa_i\sigma_{k}^{ii,jj}(\kappa)]-\kappa_i^2[\sigma_{k}^{ii,jj}(\kappa)]^2
+a_j[\kappa_iK\sigma_{k}^{ii}(\kappa)-1]\sigma_{k}^{ii}(\kappa)\\
=&\frac{1}{c_{k,K}}[\sigma_{k}^{ii}(\kappa)+\sigma_{k}^{jj}(\kappa)](\kappa_i+\kappa_j)\sigma_{k-2}(\kappa|ij)-\sigma_{k-1}^2(\kappa|ij).
\end{align*}

\noindent (2)
\begin{align*}
&\kappa_i\left[\sigma_{k}^{pp}(\kappa)\sigma_{k}^{ii,qq}(\kappa)+\sigma_{k}^{qq}(\kappa)\sigma_{k}^{ii,pp}(\kappa)-\sigma_{k}^{ii}(\kappa)
\sigma_{k}^{pp,qq}(\kappa)\right]-\sigma_{k}^{pp}(\kappa)\sigma_{k}^{qq}(\kappa)\nonumber\\
&-\kappa_i^2\sigma_{k}^{ii,pp}(\kappa)\sigma_{k}^{ii,qq}(\kappa)+\kappa_i\sigma_{k}^{ii}(\kappa)\sigma_{k}^{pp,qq}(\kappa)\nonumber\\
=&-\sigma_{k-1}(\kappa|ip)\sigma_{k-1}(\kappa|iq).
\end{align*}

\noindent (3)
\begin{align*}
\left[\sigma_{k}^{ii}(\kappa)+\sigma_{k}^{jj}(\kappa)\right](\kappa_i+\kappa_j)
=&2\sigma_k(\kappa)-2\sigma_{k}(\kappa|ij)+\kappa_i^2\sigma_{k-2}(\kappa|ij)+\kappa_j^2\sigma_{k-2}(\kappa|ij).\nonumber
\end{align*}

\noindent (4)
\begin{align*}
\sigma_{k}^{qq}(\kappa)\sigma_{k}^{ii,pp}(\kappa)-\sigma_{k}^{ii}(\kappa)\sigma_{k}^{pp,qq}(\kappa)=&\kappa_i\sigma_{k-2}^2(\kappa|ipq)
-\kappa_i\sigma_{k-1}(\kappa|ipq)\sigma_{k-3}(\kappa|ipq)\nonumber\\
&+\kappa_q\sigma_{k-3}(\kappa|ipq)\sigma_{k-1}(\kappa|ipq)
-\kappa_q\sigma_{k-2}^2(\kappa|ipq).
\end{align*}

\noindent (5)
\begin{align*}
\sigma_{k}^{pp}(\kappa)\sigma_{k-1}(\kappa|iq)=&\sigma_k\sigma_{k-2}(\kappa|ipq)+\sigma_{k-1}^2(\kappa|ipq)-\sigma_{k}(\kappa|ipq)\sigma_{k-2}(\kappa|ipq)\nonumber\\
&-\kappa_q\kappa_i\sigma_{k-2}^2(\kappa|ipq)+\kappa_q\kappa_i\sigma_{k-3}(\kappa|ipq)\sigma_{k-1}(\kappa|ipq).
\end{align*}

\end{lemm}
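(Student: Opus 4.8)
The proof of Lemma \ref{lemm18} is purely computational: the five identities are algebraic relations among the partial derivatives $\sigma_k^{pp}, \sigma_k^{ii,jj}$ of the elementary symmetric polynomial, which by facts (i) and (ii) in Section 2 are themselves $\sigma$'s evaluated at restricted index sets. The plan is to convert every derivative into a $\sigma$-with-deleted-indices and then repeatedly apply the Newton-type decomposition identity
$$\sigma_m(\kappa|S) = \kappa_a \sigma_{m-1}(\kappa|Sa) + \sigma_m(\kappa|Sa)$$
(fact (iii) applied to a restricted tensor) to expand everything down to a common "basis" of symbols $\sigma_{\bullet}(\kappa|ij)$ or $\sigma_{\bullet}(\kappa|ipq)$, at which point both sides become polynomials in $\kappa_i,\kappa_j$ (resp. $\kappa_i,\kappa_p,\kappa_q$) with these common symbols as coefficients, and one checks the polynomial identity coefficient by coefficient.

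Concretely, for identity (3) I would start from the right: write $\sigma_k^{ii}(\kappa)=\sigma_{k-1}(\kappa|i)$, expand once more in the index $j$ to get $\sigma_{k-1}(\kappa|i) = \kappa_j\sigma_{k-2}(\kappa|ij) + \sigma_{k-1}(\kappa|ij)$, do the symmetric thing for $\sigma_k^{jj}$, add, and multiply by $(\kappa_i+\kappa_j)$; the cross terms reassemble into $2\sigma_k(\kappa) - 2\sigma_k(\kappa|ij)$ via $\sigma_k(\kappa) = \kappa_i\kappa_j\sigma_{k-2}(\kappa|ij) + (\kappa_i+\kappa_j)\sigma_{k-1}(\kappa|ij) + \sigma_k(\kappa|ij)$ (the two-index version of fact (iii)). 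Identity (1) then follows by substituting the definitions of $a_j$ (equation \eqref{aj}) and $c_{k,K}$ (equation \eqref{ckK}): the factor $\kappa_i K\sigma_k^{ii}(\kappa) - 1 = 1/c_{k,K}$ is exactly designed so the $K$-dependent terms collapse, after which one is left with an identity in the $\sigma$'s that reduces to (3) together with $\sigma_k^{ii,jj}(\kappa)=\sigma_{k-2}(\kappa|ij)$ and the expansion of $\sigma_{k-1}(\kappa|i),\sigma_{k-1}(\kappa|j)$ in $j$ resp.\ $i$. Identities (2), (4), (5) involve three deleted indices and are handled the same way: express each $\sigma_k^{pp}(\kappa)$, $\sigma_k^{ii,pp}(\kappa)$ etc.\ in terms of $\sigma_\bullet(\kappa|ipq)$ by peeling off $i$, then $p$, then $q$ (using also $\sigma_{k-1}(\kappa|iq) = \kappa_p\sigma_{k-2}(\kappa|ipq) + \sigma_{k-1}(\kappa|ipq)$ and its analogues), collect, and match.

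The only genuine subtlety — and the step I expect to be the main obstacle — is bookkeeping: there are many terms, the identities are not symmetric in $p$ and $q$ (note the asymmetry in the right-hand sides of (4) and (5)), and one must be careful that the decomposition identity $\sigma_m(\kappa|S)=\kappa_a\sigma_{m-1}(\kappa|Sa)+\sigma_m(\kappa|Sa)$ is applied with a fixed, consistent order of peeling indices so that "like" terms really are alike. A clean way to organize this is to fix once and for all the expansion order $i \to p \to q$ (and $i\to j$ in the two-index case), rewrite \emph{every} symbol appearing on either side in that normal form, and then the verification is mechanical. There are no inequalities, no case distinctions, and no use of $\kappa\in\Gamma_k$ beyond what is needed to make $c_{k,K}$ well-defined (that hypothesis is really only used in Lemma \ref{lemm17}, where \eqref{ckK} is invoked); the $\Gamma_k$ assumption in the statement of Lemma \ref{lemm18} is harmless and the identities in fact hold for all $\kappa\in\mathbb{R}^n$ with $\kappa_i\sigma_{k-1}(\kappa|i)\neq 1/K$. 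I would present the proof of (3) in a couple of displayed lines, then state that (1), (2), (4), (5) follow by the same expansion-and-match procedure, perhaps recording the key intermediate expansions but not every cancellation.
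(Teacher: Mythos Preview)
Your proposal is correct and matches the paper's approach: the paper also proves each identity by rewriting every $\sigma_k^{pp}$, $\sigma_k^{pp,qq}$ as a $\sigma_{\bullet}$ with deleted indices and repeatedly applying the decomposition $\sigma_m(\kappa|S)=\kappa_a\sigma_{m-1}(\kappa|Sa)+\sigma_m(\kappa|Sa)$ until both sides live over the common basis $\sigma_\bullet(\kappa|ij)$ or $\sigma_\bullet(\kappa|ipq)$. The paper organizes (1) slightly more efficiently by first observing $-\sigma_k^{jj}+2\kappa_i\sigma_k^{ii,jj}+\sigma_k^{ii}=(\kappa_i+\kappa_j)\sigma_{k-2}(\kappa|ij)$ before expanding, but this is a cosmetic difference, and your remark that $\Gamma_k$ plays no essential role in the identities themselves is also accurate.
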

\par
\begin{proof}For simplification purpose, we omit the $\kappa$ in our notations in the following argument, which means that we let
$$\sigma_k=\sigma_k(\kappa), \ \ \sigma_k^{pp}=\sigma_k^{pp}(\kappa), \ \ \sigma_k^{pp,qq}=\sigma_k^{pp,qq}(\kappa).$$

\noindent (1) Using the identity
\begin{align*}
-\sigma_{k}^{jj}+2\kappa_i\sigma_{k}^{ii,jj}+\sigma_{k}^{ii}=(\kappa_i+\kappa_j)\sigma_{k-2}(\kappa|ij),
\end{align*}
and $a_j=\sigma_{k}^{jj}+(\kappa_i+\kappa_j)\sigma_k^{ii,jj}$, we
have
\begin{align}
&\kappa_iK\sigma_{k}^{ii}\sigma_{k}^{jj}(-\sigma_{k}^{jj}+2\kappa_i\sigma_{k}^{ii,jj})-\kappa_i^2(\sigma_{k}^{ii,jj})^2
+a_j[\kappa_iK(\sigma_{k}^{ii})^2-\sigma_{k}^{ii}] \label{s4.02}\\
=&\kappa_iK\sigma_{k}^{ii}\sigma_{k}^{jj}(-\sigma_{k}^{jj}+2\kappa_i\sigma_{k}^{ii,jj}+\sigma_{k}^{ii})-\kappa_i^2(\sigma_{k}^{ii,jj})^2-\sigma_{k}^{ii}\sigma_{k}^{jj}\nonumber\\
&
+(\kappa_iK\sigma_{k}^{ii}-1)\sigma_{k}^{ii}(\kappa_i+\kappa_j)\sigma_k^{ii,jj} \nonumber\\
=&(\kappa_iK\sigma_{k}^{ii}-1)\sigma_{k}^{jj}(\kappa_i+\kappa_j)\sigma_{k-2}(\kappa|ij)+(\kappa_i+\kappa_j)\sigma_k^{jj}\sigma_{k-2}(\kappa|ij)\nonumber
\\ &
+(\kappa_iK\sigma_{k}^{ii}-1)\sigma_{k}^{ii}(\kappa_i+\kappa_j)\sigma_k^{ii,jj}-\kappa_i^2(\sigma_{k}^{ii,jj})^2-\sigma_{k}^{ii}\sigma_{k}^{jj} \nonumber \\
=&(\kappa_iK\sigma_{k}^{ii}-1)(\sigma_{k}^{ii}+\sigma_{k}^{jj})(\kappa_i+\kappa_j)\sigma_{k-2}(\kappa|ij)\nonumber\\
&+(\kappa_i+\kappa_j)\sigma_{k}^{jj}\sigma_{k-2}(\kappa|ij)-\kappa_i^2(\sigma_{k}^{ii,jj})^2-\sigma_{k}^{ii}\sigma_{k}^{jj}
\nonumber\\
=&(\kappa_iK\sigma_{k}^{ii}-1)(\sigma_{k}^{ii}+\sigma_{k}^{jj})(\kappa_i+\kappa_j)\sigma_{k-2}(\kappa|ij)-\sigma_{k-1}^2(\kappa|ij).\nonumber
\end{align}
Here in the above last equality we have used the following identity,
\begin{align*}
&(\kappa_i+\kappa_j)\sigma_{k}^{jj}\sigma_{k-2}(\kappa|ij)-\kappa_i^2(\sigma_{k}^{ii,jj})^2-\sigma_{k}^{ii}\sigma_{k}^{jj}\\
=&\kappa_i\sigma_{k-2}(\kappa|ij)[\sigma_{k-1}(\kappa|j)-\kappa_i\sigma_{k-2}(\kappa|ij)]+\sigma_{k}^{jj}[\kappa_j\sigma_{k-2}(\kappa|ij)-\sigma_{k-1}(\kappa|i)]\\
=&\kappa_i\sigma_{k-2}(\kappa|ij)\sigma_{k-1}(\kappa|ij)-\sigma_{k}^{jj}\sigma_{k-1}(\kappa|ij)\\
=&-\sigma_{k-1}^2(\kappa|ij).
\end{align*}
\par

\noindent (2) We have
\begin{align}
&\kappa_i(\sigma_{k}^{pp}\sigma_{k}^{ii,qq}+\sigma_{k}^{qq}\sigma_{k}^{ii,pp}-\sigma_{k}^{ii}\sigma_{k}^{pp,qq})-\sigma_{k}^{pp}\sigma_{k}^{qq}
-\kappa_i^2\sigma_{k}^{ii,pp}\sigma_{k}^{ii,qq}+\kappa_i\sigma_{k}^{ii}\sigma_{k}^{pp,qq}\nonumber\\
=&\kappa_i\sigma_{k}^{qq}\sigma_{k}^{ii,pp}-\sigma_{k}^{pp}\sigma_{k-1}(\kappa|iq)
-\kappa_i^2\sigma_{k}^{ii,pp}\sigma_{k}^{ii,qq}\nonumber\\
=&\kappa_i\sigma_{k-1}(\kappa|iq)\sigma_k^{ii,pp}-\sigma_{k}^{pp}\sigma_{k-1}(\kappa|iq)\nonumber\\
=&-\sigma_{k-1}(\kappa|ip)\sigma_{k-1}(\kappa|iq). \nonumber
\end{align}

\noindent (3) We have
\begin{align*}
(\sigma_{k}^{ii}+\sigma_{k}^{jj})(\kappa_i+\kappa_j)
=&\kappa_i\sigma_{k-1}(\kappa|i)+\kappa_j\sigma_{k-1}(\kappa|j)+\kappa_i\sigma_{k-1}(\kappa|j)+\kappa_j\sigma_{k-1}(\kappa|i)\\
=&2\sigma_k-\sigma_{k}(\kappa|i)-\sigma_{k}(\kappa|j)+\kappa_i^2\sigma_{k-2}(\kappa|ij)+\kappa_i\sigma_{k-1}(\kappa|ij)\\
&+\kappa_j^2\sigma_{k-2}(\kappa|ij)+\kappa_j\sigma_{k-1}(\kappa|ij)\\
=&2\sigma_k-2\sigma_{k}(\kappa|ij)+\kappa_i^2\sigma_{k-2}(\kappa|ij)+\kappa_j^2\sigma_{k-2}(\kappa|ij).
\end{align*}

\noindent (4) We further denote $\tilde\sigma_{m}=\sigma_{m}(\kappa|ipq)$ here. Thus, we have
\begin{align*}
&\sigma_{k}^{qq}\sigma_{k}^{ii,pp}-\sigma_{k}^{ii}\sigma_{k}^{pp,qq}\\
=&[\kappa_p\sigma_{k-2}(\kappa|pq)+\sigma_{k-1}(\kappa|pq)]\sigma_{k-2}(\kappa|ip)-\sigma_{k-2}(\kappa|pq)[\kappa_p\sigma_{k-2}(\kappa|ip)+\sigma_{k-1}(\kappa|ip)]\\
=&\sigma_{k-1}(\kappa|pq)\sigma_{k-2}(\kappa|ip)-\sigma_{k-2}(\kappa|pq)\sigma_{k-1}(\kappa|ip)\\
=&(\kappa_i\tilde\sigma_{k-2}+\tilde\sigma_{k-1})(\kappa_q\tilde\sigma_{k-3}+\tilde\sigma_{k-2})-(\kappa_i\tilde\sigma_{k-3}+\tilde\sigma_{k-2})(\kappa_q\tilde\sigma_{k-2}+\tilde\sigma_{k-1})\\
=&\kappa_i\tilde\sigma_{k-2}^2-\kappa_i\tilde\sigma_{k-1}\tilde\sigma_{k-3}+\kappa_q\tilde\sigma_{k-3}\tilde\sigma_{k-1}-\kappa_q\tilde\sigma_{k-2}^2.
\end{align*}

\noindent (5) We also denote $\tilde\sigma_{m}=\sigma_{m}(\kappa|ipq)$ here. We have
\begin{align*}
\sigma_{k}^{pp}\sigma_{k-1}(\kappa|iq)
=&\sigma_{k-1}(\kappa|p)[\kappa_p\sigma_{k-2}(\kappa|ipq)+\sigma_{k-1}(\kappa|ipq)]\\
=&\kappa_p\sigma_{k-1}(\kappa|p)\tilde\sigma_{k-2}+\sigma_{k-1}(\kappa|p)\tilde\sigma_{k-1}\\
=&[\sigma_k-\sigma_{k}(\kappa|p)]\tilde\sigma_{k-2}+[\kappa_q\kappa_i\tilde\sigma_{k-3}+(\kappa_q+\kappa_i)\tilde\sigma_{k-2}+\tilde\sigma_{k-1}]\tilde\sigma_{k-1}\\
=&\sigma_k\tilde\sigma_{k-2}-[\kappa_q\kappa_i\tilde\sigma_{k-2}+(\kappa_q+\kappa_i)\tilde\sigma_{k-1}+\tilde\sigma_{k}]
\tilde\sigma_{k-2}\\
&+[\kappa_q\kappa_i\tilde\sigma_{k-3}+(\kappa_q+\kappa_i)\tilde\sigma_{k-2}+\tilde\sigma_{k-1}]\tilde\sigma_{k-1}\\
=&\sigma_k\tilde\sigma_{k-2}+\tilde\sigma_{k-1}^2-\tilde\sigma_{k}\tilde\sigma_{k-2}-\kappa_q\kappa_i\tilde\sigma_{k-2}^2
+\kappa_q\kappa_i\tilde\sigma_{k-3}\tilde\sigma_{k-1}.
\end{align*}

\end{proof}

\noindent{\bf Proof of the Lemma \ref{lemm17}}:
For the sake of simplification, we still omit the $\kappa$ in the following calculation, which means that we still let
$$\sigma_k=\sigma_k(\kappa),\ \  \sigma_k^{pp}=\sigma_k^{pp}(\kappa), \ \ \sigma_k^{pp,qq}=\sigma_k^{pp,qq}(\kappa).$$
Let's calculate the left hand side of \eqref{s3.01}. By Lemma \ref{lemm11}, we have
$$K\kappa_i\sigma_{k}^{ii}-1\geq K\kappa_1\sigma_k^{11}/2-1\geq K\theta\sigma_{k}/2-1\geq 0,$$  if the positive constant $K$ and $\kappa_1$ both are sufficiently large.
A
straightforward calculation shows,
\begin{align}\label{s4.03}
&\kappa_i\Big[K\Big(\sum_{j}\sigma_{k}^{jj}\xi_j\Big)^2-\sigma_{k}^{pp,qq}\xi_p\xi_q\Big]-\sigma_{k}^{ii}\xi_i^2+\sum_{j\neq i}a_j\xi_j^2\\
=&\kappa_iK\Big(\sum_{j\neq i}\sigma_{k}^{jj}\xi_j\Big)^2+2\kappa_i\xi_i\Big[\sum_{j\neq i}(K\sigma_{k}^{ii}\sigma_{k}^{jj}-\sigma_{k}^{ii,jj})\xi_j\Big]\nonumber\\
&+[\kappa_iK(\sigma_{k}^{ii})^2-\sigma_{k}^{ii}]\xi_i^2+\sum_{j\neq i}a_j\xi_j^2-\kappa_i\sum_{p\neq i; q\neq i}\sigma_{k}^{pp,qq}\xi_p\xi_q\nonumber\\
\geq&\kappa_iK\Big(\sum_{j\neq i}\sigma_{k}^{jj}\xi_j\Big)^2-\frac{\kappa_i^2\Big[\sum_{j\neq i}(K\sigma_{k}^{ii}\sigma_{k}^{jj}-\sigma_{k}^{ii,jj})\xi_j\Big]^2}{\kappa_iK(\sigma_{k}^{ii})^2-\sigma_{k}^{ii}}\nonumber\\
&+\sum_{j\neq i}a_j\xi_j^2-\kappa_i\sum_{p\neq i; q\neq
i}\sigma_{k}^{pp,qq}\xi_p\xi_q\nonumber\\
=&\sum_{j\neq
i}\left[\kappa_iK(\sigma_{k}^{jj})^2-\frac{\kappa_i^2(K\sigma_{k}^{ii}\sigma_{k}^{jj}-\sigma_{k}^{ii,jj})^2}{\kappa_iK(\sigma_{k}^{ii})^2-\sigma_{k}^{ii}}+a_j\right]\xi_j^2\nonumber\\
&+\sum_{p,q\neq i;p\neq
q}\Big[\kappa_iK\sigma_{k}^{pp}\sigma_{k}^{qq}-\frac{\kappa_i^2(K\sigma_{k}^{ii}\sigma_{k}^{pp}
-\sigma_{k}^{ii,pp})(K\sigma_{k}^{ii}\sigma_{k}^{qq}-\sigma_{k}^{ii,qq})}{\kappa_iK(\sigma_{k}^{ii})^2
-\sigma_{k}^{ii}}-\kappa_i\sigma_{k}^{pp,qq}\Big]\xi_p\xi_q\nonumber,
\end{align}
where, in the second inequality, we have used,
\begin{align*}
&\frac{\kappa_i^2\Big[\sum_{j\neq
i}(K\sigma_{k}^{ii}\sigma_{k}^{jj}-\sigma_{k}^{ii,jj})\xi_j\Big]^2}{\kappa_iK(\sigma_{k}^{ii})^2-\sigma_{k}^{ii}}+2\kappa_i\xi_i\Big[\sum_{j\neq
i}(K\sigma_{k}^{ii}\sigma_{k}^{jj}-\sigma_{k}^{ii,jj})\xi_j\Big]\nonumber\\
&+[\kappa_iK(\sigma_{k}^{ii})^2-\sigma_{k}^{ii}]\xi_i^2\geq 0.
\nonumber
\end{align*}

Thus, we can multiple the term
$\kappa_iK(\sigma_{k}^{ii})^2-\sigma_{k}^{ii}$ in both sides of
\eqref{s4.03}. Then, we get
\begin{align}\label{s4.04}
&[\kappa_iK(\sigma_{k}^{ii})^2-\sigma_{k}^{ii}]\Big\{\kappa_i\Big[K\Big(\sum_{j}\sigma_{k}^{jj}\xi_j\Big)^2-\sigma_{k}^{pp,qq}\xi_p\xi_q\Big]-\sigma_{k}^{ii}\xi_i^2
+\sum_{j\neq i}a_j\xi_j^2\Big\}\\
\geq&\sum_{j\neq
i}\Big[\kappa_iK\sigma_{k}^{ii}\sigma_{k}^{jj}(-\sigma_{k}^{jj}+2\kappa_i\sigma_{k}^{ii,jj})-\kappa_i^2(\sigma_{k}^{ii,jj})^2
+a_j(\kappa_iK\sigma_{k}^{ii}-1)\sigma_{k}^{ii}\Big]\xi_j^2\nonumber\\
&+\sum_{p,q\neq i;p\neq
q}\Big[\kappa_iK\sigma_{k}^{ii}[\kappa_i(\sigma_{k}^{pp}\sigma_{k}^{ii,qq}+\sigma_{k}^{qq}\sigma_{k}^{ii,pp}-\sigma_{k}^{ii}\sigma_{k}^{pp,qq})-\sigma_{k}^{pp}\sigma_{k}^{qq}]\nonumber\\
&-\kappa_i^2\sigma_{k}^{ii,pp}\sigma_{k}^{ii,qq}+\kappa_i\sigma_{k}^{ii}\sigma_{k}^{pp,qq}\Big]\xi_p\xi_q\nonumber
\end{align}
\begin{align*}
=&\dsum_{j\neq
i}\Big[(\kappa_iK\sigma_{k}^{ii}-1)(\sigma_{k}^{ii}+\sigma_{k}^{jj})(\kappa_i+\kappa_j)\sigma_{k-2}(\kappa|ij)-\sigma_{k-1}^2(\kappa|ij)\Big]\xi_j^2\nonumber\\
&+\sum_{p,q\neq i;p\neq
q}\Big[(\kappa_iK\sigma_{k}^{ii}-1)[\kappa_i(\sigma_{k}^{pp}\sigma_{k}^{ii,qq}+\sigma_{k}^{qq}\sigma_{k}^{ii,pp}-\sigma_{k}^{ii}\sigma_{k}^{pp,qq})-\sigma_{k}^{pp}\sigma_{k}^{qq}]\nonumber\\
&-\sigma_{k-1}(\kappa|ip)\sigma_{k-1}(\kappa|iq)\Big]\xi_p\xi_q\nonumber\\
= &\dsum_{j\neq
i}\Big[(\kappa_iK\sigma_{k}^{ii}-1)(\sigma_{k}^{ii}+\sigma_{k}^{jj})(\kappa_i+\kappa_j)\sigma_{k-2}(\kappa|ij)-\sigma_{k-1}^2(\kappa|ij)\Big]\xi_j^2\nonumber\\
&+\sum_{p,q\neq i;p\neq
q}\Big[(\kappa_iK\sigma_{k}^{ii}-1)(\kappa_i\sigma_{k}^{qq}\sigma_{k}^{ii,pp}-\kappa_i\sigma_{k}^{ii}\sigma_{k}^{pp,qq}-\sigma_{k}^{pp}\sigma_{k-1}(\kappa|iq))\nonumber\\
&-\sigma_{k-1}(\kappa|ip)\sigma_{k-1}(\kappa|iq)\Big]\xi_p\xi_q\nonumber\\
=&(\kappa_iK\sigma_{k}^{ii}-1)\dsum_{j\neq
i}\Big[\kappa_i^2\sigma_{k-2}^2(\kappa|ij)+\kappa_j^2\sigma_{k-2}^2(\kappa|ij)-2\sigma_{k}(\kappa|ij)\sigma_{k-2}(\kappa|ij)\nonumber\\
&+2\sigma_{k}\sigma_{k-2}(\kappa|ij)
-c_{k,K}\sigma_{k-1}^2(\kappa|ij)\Big]\xi_j^2\nonumber\\
&+(\kappa_iK\sigma_{k}^{ii}-1)\sum_{p,q\neq i;p\neq
q}\Big[\kappa_i^2\sigma_{k-2}^2(\kappa|ipq)
-\kappa_i^2\sigma_{k-1}(\kappa|ipq)\sigma_{k-3}(\kappa|ipq)\nonumber\\
&-\sigma_k\sigma_{k-2}(\kappa|ipq) +\sigma_{k}(\kappa|ipq)\sigma_{k-2}(\kappa|ipq)-\sigma_{k-1}^2(\kappa|ipq)\nonumber\\
&-c_{k,K}\sigma_{k-1}(\kappa|ip)\sigma_{k-1}(\kappa|iq)\Big]\xi_p\xi_q\nonumber\\
=&\frac{1}{c_{k,K}}\left[\kappa_i^2\textbf{A}_{k;i}+\sigma_{k}\textbf{B}_{k;i}+\textbf{C}_{k;i}-c_{k,K}\textbf{D}_{k;i}\right],
\end{align*}
where in the second equality, we have used identities (1),(2) of Lemma \ref{lemm18} and in the forth equality, we have used identities (3),(4),(5) of  Lemma \ref{lemm18}.  We have completed our proof.
\bigskip

Now, we stat the main result of the following several sections.

\begin{theo}\label{maintheo} Conjecture \ref{con} holds when
$k=n-2$ and $n\geq 5$.
\end{theo}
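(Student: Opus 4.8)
The plan is to deduce Theorem \ref{maintheo} from Lemma \ref{lemm17}, which already reduces Conjecture \ref{con} to the purely algebraic \emph{key inequality}
$$\textbf{A}_{k;i}+\sigma_{k}(\kappa)\textbf{B}_{k;i}+\textbf{C}_{k;i}-c_{k,K}\textbf{D}_{k;i}\ \geq\ 0\qquad\text{for all }\xi\in\mathbb R^{n},$$
so for $k=n-2$ --- which is precisely the range $2(n-2)>n$, i.e. $n\geq5$ --- it suffices to establish this inequality. Two features make the last term easy to dispose of: $c_{k,K}\to0$ as $K\to\infty$, while $\textbf{D}_{k;i}=\big(\sum_{j\neq i}\sigma_{k-1}(\kappa|ij)\xi_{j}\big)^{2}\geq0$ does not depend on $K$, so the term $-c_{k,K}\textbf{D}_{k;i}$ will only ever be used to absorb a controlled error after $K$ has been chosen large. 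I would first record the simplifications special to $k=n-2$: the vector $(\kappa|ipq)$ has only $n-3$ entries, hence $\sigma_{k}(\kappa|ipq)=\sigma_{n-2}(\kappa|ipq)=0$; consequently the off-diagonal entries of $\textbf{A}_{n-2;i}$ equal $\sigma_{n-4}^{2}(\kappa|ipq)-\sigma_{n-3}(\kappa|ipq)\sigma_{n-5}(\kappa|ipq)$, which by Newton's inequality (viii) is $\geq\tfrac12\sigma_{n-4}^{2}(\kappa|ipq)$ when $n\geq5$, and the off-diagonal entries of $\textbf{C}_{n-2;i}$ collapse to $-\sigma_{n-3}^{2}(\kappa|ipq)$. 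This is the precise sense in which $k=n-2$ is a ``special case''.

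Next I would normalize by reordering so that $\kappa_{1}\geq\cdots\geq\kappa_{n}$. Membership in $\Gamma_{n-2}$ forces $\kappa_{j}>0$ for $j\leq n-2$, so at most $\kappa_{n-1},\kappa_{n}$ are nonpositive; the hypothesis $\kappa_{i}>\kappa_{1}-\sqrt{\kappa_{1}}/n$ together with $N_{0}\leq\sigma_{n-2}(\kappa)\leq N$ pins the geometry down --- $\kappa_{i}$ is comparable to $\kappa_{1}$, any negative curvature satisfies $|\kappa_{n-1}|,|\kappa_{n}|\leq\tfrac{2}{n-2}\kappa_{1}$ by Lemma \ref{lemm9}, and the ``top'' minors $\sigma_{m}(\kappa|\cdots)$ are comparable to the corresponding powers of $\kappa_{1}$. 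A short computation using identities (iii)--(v) shows that the \emph{diagonal} of the combined form completes: the coefficient of $\xi_{j}^{2}$ equals
$$\big[(\kappa_{i}+\kappa_{j})\sigma_{k-2}(\kappa|ij)+\sigma_{k-1}(\kappa|ij)\big]^{2}-(1+c_{k,K})\,\sigma_{k-1}^{2}(\kappa|ij),$$
which already suggests the strategy of \emph{competing squares}. I would then split the argument into five cases, organized primarily by the number ($0$, $1$ or $2$) of nonpositive entries among $\{\kappa_{j}:j\neq i\}$, with further subdivisions according to relative magnitudes and to the signs of the auxiliary quantities $\sigma_{n-4}(\kappa|ij)$ and $\sigma_{n-3}(\kappa|ij)$; the three configurations in which the relevant sub-sums are positive would be treated in Section 6, and the two configurations carrying negative curvatures in Section 7. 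Section 5 would first establish the additional algebraic lemmas needed --- sharpened Newton/Maclaurin comparisons and sign/monotonicity statements for the shifted vectors $(\kappa|ij)$ and $(\kappa|ipq)$, in the spirit of Lemma \ref{lemm7} and Lemma \ref{lemm11}.

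For the first three cases the method is to write $\textbf{A}_{n-2;i}+\sigma_{k}(\kappa)\textbf{B}_{n-2;i}+\textbf{C}_{n-2;i}$, as a quadratic form in $\{\xi_{j}\}_{j\neq i}$, as a manifestly nonnegative combination of squares, plus a harmless $O(c_{k,K})$ error absorbed by $c_{k,K}\textbf{D}_{n-2;i}$. Concretely: the diagonal supplies the large squares $\big[(\kappa_{i}+\kappa_{j})\sigma_{k-2}(\kappa|ij)+\sigma_{k-1}(\kappa|ij)\big]^{2}$, the dangerous off-diagonal entry $-\sigma_{n-3}^{2}(\kappa|ipq)$ of $\textbf{C}_{n-2;i}$ and the positive off-diagonal part of $\textbf{A}_{n-2;i}$ are matched against the cross terms of $\textbf{D}_{n-2;i}=\big(\sum_{j}\sigma_{n-3}(\kappa|ij)\xi_{j}\big)^{2}$, and the Newton inequalities for $(\kappa|ipq)$ provide the Cauchy--Schwarz domination that closes the estimate. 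In the remaining two cases a direct sum-of-squares fails, because one or two negative curvatures make the form genuinely indefinite along the corresponding coordinate directions; there I would borrow the device used for convex solutions in \cite{GRW,LRW}: isolate the (at most two) directions $\xi_{j}$ with $\kappa_{j}\leq0$, bound their contribution using Lemma \ref{lemm9}, Lemma \ref{lemm7} and the Section 5 lemmas, and then apply the completing-square argument to the positive-definite block that remains.

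I expect the main obstacle to be the off-diagonal term $-\sigma_{n-3}^{2}(\kappa|ipq)$ of $\textbf{C}_{n-2;i}$: unlike the case $k=n-1$ treated in \cite{RW}, these entries are of the same order as the leading diagonal contributions, so they cannot be estimated index by index --- they must be cancelled exactly against a suitable combination of the $\textbf{A}$ and $\textbf{D}$ contributions, which forces the square completion to be organized globally rather than coordinate by coordinate. The hardest configuration should be the one with two negative curvatures, where the positive-definite block has only $n-3$ directions and essentially the only available slack is the tiny $c_{k,K}\textbf{D}_{n-2;i}$; making the constants in the Section 5 lemmas sharp enough for that case --- and in particular confirming that $n\geq5$ is exactly the hypothesis the argument needs --- is where most of the effort will lie.
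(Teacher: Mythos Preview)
Your reduction via Lemma \ref{lemm17} and your diagonal completion
$[(\kappa_{i}+\kappa_{j})\sigma_{k-2}(\kappa|ij)+\sigma_{k-1}(\kappa|ij)]^{2}-(1+c_{k,K})\sigma_{k-1}^{2}(\kappa|ij)$
are both correct, and you have correctly identified that the proof must combine a ``competing-squares'' argument with the convex-solution technique of \cite{GRW,LRW}. However, you have the assignment of techniques to cases \emph{exactly backwards}, and this is a genuine gap, not a cosmetic one. In the paper, the primary dichotomy is the sign of $\sigma_{n-2}(\kappa|i)$, not the number of nonpositive $\kappa_{j}$: the competing-squares method (Section~6) is used for Cases~A, B1, B2, i.e.\ precisely when $\sigma_{n-2}(\kappa|i)\leq 0$ and there \emph{are} negative curvatures, while the \cite{GRW,LRW} technique (Section~7) is used for Cases~B3 and~C, where $\sigma_{n-2}(\kappa|i)$ is nonnegative or almost so.

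The reason your plan cannot work as stated is structural. The convex-solution technique of Lemma~\ref{lem32} needs two inputs: the almost-convexity $\kappa_{i}\sigma_{n-3}(\kappa|i)\leq(1+\delta_{0})\sigma_{n-2}(\kappa)$, and an \emph{absolute upper bound on $\kappa_{n-2}$} so that an eigenvalue-gap $\kappa_{\mu+1}/\kappa_{1}\leq\delta'$ occurs for some $\mu\leq n-3$. In the negative-curvature regimes (Case~A especially) neither holds in general: one can have $\kappa_{1}=\cdots=\kappa_{n-2}$ all large with $\kappa_{n-1},\kappa_{n}$ arranged negatively so that $\sigma_{n-2}(\kappa)$ stays bounded, and then there is no gap to exploit. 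Conversely, the competing-squares argument in Section~6 hinges on Lemma~\ref{lemm30}, whose proof repeatedly uses $\sigma_{n-2}(\kappa|i)\leq 0$ (or the specific B1/B2 hypotheses) to control the ``bad'' diagonal term $-\sigma_{n-2}(\bar\kappa)\sigma_{n-6}(\bar\kappa|j)-\sigma_{n-2}(\bar\kappa|j)\sigma_{n-6}(\bar\kappa|j)$; this lemma fails outright in Case~C. So the hardest case for competing squares is indeed the two-negative-curvature Case~A --- but that is where you must make competing squares work, not where you can retreat to \cite{GRW,LRW}. The Section~5 lemmas (notably Lemma~\ref{lemm25}, \ref{lemm26}, \ref{lemm27}, \ref{lemm28}) are tuned to exactly this, and the margin $\tfrac{1}{20}\sigma_{n-3}^{2}(\kappa|i)\sum\xi_{j}^{2}$ in \eqref{s6.01} is what ultimately carries the $\textbf{D}$ term, not the smallness of $c_{k,K}$ alone.
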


The proof of the above theorem is very complicated. Thus, we first give an outline. For convenience, we let $\kappa_1\geq\cdots\geq\kappa_n$.
We divide into five cases to verify the above Theorem \ref{maintheo}
in section 6 and 7. These cases are

\bigskip

\noindent Case A: Suppose $\sigma_{n-2}(\kappa|i)\leq 0$ and
$\kappa_n\leq\kappa_{n-1}\leq 0$.

\par

\noindent  Case B: Suppose $\sigma_{n-2}(\kappa|i)\leq 0$ and $\kappa_n< 0,
\kappa_{n-1}>0$. This case needs to be divided into three sub-cases:

\par
Case B1: Suppose
$$\kappa_i\sigma_{n-3}(\kappa|i)\geq(1+\delta_0)\sigma_{n-2}(\kappa)\quad {\rm with} \quad \delta_0=\dfrac{1}{32n(n-2)}.$$

\par
Case B2: Suppose $\kappa_1\cdots\kappa_{n-2}\geq
2(n-2)\sigma_{n-2}(\kappa)$.

Case B3: For the given constant $\delta_0$ in Case B1, we suppose
$$\kappa_i\sigma_{n-3}(\kappa|i)\leq
(1+\delta_0)\sigma_{n-2}(\kappa)\text{ and } \kappa_1\cdots\kappa_{n-2}<
2(n-2)\sigma_{n-2}(\kappa).$$

\noindent Case C: Suppose $\sigma_{n-2}(\kappa|i)\geq 0$.

\par
In section 6, for the given index $i$, if
$\kappa_i\geq\kappa_1-\sqrt{\kappa_1}/n$, we will prove the
quadratic form
\begin{align}\label{s4.05}
\kappa_i^2
\textbf{A}_{n-2;i}+\sigma_{n-2}(\kappa)\textbf{B}_{n-2;i}+\textbf{C}_{n-2;i}-c_{n-2,K}\textbf{D}_{n-2;i}
\end{align}
is non-negative for the cases A, B1 and B2, when $\kappa_1$ is sufficiently large. Using Lemma \ref{lemm17}, we obtain the proof
of Theorem \ref{maintheo} for same cases. Note that the
conditions appearing in sub-cases B1 and B2 are only used  in Lemma
\ref{lemm30}. In section 7, we will give a straightforward proof of  Theorem \ref{maintheo}
for the cases B3 and C.

\section{More algebraic lemmas}
In this section, we list more algebraic results about $\sigma_k$ which will be used in section 6. For a $m$ dimensional vector $\kappa=(\kappa_1,\cdots,\kappa_m)$, in this section, we always denote $\kappa^2=(\kappa_1^2,\cdots,\kappa_m^2)$ which is also a $m$ dimensional vector.
\begin{lemm}\label{lemm20}
Assume $\kappa=(\kappa_1,\cdots,\kappa_m)\in \mathbb{R}^m$. For any  $0<k\leq m$, we have
\begin{align}\label{a5.1}
\sigma_{k}^2(\kappa)=\sigma_{k}(\kappa^2)+2\dsum_{i=1}^{k}(-1)^{i+1}\sigma_{k+i}(\kappa)\sigma_{k-i}(\kappa),
\end{align}
where by
(\ref{a2.1}), we have $\sigma_{k}(\kappa^2)=\dsum_{1\leq j_1<\cdots<j_k\leq
m}\kappa_{j_1}^2\cdots \kappa_{j_k}^2$.
\end{lemm}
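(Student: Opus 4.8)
\textbf{Proof proposal for Lemma \ref{lemm20}.}

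The plan is to prove the identity \eqref{a5.1} by expanding the square $\sigma_k^2(\kappa)$ combinatorially and organizing the resulting monomials according to how many indices are shared between the two factors. Write $\sigma_k(\kappa)=\sum_{I}\kappa_I$ where $I$ ranges over $k$-element subsets of $\{1,\dots,m\}$ and $\kappa_I=\prod_{j\in I}\kappa_j$. Then $\sigma_k^2(\kappa)=\sum_{I,J}\kappa_I\kappa_J$, and I would split this double sum according to $i:=k-|I\cap J|$, the number of indices in $I$ not in $J$ (equivalently in $J$ not in $I$, since $|I|=|J|=k$), so $i$ runs from $0$ to $k$. The $i=0$ term is $\sum_I \kappa_I^2=\sigma_k(\kappa^2)$, which accounts for the first term on the right of \eqref{a5.1}.

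For $i\geq 1$, I would show that the contribution of the pairs $(I,J)$ with $|I\cap J|=k-i$ equals $2(-1)^{i+1}\sigma_{k+i}(\kappa)\sigma_{k-i}(\kappa)$. The key step is a change of variables: a pair $(I,J)$ with $|I\cap J|=k-i$ is equivalent to the data of a set $C=I\cap J$ of size $k-i$, a set $A=I\setminus J$ of size $i$, and a set $B=J\setminus I$ of size $i$, all pairwise disjoint. Then $\kappa_I\kappa_J=\kappa_C^2\,\kappa_A\kappa_B$. Summing $\kappa_C^2$ over $C$ disjoint from $A\cup B$ gives (after reindexing) a term related to $\sigma_{k-i}$ of the complementary variables, while the pair $(A,B)$ assembles into $\sigma_{2i}$-type data on the set $A\cup B$ of size $2i$. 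The cleanest route is to recognize that $\sum_{|S|=2i}\kappa_S\,\bigl(\sum_{C:\,|C|=k-i,\,C\cap S=\emptyset}\kappa_C^2\bigr)\cdot(\text{number of ways to split }S\text{ into ordered }(A,B))$ must be reorganized. Since each unordered set $S$ of size $2i$ splits into an ordered pair $(A,B)$ of $i$-subsets in $\binom{2i}{i}$ ways, but we want the coefficient to come out as $2\sigma_{k+i}\sigma_{k-i}$, the combinatorial identity to verify is essentially the dual of the expansion $\sigma_k^2=\sum_i c_i\,\sigma_{k+i}\sigma_{k-i}$ with $c_i$ determined by a sign-reversing/inclusion-exclusion count. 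I would make this precise by instead proving the equivalent polynomial identity: multiply out $\bigl(\sum_j t\kappa_j\bigr)$-type generating functions, or more directly use Newton's identities / the known formula expressing $p_k$ (power sums, here $\sigma_k(\kappa^2)$ plays an analogous role) — but the most self-contained approach is the direct bijective count of monomials with multiplicity, tracking the sign $(-1)^{i+1}$ which arises from the inclusion-exclusion needed to pass from "ordered disjoint pair $(A,B)$" to "unordered pair", i.e. from $\sigma_{2i}$ on $2i$ variables back to a product of two $\sigma_i$'s minus overcounted lower terms.

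The main obstacle I anticipate is pinning down the sign $(-1)^{i+1}$ and the factor $2$ correctly: the naive bijection gives a sum over disjoint triples $(A,B,C)$ which does \emph{not} factor as $\sigma_{k-i}\cdot(\text{something})$ because of the disjointness constraint between $C$ and $A\cup B$. Resolving this requires an inclusion-exclusion over how much $C$ is allowed to overlap, and it is exactly this alternating sum that produces the telescoping $\sum_{i}(-1)^{i+1}\sigma_{k+i}\sigma_{k-i}$. An alternative and perhaps safer route, which I would fall back on if the bijection gets unwieldy, is induction on $m$: both sides are symmetric polynomials, and using $\sigma_k(\kappa_1,\dots,\kappa_m)=\sigma_k(\kappa|m)+\kappa_m\sigma_{k-1}(\kappa|m)$ one can expand $\sigma_k^2$ in powers of $\kappa_m$ (degrees $0,1,2$) and match coefficients against the right-hand side expanded the same way, reducing to the $(m-1)$-variable case plus the identities $\sigma_{k+i}(\kappa)=\sigma_{k+i}(\kappa|m)+\kappa_m\sigma_{k+i-1}(\kappa|m)$. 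The coefficient of $\kappa_m^0$ is the inductive hypothesis; the coefficients of $\kappa_m^1$ and $\kappa_m^2$ reduce to the same identity for $k-1$ (and for $\sigma_{k-1}\sigma_k$-type cross terms), so the induction closes. I would present whichever of these two is shorter, likely the induction.
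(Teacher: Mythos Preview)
Your fallback induction on $m$ is exactly the route the paper takes (they peel off $\kappa_1$ rather than $\kappa_m$, which is immaterial by symmetry). The paper's execution is a bit more explicit than your sketch: after writing $\sigma_k^2(\kappa)=[\kappa_1\sigma_{k-1}(\kappa|1)+\sigma_k(\kappa|1)]^2$, they apply the inductive hypothesis to $\sigma_{k-1}^2(\kappa|1)$ (not only to $\sigma_k^2(\kappa|1)$), then regroup the resulting cross terms $\sum_i(-1)^{i+1}\sigma_{k+i}(\kappa)\sigma_{k-i}(\kappa|1)$ and $\sum_i(-1)^{i+1}\sigma_{k+1+i}(\kappa)\sigma_{k-1-i}(\kappa|1)$ via a telescoping cancellation---precisely the mechanism you anticipate when you say the $\kappa_m^1$ coefficient ``reduces to the same identity for $k-1$.'' So your plan is correct and matches the paper.

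Your primary combinatorial route is genuinely different. The paper does not attempt it. Your instinct that the naive bijection fails is right: the contribution of pairs $(I,J)$ with $|I\cap J|=k-i$ is \emph{not} equal to $2(-1)^{i+1}\sigma_{k+i}\sigma_{k-i}$ individually (the former has only nonnegative integer multiplicities as a polynomial in the $\kappa_j$, while the latter alternates in sign), so one really does need an inclusion--exclusion that mixes several values of $i$ simultaneously. That can be made to work, but it is no shorter than the induction, which is why the paper (and, sensibly, you) opts for the latter.
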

\begin{proof}
We will prove \eqref{a5.1} by induction of the dimension $m$.
At first, for $m=1$, it is obvious. Now
we assume that our Lemma is true for $m-1$, namely that for any $0<k\leq m-1$, \eqref{a5.1} holds. Let's prove our Lemma for $m$. If $k=m$, \eqref{a5.1} is clear. Thus, we only need to consider the proof for $k<m$ in the following.

For a $m$ dimensional vector $\kappa$, we clearly have
\begin{align}\label{s5.04}
\sigma_k^2(\kappa)=&[\kappa_1\sigma_{k-1}(\kappa|1)+\sigma_k(\kappa|1)]^2\\
=&\kappa_1^2\sigma_{k-1}^2(\kappa|1)+2\kappa_1\sigma_{k-1}(\kappa|1)\sigma_k(\kappa|1)+\sigma_k^2(\kappa|1)\nonumber\\
=&\kappa_1^2\Big[\sigma_{k-1}(\kappa^2|1)+2\dsum_{i=1}^{k-1}(-1)^{i+1}\sigma_{k-1+i}(\kappa|1)\sigma_{k-1-i}(\kappa|1)\Big]\nonumber\\
&+2\sigma_{k-1}(\kappa|1)[\sigma_{k+1}(\kappa)-\sigma_{k+1}(\kappa|1)]+\sigma_{k}^2(\kappa|1)\nonumber\\
=&\kappa_1^2\sigma_{k-1}(\kappa^2|1)+2\kappa_1\dsum_{i=1}^{k-1}(-1)^{i+1}[\sigma_{k+i}(\kappa)-\sigma_{k+i}(\kappa|1)]\sigma_{k-1-i}(\kappa|1)\nonumber\\
&+2\sigma_{k-1}(\kappa|1)\sigma_{k+1}(\kappa)-2\sigma_{k-1}(\kappa|1)\sigma_{k+1}(\kappa|1)+\sigma_{k}^2(\kappa|1),\nonumber
\end{align}
where we have used the inductive assumption that \eqref{a5.1} holds for $k-1$ and the $m-1$ dimensional vector $(\kappa|1)$ in the third equality.
A straightforward calculation shows
\begin{align}\label{s5.05}
&\kappa_1\dsum_{i=1}^{k-1}(-1)^{i+1}[\sigma_{k+i}(\kappa)-\sigma_{k+i}(\kappa|1)]\sigma_{k-1-i}(\kappa|1)\\
=&\dsum_{i=1}^{k-1}(-1)^{i+1}\sigma_{k+i}(\kappa)[\sigma_{k-i}(\kappa)-\sigma_{k-i}(\kappa|1)]\nonumber\\
&-\dsum_{i=1}^{k-1}(-1)^{i+1}[\sigma_{k+1+i}(\kappa)-\sigma_{k+1+i}(\kappa|1)]\sigma_{k-1-i}(\kappa|1)\nonumber\\
=&\dsum_{i=1}^{k-1}(-1)^{i+1}\sigma_{k+i}(\kappa)\sigma_{k-i}(\kappa)-\dsum_{i=1}^{k-1}(-1)^{i+1}\sigma_{k+i}(\kappa)\sigma_{k-i}(\kappa|1)\nonumber\\
&-\dsum_{i=1}^{k-1}(-1)^{i+1}\sigma_{k+1+i}(\kappa)\sigma_{k-1-i}(\kappa|1)+\dsum_{i=1}^{k-1}(-1)^{i+1}\sigma_{k+1+i}(\kappa|1)\sigma_{k-1-i}(\kappa|1).\nonumber
\end{align}
It is clear that
\begin{align}\label{s5.06}
&-\sum_{i=1}^{k-1}(-1)^{i+1}\sigma_{k+i}(\kappa)\sigma_{k-i}(\kappa|1)-\dsum_{i=1}^{k-1}(-1)^{i+1}\sigma_{k+1+i}(\kappa)\sigma_{k-1-i}(\kappa|1)
\end{align}
\begin{align}
=&-\sigma_{k+1}(\kappa)\sigma_{k-1}(\kappa|1)-(-1)^k\sigma_{2k}(\kappa).\nonumber
\end{align}
Inserting (\ref{s5.05}), (\ref{s5.06}) into (\ref{s5.04}), we obtain
\begin{align}\label{s5.07}
\sigma_k^2(\kappa)=&\kappa_1^2\sigma_{k-1}(\kappa^2|1)+2\sum_{i=1}^{k-1}(-1)^{i+1}\sigma_{k+i}(\kappa)\sigma_{k-i}(\kappa)-2(-1)^k\sigma_{2k}(\kappa)\\
&+\sigma_{k}^2(\kappa|1)+2\sum_{i=1}^{k-1}(-1)^{i+1}\sigma_{k+1+i}(\kappa|1)\sigma_{k-1-i}(\kappa|1)-2\sigma_{k-1}(\kappa|1)\sigma_{k+1}(\kappa|1)\nonumber\\
=&\kappa_1^2\sigma_{k-1}(\kappa^2|1)+2\dsum_{i=1}^{k}(-1)^{i+1}\sigma_{k+i}(\kappa)\sigma_{k-i}(\kappa)\nonumber\\
&+\sigma_{k}^2(\kappa|1)-2\sum_{i=1}^{k}(-1)^{i+1}\sigma_{k+i}(\kappa|1)\sigma_{k-i}(\kappa|1)\nonumber\\
=&\kappa_1^2\sigma_{k-1}(\kappa^2|1)+2\dsum_{i=1}^{k}(-1)^{i+1}\sigma_{k+i}(\kappa)\sigma_{k-i}(\kappa)+\sigma_{k}(\kappa^2|1)\nonumber\\
=&\sigma_{k}(\kappa^2)+2\dsum_{i=1}^{k}(-1)^{i+1}\sigma_{k+i}(\kappa)\sigma_{k-i}(\kappa),\nonumber
\end{align}
where we have used inductive assumption \eqref{a5.1} for $k$ and the $m-1$ dimensional vector $(\kappa|1)$ in the third equality.
\end{proof}

\begin{lemm}\label{lemm21}
Assume $\kappa=(\kappa_1,\cdots,\kappa_{m})\in\bar{\Gamma}_m$. For
any vector $\xi=(\xi_1,\cdots,\xi_{m})\in \mathbb{R}^{m}$ and any
 $0\leq s\leq m$, the quadratic form
$$\sum_j\sigma_{s}(\kappa|j)\xi_j^2+\sum_{p\neq q}\sigma_{s}(\kappa|pq)\xi_p\xi_q$$ is
nonnegative.

\end{lemm}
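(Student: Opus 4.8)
The plan is to prove this by induction on the dimension $m$, exactly mirroring the structure that worked for the algebraic identity in Lemma \ref{lemm20}. The base cases $m=0,1$ are trivial, and the extreme cases $s=0$ and $s=m$ (where the quadratic form degenerates) are immediate since $\sigma_0(\kappa|j)=1$ gives $(\sum_j\xi_j)^2\geq 0$ and $\sigma_m(\kappa|\cdots)$ involves too few variables. So assume $0<s<m$ and $\kappa\in\bar\Gamma_m$ with $\kappa_1\geq\cdots\geq\kappa_m$. Since $\bar\Gamma_m\subset\Gamma_{m-1}$, we have $\kappa_1>0$; I would use index $1$ as the distinguished index throughout. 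The first step is to split the quadratic form along the first coordinate: separate the $\xi_1^2$ term, the cross terms $\xi_1\xi_q$ for $q\neq 1$, and the remaining block $\sum_{p,q\neq 1}$. Using identity (iii), $\sigma_s(\kappa|p)=\kappa_1\sigma_{s-1}(\kappa|1p)+\sigma_s(\kappa|1p)$ for $p\neq 1$ and similarly $\sigma_s(\kappa|pq)=\kappa_1\sigma_{s-1}(\kappa|1pq)+\sigma_s(\kappa|1pq)$, one rewrites the whole form as a piece living on the $(m-1)$-vector $(\kappa|1)$ plus a controlled remainder involving $\xi_1$ and $\kappa_1$.

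The key algebraic manipulation, which I expect is the crux, is to complete the square in $\xi_1$. After the substitution above, the terms can be organized as
\begin{align*}
&\sigma_s(\kappa|1)\xi_1^2+2\xi_1\sum_{q\neq 1}\sigma_s(\kappa|1q)\xi_q+\Big[\sum_{p\neq 1}\sigma_s(\kappa|1p)\xi_p^2+\sum_{p\neq q;p,q\neq 1}\sigma_s(\kappa|1pq)\xi_p\xi_q\Big]\\
&\quad+\kappa_1\Big[\sigma_{s-1}(\kappa|1)\xi_1^2+\cdots+\sum_{p\neq q;p,q\neq 1}\sigma_{s-1}(\kappa|1pq)\xi_p\xi_q\Big],
\end{align*}
where the bracketed expressions are precisely the quadratic forms of Lemma \ref{lemm21} for the $(m-1)$-vector $(\kappa|1)\in\bar\Gamma_{m-1}\subset\bar\Gamma_m$ (using that $(\kappa|1)$ inherits membership in $\bar\Gamma_{m-1}$ from $\kappa\in\bar\Gamma_m$, by the characterization of $\bar\Gamma_k$ via nonnegativity of all $\sigma$-derivatives stated after the definition of $\bar\Gamma_k$). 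By the inductive hypothesis, the $\kappa_1$-multiplied block is $\geq 0$, and since $\kappa_1>0$ it contributes nonnegatively. It remains to show the $\sigma_s$-block (not multiplied by $\kappa_1$) is nonnegative; this is again the quadratic form of the lemma for $(\kappa|1)$ but at order $s$ — however one must be careful that $(\kappa|1)$ sits in $\bar\Gamma_{m-1}$ which only guarantees the conclusion for orders up to $m-1$, and here $s<m$ so $s\leq m-1$, which is fine. Thus \emph{both} bracketed forms are nonnegative by induction, and the whole expression reduces to the first line, which is exactly the $\xi_1$-completed-square piece: $\sigma_s(\kappa|1)\xi_1^2+2\xi_1\sum_{q\neq 1}\sigma_s(\kappa|1q)\xi_q$ plus the order-$s$ form on $(\kappa|1)$.

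The remaining obstacle is to absorb the genuinely mixed term $2\xi_1\sum_{q\neq 1}\sigma_s(\kappa|1q)\xi_q$. Here I would use a second-order Taylor/Newton-type observation: the full quadratic form in the lemma is, up to sign and normalization, the Hessian of $\sigma_{s+1}$ (or a difference of Hessians of consecutive $\sigma$'s) evaluated in the direction $\xi$, and the statement is equivalent to the degenerate ellipticity / concavity properties already invoked in the paper. Concretely, I expect the cleanest route is: observe that $\sum_j\sigma_s(\kappa|j)\xi_j^2+\sum_{p\neq q}\sigma_s(\kappa|pq)\xi_p\xi_q$ equals $\frac{\partial^2}{\partial t^2}\big|_{t=0}\sigma_{s+1}(\kappa_1+t\xi_1,\ldots,\kappa_m+t\xi_m)$ divided by a positive combinatorial constant — wait, more precisely it equals $2\sum_{p<q}\sigma_{s-1}(\kappa|pq)\xi_p\xi_q$-type second differences — and then invoke that $\sigma_{s+1}^{1/(s+1)}$ is concave on $\Gamma_{s+1}$, together with $\bar\Gamma_m\subset\bar\Gamma_{s+1}\subset\Gamma_s$. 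If the direct identification with a Hessian is awkward at the boundary of the cone (where $\sigma_m$ may vanish), one falls back on the induction above and handles the cross term by the Cauchy–Schwarz-type bound $|\sigma_s(\kappa|1q)|^2\leq \sigma_s(\kappa|1)\,\sigma_s(\kappa|1q\cdots)$ coming from Newton's inequality (viii) applied to the deleted vectors, which is exactly what makes the residual $2\times 2$ form in $(\xi_1,\xi_q)$ positive semidefinite after distributing it against the diagonal contributions. Either way, the nonnegativity of the mixed $\xi_1$ terms is the one spot requiring real care; everything else is bookkeeping with identities (iii)–(v).
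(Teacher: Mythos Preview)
Your inductive scheme never closes: the ``remaining obstacle'' you identify---the mixed term $2\xi_1\sum_{q\neq 1}\sigma_s(\kappa|1q)\xi_q$---is precisely the content of the lemma, and none of the three routes you float (Hessian of $\sigma_{s+1}$, concavity of $\sigma_{s+1}^{1/(s+1)}$, or a Newton-type bound $|\sigma_s(\kappa|1q)|^2\leq\sigma_s(\kappa|1)\sigma_s(\kappa|1qq')$) actually nails it. The Hessian idea fails because the form mixes $\sigma_{s+1}^{jj}$ on the diagonal with $\sigma_{s+2}^{pp,qq}$ off-diagonal, so it is not the Hessian of any single $\sigma_r$; the concavity route gives inequalities with the wrong sign/structure; and the Newton bound you wrote is not a consequence of (viii). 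There is also a small bookkeeping slip: in your displayed decomposition, the $\kappa_1$-block should \emph{not} contain a $\sigma_{s-1}(\kappa|1)\xi_1^2$ term, since the original $\xi_1^2$ coefficient $\sigma_s(\kappa|1)$ is already a function of $(\kappa|1)$ and does not split via identity (iii).

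The paper's proof bypasses induction entirely with a single combinatorial identity. Expanding each $\sigma_s(\kappa|j)$ and $\sigma_s(\kappa|pq)$ as sums of monomials and interchanging the order of summation gives
\[
\sum_j\sigma_s(\kappa|j)\xi_j^2+\sum_{p\neq q}\sigma_s(\kappa|pq)\xi_p\xi_q
=\sum_{j_1<\cdots<j_s}\kappa_{j_1}\cdots\kappa_{j_s}\Big(\sum_{j\notin\{j_1,\ldots,j_s\}}\xi_j\Big)^2,
\]
which is manifestly nonnegative because $\kappa\in\bar\Gamma_m$ forces every $\kappa_i\geq 0$. (Indeed, $\bar\Gamma_m\subset\Gamma_{m-1}$ gives $\kappa_{m-1}+\kappa_m>0$, so at most $\kappa_m$ could be negative; but then $\sigma_m(\kappa)=\kappa_1\cdots\kappa_m<0$, contradicting $\sigma_m\geq 0$.) This identity is what you were circling around with the Hessian remark, but it is purely algebraic and requires no cone-concavity input at all.
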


\begin{proof}
Using (\ref{a2.1}),  we have
\begin{align*}
&\dsum_{j}\sigma_s(\kappa|j)\xi_j^2+\dsum_{p\neq
q}\sigma_s(\kappa|pq)\xi_p\xi_q\\
=&\dsum_{j}\xi_j^2\dsum_{\substack{j_1,\cdots,j_s\neq j\\
j_1<j_2<\cdots<j_{s}}}\kappa_{j_1}\kappa_{j_2}\cdots\kappa_{j_{s}}+\dsum_{p\neq
q}\xi_p\xi_q\dsum_{\substack{j_1,\cdots,j_s\neq p,q\\
j_1<j_2<\cdots<j_{s}}}\kappa_{j_1}\kappa_{j_2}\cdots\kappa_{j_{s}}\\
=&\dsum_{j_1<j_2<\cdots<j_{s}}\kappa_{j_1}\kappa_{j_2}\cdots\kappa_{j_{s}}\Big(\dsum_{j\neq
j_1,\cdots,j_s}\xi_j^2+\dsum_{\substack{p\neq q\\  p,q\neq
j_1,\cdots,j_s}}\xi_p\xi_q\Big)\\
=&\dsum_{j_1<j_2<\cdots<j_{s}}\kappa_{j_1}\kappa_{j_2}\cdots\kappa_{j_{s}}\sigma_1^2(\xi|j_1,j_2,\cdots
j_{s}),
\end{align*}
which is clearly nonnegative.
\end{proof}

\begin{lemm}\label{lemm21b}
Assume $\kappa=(\kappa_1,\cdots,\kappa_{m})\in\bar{\Gamma}_m$. For
any vector $\xi=(\xi_1,\cdots,\xi_{m})\in \mathbb{R}^{m}$, the
quadratic form
\begin{eqnarray}\label{newkappa}
\sum_{j}2\sigma_{m-3}(\kappa|j)\xi_j^2-\sum_{p\neq q}\sigma_{m-3}(\kappa|pq)\xi_p\xi_q
\end{eqnarray}
is
nonnegative.

\end{lemm}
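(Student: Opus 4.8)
The plan is to rewrite the quadratic form \eqref{newkappa} as a genuine sum of squares. The key step is to establish the identity
\[
\sum_{j}2\sigma_{m-3}(\kappa|j)\xi_j^2-\sum_{p\neq q}\sigma_{m-3}(\kappa|pq)\xi_p\xi_q=\sum_{1\le a<b\le m}\sigma_{m-3}(\kappa|ab)\,(\xi_a-\xi_b)^2 .
\]
To verify it one only has to compare coefficients. On the right, the coefficient of $\xi_a^2$ is $\sum_{b\neq a}\sigma_{m-3}(\kappa|ab)$; applying the identity (iv) to the $(m-1)$-dimensional vector $(\kappa|a)$, so that $\sigma_{m-3}((\kappa|a)|b)=\sigma_{m-3}(\kappa|ab)$, this equals $\bigl[(m-1)-(m-3)\bigr]\sigma_{m-3}(\kappa|a)=2\sigma_{m-3}(\kappa|a)$, which is exactly the coefficient of $\xi_a^2$ on the left. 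For $a\neq b$, the coefficient of $\xi_a\xi_b$ on the right is $-2\sigma_{m-3}(\kappa|ab)$, which is precisely the combined contribution of the two terms $(p,q)=(a,b)$ and $(p,q)=(b,a)$ in $-\sum_{p\neq q}\sigma_{m-3}(\kappa|pq)\xi_p\xi_q$. Note that it is exactly (iv), with $n-k=2$, that produces the factor $2$ in front of the diagonal terms in \eqref{newkappa}. (For $m\le 2$ the form is identically zero since $\sigma_{m-3}\equiv 0$, so one may assume $m\ge 3$.)

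It then remains to check that each weight $\sigma_{m-3}(\kappa|ab)$ is nonnegative. Here I would use that membership in $\bar\Gamma_m$ already pins down the signs of all the entries: by the characterization of $\bar\Gamma_m$, the $(m-1)$-st order derivative $\partial^{m-1}\sigma_m/\partial\kappa_{i_1}\cdots\partial\kappa_{i_{m-1}}$ is nonnegative for every increasing tuple, and iterating (i) this derivative equals $\sigma_1(\kappa|i_1\cdots i_{m-1})=\kappa_{i_m}$, the single remaining entry; hence $\kappa_i\ge 0$ for every $i$. Consequently $\sigma_{m-3}(\kappa|ab)$ is a sum of products of $m-3$ nonnegative numbers, so $\sigma_{m-3}(\kappa|ab)\ge 0$, and every summand on the right of the displayed identity is nonnegative. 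This proves the lemma.

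I do not expect any serious obstacle: the argument reduces to the coefficient bookkeeping in the displayed identity — the only slightly delicate point being the collapse of the diagonal terms via (iv) — together with the elementary observation that $\bar\Gamma_m$ forces all $\kappa_i\ge 0$. An alternative route, closer in spirit to the proof of Lemma \ref{lemm21}, is to expand $\sigma_{m-3}(\kappa|j)$ and $\sigma_{m-3}(\kappa|pq)$ directly from \eqref{a2.1}, regroup the resulting monomials according to the three-element index sets $T=\{a,b,c\}$, and observe that the block attached to $T$ is $\bigl(\prod_{r\notin T}\kappa_r\bigr)\bigl[(\xi_a-\xi_b)^2+(\xi_b-\xi_c)^2+(\xi_c-\xi_a)^2\bigr]\ge 0$; this yields the same conclusion without invoking (iv).
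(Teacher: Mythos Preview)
Your proof is correct, and it takes a genuinely different route from the paper's. The paper peels off one index at a time: using (iv) it rewrites the form as
\[
\sum_{s}\Big(\sum_{j\neq s}\sigma_{m-3}(\kappa|js)\xi_j^2-\sum_{\substack{p\neq q\\ p,q\neq s}}\sigma_{m-3}(\kappa|pqs)\xi_p\xi_q\Big),
\]
observes that $\bar\Gamma_m\subset\Gamma_{m-1}$, and then appeals to Lemma~10 of \cite{RW} for each fixed $s$. So the paper's argument is recursive and relies on an external reference.

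Your argument is more elementary and fully self-contained: the single identity
\[
\sum_{j}2\sigma_{m-3}(\kappa|j)\xi_j^2-\sum_{p\neq q}\sigma_{m-3}(\kappa|pq)\xi_p\xi_q=\sum_{a<b}\sigma_{m-3}(\kappa|ab)(\xi_a-\xi_b)^2
\]
does all the work, and you only need the trivial fact that $\kappa\in\bar\Gamma_m$ forces every $\kappa_i\ge 0$, rather than the weaker inclusion into $\Gamma_{m-1}$. What the paper's approach buys is that it fits into a broader inductive pattern (the same ``drop one index and recurse'' move is used later, e.g.\ in Lemma~\ref{lemm25}); what your approach buys is a cleaner, one-line sum-of-squares decomposition that makes the nonnegativity transparent without any outside input. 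Your alternative regrouping by three-element sets $T$ is also correct and is essentially the same decomposition written in complementary coordinates.
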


\begin{proof}
It is clear that
\begin{align*}
&\sum_{j}2\sigma_{m-3}(\kappa|j)\xi_j^2-\sum_{p\neq
q}\sigma_{m-3}(\kappa|pq)\xi_p\xi_q\\
=&\sum_{j}\Big(\dsum_{s\neq
j}\sigma_{m-3}(\kappa|js)\Big)\xi_j^2-\sum_{p\neq
q}\Big(\dsum_{s\neq p,q}\sigma_{m-3}(\kappa|pqs)\Big)\xi_p\xi_q\\
=&\sum_{s}\Big(\dsum_{j\neq
s}\sigma_{m-3}(\kappa|js)\xi_j^2-\sum_{p\neq q;p,q\neq
s}\sigma_{m-3}(\kappa|pqs)\xi_p\xi_q\Big).
\end{align*}
If we have  $\kappa\in\bar\Gamma_m$, we obviously have $\kappa\in
\Gamma_{m-1}$. Thus, using Lemma 10 in \cite{RW}, the above
quadratic form is nonnegative.

\end{proof}

\begin{lemm}\label{lemm22}
Assume $\kappa=(\kappa_1,\cdots,\kappa_m)\in \mathbb{R}^m$. Then for
any $1\leq s\leq m$, we have
\begin{align*}
&\dsum_{i=1}^m
\sigma_{m-s}(\kappa|i)\sigma_{m-1}(\kappa|i)=\sigma_{m-s}(\kappa)\sigma_{m-1}(\kappa)-(s+1)\sigma_m(\kappa)\sigma_{m-(s+1)}(\kappa).
\end{align*}
\end{lemm}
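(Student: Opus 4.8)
The plan is to prove this by double counting, or equivalently by a generating-function argument. First I would fix notation: write $\sigma_\ell=\sigma_\ell(\kappa)$ and $\sigma_\ell^{(i)}=\sigma_\ell(\kappa|i)$. The two standard identities I intend to use repeatedly are (iii) and (iv) from Section 2: $\sigma_\ell = \kappa_i\sigma_{\ell-1}^{(i)}+\sigma_\ell^{(i)}$ for every $i$, and $\sum_{i=1}^m\sigma_\ell^{(i)}=(m-\ell)\sigma_\ell$. The left-hand side $\sum_i \sigma_{m-s}^{(i)}\sigma_{m-1}^{(i)}$ mixes two different omitted-index polynomials, so the first step is to convert one factor into a global quantity plus a correction: since $\sigma_{m-1}^{(i)}=\sigma_{m-1}-\kappa_i\sigma_{m-2}^{(i)}$ (rearranging (iii) with $\ell=m-1$), we get
\begin{align*}
\sum_{i=1}^m \sigma_{m-s}^{(i)}\sigma_{m-1}^{(i)}
= \sigma_{m-1}\sum_{i=1}^m\sigma_{m-s}^{(i)} - \sum_{i=1}^m \kappa_i\,\sigma_{m-s}^{(i)}\sigma_{m-2}^{(i)}.
\end{align*}
The first term is $\sigma_{m-1}(m-(m-s))\sigma_{m-s} = s\,\sigma_{m-1}\sigma_{m-s}$ by (iv).

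The remaining work is to evaluate $T_s:=\sum_i \kappa_i\sigma_{m-s}^{(i)}\sigma_{m-2}^{(i)}$. Here I would again use (iii), this time on the $\sigma_{m-2}^{(i)}$ factor in the form $\kappa_i\sigma_{m-2}^{(i)} = \sigma_{m-1}-\sigma_{m-1}^{(i)}$, giving $T_s = \sigma_{m-1}\sum_i\sigma_{m-s}^{(i)} - \sum_i \sigma_{m-s}^{(i)}\sigma_{m-1}^{(i)}$; but that just loops back. So instead I would apply (iii) to the other factor: $\kappa_i\sigma_{m-s}^{(i)} = \sigma_{m-s+1} - \sigma_{m-s+1}^{(i)}$ (this is (iii) with $\ell=m-s+1$), hence
\begin{align*}
T_s = \sigma_{m-s+1}\sum_{i=1}^m\sigma_{m-2}^{(i)} - \sum_{i=1}^m \sigma_{m-s+1}^{(i)}\sigma_{m-2}^{(i)} = 2\sigma_{m-s+1}\sigma_{m-2} - U_{s-1},
\end{align*}
where $U_{s-1}:=\sum_i\sigma_{(m-1)-(s-1)}^{(i)}\sigma_{m-2}^{(i)}$ has the same shape as our original sum but with $m$ replaced by $m-1$ in the role of "top degree." This suggests setting up an induction on $s$ (or on $m$), with the base case $s=0$ trivial ($\sum_i (\sigma_{m-1}^{(i)})^2$ is not needed; actually $s\geq 1$, and $s=1$ gives $\sum_i\sigma_{m-1}^{(i)}\sigma_{m-1}^{(i)}$... ) — I will need to be careful at the base case and may instead verify $s=1$ directly: $\sum_i \sigma_{m-1}^{(i)}\sigma_{m-1}^{(i)}$, hmm, that is not obviously $\sigma_{m-1}^2 - 2\sigma_m\sigma_{m-2}$, so let me reconsider and take the base as $s=1$ meaning $\sum_i\sigma_{m-1}^{(i)}\cdot\sigma_{m-1}^{(i)}$ — no: for $s=1$ the claimed identity reads $\sum_i\sigma_{m-1}^{(i)}\sigma_{m-1}^{(i)} = \sigma_{m-1}^2 - 2\sigma_m\sigma_{m-2}$, which is exactly Lemma \ref{lemm20} with $k=m-1$ after noting $\sum_i(\sigma_{m-1}^{(i)})^2=\sigma_{m-1}(\kappa^2)+\cdots$; in fact $\sigma_{m-1}(\kappa^2)=\sum_i\prod_{j\neq i}\kappa_j^2=\sum_i(\sigma_{m-1}^{(i)})^2$ only when... this needs checking.

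Given these subtleties, the cleanest route is probably the pure generating-function / symmetric-function computation: expand $\sum_i\sigma_{m-s}^{(i)}\sigma_{m-1}^{(i)}$ directly as a sum over pairs of index-subsets avoiding $i$, classify terms by the size of the symmetric difference, and collect. I expect the main obstacle to be exactly this bookkeeping — correctly tracking the combinatorial multiplicities of the cross terms (how many ways a monomial of $\sigma_m$ or of $\sigma_m\sigma_{m-s-1}$ arises) and pinning down the coefficient $s+1$, and fixing the induction's base case. Once the recursion $T_s = 2\sigma_{m-s+1}\sigma_{m-2} - U_{s-1}$ is combined with the inductive hypothesis applied to $U_{s-1}$, unwinding the telescoping sum should collapse to $\sigma_{m-s}\sigma_{m-1} - (s+1)\sigma_m\sigma_{m-s-1}$, and I would close by verifying the arithmetic of the telescoping coefficients.
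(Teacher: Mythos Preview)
Your recursion does not actually close. You write $U_{s-1}=\sum_i\sigma_{m-s+1}^{(i)}\sigma_{m-2}^{(i)}$ and then claim it ``has the same shape as our original sum but with $m$ replaced by $m-1$ in the role of top degree.'' That is not so: the original sum has second factor $\sigma_{m-1}^{(i)}$, while $U_{s-1}$ has second factor $\sigma_{m-2}^{(i)}$. This is not an instance of the same lemma with smaller parameters; it is a genuinely different bilinear quantity $S(a,b)=\sum_i\sigma_a^{(i)}\sigma_b^{(i)}$ with $b=m-2$ rather than $b=m-1$. To make your induction go through you would need a two-parameter identity for $S(a,b)$, which is more work than the lemma itself. (Your index arithmetic also slips: $(m-1)-(s-1)=m-s$, not $m-s+1$.) The confusion you describe at the base case is a symptom of the same issue.

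The paper's argument avoids all of this with one clean trick that you did not try: start from the \emph{product} of the two sums,
\[
s\,\sigma_{m-s}\sigma_{m-1}=\Big(\sum_i\sigma_{m-s}(\kappa|i)\Big)\Big(\sum_j\sigma_{m-1}(\kappa|j)\Big),
\]
and split into diagonal terms $i=j$ (the sum you want) and off-diagonal terms $i\neq j$. For the off-diagonal sum, expand $\sigma_{m-s}(\kappa|i)=\kappa_j\sigma_{m-s-1}(\kappa|ij)+\sigma_{m-s}(\kappa|ij)$ and use the crucial fact that $\kappa_j\sigma_{m-1}(\kappa|j)=\sigma_m(\kappa)$ (since $\sigma_m(\kappa|j)=0$). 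The first piece becomes $\sigma_m\sum_{i\neq j}\sigma_{m-s-1}(\kappa|ij)=s(s+1)\sigma_m\sigma_{m-s-1}$; the second, after summing over $j$, becomes $(s-1)$ times the diagonal sum. Collecting gives the identity with no induction at all. The special role of the factor $\sigma_{m-1}(\kappa|j)$ --- that multiplying by $\kappa_j$ yields $\sigma_m$ exactly --- is what makes the off-diagonal sum collapse, and is precisely the structural feature your first-factor manipulations miss.
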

\begin{proof}
By (iv) in section 2, we have
\begin{align}\label{s5.08}
s\sigma_{m-s}(\kappa)\sigma_{m-1}(\kappa)=&\Big(\dsum_{i=1}^m\sigma_{m-s}(\kappa|i)\Big)\Big(\dsum_{j=1}^m\sigma_{m-1}(\kappa|j)\Big)\\
=&\dsum_{i=1}^m \sigma_{m-s}(\kappa|i)\sigma_{m-1}(\kappa|i)+\dsum_{i\neq
j} \sigma_{m-s}(\kappa|i)\sigma_{m-1}(\kappa|j),\nonumber
\end{align}
and
\begin{align}\label{s5.09}
\dsum_{i\neq j} \sigma_{m-s}(\kappa|i)\sigma_{m-1}(\kappa|j)=&\dsum_{i\neq
j}
[\kappa_j\sigma_{m-(s+1)}(\kappa|ij)+\sigma_{m-s}(\kappa|ij)]\sigma_{m-1}(\kappa|j)\\
=&\sigma_m(\kappa)\dsum_{i\neq j} \sigma_{m-(s+1)}(\kappa|ij)+\dsum_{i\neq
j}\sigma_{m-s}(\kappa|ij)\sigma_{m-1}(\kappa|j)\nonumber\\
=&s(s+1)\sigma_m(\kappa)\sigma_{m-(s+1)}(\kappa)+(s-1)\dsum_{i=1}^m
\sigma_{m-s}(\kappa|i)\sigma_{m-1}(\kappa|i),\nonumber
\end{align}
inserting (\ref{s5.09}) into (\ref{s5.08}), we obtain our lemma.

\end{proof}

\begin{lemm}\label{lemm23}
Assume $\kappa=(\kappa_1,\cdots,\kappa_m)\in \mathbb{R}^m$. For any
given indices $1\leq j\leq m$, we have
\begin{align*}
 &\dsum_{s\neq j}\sigma_{m-4}^2(\kappa|js)\\
 =&3\sigma_{m-4}^2(\kappa|j)-2\sigma_{m-5}(\kappa|j)\sigma_{m-3}(\kappa|j)-4\sigma_{m-6}(\kappa|j)\sigma_{m-2}(\kappa|j)-6\sigma_{m-7}(\kappa|j)\sigma_{m-1}(\kappa|j).
\end{align*}
\end{lemm}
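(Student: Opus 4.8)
The plan is to fix the index $j$ and pass to the $(m-1)$-dimensional vector $\mu:=(\kappa|j)$; writing $\mu^2=(\mu_1^2,\cdots,\mu_{m-1}^2)$ as in this section's convention and using $\sigma_{m-4}(\kappa|js)=\sigma_{m-4}(\mu|s)$, the assertion is equivalent to the purely algebraic identity
\begin{align*}
\sum_{s=1}^{m-1}\sigma_{m-4}^2(\mu|s)=3\sigma_{m-4}^2(\mu)-2\sigma_{m-5}(\mu)\sigma_{m-3}(\mu)-4\sigma_{m-6}(\mu)\sigma_{m-2}(\mu)-6\sigma_{m-7}(\mu)\sigma_{m-1}(\mu)
\end{align*}
for arbitrary $\mu\in\mathbb{R}^{m-1}$.

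I would prove this by generating functions. Put $H(t)=\prod_{i=1}^{m-1}(1+\mu_it)=\sum_k\sigma_k(\mu)t^k$, a polynomial of degree $m-1$, so that $\prod_{i\neq s}(1+\mu_it)=H(t)/(1+\mu_st)=\sum_k\sigma_k(\mu|s)t^k$; then $\sum_s\sigma_{m-4}^2(\mu|s)$ is the coefficient of $t^{m-4}u^{m-4}$ in
\begin{align*}
\sum_s\Big(\sum_k\sigma_k(\mu|s)t^k\Big)\Big(\sum_l\sigma_l(\mu|s)u^l\Big)=H(t)H(u)\sum_s\frac{1}{(1+\mu_st)(1+\mu_su)}.
\end{align*}
The partial fraction identity $\frac{1}{(1+\mu_st)(1+\mu_su)}=\frac{1}{t-u}\big(\frac{t}{1+\mu_st}-\frac{u}{1+\mu_su}\big)$ together with $\sum_s\frac{1}{1+\mu_st}=(m-1)-t\,H'(t)/H(t)$ reduces the right-hand side to
\begin{align*}
(m-1)\,H(t)H(u)-\frac{t^2H'(t)H(u)-u^2H(t)H'(u)}{t-u}.
\end{align*}
The numerator of the last fraction vanishes on $t=u$, hence is divisible by $t-u$, and the quotient is a polynomial. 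Writing $\Psi(t)=t^2H'(t)$, whose coefficient $\Psi_a$ of $t^a$ is $(a-1)H_{a-1}$, expanding $\Psi(t)H(u)-H(t)\Psi(u)=\sum_{a>b}(\Psi_aH_b-H_a\Psi_b)(t^au^b-t^bu^a)$, and using $\frac{t^au^b-t^bu^a}{t-u}=\sum_{c=0}^{a-b-1}t^{a-1-c}u^{b+c}$ for $a>b$, one finds that only the pairs $(a,b)=(p+1+k,\,p-k)$ (with $a+b=2p+1$) contribute to $[t^pu^p]$, and after substituting $\Psi_a=(a-1)H_{a-1}$ and reindexing,
\begin{align*}
[t^pu^p]\,\frac{\Psi(t)H(u)-H(t)\Psi(u)}{t-u}=p\,\sigma_p^2(\mu)+2\sum_{k\geq1}k\,\sigma_{p+k}(\mu)\,\sigma_{p-k}(\mu).
\end{align*}
With $p=m-4$, so $m-1-p=3$, and since $\sigma_{m-4+k}(\mu)=0$ for $k>3$ (as $\mu\in\mathbb{R}^{m-1}$), this gives precisely the displayed identity with coefficients $3,-2,-4,-6$.

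The one delicate point is the coefficient extraction above: after putting $\Psi_a=(a-1)H_{a-1}$, the contributions of the pairs $(p+1+k,\,p-k)$, $k\geq0$, must be shown to collapse to $p\,\sigma_p^2+2\sum_{k\geq1}k\,\sigma_{p+k}\sigma_{p-k}$, which is a short but bookkeeping-heavy telescoping. A more hands-on alternative, in the spirit of Lemmas \ref{lemm20} and \ref{lemm22}, is to apply Lemma \ref{lemm20} to each $(\mu|s)\in\mathbb{R}^{m-2}$ to obtain $\sigma_{m-4}^2(\mu|s)=\sigma_{m-4}((\mu|s)^2)+2\sigma_{m-3}(\mu|s)\sigma_{m-5}(\mu|s)-2\sigma_{m-2}(\mu|s)\sigma_{m-6}(\mu|s)$ (the higher Newton terms vanishing for dimension reasons), and then to sum over $s$: the first sum equals $3\sigma_{m-4}(\mu^2)$ by (iv) of Section~2, which Lemma \ref{lemm20} then re-expresses in the $\sigma_\ell(\mu)$, while $\sum_s\sigma_{m-2}(\mu|s)\sigma_{m-6}(\mu|s)$ is supplied by Lemma \ref{lemm22} applied to $\mu$. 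The leftover sum $\sum_s\sigma_{m-3}(\mu|s)\sigma_{m-5}(\mu|s)$ is not of the form handled by Lemma \ref{lemm22}, so along that route one needs a further identity of the same type, and I expect establishing it to be the main obstacle there — which is why I would take the generating function route above.
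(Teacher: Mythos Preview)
Your generating-function argument is correct and complete: the telescoping you flag as ``delicate'' does collapse as claimed (substituting $\Psi_a=(a-1)H_{a-1}$ and shifting $k\mapsto k+1$ in the second sum gives $pH_p^2+\sum_{k\ge1}\big((p+k)-(p-k)\big)H_{p+k}H_{p-k}$), and the specialisation to $p=m-4$ with $\mu\in\mathbb{R}^{m-1}$ kills the terms with $k>3$, yielding exactly the coefficients $3,-2,-4,-6$. In fact your computation proves the general identity
\[
\sum_{s}\sigma_p^2(\mu|s)=(\dim\mu-p)\,\sigma_p^2(\mu)-2\sum_{k\ge1}k\,\sigma_{p+k}(\mu)\,\sigma_{p-k}(\mu),
\]
of which the lemma is the case $p=m-4$, $\dim\mu=m-1$.

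The paper takes a quite different, hands-on route: it expands $\sigma_{m-4}(\kappa|js)=\sigma_{m-4}(\kappa|j)-\kappa_s\sigma_{m-5}(\kappa|js)$, squares, and then reduces the cross-term $\sum_s\kappa_s^2\sigma_{m-5}^2(\kappa|js)$ through a cascade of four elementary identities (using (iii)--(v) of Section~2), each peeling off one factor of $\kappa_s$ and lowering the inner index by one, until the sum terminates at $\sigma_{m-7}\sigma_{m-1}$. So the ``further identity'' you anticipated for $\sum_s\sigma_{m-3}(\mu|s)\sigma_{m-5}(\mu|s)$ is exactly what the paper supplies by this descent. Your approach is more conceptual and yields the result for all $p$ at once; the paper's is more elementary and stays within the combinatorial lemmas already on hand, at the cost of the four-step chain.
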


\begin{proof}
Since we have $\sigma_{m-4}(\kappa|js)=\sigma_{m-4}(\kappa|j)-\kappa_s\sigma_{m-5}(\kappa|js)$,
we have
\begin{align}\label{s5.10}
 &\dsum_{s\neq j}\sigma_{m-4}^2(\kappa|js)\\
 =&\dsum_{s\neq
 j}[\sigma_{m-4}^2(\kappa|j)-2\kappa_s\sigma_{m-5}(\kappa|js)\sigma_{m-4}(\kappa|j)+\kappa_s^2\sigma_{m-5}^2(\kappa|js)]\nonumber\\
 =&(m-1)\sigma_{m-4}^2(\kappa|j)-2\sigma_{m-4}(\kappa|j)\dsum_{s\neq
j}\kappa_s\sigma_{m-5}(\kappa|js)+\dsum_{s\neq
j}\kappa_s^2\sigma_{m-5}^2(\kappa|js).\nonumber
\end{align}
Using (v) and (iv) in section 2, we have the following four identities,
\begin{align}\label{s5.11}
 \dsum_{s\neq
 j}\kappa_s\sigma_{m-5}(\kappa|js)=(m-4)\sigma_{m-4}(\kappa|j),
\end{align}

\begin{align}\label{s5.12}
 &\dsum_{s\neq
 j}\kappa_s^2\sigma_{m-5}^2(\kappa|js)\\
=&\dsum_{s\neq
 j}\kappa_s\sigma_{m-5}(\kappa|js)[\sigma_{m-4}(\kappa|j)-\sigma_{m-4}(\kappa|js)]\nonumber\\
=&\sigma_{m-4}(\kappa|j)\dsum_{s\neq
 j}\kappa_s\sigma_{m-5}(\kappa|js)-\dsum_{s\neq
 j}\kappa_s\sigma_{m-5}(\kappa|js)\sigma_{m-4}(\kappa|js)\nonumber\\
=&(m-4)\sigma_{m-4}^2(\kappa|j)-\dsum_{s\neq
 j}\sigma_{m-5}(\kappa|js)[\sigma_{m-3}(\kappa|j)-\sigma_{m-3}(\kappa|js)]\nonumber\\
 =&(m-4)\sigma_{m-4}^2(\kappa|j)-4\sigma_{m-3}(\kappa|j)\sigma_{m-5}(\kappa|j)+\dsum_{s\neq
 j}\sigma_{m-5}(\kappa|js)\sigma_{m-3}(\kappa|js),\nonumber
\end{align}

\begin{align}\label{s5.13}
&\dsum_{s\neq
 j}\sigma_{m-5}(\kappa|js)\sigma_{m-3}(\kappa|js)\\
 =& \dsum_{s\neq
 j}[\sigma_{m-5}(\kappa|j)-\kappa_s\sigma_{m-6}(\kappa|js)]\sigma_{m-3}(\kappa|js)\nonumber\\
 =&\sigma_{m-5}(\kappa|j)\dsum_{s\neq
 j}\sigma_{m-3}(\kappa|js)-\dsum_{s\neq
 j}\kappa_s\sigma_{m-6}(\kappa|js)\sigma_{m-3}(\kappa|js)\nonumber\\
 =&2\sigma_{m-5}(\kappa|j)\sigma_{m-3}(\kappa|j) -\dsum_{s\neq
 j}\sigma_{m-6}(\kappa|js)[\sigma_{m-2}(\kappa|j)-\sigma_{m-2}(\kappa|js)]\nonumber\\
 =&2\sigma_{m-5}(\kappa|j)\sigma_{m-3}(\kappa|j)-5\sigma_{m-6}(\kappa|j)\sigma_{m-2}(\kappa|j)+\dsum_{s\neq
 j}\sigma_{m-6}(\kappa|js)\sigma_{m-2}(\kappa|js),\nonumber
\end{align}

\begin{align}\label{s5.14}
&\dsum_{s\neq
 j}\sigma_{m-6}(\kappa|js)\sigma_{m-2}(\kappa|js)\\
 =& \dsum_{s\neq
 j}[\sigma_{m-6}(\kappa|j)-\kappa_s\sigma_{m-7}(\kappa|js)]\sigma_{m-2}(\kappa|js)\nonumber\\
 =&\sigma_{m-6}(\kappa|j)\dsum_{s\neq
 j}\sigma_{m-2}(\kappa|js)-\dsum_{s\neq
 j}\kappa_s\sigma_{m-7}(\kappa|js)\sigma_{m-2}(\kappa|js)\nonumber\\
 =&\sigma_{m-6}(\kappa|j)\sigma_{m-2}(\kappa|j) -\dsum_{s\neq
 j}\sigma_{m-7}(\kappa|js)\sigma_{m-1}(\kappa|j)\nonumber\\
 =&\sigma_{m-6}(\kappa|j)\sigma_{m-2}(\kappa|j)-6\sigma_{m-7}(\kappa|j)\sigma_{m-1}(\kappa|j).\nonumber
\end{align}
Inserting (\ref{s5.11})-(\ref{s5.14}) into (\ref{s5.10}), we obtain our lemma.
\end{proof}

\begin{lemm}\label{lemm24}
Assume $\kappa=(\kappa_1,\cdots,\kappa_{m})\in \Gamma_{s}$ with $1\leq s\leq m$. For any $m$ dimensional vector
$\xi=(\xi_1,\xi_2,\cdots,\xi_m)\in \mathbb{R}^m$, the following
quadratic form
\begin{align*}
\dsum_{j} \sigma_{s-1}^2(\kappa|j)\xi_j^2+\dsum_{p\neq
q}[\sigma_{s-1}^2(\kappa| pq) -\sigma_{s}(\kappa|
pq)\sigma_{s-2}(\kappa| pq)]\xi_p\xi_q
\end{align*}
 is nonnegative.
\end{lemm}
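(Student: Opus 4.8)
The plan is to rewrite the form as a perfect square minus a term that the concavity of $\sigma_s^{1/s}$ controls, so that the hypothesis $\kappa\in\Gamma_s$ enters only through that concavity.

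\emph{Step 1 (an algebraic identity).} First I would complete the square $\big(\sum_j \sigma_{s-1}(\kappa|j)\xi_j\big)^2 = \sum_j \sigma_{s-1}^2(\kappa|j)\xi_j^2 + \sum_{p\neq q}\sigma_{s-1}(\kappa|p)\sigma_{s-1}(\kappa|q)\xi_p\xi_q$ and subtract it from the quadratic form in question; the diagonal parts cancel and the off-diagonal coefficient becomes $\sigma_{s-1}^2(\kappa|pq) - \sigma_s(\kappa|pq)\sigma_{s-2}(\kappa|pq) - \sigma_{s-1}(\kappa|p)\sigma_{s-1}(\kappa|q)$. Using the one–index reduction identities $\sigma_{s-1}(\kappa|p) = \sigma_{s-1}(\kappa|pq) + \kappa_q\sigma_{s-2}(\kappa|pq)$, its symmetric counterpart for $\sigma_{s-1}(\kappa|q)$, and $\sigma_s(\kappa) = \sigma_s(\kappa|pq) + (\kappa_p+\kappa_q)\sigma_{s-1}(\kappa|pq) + \kappa_p\kappa_q\sigma_{s-2}(\kappa|pq)$, a direct computation shows that every term built from $\sigma$'s on $(\kappa|pq)$ cancels and this coefficient collapses to $-\sigma_s(\kappa)\sigma_{s-2}(\kappa|pq)$. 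Hence, for every $\kappa\in\mathbb{R}^m$,
\[ \sum_j \sigma_{s-1}^2(\kappa|j)\xi_j^2 + \sum_{p\neq q}\big[\sigma_{s-1}^2(\kappa|pq) - \sigma_s(\kappa|pq)\sigma_{s-2}(\kappa|pq)\big]\xi_p\xi_q = \Big(\sum_j\sigma_{s-1}(\kappa|j)\xi_j\Big)^2 - \sigma_s(\kappa)\sum_{p\neq q}\sigma_{s-2}(\kappa|pq)\xi_p\xi_q . \]

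\emph{Step 2 (concavity of $\sigma_s^{1/s}$).} Here the assumption $\kappa\in\Gamma_s$ is used. By identities (i) and (ii) of Section 2 we have $\sigma_{s-1}(\kappa|j) = \sigma_s^{jj}(\kappa)$, $\sigma_{s-2}(\kappa|pq) = \sigma_s^{pp,qq}(\kappa)$ for $p\neq q$, and $\sigma_s^{pp,pp}(\kappa)=0$, so the subtracted term equals $\sigma_s(\kappa)\sum_{p,q}\sigma_s^{pp,qq}(\kappa)\xi_p\xi_q$. Differentiating the concave function $\sigma_s^{1/s}$ (concave on $\Gamma_s$, where $\sigma_s(\kappa)>0$) twice and removing the positive factor yields the standard inequality $\sigma_s(\kappa)\sum_{p,q}\sigma_s^{pp,qq}(\kappa)\xi_p\xi_q \leq \frac{s-1}{s}\big(\sum_j\sigma_s^{jj}(\kappa)\xi_j\big)^2$. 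Substituting this into the identity of Step 1 bounds the form below by $\frac{1}{s}\big(\sum_j\sigma_s^{jj}(\kappa)\xi_j\big)^2 \geq 0$, which is the assertion. The extreme parameters are harmless: for $s=1$ the form is literally $(\sum_j\xi_j)^2$, and for $s=m$ the concavity of $\sigma_m^{1/m}$ on the positive cone still applies.

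\emph{Main difficulty.} I do not expect a genuine obstacle. The only point requiring care is the cancellation in Step 1, which is a finite bookkeeping exercise with the reduction formulas for $\sigma_s$; once the identity is in place, the factor $\frac{s-1}{s}<1$ supplied by the concavity leaves precisely the slack needed, with no case analysis.
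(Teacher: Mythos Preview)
Your proof is correct and follows essentially the same approach as the paper. Both establish the identity that the quadratic form equals $\big(\sum_j\sigma_s^{jj}(\kappa)\xi_j\big)^2-\sigma_s(\kappa)\sum_{p,q}\sigma_s^{pp,qq}(\kappa)\xi_p\xi_q$ and then appeal to concavity; the paper obtains the inequality \eqref{1313} by specializing Lemma~\ref{Guan} with $k=s$, $l=1$, $\delta=1$, whereas you invoke the concavity of $\sigma_s^{1/s}$ directly to get the slightly sharper $\sigma_s\sum_{p,q}\sigma_s^{pp,qq}\xi_p\xi_q\le \frac{s-1}{s}\big(\sum_j\sigma_s^{jj}\xi_j\big)^2$.
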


\begin{proof}
Let $k=s, l=1$, $\delta=1$ and $w_{iih}=\xi_i$ in (\ref{s2.03}), then we
get
\begin{align}\label{1313}
\sum_{p,q}[\sigma_{s-1}(\kappa|p)\sigma_{s-1}(\kappa|q)-\sigma_{s}(\kappa)\sigma_{s}^{pp,qq}(\kappa)]\xi_p\xi_q\geq
0.
\end{align}
If $p=q=j$, we have
$$
\sigma_{s-1}(\kappa|p)\sigma_{s-1}(\kappa|q)-\sigma_{s}(\kappa)\sigma_{s}^{pp,qq}(\kappa)=\sigma_{s-1}^2(\kappa|j).
$$
If $p\neq q$, we have
$$
\sigma_{s-1}(\kappa|p)\sigma_{s-1}(\kappa|q)-\sigma_{s}(\kappa)\sigma_{s}^{pp,qq}(\kappa)=\sigma_{s-1}^2(\kappa|pq)-\sigma_{s}(\kappa|pq)\sigma_{s-2}(\kappa|pq).
$$
Using the above two identities and \eqref{1313}, we obtain our lemma.
\end{proof}

\begin{lemm}\label{lemm25}
Assume $\kappa=(\kappa_1,\cdots,\kappa_m)\in \Gamma_{m-2}$. For any
$\xi=(\xi_1,\cdots,\xi_{m})\in \mathbb{R}^{m}$, the following
quadratic form
\begin{align}\label{s5.15}
\sum_j 2\sigma_{m-3}(\kappa|j)\xi_j^2 -\sum_{p\neq
q}\sigma_{m-3}(\kappa|pq)\xi_p\xi_q
\end{align}
 is nonnegative.
\end{lemm}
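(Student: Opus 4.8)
The plan is to put the quadratic form into a transparent ``sum over pairs'' shape, and then to exploit that a vector in $\Gamma_{m-2}$ has at most two negative components.

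The first and main step is the algebraic identity
\begin{equation}\label{eq:planid}
\sum_{j} 2\sigma_{m-3}(\kappa|j)\,\xi_j^2-\sum_{p\neq q}\sigma_{m-3}(\kappa|pq)\,\xi_p\xi_q=\sum_{1\le p<q\le m}\sigma_{m-3}(\kappa|pq)\,(\xi_p-\xi_q)^2,
\end{equation}
valid for \emph{every} $\kappa,\xi\in\mathbb R^m$. To see it, compare coefficients on the right-hand side: the coefficient of $\xi_j^2$ is $\sum_{q\neq j}\sigma_{m-3}(\kappa|jq)=\big((m-1)-(m-3)\big)\sigma_{m-3}(\kappa|j)=2\sigma_{m-3}(\kappa|j)$ by identity (iv) applied to the $(m-1)$-vector $(\kappa|j)$, while the coefficient of $\xi_p\xi_q$ with $p\neq q$ is $-2\sigma_{m-3}(\kappa|pq)$; this is exactly the left-hand side. (Alternatively one may write $\sigma_{m-3}(\kappa|pq)=\sum_{r\neq p,q}\sigma_{m-3}(\kappa|pqr)$, regroup over triples, and combine the sum-of-squares identity from the proof of Lemma \ref{lemm21} with $3(a^2+b^2+c^2)-(a+b+c)^2=(a-b)^2+(b-c)^2+(a-c)^2$.) Because of \eqref{eq:planid}, Lemma \ref{lemm25} is equivalent to
\begin{equation}\label{eq:planineq}
\sum_{1\le p<q\le m}\sigma_{m-3}(\kappa|pq)\,(\xi_p-\xi_q)^2\ \ge\ 0\qquad\text{for all }\kappa\in\Gamma_{m-2},\ \xi\in\mathbb R^m .
\end{equation}

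Next I would normalise $\kappa_1\ge\cdots\ge\kappa_m$; by \eqref{Gamma1} at most $\kappa_{m-1},\kappa_m$ are negative. If all $\kappa_i\ge 0$, then each $\sigma_{m-3}(\kappa|pq)$ is a sum of products of non-negative numbers, so \eqref{eq:planineq} is a sum of non-negative terms and the lemma follows (this case also reproves Lemma \ref{lemm21b}). In general, the only coefficients $\sigma_{m-3}(\kappa|pq)$ that can be negative are those for which $\{p,q\}$ omits a negative index, and these must be absorbed by the positive ones. For the case of a single negative entry $\kappa_m<0$, note that since $\kappa_m\le 0$ and $\kappa\in\Gamma_{m-2}$ we have $\sigma_{m-2}(\kappa|m)=\sigma_{m-2}(\kappa)-\kappa_m\sigma_{m-3}(\kappa|m)>0$, hence $\nu:=(\kappa|m)\in\Gamma_{m-2}$ on $\mathbb R^{m-1}$; in particular the ``cross'' coefficients $\sigma_{m-3}(\kappa|pm)=\sigma_{m-3}(\nu|p)=\sigma_{m-2}^{pp}(\nu)$ are all strictly positive by the G\r{a}rding characterisation of $\Gamma_{m-2}$. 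Introducing $\eta_p:=\xi_p-\xi_m$ $(p\le m-1)$ turns \eqref{eq:planineq} into the positive semidefiniteness of the $(m-1)\times(m-1)$ symmetric matrix with diagonal $2\sigma_{m-3}(\kappa|p)=2\sigma_{m-2}^{pp}(\kappa)>0$ and off-diagonal $-\sigma_{m-3}(\kappa|pq)$. Expanding $\sigma_{m-3}(\kappa|pq)=\sigma_{m-3}(\nu|pq)+\kappa_m\sigma_{m-4}(\nu|pq)$ and estimating the negative terms by Maclaurin's and Newton's inequalities for $\nu$ (Lemmas \ref{lemm8}, \ref{lemm11}) together with the bound $-\kappa_m<\frac{2\kappa_1}{m-2}$ from Lemma \ref{lemm9} shows that the positive diagonal dominates; it is convenient to run this as an induction on $m$, since the $\nu$-form that splits off is precisely the $(m-1)$-dimensional instance of \eqref{eq:planineq}. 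Peeling off $\kappa_m$ and then, if necessary, $\kappa_{m-1}$ in the same fashion disposes of the two-negative-entry case.

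The delicate point---and the main obstacle---is this last estimate: the hypothesis $\kappa\in\Gamma_{m-2}$ is exactly what is needed (the extremal configurations are approached as $\sigma_{m-2}(\kappa)\to 0$), so the negative coefficients $\sigma_{m-3}(\kappa|pq)$ cannot be discarded and must be absorbed with the sharp constant, uniformly in the sizes of the $\kappa_i$. Carrying out this absorption is precisely what the algebraic lemmas of Section 5, and the subsequent case split into Cases A, B1--B3 and C, are designed to do.
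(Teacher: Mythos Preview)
Your identity \eqref{eq:planid} is correct and clarifying, and the induction-on-$m$ strategy of peeling off the smallest (negative) entry is exactly the scaffold the paper uses: there too one shows $\sigma_{m-3}(\kappa|m)\,L\ge\sigma_{m-2}(\kappa)\,L'$ with $L'$ the $(m-1)$-dimensional form, and then recurses. So the architecture is right.

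The gap is the core estimate. After your expansion the negative piece is $\kappa_m\sum_{p<q<m}\sigma_{m-4}(\nu|pq)(\xi_p-\xi_q)^2$; the induction hypothesis tells you this sum is nonnegative but gives no \emph{upper} bound, so multiplying by $\kappa_m<0$ produces a term you cannot control by ``diagonal dominance'' or by the crude bound $-\kappa_m<\tfrac{2\kappa_1}{m-2}$ alone. The hypothesis $\kappa\in\Gamma_{m-2}$ is sharp here---as you yourself note, the inequality degenerates as $\sigma_{m-2}(\kappa)\to 0$---so Maclaurin and Newton with a non-optimal constant will not close the gap. The paper supplies the missing idea: after isolating the $\xi_m$-cross terms by Cauchy--Schwarz and eliminating $\kappa_m$ via $\kappa_m=(\sigma_{m-2}(\kappa)-\sigma_{m-2}(\kappa|m))/\sigma_{m-3}(\kappa|m)$, it passes to the reciprocals $\mu_i=1/\kappa_i$ and rewrites the resulting expression as an \emph{explicit} sum of squares (see \eqref{s5.20}--\eqref{s5.23}), which yields $\sigma_{m-3}(\kappa|m)L-\sigma_{m-2}(\kappa)L'\ge 0$ with the exact constant. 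That algebraic miracle is the heart of the proof and is absent from your sketch.

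Finally, your last paragraph misplaces the case split. Lemma~\ref{lemm25} is one of the preparatory algebraic lemmas of Section~5; it must be proved on its own, and Cases A, B1--B3, C in Sections~6--7 concern the verification of Conjecture~\ref{con} for $k=n-2$, not the proof of this lemma. So you cannot defer the absorption step to that later split.
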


\begin{proof}
Denote the quadratic form (\ref{s5.15}) by $L$. Assume
$\kappa_1\geq\cdots\geq\kappa_m$. If $\kappa_m\geq 0$, by Lemma
\ref{lemm21b}, we know $L\geq 0$. Thus, in the following, we will assume
$\kappa_m<0$.

We can rewrite $L$ to be
\begin{align}\label{a5.14}
L=&2\dsum_{j\neq m}\sigma_{m-3}(\kappa|j)\xi_j^2+2\sigma_{m-3}(\kappa|m)\xi_m^2\\
&-\dsum_{p\neq q;p,q\neq
m}\sigma_{m-3}(\kappa|pq)\xi_p\xi_q-2\dsum_{j\neq
m}\sigma_{m-3}(\kappa|jm)\xi_m\xi_j.\nonumber
\end{align}
If the indices $j,p,q\neq m$, using the identity
$\kappa_m=\dfrac{\sigma_{m-2}(\kappa)-\sigma_{m-2}(\kappa|m)}{\sigma_{m-3}(\kappa|m)}$,
we have
\begin{align*}
\sigma_{m-3}(\kappa|j)=&\kappa_m\sigma_{m-4}(\kappa|jm)+\sigma_{m-3}(\kappa|jm)\\
=&\dfrac{\sigma_{m-2}(\kappa)-\sigma_{m-2}(\kappa|m)}{\sigma_{m-3}(\kappa|m)}\sigma_{m-4}(\kappa|jm)+\sigma_{m-3}(\kappa|jm),
\end{align*}
and
\begin{align*}
\sigma_{m-3}(\kappa|pq)=&\kappa_m\sigma_{m-4}(\kappa|pqm)+\sigma_{m-3}(\kappa|pqm)\\
=&\dfrac{\sigma_{m-2}(\kappa)-\sigma_{m-2}(\kappa|m)}{\sigma_{m-3}(\kappa|m)}\sigma_{m-4}(\kappa|pqm)+\sigma_{m-3}(\kappa|pqm).
\end{align*}
Inserting the above two identities into (\ref{a5.14}), we get
\begin{align}\label{s5.16}
&\sigma_{m-3}(\kappa|m)\cdot L\\
=&\sigma_{m-2}(\kappa)\Big[\sum_{j\neq m} 2\sigma_{m-4}(\kappa|jm)\xi_j^2-\sum_{p\neq q;p,q\neq m}\sigma_{m-4}(\kappa|pqm)\xi_p\xi_q\Big]\nonumber\\
&+2\dsum_{j\neq
m}[-\sigma_{m-2}(\kappa|m)\sigma_{m-4}(\kappa|jm)+\sigma_{m-3}(\kappa|jm)\sigma_{m-3}(\kappa|m)]\xi_j^2 \nonumber\\
&-\sum_{p\neq q;p,q\neq
m}[-\sigma_{m-2}(\kappa|m)\sigma_{m-4}(\kappa|pqm)+\sigma_{m-3}(\kappa|pqm)\sigma_{m-3}(\kappa|m)]\xi_p\xi_q \nonumber\\
&+2\sigma_{m-3}^2(\kappa|m)\xi_m^2-2\dsum_{j\neq
m}\sigma_{m-3}(\kappa|jm)\sigma_{m-3}(\kappa|m)\xi_m\xi_j.\nonumber
\end{align}
Let's denote
$$L'=\sum_{j\neq m} 2\sigma_{m-4}(\kappa|jm)\xi_j^2-\sum_{p\neq q;p,q\neq m}\sigma_{m-4}(\kappa|pqm)\xi_p\xi_q.$$
By Cauchy-Schwarz inequality, we have
\begin{align}\label{s5.18}
&2\sigma_{m-3}^2(\kappa|m)\xi_m^2-2\dsum_{j\neq
m}\sigma_{m-3}(\kappa|jm)\sigma_{m-3}(\kappa|m)\xi_m\xi_j\geq
-\dfrac{1}{2}\Big(\dsum_{j\neq
m}\sigma_{m-3}(\kappa|jm)\xi_j\Big)^2.
\end{align}
Inserting (\ref{s5.18}) into (\ref{s5.16}), we get
\begin{align}\label{s5.19}
&\sigma_{m-3}(\kappa|m) L-\sigma_{m-2}(\kappa)L' \\
 \geq&2\dsum_{j\neq
m}[\sigma_{m-3}(\kappa|jm)\sigma_{m-3}(\kappa|m)-\sigma_{m-2}(\kappa|m)\sigma_{m-4}(\kappa|jm)]\xi_j^2\nonumber\\
&+\dsum_{p\neq q;p,q\neq
m}[\sigma_{m-2}(\kappa|m)\sigma_{m-4}(\kappa|pqm)-\sigma_{m-3}(\kappa|pqm)\sigma_{m-3}(\kappa|m)]\xi_p\xi_q \nonumber\\
&-\dfrac{1}{2}\Big(\dsum_{j\neq
m}\sigma_{m-3}(\kappa|jm)\xi_j\Big)^2.\nonumber
\end{align}

Since $\kappa\in\Gamma_{m-2}$, by (\ref{Gamma1}) we have
$$\kappa_{m-2}+\kappa_{m-1}+\kappa_m>0.$$
As we have assumed $\kappa_m<0$, we have $\kappa_{m-2}>0$. In fact, we further
have that the only possible zero entry is $\kappa_{m-1}$. If it
occurs, namely $\kappa_{m-1}=0$, it is easy to see that the $m-1$
dimensional vector $\tilde{\kappa}=(\kappa|m-1)\in\Gamma_{m-2}$.
Thus, in this case, the quadratic form $L$ can be rewritten as
\begin{eqnarray}
\\
L&=&2\sigma_{m-3}(\tilde{\kappa})\xi_{m-1}^2-2\sum_{j\neq m-1}\sigma_{m-3}(\tilde{\kappa}|j)\xi_{m-1}\xi_j+\sum_{j\neq m-1}\sigma_{m-3}(\tilde{\kappa}|j)\xi_j^2\nonumber\\
&&+\sum_{j\neq m-1}\sigma_{m-3}(\tilde{\kappa}|j)\xi_j^2 -\sum_{p,q\neq m-1, p\neq q}\sigma_{m-3}(\tilde{\kappa}|pq)\xi_p\xi_q\nonumber\\
&\geq&\sum_{j\neq m-1}\sigma_{m-3}(\tilde{\kappa}|j)\xi_{m-1}^2-2\sum_{j\neq m-1}\sigma_{m-3}(\tilde{\kappa}|j)\xi_{m-1}\xi_j+\sum_{j\neq m-1}\sigma_{m-3}(\tilde{\kappa}|j)\xi_j^2\nonumber\\
&=&\sum_{j\neq m-1}\sigma_{m-3}(\tilde{\kappa}|j)(\xi_{m-1}-\xi_j)^2,\nonumber
\end{eqnarray}
which implies our lemma. Here, in the first inequality, we have used
Lemma 10 in \cite{RW}. Thus, in the following, we always assume
$\kappa_{m-1}\neq 0$. On the other hand, $\kappa_{m-2}>0$ implies
$\kappa_i>0$ for $i\leq m-2$. Therefore, every entry of $\kappa$ is
non zero. Hence, we can let $\mu_i=\dfrac{1}{\kappa_i}$ for
$i=1,2,\cdots, m$. We further let $\tilde{\xi}_i=\mu_i\xi_i$. By
(\ref{s5.19}), using the notation $\mu_i,\tilde{\xi}_i$, we have
\begin{align}\label{s5.20}
&\frac{\sigma_{m-3}(\kappa|m)L-\sigma_{m-2}(\kappa)L'}{\sigma_{m-1}^2(\kappa|m)}\\
 \geq&2\dsum_{j\neq
m}[\mu_j\sigma_{1}(\mu|jm)\sigma_{2}(\mu|m)-\sigma_{1}(\mu|m)\mu_j\sigma_{2}(\mu|jm)]\xi_j^2\nonumber\\
&+\dsum_{p\neq q;p,q\neq
m}[\sigma_{1}(\mu|m)\mu_p\mu_q\sigma_{1}(\mu|pqm)-\sigma_{2}(\mu|m)\mu_p\mu_q]\xi_p\xi_q
-\dfrac{1}{2}\Big(\dsum_{j\neq
m}\mu_j\sigma_{1}(\mu|jm)\xi_j\Big)^2\nonumber\\
=&2\dsum_{j\neq
m}[\sigma_{1}^2(\mu|jm)-\sigma_{2}(\mu|jm)](\mu_j\xi_j)^2-\dfrac{1}{2}\Big(\dsum_{j\neq m}\sigma_{1}(\mu|jm)\mu_j\xi_j\Big)^2\nonumber\\
&+\dsum_{p\neq q;p,q\neq
m}[\sigma_{1}(\mu|m)\sigma_{1}(\mu|pqm)-\sigma_{2}(\mu|m)]\mu_p\xi_p\mu_q\xi_q\nonumber\\
=&\dfrac{1}{2}\dsum_{j\neq
m}\sigma_{1}^2(\mu|jm)\tilde\xi_j^2+\dsum_{j\neq
m}[\sigma_{1}^2(\mu|jm)-2\sigma_{2}(\mu|jm)]\tilde\xi_j^2\nonumber\\
&+\dsum_{p\neq q;p,q\neq
m}\Big(\dfrac{1}{2}\sigma_{1}^2(\mu|pqm)-\sigma_{2}(\mu|pqm)-\dfrac{3}{2}\mu_p\mu_q-\dfrac{1}{2}(\mu_p+\mu_q)\sigma_{1}(\mu|pqm)\Big)\tilde\xi_p\tilde\xi_q\nonumber\\
=&\dfrac{1}{2}\Big[\dsum_{j\neq
m}\sigma_{1}^2(\mu|jm)\tilde\xi_j^2+2\dsum_{j\neq m}\dsum_{s\neq
m,j}\mu_s^2\tilde\xi_j^2\nonumber\\
&+\dsum_{p\neq q;p,q\neq m}\Big(\dsum_{s\neq
m,p,q}\mu_s^2-\mu_p\mu_q-\mu_p\sigma_1(\mu|pm)-\mu_q\sigma_1(\mu|qm)\Big)\tilde\xi_p\tilde\xi_q\Big]\nonumber
\end{align}
\begin{align}
=&\dfrac{1}{2}\Big[\dsum_{j\neq
m}\sigma_{1}^2(\mu|jm)\tilde\xi_j^2+\Big(\sum_{j\neq m}\dsum_{s\neq
m,j}\mu_s^2\tilde\xi_j^2-\sum_{p\neq q;p,q\neq m}\mu_p\mu_q\tilde\xi_p\tilde\xi_q\Big)\nonumber\\
&+\Big(\sum_{j\neq m}\dsum_{s\neq
m,j}\mu_s^2\tilde\xi_j^2+\sum_{p\neq q;p,q\neq m}\sum_{s\neq
m,p,q}\mu_s^2\tilde{\xi}_p\tilde{\xi}_q\Big)-2\sum_{p\neq q;p,q\neq m}\mu_p\sigma_1(\mu|pm)\tilde\xi_p\tilde\xi_q\Big].\nonumber
\end{align}
A straightforward calculation shows
\begin{align}\label{s5.21}
&\dsum_{j\neq m}\dsum_{s\neq m,j}\mu_s^2\tilde\xi_j^2-\dsum_{p\neq
q;p,q\neq m}\mu_p\mu_q\tilde\xi_p\tilde\xi_q\\
=&\dfrac{1}{2}\dsum_{p\neq q;p,q\neq
m}\mu_p^2\tilde\xi_q^2+\dfrac{1}{2}\dsum_{p\neq q;p,q\neq
m}\mu_q^2\tilde\xi_p^2-\dsum_{p\neq q;p,q\neq
m}(\mu_p\tilde\xi_q)(\mu_q\tilde\xi_p)\nonumber\\
=&\dfrac{1}{2}\dsum_{p\neq q;p,q\neq
m}(\mu_p\tilde\xi_q-\mu_q\tilde\xi_p)^2,\nonumber
\end{align}
\begin{align}\label{s5.22}
&\dsum_{j\neq m}\dsum_{s\neq m,j}\mu_s^2\tilde\xi_j^2+\dsum_{p\neq
q;p,q\neq m}\dsum_{s\neq m,p,q}\mu_s^2\tilde\xi_p\tilde\xi_q\\
=&\dsum_{j,s\neq m;j\neq s}\mu_s^2\tilde\xi_j^2+\dsum_{p\neq
q;p,q\neq m}\dsum_{j\neq m,p,q}\mu_j^2\tilde\xi_p\tilde\xi_q\nonumber\\
=&\dsum_{j\neq m}\dsum_{s\neq m,j}\mu_j^2\tilde\xi_s^2+\dsum_{j\neq
m}\dsum_{p\neq q;p,q\neq
m,j}\mu_j^2\tilde\xi_p\tilde\xi_q=\dsum_{j\neq
m}\mu_j^2\Big(\dsum_{s\neq m,j}\tilde\xi_s\Big)^2,\nonumber
\end{align}
and
\begin{align}\label{s5.23}
&\dsum_{j\neq m}\sigma_{1}^2(\mu|jm)\tilde\xi_j^2+\dsum_{j\neq
m}\mu_j^2\Big(\dsum_{s\neq m,j}\tilde\xi_s\Big)^2-2\dsum_{j\neq
m}\mu_j\sigma_1(\mu|jm)\tilde\xi_j\dsum_{s\neq m,j}\tilde\xi_s\\
=&\dsum_{j\neq m}\Big(\sigma_1(\mu|jm)\tilde\xi_j-\mu_j\sum_{s\neq
m,j}\tilde\xi_s \Big)^2.\nonumber
\end{align}
Inserting  (\ref{s5.21}), (\ref{s5.22}), (\ref{s5.23}) into
(\ref{s5.20}), we obtain $$\sigma_{m-3}(\kappa|m)L- \sigma_{m-2}(\kappa)L'\geq 0.$$

Since $\kappa_m$ is the minimum entry, we have the $m-1$ dimensional
vector $\bar{\kappa}=(\kappa|m)\in\Gamma_{m-2}$. On the other hand, we can rewrite $L'$ to be
\begin{align}\label{s5.17}
L'=\sum_{j\neq m}2\sigma_{(m-1)-3}(\bar{\kappa}|j)\xi_j^2-\sum_{p\neq q;p,q\neq
m}\sigma_{(m-1)-3}(\bar{\kappa}|pq)\xi_p\xi_q.
\end{align}
Comparing $L'$ with $L$, we observe that $L$ and $L'$ possess the same type. Thus, we can use the induction argument to handle
 $L'$.
We repeat the above whole argument of handling $L$ to deal with $L'$, then we obtain
\begin{align*}&\sigma_{m-4}(\bar{\kappa}|m-1)L'\\
\geq &\sigma_{m-3}(\bar{\kappa})\Big[\sum_{j\neq m,
m-1}2\sigma_{m-5}(\bar{\kappa}|j,m-1)\xi_j^2-\sum_{p\neq q;p,q\neq
m,m-1}\sigma_{m-5}(\bar{\kappa}|p,q,m-1)\xi_p\xi_q\Big].
\end{align*}
Since $\kappa_{m-2}>0$, the $m-2$ dimensional vector
$(\kappa_1,\cdots,\kappa_{m-2})\in\Gamma_{m-2}$. Therefore, the above
quadratic form is nonnegative by Lemma \ref{lemm21b}.

\end{proof}

\begin{lemm}\label{lemm26}
Assume $\kappa=(\kappa_1,\cdots,\kappa_m)\in\Gamma_{m-2}$. For
any given index $1\leq j\leq m$, we have
\begin{align*}
4\sigma_{m-4}^2(\kappa)\geq \dsum_{s\neq
j}\sigma_{m-4}^2(\kappa|js).
\end{align*}
\end{lemm}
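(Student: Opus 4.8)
Here is how I would go about proving Lemma~\ref{lemm26}.

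The plan is to use Lemma~\ref{lemm23} to rewrite $\sum_{s\neq j}\sigma_{m-4}^2(\kappa|js)$ entirely in terms of the numbers $\sigma_\ell(\kappa|j)$, and then to estimate the resulting expression against $4\sigma_{m-4}^2(\kappa)$ with the Newton/Maclaurin inequalities and the structure of $\Gamma_{m-2}$. First I would record the facts used throughout. Order the entries so that $\kappa_1\ge\cdots\ge\kappa_m$. By \eqref{Gamma1} at most two entries are nonpositive, and since $\kappa_{m-2}+\kappa_{m-1}+\kappa_m>0$ one in fact has $\kappa_1,\dots,\kappa_{m-2}>0$, so only $\kappa_{m-1},\kappa_m$ may be $\le 0$. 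Since deleting one entry sends $\Gamma_k$ into $\Gamma_{k-1}$ (read off from the characterization \eqref{Gamma}, as every relevant $\sigma$ of $\kappa|j$ is a partial derivative of $\sigma_{m-2}$ at $\kappa$), we have $\kappa|j\in\Gamma_{m-3}$; hence $\sigma_\ell(\kappa|j)>0$ for $1\le\ell\le m-3$, and the only $\sigma_\ell(\kappa|j)$ which can change sign are $\sigma_{m-2}(\kappa|j)$ and $\sigma_{m-1}(\kappa|j)$. Finally, the possibly negative entries are small: Lemma~\ref{lemm9}(a) gives $-\kappa_i<\tfrac{2}{m-2}\kappa_1$ for $\kappa_i\le 0$, and from $\sigma_{m-4}(\kappa),\sigma_{m-3}(\kappa),\sigma_{m-2}(\kappa)>0$, expanded by identity (iii) as $\sigma_\ell(\kappa)=\kappa_j\sigma_{\ell-1}(\kappa|j)+\sigma_\ell(\kappa|j)$, one reads off that when $\kappa_j<0$ the number $|\kappa_j|$ is bounded by the ratios $\sigma_\ell(\kappa|j)/\sigma_{\ell-1}(\kappa|j)$; these ratios are monotone in $\ell$ by Newton's inequality on $\kappa|j$, which is what lets one compare $|\kappa_j|\,\sigma_{m-5}(\kappa|j)$ with $\sigma_{m-4}(\kappa|j)$.

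When $\kappa$ has no negative entry (so in particular $\kappa_j>0$) the estimate is immediate with room to spare. By identity (iii) applied to $\kappa|j$, $\kappa_s\sigma_{m-5}(\kappa|js)=\sigma_{m-4}(\kappa|j)-\sigma_{m-4}(\kappa|js)$, so $0\le\sigma_{m-4}(\kappa|js)\le\sigma_{m-4}(\kappa|j)$ for every $s\ne j$, while identity (iv) gives $\sum_{s\ne j}\sigma_{m-4}(\kappa|js)=3\sigma_{m-4}(\kappa|j)$; therefore
\begin{align*}
\sum_{s\ne j}\sigma_{m-4}^2(\kappa|js)\;\le\;\sigma_{m-4}(\kappa|j)\sum_{s\ne j}\sigma_{m-4}(\kappa|js)\;=\;3\sigma_{m-4}^2(\kappa|j)\;\le\;3\sigma_{m-4}^2(\kappa),
\end{align*}
the last step because $\sigma_{m-4}(\kappa)=\sigma_{m-4}(\kappa|j)+\kappa_j\sigma_{m-5}(\kappa|j)\ge\sigma_{m-4}(\kappa|j)>0$. (Equivalently, after the substitution $\mu_i=1/\kappa_i$ used in the proof of Lemma~\ref{lemm25}, this is the elementary inequality $\sum_s\nu_s^2\sigma_2^2(\nu|s)\le 3\sigma_3^2(\nu)$ for a positive vector $\nu$, proved in the same way.)

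For the remaining cases I would start from Lemma~\ref{lemm23},
\begin{align*}
\sum_{s\ne j}\sigma_{m-4}^2(\kappa|js)=3\sigma_{m-4}^2(\kappa|j)-2\sigma_{m-5}(\kappa|j)\sigma_{m-3}(\kappa|j)-4\sigma_{m-6}(\kappa|j)\sigma_{m-2}(\kappa|j)-6\sigma_{m-7}(\kappa|j)\sigma_{m-1}(\kappa|j),
\end{align*}
and, applying Lemma~\ref{lemm20} to $\sigma_{m-4}^2(\kappa|j)$, I would rewrite this in the shorter form
\begin{align*}
\sum_{s\ne j}\sigma_{m-4}^2(\kappa|js)=3\sigma_{m-4}(\kappa^2|j)+4\sigma_{m-3}(\kappa|j)\sigma_{m-5}(\kappa|j)-10\sigma_{m-2}(\kappa|j)\sigma_{m-6}(\kappa|j),
\end{align*}
in which the first two terms are $\ge 0$ and only the last one can be of the wrong sign. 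One then splits according to whether the given index $j$ sits at a negative entry of $\kappa$ and according to the signs of $\sigma_{m-2}(\kappa|j)$ and $\sigma_{m-1}(\kappa|j)$. In each branch the non-negative terms are bounded by multiples of $\sigma_{m-4}^2(\kappa|j)$ via Newton's inequality (items (viii), (x)) applied to $\kappa|j\in\Gamma_{m-3}$, the wrong-sign term is absorbed using the size bounds on $\kappa_{m-1},\kappa_m$ from the first paragraph, and $\sigma_{m-4}(\kappa|j)$ is converted to $\sigma_{m-4}(\kappa)$ through $\sigma_{m-4}(\kappa)=\sigma_{m-4}(\kappa|j)+\kappa_j\sigma_{m-5}(\kappa|j)$; the boundary situations where some $\kappa_i=0$ are handled directly, since then several of the $\sigma$'s drop in index and the sums shorten.

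The main obstacle is that one cannot afford to discard the cross term $-2\sigma_{m-5}(\kappa|j)\sigma_{m-3}(\kappa|j)$ (nor, in the short form, to be wasteful about $4\sigma_{m-3}(\kappa|j)\sigma_{m-5}(\kappa|j)$ versus $3\sigma_{m-4}(\kappa^2|j)$): already for $m=5$ and $\kappa$ near the boundary of $\Gamma_3$ one has $3\sigma_{m-4}^2(\kappa|j)>4\sigma_{m-4}^2(\kappa)$, so the inequality is genuinely close to tight and the cancellation built into Lemma~\ref{lemm23} must be retained. Thus essentially the whole weight of the argument is the quantitative bookkeeping in the signed cases, using the actual magnitudes — not merely the signs — of the negative entries, which is finite but somewhat tedious.
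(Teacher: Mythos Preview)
Your outline is on the right track in spirit --- Lemma~\ref{lemm23} together with Lemma~\ref{lemm20} and the Newton-type inequalities are indeed the tools --- but as written it is not a proof: it stops exactly at the hard part. You say ``the whole weight of the argument is the quantitative bookkeeping in the signed cases \dots\ which is finite but somewhat tedious'' and leave it there. As you yourself note, for $m=5$ near $\partial\Gamma_3$ one already has $3\sigma_{m-4}^2(\kappa|j)>4\sigma_{m-4}^2(\kappa)$, so the inequality is close to sharp and the cancellation cannot be done by soft estimates. The paper carries out this computation, and it does so by two moves you do not make.

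First, the paper immediately reduces to $j=m$ (the smallest index): for $s\neq j,m$ one has $\sigma_{m-4}(\kappa|ms)-\sigma_{m-4}(\kappa|js)=(\kappa_j-\kappa_m)\sigma_{m-5}(\kappa|jms)\ge 0$, while the terms $s=m$ and $s=j$ coincide, so $\sum_{s\neq j}\sigma_{m-4}^2(\kappa|js)\le\sum_{s\neq m}\sigma_{m-4}^2(\kappa|ms)$. This eliminates your case split on whether $\kappa_j$ is positive. Second, and this is the key quantitative input you are missing, the paper does not merely use the identity $\sigma_{m-4}(\kappa)=\sigma_{m-4}(\kappa|m)+\kappa_m\sigma_{m-5}(\kappa|m)$; rather, writing $\tilde\sigma_\ell=\sigma_\ell(\kappa|m)$, it feeds in the constraint $\sigma_{m-2}(\kappa)>0$ to get the sharp lower bound
\[
\sigma_{m-4}(\kappa)=\kappa_m\tilde\sigma_{m-5}+\tilde\sigma_{m-4}
>\frac{\tilde\sigma_{m-4}\tilde\sigma_{m-3}-\tilde\sigma_{m-5}\tilde\sigma_{m-2}}{\tilde\sigma_{m-3}},
\]
which after squaring gives a lower bound on $4\sigma_{m-4}^2(\kappa)$ of exactly the right homogeneity. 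The comparison with $\sum_{s\neq m}\sigma_{m-4}^2(\kappa|ms)$ (via Lemma~\ref{lemm23}) then becomes a polynomial inequality purely in the $\tilde\sigma_\ell$, and the paper closes it by repeatedly applying Lemma~\ref{lemm20} as a one-sided inequality (dropping the positive $\sigma_k(\kappa^2)$ term), splitting only on the sign of $\tilde\sigma_{m-1}$. Your proposed route --- bounding the rewritten sum $3\sigma_{m-4}(\kappa^2|j)+4\tilde\sigma_{m-3}\tilde\sigma_{m-5}-10\tilde\sigma_{m-2}\tilde\sigma_{m-6}$ term by term against $4\sigma_{m-4}^2(\kappa)$ via Newton and the crude size bounds on $\kappa_{m-1},\kappa_m$ from Lemma~\ref{lemm9} --- does not obviously close without the sharper input above; at minimum you have not shown that it does.
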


\begin{proof}
We may assume
$\kappa_1\geq\kappa_2\geq\cdots\geq\kappa_m$. Thus, for any index $s$ satisfying $1\leq s\leq m$ and $s\neq j,m$, we have
$$
0<\sigma_{m-4}(\kappa|js)\leq\sigma_{m-4}(\kappa|ms),
$$
which implies $$\sum_{s\neq j}\sigma^2_{m-4}(\kappa|js)\leq \sum_{s\neq m}\sigma_{m-4}^2(\kappa|ms).$$
Therefore, we only need to consider the case $j=m$.

For simplification purpose, in our proof, we will always denote
$\tilde\sigma_l=\sigma_l(\kappa|m)$ for $l=1,2,\cdots,m$.
 Using the identity
$\sigma_{m-2}(\kappa)=\kappa_m\sigma_{m-3}(\kappa|m)+\sigma_{m-2}(\kappa|m)>0$,
we get
\begin{eqnarray}\label{kappa}
\kappa_m>-\dfrac{\tilde\sigma_{m-2}}{\tilde\sigma_{m-3}}.
\end{eqnarray}
Then by (x) in section 2, we have
\begin{align*}
\sigma_{m-4}(\kappa)=&\kappa_m\tilde\sigma_{m-5}+\tilde\sigma_{m-4}
>\dfrac{1}{\tilde\sigma_{m-3}}[\tilde\sigma_{m-4}\tilde\sigma_{m-3}-\tilde\sigma_{m-5}\tilde\sigma_{m-2}]>0.
\end{align*}
Thus, squaring the both sides of the above inequality, we get
\begin{align}\label{s5.24}
4\sigma_{m-4}^2(\kappa)>&\dfrac{4}{\tilde\sigma_{m-3}^2}[\tilde\sigma_{m-4}^2\tilde\sigma_{m-3}^2
-2\tilde\sigma_{m-5}\tilde\sigma_{m-4}\tilde\sigma_{m-3}\tilde\sigma_{m-2}+\tilde\sigma_{m-5}^2\tilde\sigma_{m-2}^2].
\end{align}
Using Lemma \ref{lemm23} and (\ref{s5.24}), we have
\begin{align}\label{s5.26}
&\Big[4\sigma_{m-4}^2(\kappa)-\dsum_{s\neq m}\sigma_{m-4}^2(\kappa|ms)\Big]\tilde\sigma_{m-3}^2\\
>&\tilde\sigma_{m-4}^2 \tilde\sigma_{m-3}^2 +4\tilde\sigma_{m-5}^2 \tilde\sigma_{m-2}^2 +2\tilde\sigma_{m-5} \tilde\sigma_{m-3}^3
+4\tilde\sigma_{m-6} \tilde\sigma_{m-2} \tilde\sigma_{m-3}^2 \nonumber\\
&+6\tilde\sigma_{m-7} \tilde\sigma_{m-1} \tilde\sigma_{m-3}^2
-8\tilde\sigma_{m-5} \tilde\sigma_{m-4} \tilde\sigma_{m-3}
\tilde\sigma_{m-2} .\nonumber
\end{align}
 By Lemma \ref{lemm20}, we have
\begin{align}
\tilde\sigma_{m-4}^2
>&2\tilde\sigma_{m-3} \tilde\sigma_{m-5} -2\tilde\sigma_{m-2} \tilde\sigma_{m-6} +2\tilde\sigma_{m-1}
\tilde\sigma_{m-7},\label{s5.27}\\
\tilde\sigma_{m-3}^2
>&2\tilde\sigma_{m-2} \tilde\sigma_{m-4} -2\tilde\sigma_{m-1} \tilde\sigma_{m-5} .\label{s5.28}
\end{align}
We divide into two cases to deal with (\ref{s5.26}).

Case (a) Suppose $\tilde\sigma_{m-1}\geq 0$. Since $\kappa\in\Gamma_{m-2}$, the only possible negative entries of $\kappa$ are $\kappa_m,\kappa_{m-1}$. Uisng our assumption, we know $\kappa_{m-1}\geq 0$, which implies $(\kappa|m)\in\Gamma_{m-1}$.

In this case, inserting (\ref{s5.27}) and (\ref{s5.28}) into (\ref{s5.26}), we get
\begin{align}\label{new5.31}
&\Big[4\sigma_{m-4}^2(\kappa)-\dsum_{s\neq m}\sigma_{m-4}^2(\kappa|ms)\Big]\tilde\sigma_{m-3}^2\\
>&[2\tilde\sigma_{m-3}\tilde\sigma_{m-5}-2\tilde\sigma_{m-2}\tilde\sigma_{m-6}+2\tilde\sigma_{m-1}\tilde\sigma_{m-7}]
[2\tilde\sigma_{m-2}\tilde\sigma_{m-4}-2\tilde\sigma_{m-1}\tilde\sigma_{m-5}]\nonumber\\
&+4\tilde\sigma_{m-5}^2\tilde\sigma_{m-2}^2+2\tilde\sigma_{m-5}\tilde\sigma_{m-3}[2\tilde\sigma_{m-2}\tilde\sigma_{m-4}-2\tilde\sigma_{m-1}\tilde\sigma_{m-5}]\nonumber\\
&+4\tilde\sigma_{m-6}\tilde\sigma_{m-2}\tilde\sigma_{m-3}^2+6\tilde\sigma_{m-7}\tilde\sigma_{m-1}\tilde\sigma_{m-3}^2-8\tilde\sigma_{m-5}\tilde\sigma_{m-4}\tilde\sigma_{m-3}\tilde\sigma_{m-2}\nonumber\\
=&4(\tilde\sigma_{m-1}\tilde\sigma_{m-2}\tilde\sigma_{m-5}\tilde\sigma_{m-6}-\tilde\sigma_{m-1}^2\tilde\sigma_{m-5}\tilde\sigma_{m-7})\nonumber\\
&+4(\tilde\sigma_{m-6}\tilde\sigma_{m-2}\tilde\sigma_{m-3}^2-\tilde\sigma_{m-2}^2\tilde\sigma_{m-4}\tilde\sigma_{m-6})\nonumber\\
&+4(\tilde\sigma_{m-5}^2\tilde\sigma_{m-2}^2-2\tilde\sigma_{m-1}\tilde\sigma_{m-3}\tilde\sigma_{m-5}^2)\nonumber\\
&+6\tilde\sigma_{m-7}\tilde\sigma_{m-1}\tilde\sigma_{m-3}^2+4\tilde\sigma_{m-1}\tilde\sigma_{m-2}\tilde\sigma_{m-4}\tilde\sigma_{m-7}.\nonumber
\end{align}
By (x) in section 2, it is clear that
\begin{align*}
&\tilde\sigma_{m-1}\tilde\sigma_{m-2}\tilde\sigma_{m-5}\tilde\sigma_{m-6}-\tilde\sigma_{m-1}^2\tilde\sigma_{m-5}\tilde\sigma_{m-7}\geq
0,\\
&\tilde\sigma_{m-6}\tilde\sigma_{m-2}\tilde\sigma_{m-3}^2-\tilde\sigma_{m-2}^2\tilde\sigma_{m-4}\tilde\sigma_{m-6}\geq
0,\\
&\tilde\sigma_{m-5}^2\tilde\sigma_{m-2}^2-2\tilde\sigma_{m-1}\tilde\sigma_{m-3}\tilde\sigma_{m-5}^2\geq
0.
\end{align*}
Thus, combing the above three formulae with \eqref{new5.31}, we obtain
\begin{align*}
&\Big[4\sigma_{m-4}^2(\kappa)-\sum_{s\neq
m}\sigma_{m-4}^2(\kappa|ms)\Big]\tilde\sigma_{m-3}^2 \geq 0.
\end{align*}

Case (b) Suppose $\tilde\sigma_{m-1}< 0$. Same as the argument of Case (a), we have $\kappa_{m-1}<0$, which implies $\kappa_m<0$ Thus, using \eqref{kappa}, we know $\tilde{\sigma}_{m-2}>0$. Thus, we get $(\kappa|m)\in \Gamma_{m-2}$.

In this case, inserting (\ref{s5.28}) and (\ref{s5.27})  into (\ref{s5.26}), we get
\begin{align}\label{new5.32}
&\Big[4\sigma_{m-4}^2(\kappa)-\sum_{s\neq m}\sigma_{m-4}^2(\kappa|ms)\Big]\tilde\sigma_{m-3}^2\\
>&\tilde\sigma_{m-4}^2
[2\tilde\sigma_{m-2}\tilde\sigma_{m-4}-2\tilde\sigma_{m-1}\tilde\sigma_{m-5}]\nonumber\\
&+4\tilde\sigma_{m-5}^2\tilde\sigma_{m-2}^2+2\tilde\sigma_{m-5}\tilde\sigma_{m-3}[2\tilde\sigma_{m-2}\tilde\sigma_{m-4}-2\tilde\sigma_{m-1}\tilde\sigma_{m-5}]\nonumber\\
&+4\tilde\sigma_{m-6}\tilde\sigma_{m-2}\tilde\sigma_{m-3}^2+6\tilde\sigma_{m-7}\tilde\sigma_{m-1}\tilde\sigma_{m-3}^2-8\tilde\sigma_{m-5}\tilde\sigma_{m-4}\tilde\sigma_{m-3}\tilde\sigma_{m-2}\nonumber\\
=&2\tilde\sigma_{m-2}\tilde\sigma_{m-4}^3-2\tilde\sigma_{m-4}^2\tilde\sigma_{m-1}\tilde\sigma_{m-5}+4\tilde\sigma_{m-5}^2\tilde\sigma_{m-2}^2
-4\tilde\sigma_{m-2}\tilde\sigma_{m-3}\tilde\sigma_{m-4}\tilde\sigma_{m-5}\nonumber\\
&-4\tilde\sigma_{m-1}\tilde\sigma_{m-5}^2\tilde\sigma_{m-3}+4\tilde\sigma_{m-6}\tilde\sigma_{m-2}\tilde\sigma_{m-3}^2+6\tilde\sigma_{m-7}\tilde\sigma_{m-1}\tilde\sigma_{m-3}^2\nonumber\\
\geq&2\tilde\sigma_{m-2}\tilde\sigma_{m-4}[2\tilde\sigma_{m-3}\tilde\sigma_{m-5}-2\tilde\sigma_{m-2}\tilde\sigma_{m-6}+2\tilde\sigma_{m-1}\tilde\sigma_{m-7}]-2\tilde\sigma_{m-4}^2\tilde\sigma_{m-1}\tilde\sigma_{m-5}\nonumber\\
&+4\tilde\sigma_{m-5}^2\tilde\sigma_{m-2}^2
-4\tilde\sigma_{m-2}\tilde\sigma_{m-3}\tilde\sigma_{m-4}\tilde\sigma_{m-5}-4\tilde\sigma_{m-1}\tilde\sigma_{m-5}^2\tilde\sigma_{m-3}\nonumber\\
&+4\tilde\sigma_{m-6}\tilde\sigma_{m-2}[2\tilde\sigma_{m-2}\tilde\sigma_{m-4}-2\tilde\sigma_{m-1}\tilde\sigma_{m-5}]+6\tilde\sigma_{m-7}\tilde\sigma_{m-1}\tilde\sigma_{m-3}^2\nonumber
\end{align}
\begin{align}
=&\Big(4\tilde\sigma_{m-1}\tilde\sigma_{m-2}\tilde\sigma_{m-4}\tilde\sigma_{m-7}-8\tilde\sigma_{m-1}\tilde\sigma_{m-2}\tilde\sigma_{m-5}\tilde\sigma_{m-6}\Big)\nonumber\\
&+\Big(-2\tilde\sigma_{m-4}^2\tilde\sigma_{m-1}\tilde\sigma_{m-5}
-4\tilde\sigma_{m-1}\tilde\sigma_{m-5}^2\tilde\sigma_{m-3}+6\tilde\sigma_{m-7}\tilde\sigma_{m-1}\tilde\sigma_{m-3}^2\Big)\nonumber\\
&+4\tilde\sigma_{m-2}^2\tilde\sigma_{m-4}\tilde\sigma_{m-6}+4\tilde\sigma_{m-5}^2\tilde\sigma_{m-2}^2.\nonumber
\end{align}
Here, we also use (\ref{s5.28}) and (\ref{s5.27}) in the third
inequality. Using (x) in section 2, we have
\begin{align*}
&-8\tilde\sigma_{m-1}\tilde\sigma_{m-2}\tilde\sigma_{m-5}\tilde\sigma_{m-6}+4\tilde\sigma_{m-1}\tilde\sigma_{m-2}\tilde\sigma_{m-4}\tilde\sigma_{m-7}\geq 0,\\
&-2\tilde\sigma_{m-4}^2\tilde\sigma_{m-1}\tilde\sigma_{m-5}-4\tilde\sigma_{m-1}\tilde\sigma_{m-5}^2\tilde\sigma_{m-3}
+6\tilde\sigma_{m-7}\tilde\sigma_{m-1}\tilde\sigma_{m-3}^2\geq 0.
\end{align*}
In the above second inequality, we have used
$\tilde\sigma_{m-4}^2\tilde\sigma_{m-5}\geq\tilde\sigma_{m-5}^2\tilde\sigma_{m-3}$.
Thus, combing the above two inequalities with \eqref{new5.32}, we obtain
\begin{align*}
&\Big[4\sigma_{m-4}^2(\kappa)-\sum_{s\neq
m}\sigma_{m-4}^2(\kappa|ms)\Big]\tilde\sigma_{m-3}^2\geq 0.
\end{align*}
\end{proof}

\begin{lemm}\label{lemm27}
Assume $\kappa=(\kappa_1,\cdots,\kappa_{m})\in\Gamma_{m-2}$, $m\geq
5$ and $\kappa_1\geq\cdots\geq\kappa_m$. For any given small
constant $0<\delta<1$, if $-\kappa_m\geq\delta \kappa_1$, or $
\kappa_{m-1}\geq \delta \kappa_1$, then there exists a constant
$\delta'>0$ only depending on $\delta$, such that
$\sigma_{m-3}(\kappa|1)\geq\delta'\kappa_1^{m-3}$.
\end{lemm}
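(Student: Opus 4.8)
The plan is to normalize by homogeneity and then argue by compactness, reducing the statement to a strict positivity claim that is dispatched with Newton's inequality together with an elementary fact on vanishing elementary symmetric functions.

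Since $\sigma_{m-3}(\kappa|1)$ is homogeneous of degree $m-3$, we may assume $\kappa_1=1$; note $\kappa_1>0$ because $\kappa\in\Gamma_{m-2}$ and $\kappa_1$ is the maximal entry. Under either branch of the hypothesis, Lemma~\ref{lemm9}(a) (with $n-k=2$) together with \eqref{Gamma1} shows that $\kappa_2,\dots,\kappa_{m-2}\ge\delta/2$ and that every entry lies in $[-2,1]$; in particular only $\kappa_{m-1},\kappa_m$ can be negative, and since one of them is $\le-\delta$ or $\ge\delta$, the $(m-1)$-vector $(\kappa_2,\dots,\kappa_m)$ has at most one vanishing entry. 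Consequently, for a fixed $\delta$ the set
\[
 S_\delta=\bigl\{\kappa\in\mathbb R^m:\ \kappa_1=1=\textstyle\max_i\kappa_i,\ \kappa\in\overline{\Gamma_{m-2}},\ (-\kappa_m\ge\delta\ \text{or}\ \kappa_{m-1}\ge\delta)\bigr\}
\]
is compact. As $\sigma_{m-3}(\kappa|1)$ is continuous and $\ge0$ on $S_\delta$ (it is $>0$ on $\Gamma_{m-2}$ by (vii), hence $\ge 0$ on the closure), it suffices to prove $\sigma_{m-3}(\kappa|1)>0$ at every point of $S_\delta$; its minimum over $S_\delta$ then gives the required $\delta'>0$, which depends only on $\delta$ (and the fixed $n,k$).

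So suppose $\kappa\in S_\delta$ with $\sigma_{m-3}(\kappa|1)=0$. Since $\sigma_{m-3}(\kappa|1)=\sigma_{m-2}^{11}(\kappa)$, the identities (iii) in the first coordinate give $\sigma_{m-2}(\kappa)=\sigma_{m-2}(\kappa|1)$ and $\sigma_{m-3}(\kappa)=\sigma_{m-4}(\kappa|1)$. Applying Newton's inequality (viii) to the $(m-1)$-vector $\mu:=(\kappa|1)$ (with $n=m-1$, $k=m-2$) yields $0=\sigma_{m-3}(\kappa|1)^2\ge c_m\,\sigma_{m-2}(\kappa|1)\,\sigma_{m-4}(\kappa|1)=c_m\,\sigma_{m-2}(\kappa)\,\sigma_{m-3}(\kappa)$ for a positive constant $c_m$. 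Since $\kappa\in\overline{\Gamma_{m-2}}$, both $\sigma_{m-2}(\kappa)\ge0$ and $\sigma_{m-3}(\kappa)\ge0$, so $\sigma_{m-2}(\kappa)=0$ or $\sigma_{m-3}(\kappa)=0$. In the first case $\sigma_{(m-1)-1}(\mu)=\sigma_{(m-1)-2}(\mu)=0$; in the second $\sigma_{(m-1)-2}(\mu)=\sigma_{(m-1)-3}(\mu)=0$. Either way $\mu$ has two consecutive vanishing elementary symmetric functions, and hence (elementary: if $\mu$ had no vanishing entry, then $\sigma_{j-1}=\sigma_j=0$ would force $\sigma_1(\mu^{-1})$ and $\sigma_2(\mu^{-1})$, so $\sum\mu_i^{-2}$, to vanish, which is impossible; the cases of one or two vanishing entries are handled similarly, giving in all cases at least two vanishing entries of $\mu$). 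This contradicts the previous paragraph, where $(\kappa_2,\dots,\kappa_m)$ was seen to have at most one vanishing entry. Therefore $\sigma_{m-3}(\kappa|1)>0$ on $S_\delta$, proving the lemma.

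One can also proceed by a direct quantitative estimate. When at most one of $\kappa_{m-1},\kappa_m$ is negative, put $W=(\kappa_2,\dots,\kappa_{m-1})$ (a nonnegative vector) and expand $\sigma_{m-3}(\kappa|1)$ and $\sigma_{m-2}(\kappa)$ along $\kappa_m$; the inequality $\sigma_{m-2}(\kappa)>0$ bounds $-\kappa_m$ and gives $\sigma_{m-3}(\kappa|1)>\bigl(\sigma_{m-3}(W)^2-\sigma_{m-2}(W)\sigma_{m-4}(W)\bigr)/\bigl(\sigma_{m-3}(W)+\sigma_{m-4}(W)\bigr)$, which is controlled from below by Newton's inequality (viii) for $W$ (the Newton ratio $2(m-2)/(m-3)$ exceeds $2$, leaving a definite gap) together with $\sigma_{m-3}(W)\ge\prod_{i=2}^{m-2}\kappa_i\ge(\delta/2)^{m-3}$ and the crude bounds $\sigma_j(W)\le\binom{m-2}{j}$. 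The genuinely delicate case is when $\kappa_{m-1}$ and $\kappa_m$ are both negative: there one must combine both cone inequalities $\sigma_{m-2}(\kappa)>0$ and $\sigma_{m-3}(\kappa)>0$ with the elementary bound $\kappa_{m-1}\kappa_m\le\tfrac14(\kappa_{m-1}+\kappa_m)^2$ and complete the square. This is the step I expect to be the main obstacle in a hands-on proof, and it is exactly the degeneracy that the compactness argument above absorbs cleanly.
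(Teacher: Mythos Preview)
Your compactness approach is genuinely different from the paper's proof, which proceeds by an explicit quantitative case analysis (three cases according to the signs of $\kappa_{m-1},\kappa_m$) and produces a concrete $\delta'$. Your route is softer and conceptually cleaner, and the overall framework is sound: normalize $\kappa_1=1$, observe that the hypothesis forces $\kappa_2,\dots,\kappa_{m-2}\geq\delta/2$, pass to the compact set $S_\delta$, and show $\sigma_{m-3}(\kappa|1)>0$ there.

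There is, however, a gap in your treatment of Case~2 (where $\sigma_{m-3}(\kappa)=0$, equivalently $\sigma_{m-4}(\mu)=\sigma_{m-3}(\mu)=0$ for $\mu=(\kappa|1)\in\mathbb{R}^{m-1}$). Your inverse-vector sketch asserts that $\sigma_{j-1}(\mu)=\sigma_j(\mu)=0$ forces $\sigma_1(\mu^{-1})=\sigma_2(\mu^{-1})=0$; this is correct only when $j=(m-1)-1$, i.e.\ in Case~1. In Case~2 the duality yields $\sigma_2(\mu^{-1})=\sigma_3(\mu^{-1})=0$, which does \emph{not} immediately give $\sum\mu_i^{-2}=0$. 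The easiest repair is to note that Case~2 is in fact vacuous on $S_\delta$: since $\kappa_1,\dots,\kappa_{m-2}\geq\delta/2>0$ there, the proof of Lemma~\ref{lemm10} (which only uses $\sigma_{s-l}(\kappa|1\cdots l)\geq 0$, all valid on the closure via Korevaar's characterization) gives $\sigma_{m-3}(\kappa)\geq\kappa_1\cdots\kappa_{m-3}>0$. Alternatively, one can prove the general fact you implicitly invoke---that two consecutive vanishing $\sigma_j$'s for a real vector force $\sigma_l=0$ for all $l\geq j$, hence many zero entries---by repeatedly applying Rolle's theorem to reduce to the quartic $At^4-Bt^3+C$, which is real-rooted only if $C=0$. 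Either fix closes the argument; with it, only Case~1 remains, and there your inverse-vector reasoning and the one-zero-entry check are fine.

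Compared with the paper, your argument trades an explicit constant for brevity, and isolates the content as a strict-positivity statement on a compact set. The paper's hands-on estimates, on the other hand, track the dependence on $\delta$ and mirror the style of the surrounding section.
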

\begin{proof}
For the sake of simplification, we denote
$\tilde\sigma_s=\sigma_s(\kappa|1,m-1,m)$ in our proof. Since
$\kappa\in\Gamma_{m-2}$, by (\ref{Gamma1}) we have
$$\kappa_{m-2}+\kappa_{m-1}+\kappa_m>0.$$
Thus, our assumption $-\kappa_m\geq\delta \kappa_1$ or $ \kappa_{m-1}\geq \delta
\kappa_1$ implies
$\kappa_{m-2}\geq\dfrac{\delta}{2}\kappa_1$. We select
\begin{eqnarray}\label{new2}
\delta'=
\dmin\Big\{\frac{1}{2^{m-1}}\delta^{m-2},\delta^{m-1}\Big\}.
\end{eqnarray}
It is obvious
\begin{align}\label{a5.28}
\sigma_{m-3}(\kappa|1)-\kappa_m\sigma_{m-4}(\kappa|1m)=\sigma_{m-3}(\kappa|1m),
\end{align}
and
\begin{align}\label{s5.29}
\kappa_1\sigma_{m-3}(\kappa|1)+\kappa_m\sigma_{m-3}(\kappa|1m)+\sigma_{m-2}(\kappa|1m)=\sigma_{m-2}(\kappa)>0.
\end{align}

Now, we divide into three cases to prove our lemma.
\par
Case (a): Suppose $-\kappa_m\geq\delta \kappa_1$ and $\kappa_{m-1}\leq
0$. Using \eqref{a5.28} and our assumption, we have $\sigma_{m-3}(\kappa|1m)>0$.

Multiple $\kappa_m$ in both side of (\ref{a5.28}) and insert it to
(\ref{s5.29}), then we get
\begin{align}\label{s5.30}
(\kappa_1+\kappa_m)\sigma_{m-3}(\kappa|1)-\kappa_m^2\sigma_{m-4}(\kappa|1m)+\sigma_{m-2}(\kappa|1m)=\sigma_{m-2}(\kappa)>0.
\end{align}

 Since we have
$$\sigma_{m-3}(\kappa|1m)=\kappa_{m-1}\tilde\sigma_{m-4}+\tilde\sigma_{m-3}>0,$$ we get
$\kappa_{m-1}>-\dfrac{\tilde\sigma_{m-3}}{\tilde\sigma_{m-4}}$. Thus,
we have
\begin{align}\label{a5.31}
\sigma_{m-4}(\kappa|1m)=&\kappa_{m-1}\tilde\sigma_{m-5}+\tilde\sigma_{m-4}
>\dfrac{\tilde\sigma_{m-4}^2-\tilde\sigma_{m-5}\tilde\sigma_{m-3}}{\tilde\sigma_{m-4}}\\
\geq&\Big(1-\dfrac{C_{m-3}^{m-5}}{C_{m-3}^{m-4}C_{m-3}^{m-4}}\Big)\tilde\sigma_{m-4}\geq
\frac{m-2}{2^{m-3}}\delta^{m-4}\kappa_1^{m-4}.\nonumber
\end{align}
Here, we have used Newton's inequality in the third
inequality
 and used $\kappa_{m-2}\geq\dfrac{\delta}{2}\kappa_1$
in the last inequality.
\par
 Note that, in this case, $\kappa_m\leq \kappa_{m-1}\leq 0$, which implies $\sigma_{m-2}(\kappa|1m)=\kappa_2\cdots\kappa_{m-1}\leq 0$. On the other hand, by Lemma \ref{lemm9}, we have
 $$\kappa_1\geq\kappa_1+\kappa_m>\Big(1-\dfrac{2}{m-2}\Big)\kappa_1>0.$$
Thus, combing
 (\ref{s5.30}), (\ref{a5.31}) with $\sigma_{m-2}(\kappa|1m)\leq 0$,
we obtain
$$
\sigma_{m-3}(\kappa|1)\geq
\frac{m-2}{2^{m-3}}\delta^{m-2}\kappa_1^{m-2}/(\kappa_1+\kappa_m)\geq\delta'
\kappa_1^{m-3}.
$$
Here in the last inequality, we have used \eqref{new2}.

Case (b): Suppose $-\kappa_m\geq\delta \kappa_1,
0<\kappa_{m-1}\leq\delta\kappa_1$. Therefore, our assumption implies
$\kappa_m+\kappa_{m-1}\leq 0$. It is obvious that
\begin{align}\label{s5.31}
\kappa_1\sigma_{m-3}(\kappa|1)+\kappa_m\kappa_{m-1}\tilde\sigma_{m-4}+(\kappa_m+\kappa_{m-1})\tilde\sigma_{m-3}=\sigma_{m-2}(\kappa)>0.
\end{align}
If $\kappa_{m-1}\geq -\dfrac{\kappa_m}{2}$, by \eqref{s5.31}, we
have
\begin{align*}
\sigma_{m-3}(\kappa|1)>-\kappa_m\kappa_{m-1}\tilde\sigma_{m-4}/\kappa_1>\delta'\kappa_1^{m-3}.
\end{align*}
If $\kappa_{m-1}< -\dfrac{\kappa_m}{2}$, by \eqref{s5.31}, we have
\begin{align*}
\sigma_{m-3}(\kappa|1)>-(\kappa_m+\kappa_{m-1})\tilde\sigma_{m-3}/\kappa_1>\delta'\kappa_1^{m-3}.
\end{align*}
Here, in the above two inequalities, we both have used
$\kappa_{m-2}\geq\dfrac{\delta}{2}\kappa_1$ and \eqref{new2}.
\par
Case (c): Suppose $\kappa_{m-1}\geq \delta \kappa_1$. Using
(\ref{s5.29}), we have
$$\kappa_m>\dfrac{-\sigma_{m-2}(\kappa|1m)-\kappa_1\sigma_{m-3}(\kappa|1)}{\sigma_{m-3}(\kappa|1m)},$$
Thus, we get

\begin{align*}
\sigma_{m-3}(\kappa|1)=&\kappa_m\sigma_{m-4}(\kappa|1m)+\sigma_{m-3}(\kappa|1m)\\
>&\dfrac{-\sigma_{m-2}(\kappa|1m)\sigma_{m-4}(\kappa|1m)-\kappa_1\sigma_{m-3}(\kappa|1)\sigma_{m-4}(\kappa|1m)
+\sigma_{m-3}^2(\kappa|1m)}{\sigma_{m-3}(\kappa|1m)}.
\end{align*}
Solving the above inequality with respect to $\sigma_{m-3}(\kappa|1)$ and using
Newton's inequality, we obtain
\begin{align*}
\sigma_{m-3}(\kappa|1)>&\dfrac{\sigma_{m-3}^2(\kappa|1m)-\sigma_{m-2}(\kappa|1m)\sigma_{m-4}(\kappa|1m)}{\sigma_{m-3}(\kappa|1m)}
\Big/\Big(1+\dfrac{\kappa_1\sigma_{m-4}(\kappa|1m)}{\sigma_{m-3}(\kappa|1m)}\Big)\\
>&\Big(1-\dfrac{C_{m-2}^{m-4}}{C_{m-2}^{m-3}C_{m-2}^{m-3}}\Big)\sigma_{m-3}(\kappa|1m)\Big/\Big(1+\dfrac{\kappa_1\sigma_{m-4}(\kappa|1m)}{\sigma_{m-3}(\kappa|1m)}\Big)\\
>&\dfrac{m-1}{2}\kappa_3\cdots\kappa_{m-1}\Big/\Big(1+\dfrac{\kappa_1C_{m-2}^{m-4}\kappa_2\cdots\kappa_{m-3}}{C_{m-2}^{m-3}\kappa_3\cdots\kappa_{m-1}}\Big).
\end{align*}
Since $\kappa_1\geq\cdots\geq\kappa_{m-1}\geq\delta \kappa_1$, we
get
\begin{align*}
\sigma_{m-3}(\kappa|1)>&\dfrac{m-1}{2}\delta^{m-3}\kappa_1^{m-3}\Big/\Big(1+\dfrac{m-3}{2\delta^2}\Big)\geq\delta'\kappa_1^{m-3}.
\end{align*}
\end{proof}

\section{The Proof of Theorem \ref{maintheo} for cases A, B1 and B2}

In this section, we will divided into two steps to prove that (\ref{s4.05}) is a non negative form for cases A, B1 and B2. The main result of this section is the following theorem:
\begin{theo}\label{theorem}
Suppose $\kappa=(\kappa_{1},\cdots,\kappa_{n})\in\Gamma_{n-2}$, $n\geq 5$ and $\sigma_{n-2}(\kappa)$ has  positive upper and lower bounds $N_0\leq \sigma_{n-2}(\kappa)\leq N$. Further suppose $\kappa$ satisfies the condition of Case A, Case B1,
or Case B2 defined in section 4. For any given index $1\leq i\leq n$, if $\kappa_i>\kappa_1-\sqrt{\kappa_1}/n$, we have, for any $\xi=(\xi_1,\cdots,\xi_n)\in\mathbb{R}^n$,
\begin{align}\label{s6.01}
\dfrac{8\kappa_i^2}{9}\textbf{A}_{n-2;i}+\textbf{C}_{n-2;i}
\geq&\dfrac{1}{20} \sigma_{n-3}^2(\kappa|i)\sum_{j\neq i}\xi_j^2\geq
0,
\end{align}
and
\begin{align}\label{s6.02}
&\dfrac{\kappa_i^2}{9}\textbf{A}_{n-2;i}+\sigma_{n-2}(\kappa)\textbf{B}_{n-2;i}
-c_{n-2,K}\textbf{D}_{n-2;i}
+\dfrac{1}{20}\sigma_{n-3}^2(\kappa|i)\sum_{j\neq i}\xi_j^2\geq 0,
\end{align}
when $\kappa_1$ is sufficiently large. Here the definitions of $\textbf{A}_{n-2;i}, \textbf{B}_{n-2;i},
\textbf{C}_{n-2;i}, \textbf{D}_{n-2;i}$ are presented in section 4
and $c_{n-2,K}$ is defined by \eqref{ckK}. It is clear that
\eqref{s6.01} and \eqref{s6.02} give the non negativity of \eqref{s4.05}.
\end{theo}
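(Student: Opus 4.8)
The plan is to reduce the Conjecture for $k=n-2$ to the two inequalities \eqref{s6.01} and \eqref{s6.02} and then prove these separately, \eqref{s6.02} being a soft absorption estimate and \eqref{s6.01} the analytic core. Adding \eqref{s6.01} and \eqref{s6.02} cancels the two copies of $\tfrac1{20}\sigma_{n-3}^2(\kappa|i)\sum_{j\ne i}\xi_j^2$ and leaves precisely the form \eqref{s4.05}, $\kappa_i^2\textbf{A}_{n-2;i}+\sigma_{n-2}(\kappa)\textbf{B}_{n-2;i}+\textbf{C}_{n-2;i}-c_{n-2,K}\textbf{D}_{n-2;i}\ge0$; by Lemma \ref{lemm17} the left side of \eqref{s3.01} is at least $c_{n-2,K}^{-1}$ times this form, so the non-negativity of \eqref{s4.05} gives \eqref{s3.01}, i.e. the Conjecture, in Cases A, B1 and B2. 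Before proving the two inequalities I would record the structural facts special to $k=n-2$: writing $\mu=(\kappa|i)$, an $(n-1)$-vector lying in $\Gamma_{n-3}$ because $\kappa\in\Gamma_{n-2}$, Lemma \ref{lemm24} (with $s=n-3$) gives $\textbf{A}_{n-2;i}\ge0$ together with $\textbf{A}_{n-2;i}=\bigl(\sum_{p\ne i}\sigma_{n-4}(\kappa|ip)\xi_p\bigr)^2-\sigma_{n-3}(\kappa|i)\sum_{p\ne q;\,p,q\ne i}\sigma_{n-5}(\kappa|ipq)\xi_p\xi_q$, Lemma \ref{lemm25} gives $\textbf{B}_{n-2;i}\ge0$, and $\textbf{D}_{n-2;i}=\bigl(\sum_{j\ne i}\sigma_{n-3}(\kappa|ij)\xi_j\bigr)^2\ge0$. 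Finally, in all of Cases A, B1, B2 the hypothesis $\sigma_{n-2}(\kappa|i)\le0$ together with $\sigma_{n-2}(\kappa)=\kappa_i\sigma_{n-3}(\kappa|i)+\sigma_{n-2}(\kappa|i)>0$ and $\kappa_i>0$ forces $\sigma_{n-3}(\kappa|i)>0$, so the right-hand sides of \eqref{s6.01} and \eqref{s6.02} are a genuine positive multiple of $\sum_{j\ne i}\xi_j^2$; this reserve term is what absorbs the surviving negative contributions.

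For \eqref{s6.02} --- which, as remarked in Section 4, uses none of the Case A/B1/B2 hypotheses --- I would write $\sigma_{n-3}(\kappa|ij)=\sigma_{n-3}(\kappa|i)-\kappa_j\sigma_{n-4}(\kappa|ij)$ and apply Cauchy--Schwarz to get $\textbf{D}_{n-2;i}\le 2(n-1)\sigma_{n-3}^2(\kappa|i)\sum_{j\ne i}\xi_j^2+2(n-1)\sum_{j\ne i}\kappa_j^2\sigma_{n-4}^2(\kappa|ij)\xi_j^2$. Since $\kappa_i\sigma_{n-3}(\kappa|i)$ is bounded below by a positive constant (Lemma \ref{lemm11}), $c_{n-2,K}$ is as small as desired for $K$ large, so the first term of $c_{n-2,K}\textbf{D}_{n-2;i}$ is dominated by $\tfrac1{20}\sigma_{n-3}^2(\kappa|i)\sum_{j\ne i}\xi_j^2$. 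For the second, $|\kappa_j|\le\kappa_1\le2\kappa_i$ (Lemma \ref{lemm9} for $n\ge5$, $\kappa_1$ large) shows it is at most a dimensional constant times $\kappa_i^2$ times the diagonal part of $\textbf{A}_{n-2;i}$; bounding this against $\tfrac{\kappa_i^2}{9}\textbf{A}_{n-2;i}+\sigma_{n-2}(\kappa)\textbf{B}_{n-2;i}$ uses the cross-term control for $\textbf{A}_{n-2;i}$ and $\textbf{B}_{n-2;i}$ provided by Lemma \ref{lemm26} and the companion identities of Section 5, after which the smallness of $c_{n-2,K}$ closes \eqref{s6.02}.

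The proof of \eqref{s6.01} is the heart of the matter. For $k=n-2$ one has $\sigma_{n-2}(\kappa|ipq)=0$, so $\textbf{C}_{n-2;i}$ collapses to $\sum_{j\ne i}\bigl[\kappa_j^2\sigma_{n-4}^2(\kappa|ij)-2\sigma_{n-2}(\kappa|ij)\sigma_{n-4}(\kappa|ij)\bigr]\xi_j^2-\sum_{p\ne q;\,p,q\ne i}\sigma_{n-3}^2(\kappa|ipq)\xi_p\xi_q$. The plan is to complete squares in $\tfrac{8\kappa_i^2}{9}\textbf{A}_{n-2;i}+\textbf{C}_{n-2;i}$: the square $\bigl(\sum_{p\ne i}\sigma_{n-4}(\kappa|ip)\xi_p\bigr)^2$ from $\textbf{A}$ and the diagonal squares $\kappa_j^2\sigma_{n-4}^2(\kappa|ij)\xi_j^2$ from $\textbf{C}$ are matched, via the identity $\sigma_{n-3}(\kappa|i)=\kappa_j\sigma_{n-4}(\kappa|ij)+\sigma_{n-3}(\kappa|ij)$, against the two families of cross terms, leaving a diagonal remainder of order $\sigma_{n-3}^2(\kappa|i)$, which is exactly what the right side of \eqref{s6.01} needs. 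The term that resists this bookkeeping is $-2\sigma_{n-2}(\kappa|ij)\sigma_{n-4}(\kappa|ij)\xi_j^2$: since $\sigma_{n-2}(\kappa|ij)=\prod_{l\ne i,j}\kappa_l$ can be of size $\sim\kappa_1^{n-2}$ while $\sigma_{n-3}^2(\kappa|i)$ may be as small as $\sim\kappa_1^{-2}$, no absolute estimate suffices and one must use the finer hypotheses of the subcases. I expect this to be the main obstacle, and it is the only place the subcase hypotheses enter, via an auxiliary lemma (the ``Lemma \ref{lemm30}'' of the outline): in Case A ($\kappa_n\le\kappa_{n-1}\le0$) the product $\sigma_{n-2}(\kappa|ij)$ has a sign one can use; in Case B1 ($\kappa_i\sigma_{n-3}(\kappa|i)\le(1+\delta_0)\sigma_{n-2}(\kappa)$) the factor $\kappa_i$ is controlled by $\sigma_{n-3}(\kappa|i)$, so the $\kappa_i^2$-weighted part of $\textbf{A}$ already dominates; and in Case B2 ($\kappa_1\cdots\kappa_{n-2}\ge2(n-2)\sigma_{n-2}(\kappa)$) the product $\prod_{l\ne i,j}\kappa_l$ is bounded above in terms of $\sigma_{n-2}(\kappa)$. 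With that lemma in hand the square completion yields \eqref{s6.01}, and \eqref{s6.01} together with \eqref{s6.02} gives the non-negativity of \eqref{s4.05}, completing the proof of Theorem \ref{theorem} and hence of Theorem \ref{maintheo} in Cases A, B1 and B2.
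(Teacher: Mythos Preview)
Your overall architecture is right --- adding \eqref{s6.01} and \eqref{s6.02} gives \eqref{s4.05}, and Lemma~\ref{lemm17} then yields \eqref{s3.01} --- but your treatment of \eqref{s6.02} has a genuine gap, and your plan for \eqref{s6.01} is too schematic.

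\textbf{On \eqref{s6.02}.} You claim it is a soft absorption estimate independent of the case hypotheses, and try to absorb $c_{n-2,K}\sum_{j\ne i}\kappa_j^2\sigma_{n-4}^2(\kappa|ij)\xi_j^2$ into $\tfrac{\kappa_i^2}{9}\textbf{A}_{n-2;i}+\sigma_{n-2}(\kappa)\textbf{B}_{n-2;i}$. This does not close: nonnegativity of a quadratic form does \emph{not} let you remove even a tiny multiple of its diagonal and stay nonnegative (think of a rank-one form). The smallness of $c_{n-2,K}$ does not help, and the reserve term $\tfrac{1}{20}\sigma_{n-3}^2(\kappa|i)\sum\xi_j^2$ is far too small to absorb a term of order $\kappa_i^2\sigma_{n-4}^2(\kappa|ij)$ when $\sigma_{n-3}(\kappa|i)$ can be as small as $O(\kappa_1^{-1})$. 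The paper does something quite different: after the $K$-absorption it reduces \eqref{s6.02} to the inequality
\[
\tfrac{8\kappa_i^2}{9}\textbf{A}_{n-2;i}-\sigma_{n-2}(\bar\kappa)\textbf{B}_{n-2;i}+\sum_{j\ne i}\sigma_{n-3}^2(\bar\kappa)\xi_j^2-\textbf{D}_{n-2;i}\ge 0,
\]
and then shows by two algebraic identities that this expression is \emph{identically equal} to the left side of \eqref{s6.01}. So \eqref{s6.02} is not a separate estimate at all; it is the same inequality as \eqref{s6.01}, and in particular needs the case hypotheses just as much.

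\textbf{On \eqref{s6.01}.} Your square-completion outline is in the right spirit but misses the machinery that actually makes it work. The paper does not use your representation of $\textbf{A}_{n-2;i}$ as a single square minus a cross term; instead it rewrites the diagonal of $\textbf{A}$ via the identity $\sigma_{n-4}^2(\lambda)=\sigma_{n-4}(\lambda^2)+2\sigma_{n-3}(\lambda)\sigma_{n-5}(\lambda)-2\sigma_{n-2}(\lambda)\sigma_{n-6}(\lambda)$ (Lemma~\ref{lemm20}), then replaces the weight $\kappa_i^2$ by $\tfrac{2\sigma_{n-3}(\bar\kappa)}{3\sigma_{n-5}(\bar\kappa)}$ using a nontrivial ratio bound (Lemma~\ref{lemm28}), and finally completes squares through an auxiliary form $\textbf{H}_i$ (Lemma~\ref{lemm31}) built from the pieces $\sigma_{n-3}(\bar\kappa|pq)-\tfrac{\sigma_{n-3}(\bar\kappa)}{3\sigma_{n-5}(\bar\kappa)}\sigma_{n-5}(\bar\kappa|pq)$. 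Only after all this does the case-dependent Lemma~\ref{lemm30} handle the residual $\sigma_{n-2}(\bar\kappa|j)\sigma_{n-6}(\bar\kappa|j)$ terms. Your description of Lemma~\ref{lemm30} also has the Case~B1 inequality backwards: B1 is $\kappa_i\sigma_{n-3}(\kappa|i)\ge(1+\delta_0)\sigma_{n-2}(\kappa)$, not $\le$.
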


 Throughout this section, we always use $\bar{\kappa}$ to denote $(\kappa|i)$. Thus, we have $(\bar{\kappa}|j)=(\kappa|ij)$ and $(\bar{\kappa}|pq)=(\kappa|ipq)$.

\begin{lemm}\label{lemm28}
Assume $\kappa=(\kappa_1,\kappa_2,\cdots,\kappa_n)\in\Gamma_{n-2}$,
 $n\geq 5$ and $\kappa_1\geq\cdots\geq\kappa_n$. We let
 $\delta=1/10$. Then, for any given index $1\leq i\leq n$, if
$\kappa_i>\kappa_1-\sqrt{\kappa_1}/n$, we have
\begin{align}\label{a6.4}
0<\dfrac{\sigma_{n-3}(\bar{\kappa})}{\sigma_{n-5}(\bar{\kappa})}\leq
(1+\delta)\kappa_1^2+\dfrac{\sigma_{n-2}(\kappa)}{\kappa_i},
\end{align}
when $\kappa_1$ is sufficiently large.
\end{lemm}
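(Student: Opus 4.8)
The plan is to split on the dimension $n$: for $n\ge 7$ the bound is immediate from Maclaurin's inequality, while $n=5$ and $n=6$ require a direct combinatorial argument built on the single identity $\sigma_{n-2}(\kappa)=\kappa_i\sigma_{n-3}(\bar\kappa)+\sigma_{n-2}(\bar\kappa)$ (property (iii)). For the strict positivity in \eqref{a6.4}, note first that $\kappa\in\Gamma_{n-2}$ forces $\bar\kappa=(\kappa|i)\in\Gamma_{n-3}$ (deleting one entry from a cone vector stays in the next cone; equivalently iterate property (vii)), so $\sigma_{n-3}(\bar\kappa)>0$, and since $\Gamma_{n-3}\subset\Gamma_{n-5}$ for $n\ge6$ while $\sigma_0\equiv1$ for $n=5$, also $\sigma_{n-5}(\bar\kappa)>0$.

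For $n\ge 7$, apply Lemma \ref{lemm8} to the $(n-1)$-dimensional vector $\bar\kappa\in\Gamma_{n-3}$ with $s=2$, using that its largest entry is $\le\kappa_1$:
\[
\frac{\sigma_{n-3}(\bar\kappa)}{\sigma_{n-5}(\bar\kappa)}\le\frac{C_{n-1}^{n-3}}{C_{n-1}^{n-5}}\,\kappa_1^2=\frac{12}{(n-3)(n-4)}\,\kappa_1^2\le\kappa_1^2<(1+\delta)\kappa_1^2,
\]
and since $\sigma_{n-2}(\kappa)>0$ and $\kappa_i>0$ the extra term on the right of \eqref{a6.4} only helps. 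So it remains to treat $n=5$ and $n=6$.

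For $n=5,6$: using property (iii) to write $\dfrac{\sigma_{n-2}(\kappa)}{\kappa_i}=\sigma_{n-3}(\bar\kappa)+\dfrac{\sigma_{n-2}(\bar\kappa)}{\kappa_i}$ and multiplying \eqref{a6.4} by $\sigma_{n-5}(\bar\kappa)>0$, the claim becomes $\sigma_{n-3}(\bar\kappa)\bigl(1-\sigma_{n-5}(\bar\kappa)\bigr)\le\bigl[(1+\delta)\kappa_1^2+\sigma_{n-2}(\bar\kappa)/\kappa_i\bigr]\sigma_{n-5}(\bar\kappa)$. For $n=5$ (where $\sigma_{n-5}(\bar\kappa)=1$) this is simply $\sigma_{3}(\bar\kappa)\ge-(1+\delta)\kappa_1^2\kappa_i$, and for $n=6$ the same rearrangement, after separating the subcases $\sigma_1(\bar\kappa)\ge1$ and $\sigma_1(\bar\kappa)<1$ and using the Maclaurin bound $\sigma_3(\bar\kappa)\le 2\kappa_1^2\sigma_1(\bar\kappa)$ of Lemma \ref{lemm8}, reduces to a lower bound of the same type for $\sigma_4(\bar\kappa)$. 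I would prove these lower bounds by a case split on the number — at most two, by \eqref{Gamma1} — and size of the negative entries of $\kappa$: if $\kappa_n\ge0$ the relevant $\sigma$ of $\bar\kappa$ is a sum of nonnegative products; if only $\kappa_n<0$, expand $\sigma_{n-3}(\bar\kappa)$ along $\kappa_n$, bound $|\kappa_n|$ by Lemma \ref{lemm9}(a), and trade the term $\kappa_n\sigma_{n-4}(\bar\kappa|n)$ against the positivity $\sigma_{n-2}(\kappa)>0$ together with Lemma \ref{lemm10}; the two-negative-entry case is handled analogously, now additionally invoking $\sigma_{n-3}(\kappa)>0$ to control the joint contribution of $\kappa_{n-1},\kappa_n$. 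Throughout, the hypothesis $\kappa_i>\kappa_1-\sqrt{\kappa_1}/n$ is used to replace $\kappa_i$ by $(1-o(1))\kappa_1$ and absorb all lower-order errors into $\delta\kappa_1^2$ once $\kappa_1$ is large.

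The main obstacle is exactly this last analysis for $n=5,6$: estimating the negative entries via Lemma \ref{lemm9} and the positive ones by $|\kappa_j|\le\kappa_1$ \emph{independently} overshoots, giving a constant $\approx 2$ in front of $\kappa_1^2$ rather than $1+\delta$. The configuration $\bar\kappa\approx(t,t,t,-t)$ is admissible for $\Gamma_{2}$ yet violates the target inequality, and is excluded only because the corresponding $\kappa$ would fail $\sigma_{n-2}(\kappa)>0$; hence one genuinely needs that $\kappa$ itself — not merely $(\kappa|i)$ — lies in $\Gamma_{n-2}$, and the delicate step is to couple $\sigma_{n-3}(\kappa)>0$, $\sigma_{n-2}(\kappa)>0$ and $\kappa_i$ being within $\sqrt{\kappa_1}/n$ of $\kappa_1$ so that the constant comes out at precisely $1+\tfrac1{10}$.
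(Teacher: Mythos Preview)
Your shortcut for $n\ge7$ is correct and cleaner than the paper's argument: applying Lemma~\ref{lemm8} to $\bar\kappa\in\Gamma_{n-3}$ with $s=2$ gives $\sigma_{n-3}(\bar\kappa)/\sigma_{n-5}(\bar\kappa)\le\frac{12}{(n-3)(n-4)}\kappa_1^2\le\kappa_1^2$, which already beats $(1+\delta)\kappa_1^2$. The paper does not take this route; it treats all $n\ge5$ by a single method based on case-splitting on the signs of $\kappa_{n-1},\kappa_n$, so your observation genuinely shortens two-thirds of the range.

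For $n=5,6$, however, your proposal has a real gap. You correctly reduce to showing (for $n=5$) that $\sigma_3(\bar\kappa)\ge-(1+\delta)\kappa_1^2\kappa_i$, and you correctly diagnose that bounding the negative entries via Lemma~\ref{lemm9} and the positive ones by $\kappa_1$ \emph{separately} gives a constant $\approx2$ instead of $1+\tfrac1{10}$; you also correctly note that the extremal configuration $\bar\kappa\approx(t,t,t,-t)$ is excluded only by the full condition $\kappa\in\Gamma_{n-2}$. But you stop at the diagnosis. The paper's device for closing this gap is the following: in the case $\kappa_n<0\le\kappa_{n-1}$, expand $\sigma_{n-2}(\kappa)$ along $\kappa_i$ and then along $\kappa_n$, \emph{solve} the resulting linear relation for $\kappa_n$, and substitute this expression back into $\sigma_{n-3}(\bar\kappa)-\bigl[(1+\delta)\kappa_1^2+\sigma_{n-2}(\kappa)/\kappa_i\bigr]\sigma_{n-5}(\bar\kappa)$. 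The result factors as $\sigma_{n-2}(\kappa)\cdot G_1+G_2/(\kappa_i\tilde\sigma_{n-4}+\tilde\sigma_{n-3})$ where $\tilde\sigma_m=\sigma_m(\bar\kappa|n)$, and one shows $G_1\le0$ and $G_2\le0$ separately using Newton/Maclaurin on the remaining all-positive vector $(\bar\kappa|n)$. In the case $\kappa_{n-1}<0$ one instead eliminates the sum $\kappa_n+\kappa_{n-1}$ and controls the product $\kappa_n\kappa_{n-1}$ via Lemma~\ref{lemm9}(b). This substitution is exactly what couples $\sigma_{n-2}(\kappa)>0$ to the size of the negative entries and brings the constant down to $1+\delta$; without it your $n=5,6$ cases are not proved.
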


\begin{proof} In the cone $\Gamma_{n-2}$, the only possible non positive entries are $\kappa_{n-1},\kappa_n$. We divide into three cases to prove our lemma.
\par
Case (a): Suppose $\kappa_n\geq 0$. If
$\kappa_{n-3}\kappa_{n-4}C_{n-1}^{n-3}\leq\kappa_1^2$, by Lemma
\ref{lemm10}, we get
\begin{align*}
\dfrac{\sigma_{n-3}(\bar{\kappa})}{\sigma_{n-5}(\bar{\kappa})}\leq
\dfrac{\kappa_1\cdots\kappa_{n-3}C_{n-1}^{n-3}}{\kappa_1\cdots\kappa_{n-5}}=\kappa_{n-3}\kappa_{n-4}C_{n-1}^{n-3}\leq\kappa_1^2,
\end{align*}
which implies (\ref{a6.4}). If
$\kappa_{n-3}\kappa_{n-4}C_{n-1}^{n-3}>\kappa_1^2$, namely
$\kappa_{n-3}\kappa_{n-4}> \kappa_1^2/C_{n-1}^{n-3}$, by Lemma
\ref{lemm10}, we get
$$\sigma_{n-5}(\bar{\kappa})\geq\kappa_2\cdots\kappa_{n-4}>\frac{\kappa_1^{n-5}}{(C_{n-1}^{n-3})^{\frac{n-5}{2}}}>1,$$
when $\kappa_1$ is sufficiently large. Thus, we obtain
\begin{align*}
\dfrac{\sigma_{n-3}(\bar{\kappa})}{\sigma_{n-5}(\bar{\kappa})}=\dfrac{\kappa_i\sigma_{n-3}(\bar{\kappa})}{\kappa_i\sigma_{n-5}(\bar{\kappa})}<
\dfrac{\sigma_{n-2}(\kappa)}{\kappa_i},
\end{align*}
which implies (\ref{a6.4}).

 Case (b): Suppose $\kappa_n<0,\kappa_{n-1}\geq
0$. We need to prove
\begin{align*}
&\sigma_{n-3}(\bar{\kappa})-\Big((1+\delta)\kappa_1^2+\dfrac{\sigma_{n-2}(\kappa)}{\kappa_i}\Big)\sigma_{n-5}(\bar{\kappa})\leq
0.
\end{align*}
Using the identity,
\begin{align*}
\sigma_{n-2}(\kappa)=&\kappa_i\sigma_{n-3}(\bar{\kappa})+\sigma_{n-2}(\bar{\kappa})\\
=&\kappa_i[\kappa_n\sigma_{n-4}(\bar{\kappa}|n)+\sigma_{n-3}(\bar{\kappa}|n)]+\kappa_n\sigma_{n-3}(\bar{\kappa}|n)+\sigma_{n-2}(\bar{\kappa}|n),
\end{align*}
we get
$$\kappa_n=\dfrac{\sigma_{n-2}(\kappa)-\kappa_i\sigma_{n-3}(\bar{\kappa}|n)-\sigma_{n-2}(\bar{\kappa}|n)}{\kappa_i\sigma_{n-4}(\bar{\kappa}|n)+\sigma_{n-3}(\bar{\kappa}|n)}.$$
In this case, for the sake of simplification, we always denote $\tilde\sigma_m=\sigma_m(\bar{\kappa}|n)$. Inserting the above
identity into the following first equality, we have
\begin{align*}
&\sigma_{n-3}(\bar{\kappa})-\Big( (1+\delta)\kappa_1^2+\dfrac{\sigma_{n-2}(\kappa)}{\kappa_i}\Big)\sigma_{n-5}(\bar{\kappa})\\
=&\kappa_n\tilde\sigma_{n-4}+\tilde\sigma_{n-3}- (1+\delta)\kappa_1^2(\kappa_n\tilde\sigma_{n-6}+\tilde\sigma_{n-5})-\dfrac{\sigma_{n-2}(\kappa)\sigma_{n-5}(\bar{\kappa})}{\kappa_i}\\
=&\sigma_{n-2}(\kappa)G1+\dfrac{G2}{\kappa_i\tilde\sigma_{n-4}+\tilde\sigma_{n-3}},
\end{align*}
where $G1,G2$ are defined by
\begin{align*}
G1=&\dfrac{\tilde\sigma_{n-4}-(1+\delta)\kappa_1^2\tilde\sigma_{n-6}}{\kappa_i\tilde\sigma_{n-4}+\tilde\sigma_{n-3}}-\dfrac{\sigma_{n-5}(\bar{\kappa})}{\kappa_i}.\\
G2=&\tilde\sigma_{n-3}^2-\tilde\sigma_{n-2}\tilde\sigma_{n-4}-
(1+\delta)\kappa_i\kappa_1^2(\tilde\sigma_{n-4}\tilde\sigma_{n-5}-\tilde\sigma_{n-3}\tilde\sigma_{n-6})\\
&-(1+\delta)\kappa_1^2(\tilde\sigma_{n-3}\tilde\sigma_{n-5}-\tilde\sigma_{n-2}\tilde\sigma_{n-6}).
\end{align*}
Thus, we only need to show $G1\leq 0, G2\leq 0$.

Firstly, let's prove $G1\leq 0$. We divide into the cases of $n=5$ and $n>5$ to prove it respectively. Suppose $n=5$, then $\sigma_{n-5}(\bar{\kappa})=1$. By $\kappa_1>0$, we obviously have
$$
G1\leq\dfrac{\tilde\sigma_{n-4}}{\kappa_i\tilde\sigma_{n-4}}-\dfrac{1}{\kappa_i}=0.
$$
Suppose $n>5$. If we further assume  $\tilde\sigma_{n-4}\leq
\kappa_1^2\tilde\sigma_{n-6}$, we clearly have $G1\leq 0$. Therefore, we assume
$\tilde\sigma_{n-4}\geq \kappa_1^2\tilde\sigma_{n-6}$. With this assumption, we have
\begin{eqnarray}\label{new6.1}
\dfrac{\kappa_1\cdots\kappa_{n-3}}{\kappa_i}\geq\dfrac{\tilde\sigma_{n-4}}{C_{n-2}^{n-4}}\geq
\dfrac{\kappa_1^2\tilde\sigma_{n-6}}{C_{n-2}^{n-4}}\geq
\frac{1}{C_{n-2}^{n-4}}\kappa_1^2\kappa_2\cdots\kappa_{n-5}.
\end{eqnarray}
 By the condition
$\kappa_i>\kappa_1-\sqrt{\kappa_1}/n$, we know
$(1+\delta)\kappa_i>\kappa_1$ if $\kappa_1>\dfrac{1}{\delta^2}$.
Thus, \eqref{new6.1} gives $\kappa_{n-3}\kappa_{n-4}\geq
\dfrac{\kappa_1\kappa_i}{C_{n-2}^{n-4}}\geq
\dfrac{\kappa_1^2}{2C_{n-2}^{n-4}}$. Note that
$\kappa_1\geq\cdots\geq\kappa_n$, then, by Lemma \ref{lemm10} and $\bar{\kappa}\in\Gamma_{n-3}$, we have
\begin{align}\label{s6.05}
\sigma_{n-5}(\bar{\kappa})\geq
&\kappa_{2}\cdots\kappa_{n-4}\geq\dfrac{\kappa_1^{n-5}}{(2C_{n-2}^{n-4})^{(n-5)/2}}.
\end{align}
Thus we get
$$
G1\leq
\dfrac{\tilde\sigma_{n-4}}{\kappa_i\tilde\sigma_{n-4}+\tilde\sigma_{n-3}}-\dfrac{\sigma_{n-5}(\bar{\kappa})}{\kappa_i}<
\dfrac{1}{\kappa_i}-\dfrac{\kappa_1^{n-5}}{\kappa_i(2C_{n-2}^{n-4})^{(n-5)/2}}<0,
$$
if $\kappa_1$ is sufficiently large.

Secondly, let's prove $G2\leq 0$. Using
$(1+\delta)\kappa_i\geq\kappa_1$ and (x) in section 2, we have
\begin{align}\label{new6.2}
&G2\\
\leq&\tilde\sigma_{n-3}^2-\tilde\sigma_{n-2}\tilde\sigma_{n-4}-
(\tilde\sigma_{n-4}\tilde\sigma_{n-5}-\tilde\sigma_{n-3}\tilde\sigma_{n-6})\kappa_1^3-(\tilde\sigma_{n-3}\tilde\sigma_{n-5}-\tilde\sigma_{n-2}\tilde\sigma_{n-6})\kappa_1^2\nonumber\\
\leq&\tilde\sigma_{n-3}^2-\tilde\sigma_{n-2}\tilde\sigma_{n-4}-
\Big(1-\dfrac{C_{n-2}^{n-3}C_{n-2}^{n-6}}{C_{n-2}^{n-4}C_{n-2}^{n-5}}\Big)\kappa_1^3\tilde\sigma_{n-4}\tilde\sigma_{n-5}-\Big(1-\dfrac{C_{n-2}^{n-2}C_{n-2}^{n-6}}{C_{n-2}^{n-3}C_{n-2}^{n-5}}\Big)\kappa_1^2\tilde\sigma_{n-3}\tilde\sigma_{n-5}\nonumber\\
=&\tilde\sigma_{n-3}^2-\tilde\sigma_{n-2}\tilde\sigma_{n-4}-
\dfrac{n-1}{2(n-3)}\kappa_1^3\tilde\sigma_{n-4}\tilde\sigma_{n-5}-\dfrac{3n-3}{4n-8}\kappa_1^2\tilde\sigma_{n-3}\tilde\sigma_{n-5},\nonumber
\end{align}
We also need to divide into two cases of $n=5$ and $n>5$ to prove it respectively. For $n=5$, we may assume $(\bar{\kappa}|5)=(\kappa_a,\kappa_b,\kappa_c)$ and
$\kappa_1\geq\kappa_a\geq\kappa_b\geq\kappa_c\geq0$, then we have
$\tilde\sigma_{1}=\kappa_a+\kappa_b+\kappa_c,\tilde\sigma_{2}=\kappa_a\kappa_b+\kappa_a\kappa_c+\kappa_b\kappa_c,
\tilde\sigma_{3}=\kappa_a\kappa_b\kappa_c$. Thus, \eqref{new6.2} becomes
\begin{align*}
G2\leq&\tilde\sigma_{2}^2-\tilde\sigma_3\tilde\sigma_{1}-
\kappa_1^3\tilde\sigma_{1}-\kappa_1^2\tilde\sigma_{2}\\
=&\kappa_a^2\kappa_b^2+\kappa_a^2\kappa_c^2+\kappa_b^2\kappa_c^2+\kappa_a\kappa_b\kappa_c(\kappa_a+\kappa_b+\kappa_c)\\
&-\kappa_1^3(\kappa_a+\kappa_b+\kappa_c)-\kappa_1^2(\kappa_a\kappa_b+\kappa_a\kappa_c+\kappa_b\kappa_c)\leq
0.
\end{align*}
For $n\geq 6$, by Lemma \ref{lemm8}, we have
\begin{align*}
\kappa_1^3\tilde\sigma_{n-4}\tilde\sigma_{n-5}\geq
\dfrac{(n-3)^2(n-4)}{12}\tilde\sigma_{n-3}^2.
\end{align*}
Thus, we get
\begin{align*}
G2\leq&\tilde\sigma_{n-3}^2-
\dfrac{n-1}{2(n-3)}\kappa_1^3\tilde\sigma_{n-4}\tilde\sigma_{n-5}\\
\leq&\tilde\sigma_{n-3}^2-\dfrac{(n-1)(n-3)(n-4)}{24}\tilde\sigma_{n-3}^2\leq
0.
\end{align*}

Case (c): Suppose $\kappa_n\leq\kappa_{n-1}<0, \kappa_{n-2}>0$. For
simplification purpose, in the proof of this case, we denote
$\tilde\sigma_m=\sigma_m(\bar{\kappa}|n,n-1)$. Using the identity
\begin{align*}
\sigma_{n-2}(\kappa)=&\kappa_i\sigma_{n-3}(\bar{\kappa})+\sigma_{n-2}(\bar{\kappa})\\
=&\kappa_i[\tilde\sigma_{n-3}+(\kappa_n+\kappa_{n-1})\tilde\sigma_{n-4}+\kappa_n\kappa_{n-1}\tilde\sigma_{n-5}]\\
&+(\kappa_n+\kappa_{n-1})\tilde\sigma_{n-3}+\kappa_n\kappa_{n-1}\tilde\sigma_{n-4},
\end{align*}
we get
\begin{align}\label{a6.5}
\kappa_n+\kappa_{n-1}=\dfrac{\sigma_{n-2}(\kappa)-\kappa_i\tilde\sigma_{n-3}-\kappa_i\kappa_n\kappa_{n-1}\tilde\sigma_{n-5}-\kappa_n\kappa_{n-1}\tilde\sigma_{n-4}}{\kappa_i\tilde\sigma_{n-4}+\tilde\sigma_{n-3}}.
\end{align}
 Then inserting (\ref{a6.5}) into the following first equality , we have
\begin{align*}
&\sigma_{n-3}(\bar{\kappa})-\Big((1+\delta)\kappa_1^2+\dfrac{\sigma_{n-2}(\kappa)}{\kappa_i}\Big)\sigma_{n-5}(\bar{\kappa})\\
=&\kappa_n\kappa_{n-1}\tilde\sigma_{n-5}+(\kappa_n+\kappa_{n-1})\tilde\sigma_{n-4}+\tilde\sigma_{n-3}-\dfrac{\sigma_{n-2}(\kappa)\sigma_{n-5}(\bar{\kappa})}{\kappa_i}\\
&- (1+\delta)\kappa_1^2[\kappa_n\kappa_{n-1}\tilde\sigma_{n-7}+(\kappa_n+\kappa_{n-1})\tilde\sigma_{n-6}+\tilde\sigma_{n-5}]\\
=&\sigma_{n-2}(\kappa)G3+\dfrac{G4}{\kappa_i\tilde\sigma_{n-4}+\tilde\sigma_{n-3}},
\end{align*}
where $G3,G4$ are defined by
\begin{align*}
G3=&\dfrac{\tilde\sigma_{n-4}-(1+\delta)\kappa_1^2\tilde\sigma_{n-6}}{\kappa_i\tilde\sigma_{n-4}+\tilde\sigma_{n-3}}-\dfrac{\sigma_{n-5}(\bar{\kappa})}{\kappa_i}
,\\
 G4=&
-(1+\delta)\kappa_i\kappa_1^2(\tilde\sigma_{n-4}\tilde\sigma_{n-5}-\tilde\sigma_{n-3}\tilde\sigma_{n-6})-(1+\delta)\kappa_1^2\tilde\sigma_{n-3}\tilde\sigma_{n-5}+\tilde\sigma_{n-3}^2\\
&+
\kappa_n\kappa_{n-1}[(1+\delta)\kappa_i\kappa_1^2(\tilde\sigma_{n-5}\tilde\sigma_{n-6}-\tilde\sigma_{n-4}\tilde\sigma_{n-7})+(1+\delta)\kappa_1^2(\tilde\sigma_{n-4}\tilde\sigma_{n-6}-\tilde\sigma_{n-3}\tilde\sigma_{n-7})]\\
&-
\kappa_n\kappa_{n-1}(\tilde\sigma_{n-4}^2-\tilde\sigma_{n-3}\tilde\sigma_{n-5}).
\end{align*}
Obviously, same as the argument dealing with $G1$, we have $G3\leq
0$. Now we consider the term $G4$.  By Newton's inequality, we have
$\tilde\sigma_{n-4}^2-\tilde\sigma_{n-3}\tilde\sigma_{n-5}\geq 0$.
Then, we get
\begin{align}\label{s6.06}
\dfrac{G4}{(1+\delta)\kappa_1^2} \leq&
\dfrac{\tilde\sigma_{n-3}^2}{(1+\delta)\kappa_1^2}-\kappa_i(\tilde\sigma_{n-4}\tilde\sigma_{n-5}-\tilde\sigma_{n-3}\tilde\sigma_{n-6})-\tilde\sigma_{n-3}\tilde\sigma_{n-5}\\
&+
\kappa_n\kappa_{n-1}\kappa_i(\tilde\sigma_{n-5}\tilde\sigma_{n-6}-\tilde\sigma_{n-4}\tilde\sigma_{n-7})\nonumber\\
&+\kappa_n\kappa_{n-1}(\tilde\sigma_{n-4}\tilde\sigma_{n-6}-\tilde\sigma_{n-3}\tilde\sigma_{n-7})\nonumber\\
\leq&\dfrac{\tilde\sigma_{n-3}^2}{(1+\delta)\kappa_1^2}-\kappa_i(\tilde\sigma_{n-4}\tilde\sigma_{n-5}-\tilde\sigma_{n-3}\tilde\sigma_{n-6}-\kappa_n\kappa_{n-1}\tilde\sigma_{n-5}\tilde\sigma_{n-6})\nonumber\\
&-\tilde\sigma_{n-3}\tilde\sigma_{n-5}+\kappa_n\kappa_{n-1}\tilde\sigma_{n-4}\tilde\sigma_{n-6}.\nonumber
\end{align}
Since $\kappa_n\leq\kappa_{n-1}<0$, by Lemma \ref{lemm9} and $\bar{\kappa}\in\Gamma_{n-3}$, we get
\begin{align}\label{a6.7}
0<\kappa_n\kappa_{n-1}\leq\dfrac{(\kappa_n+\kappa_{n-1})^2}{4}\leq
\dfrac{\tilde\sigma_{n-3}^2}{\tilde\sigma_{n-4}^2}.
\end{align}

 Using \eqref{a6.7} and (x) in scetion 2, we get
\begin{align}\label{s6.07}
-\tilde\sigma_{n-3}\tilde\sigma_{n-5}+\kappa_n\kappa_{n-1}\tilde\sigma_{n-4}\tilde\sigma_{n-6}
\leq-\tilde\sigma_{n-3}\tilde\sigma_{n-5}+\dfrac{\tilde\sigma_{n-3}^2}{\tilde\sigma_{n-4}^2}\tilde\sigma_{n-4}\tilde\sigma_{n-6}\leq
0.
\end{align}
 Using (x) in section 2 again, $(1+\delta)\kappa_i\geq\kappa_1$ and (\ref{a6.7}), we get
\begin{align}\label{s6.08}
&\dfrac{\tilde\sigma_{n-3}^2}{(1+\delta)\kappa_1^2}-\kappa_i(\tilde\sigma_{n-4}\tilde\sigma_{n-5}-\tilde\sigma_{n-3}\tilde\sigma_{n-6}-\kappa_n\kappa_{n-1}\tilde\sigma_{n-5}\tilde\sigma_{n-6})\\
\leq&\dfrac{\tilde\sigma_{n-3}^2}{(1+\delta)\kappa_1^2}-\kappa_i\Big(\dfrac{2n-4}{3(n-3)}\tilde\sigma_{n-4}\tilde\sigma_{n-5}-\dfrac{\tilde\sigma_{n-3}^2}{\tilde\sigma_{n-4}^2}\tilde\sigma_{n-5}\tilde\sigma_{n-6}\Big)\nonumber\\
\leq&\dfrac{\tilde\sigma_{n-3}^2}{(1+\delta)\kappa_1^2}-\kappa_i\Big(\dfrac{2n-4}{3(n-3)}-\dfrac{(n-4)(n-5)}{6(n-3)^2}\Big)\tilde\sigma_{n-4}\tilde\sigma_{n-5}\nonumber\\
\leq&\dfrac{\tilde\sigma_{n-3}^2}{(1+\delta)\kappa_1^2}-\kappa_i\Big(\dfrac{2n-4}{3(n-3)}-\dfrac{(n-4)(n-5)}{6(n-3)^2}\Big)\dfrac{(n-3)^2(n-4)}{2\kappa_1^3}\tilde\sigma_{n-3}^2\nonumber\\
\leq&\Big(1-\frac{(n-2)(n-3)(n-4)}{3}+\frac{1}{12}(n-4)^2(n-5)\Big)\frac{\tilde{\sigma}_{n-3}^2}{(1+\delta)\kappa_1^2}\nonumber\\
\leq&\Big(1-\frac{(n-2)(n-3)(n-4)}{4}\Big)\frac{\tilde{\sigma}_{n-3}^2}{(1+\delta)\kappa_1^2}\nonumber\\
\leq&0\nonumber,
\end{align}
for $n\geq 5$. In the above second inequality, we have used
$$
\tilde\sigma_{n-4}^2\geq\dfrac{C_{n-3}^{n-4}C_{n-3}^{n-4}}{C_{n-3}^{n-3}C_{n-3}^{n-5}}\tilde\sigma_{n-3}\tilde\sigma_{n-5};\quad
\tilde\sigma_{n-4}\tilde\sigma_{n-5}\geq
\dfrac{C_{n-3}^{n-4}C_{n-3}^{n-5}}{C_{n-3}^{n-3}C_{n-3}^{n-6}}\tilde\sigma_{n-3}\tilde\sigma_{n-6},
$$
and in the third inequality of \eqref{s6.08}, we have used Lemma
\ref{lemm8}. Combing (\ref{s6.07}), (\ref{s6.08}) with
(\ref{s6.06}), we obtain $G4\leq 0$.

\end{proof}

Using Lemma \ref{lemm24}, Lemma \ref{lemm21} and Lemma \ref{lemm28},
we have:
\begin{lemm}\label{lemm29}
Assume $\kappa=(\kappa_1,\kappa_2,\cdots,\kappa_n)\in\Gamma_{n-2}$, $n\geq 5$ and $\kappa_1$ is the
maximum entry of $\kappa$. Suppose $\sigma_{n-2}(\kappa)$ has a positive upper bound $N$. Then for any given index $1\leq i\leq n$, if
$\kappa_i>\kappa_1-\sqrt{\kappa_1}/n$, we have
\begin{align*}
\dfrac{8}{9}\kappa_i^2\textbf{A}_{n-2;i}
 \geq&
\dfrac{2\sigma_{n-3}(\bar{\kappa})}{3\sigma_{n-5}(\bar{\kappa})}
\dsum_{j\neq
i}[2\sigma_{n-3}(\bar{\kappa}|j)\sigma_{n-5}(\bar{\kappa}|j)-2\sigma_{n-2}(\bar{\kappa}|j)\sigma_{n-6}(\bar{\kappa}|j)]\xi_j^2\\
&+\dfrac{2\sigma_{n-3}(\bar{\kappa})}{3\sigma_{n-5}(\bar{\kappa})}\dsum_{p,q\neq
i}\sigma_{n-3}(\bar{\kappa}|pq)\sigma_{n-5}(\bar{\kappa}|pq)\xi_p\xi_q,
\end{align*}
when $\kappa_1$ is sufficiently large.
\end{lemm}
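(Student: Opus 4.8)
The plan is the following. After relabelling we may assume $\kappa_1\ge\kappa_2\ge\cdots\ge\kappa_n$, we keep the given index $i$ fixed, and we write $\bar\kappa=(\kappa|i)$, an $(n-1)$-dimensional vector. Since $\kappa\in\Gamma_{n-2}$, removing one entry keeps us in $\Gamma_{n-3}$ (apply property (vii) with $k$ replaced by $m+1$ for $m=1,\dots,n-3$ to get $\sigma_m(\kappa|i)>0$), so $\bar\kappa\in\Gamma_{n-3}$; in particular $\sigma_{n-5}(\bar\kappa)>0$ and $\sigma_{n-3}(\bar\kappa)>0$, hence $\sigma_{n-3}(\bar\kappa)/\sigma_{n-5}(\bar\kappa)$ is a well-defined positive number. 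Denote by $\textbf{Q}$ the quadratic form appearing in the bracket of the asserted bound, namely $\textbf{Q}=\sum_{j\ne i}\bigl[2\sigma_{n-3}(\bar\kappa|j)\sigma_{n-5}(\bar\kappa|j)-2\sigma_{n-2}(\bar\kappa|j)\sigma_{n-6}(\bar\kappa|j)\bigr]\xi_j^2+\sum_{p\ne q;\,p,q\ne i}\sigma_{n-3}(\bar\kappa|pq)\sigma_{n-5}(\bar\kappa|pq)\xi_p\xi_q$ (for $n=5$ the terms with $\sigma_{n-6}$ vanish by convention).

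The core step is the identity $\textbf{A}_{n-2;i}-\textbf{Q}=\sum_{j\ne i}\sigma_{n-4}(\bar\kappa^2|j)\,\xi_j^2+\sum_{p\ne q;\,p,q\ne i}\sigma_{n-4}(\bar\kappa^2|pq)\,\xi_p\xi_q$, where $\bar\kappa^2=(\kappa_l^2)_{l\ne i}$. I would prove it by applying Lemma \ref{lemm20} twice. For the $(n-2)$-vector $(\bar\kappa|j)$ with $k=n-4$ the sum in Lemma \ref{lemm20} retains only its first two terms, giving $\sigma_{n-4}^2(\bar\kappa|j)-2\sigma_{n-3}(\bar\kappa|j)\sigma_{n-5}(\bar\kappa|j)+2\sigma_{n-2}(\bar\kappa|j)\sigma_{n-6}(\bar\kappa|j)=\sigma_{n-4}(\bar\kappa^2|j)$; for the $(n-3)$-vector $(\bar\kappa|pq)$ with $k=n-4$ only the first term survives, giving $\sigma_{n-4}^2(\bar\kappa|pq)-2\sigma_{n-3}(\bar\kappa|pq)\sigma_{n-5}(\bar\kappa|pq)=\sigma_{n-4}(\bar\kappa^2|pq)$. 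Since the diagonal coefficient of $\textbf{A}_{n-2;i}$ is $\sigma_{n-4}^2(\bar\kappa|j)$ and its off-diagonal coefficient is $\sigma_{n-4}^2(\bar\kappa|pq)-\sigma_{n-3}(\bar\kappa|pq)\sigma_{n-5}(\bar\kappa|pq)$, subtracting $\textbf{Q}$ term by term yields the displayed identity. Because every entry of $\bar\kappa^2$ is nonnegative, the computation in the proof of Lemma \ref{lemm21} rewrites the right-hand side as $\sum_{j_1<\cdots<j_{n-4}}\kappa_{j_1}^2\cdots\kappa_{j_{n-4}}^2\,\sigma_1^2(\xi|j_1,\dots,j_{n-4})\ge 0$, so $\textbf{A}_{n-2;i}\ge\textbf{Q}$. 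Independently, Lemma \ref{lemm24} applied to $\bar\kappa\in\Gamma_{n-3}$ with $s=n-3$ is precisely the statement $\textbf{A}_{n-2;i}\ge 0$.

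It remains to compare scalars: $\tfrac{8}{9}\kappa_i^2\ge\tfrac{2}{3}\,\sigma_{n-3}(\bar\kappa)/\sigma_{n-5}(\bar\kappa)$ once $\kappa_1$ is large. Indeed, Lemma \ref{lemm28} with $\delta=1/10$ gives $\sigma_{n-3}(\bar\kappa)/\sigma_{n-5}(\bar\kappa)\le\tfrac{11}{10}\kappa_1^2+\sigma_{n-2}(\kappa)/\kappa_i\le\tfrac{11}{10}\kappa_1^2+N/\kappa_i$, while $\kappa_i>\kappa_1-\sqrt{\kappa_1}/n$ forces $\kappa_1^2/\kappa_i^2\to 1$ and $N/\kappa_i\to 0$ as $\kappa_1\to\infty$; since $\tfrac{8}{9}-\tfrac{2}{3}\cdot\tfrac{11}{10}=\tfrac{7}{45}>0$, the claimed scalar inequality holds for $\kappa_1$ sufficiently large. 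Combining, $\tfrac{8}{9}\kappa_i^2\,\textbf{A}_{n-2;i}\ge\tfrac{2}{3}\,\tfrac{\sigma_{n-3}(\bar\kappa)}{\sigma_{n-5}(\bar\kappa)}\,\textbf{A}_{n-2;i}\ge\tfrac{2}{3}\,\tfrac{\sigma_{n-3}(\bar\kappa)}{\sigma_{n-5}(\bar\kappa)}\,\textbf{Q}$, the first inequality using $\textbf{A}_{n-2;i}\ge 0$ and the scalar bound, the second using $\textbf{A}_{n-2;i}\ge\textbf{Q}$ and positivity of the ratio (so the sign of $\textbf{Q}$ is irrelevant). The last expression is exactly the asserted lower bound.

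On the difficulty: the one genuinely hard input, the bound $\sigma_{n-3}(\bar\kappa)/\sigma_{n-5}(\bar\kappa)\le(1+\delta)\kappa_1^2+\sigma_{n-2}(\kappa)/\kappa_i$, is already isolated and proved as Lemma \ref{lemm28} with its $G1$–$G4$ case analysis, so the work inside the present lemma is light. The only delicate point is the bookkeeping in the identity $\textbf{A}_{n-2;i}-\textbf{Q}=\sum\sigma_{n-4}(\bar\kappa^2|\cdot)(\cdot)$: one must track exactly which tails of the Newton-type expansion of Lemma \ref{lemm20} survive (the first two terms for an $(n-2)$-vector, only the first for an $(n-3)$-vector) and check that the off-diagonal cross terms cancel as asserted. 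That is routine but error-prone, and is where I would be most careful.
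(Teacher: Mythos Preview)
Your proof is correct and follows essentially the same route as the paper: both use Lemma~\ref{lemm20} to decompose $\textbf{A}_{n-2;i}$ as $\textbf{Q}$ plus the nonnegative $\sigma_{n-4}(\bar\kappa^2|\cdot)$ form (Lemma~\ref{lemm21}), and both obtain the scalar bound $\tfrac{8}{9}\kappa_i^2\ge\tfrac{2}{3}\sigma_{n-3}(\bar\kappa)/\sigma_{n-5}(\bar\kappa)$ from Lemma~\ref{lemm28}. Your explicit invocation of Lemma~\ref{lemm24} for $\textbf{A}_{n-2;i}\ge 0$ is a helpful clarification, since this positivity is indeed needed to pass the scalar inequality through but is left implicit in the paper's proof of this lemma.
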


\begin{proof} If $\kappa_1$ is sufficiently large, we clearly have
$$\frac{\sigma_{n-2}(\kappa)}{\kappa_i}\leq\frac{N}{\kappa_i}\leq \frac{1}{10}\kappa_1^2.$$
Thus, by Lemma \ref{lemm28}, if $\kappa_1$ is sufficiently large, we have
\begin{align*}
\dfrac{8}{9}\kappa_i^2>\dfrac{8}{9}(\kappa_1-\sqrt{\kappa_1}/n)^2>\dfrac{8}{9}\frac{9}{10}\kappa_1^2
=\dfrac{2}{3}\Big[\frac{1}{10}\kappa_1^2+\Big(1+\frac{1}{10}\Big)\kappa_1^2\Big]\geq
\dfrac{2\sigma_{n-3}(\bar{\kappa})}{3\sigma_{n-5}(\bar{\kappa})}.
\end{align*}
Using Lemma \ref{lemm20} and Lemma \ref{lemm21}, we obtain
\begin{align*}
\textbf{A}_{n-2;i}=&\dsum_{j\neq
i}\sigma_{n-4}(\bar{\kappa}^2|j)\xi_j^2+\dsum_{p,q\neq
i}\sigma_{n-4}(\bar{\kappa}^2|pq)\xi_p\xi_q\\
&+\dsum_{j\neq
i}[2\sigma_{n-3}(\bar{\kappa}|j)\sigma_{n-5}(\bar{\kappa}|j)-2\sigma_{n-2}(\bar{\kappa}|j)\sigma_{n-6}(\bar{\kappa}|j)]\xi_j^2\\
&+\dsum_{p,q\neq
i}\sigma_{n-3}(\bar{\kappa}|pq)\sigma_{n-5}(\bar{\kappa}|pq)\xi_p\xi_q\\
\geq&\dsum_{j\neq
i}[2\sigma_{n-3}(\bar{\kappa}|j)\sigma_{n-5}(\bar{\kappa}|j)-2\sigma_{n-2}(\bar{\kappa}|j)\sigma_{n-6}(\bar{\kappa}|j)]\xi_j^2\\
&+\dsum_{p,q\neq
i}\sigma_{n-3}(\bar{\kappa}|pq)\sigma_{n-5}(\bar{\kappa}|pq)\xi_p\xi_q.
\end{align*}
 Using the above two inequalities, we obtain our lemma.
\end{proof}

\begin{lemm}\label{lemm30}
Assume $\kappa=(\kappa_1,\kappa_2,\cdots,\kappa_n)\in\Gamma_{n-2}$,
$n\geq 5$,  $\kappa_1\geq\cdots\geq\kappa_n$ and
$\sigma_{n-2}(\kappa)$ has a upper bound $N$. Further suppose
$\kappa$ satisfies the condition of Case A, Case B1, or Case B2
defined in section 4. Then for any given indices $i,j$ satisfying $1\leq i,j\leq n,
$ and $j\neq i$, if $\kappa_i>\kappa_1-\sqrt{\kappa_1}/n$, we have
\begin{align*}
-\sigma_{n-2}(\bar{\kappa})\sigma_{n-6}(\bar{\kappa}|j)-\sigma_{n-2}(\bar{\kappa}|j)\sigma_{n-6}(\bar{\kappa}|j)+\dfrac{1}{40}\sigma_{n-3}(\bar{\kappa})\sigma_{n-5}(\bar{\kappa})\geq
0,
\end{align*}
when $\kappa_1$ is sufficiently large.
\end{lemm}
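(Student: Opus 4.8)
The plan is to dispose of the degenerate dimension first and then run a sign analysis steered by the hypotheses of Cases A, B1, B2. Write $\bar\kappa=(\kappa|i)$, an $(n-1)$-vector, and $\mu=(\bar\kappa|j)=(\kappa|ij)$, an $(n-2)$-vector. Since $\kappa\in\Gamma_{n-2}$ and $\kappa_i>\kappa_1-\sqrt{\kappa_1}/n>0$, the characterization of $\Gamma_k$ recalled in Section 2 (which gives $\sigma_{m-1}(\lambda|l)>0$ for $\lambda\in\Gamma_m$) yields $\bar\kappa\in\Gamma_{n-3}$ and $\mu\in\Gamma_{n-4}$; in particular $\sigma_{n-3}(\bar\kappa)>0$, $\sigma_{n-5}(\bar\kappa)>0$ and $\sigma_{n-6}(\mu)\geq 0$ (with $\sigma_{n-6}(\mu)=0$ when $n=5$, $=1$ when $n=6$, and $>0$ when $n\geq 7$). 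When $n=5$ the first two summands of the claimed inequality vanish and the third equals $\frac{1}{40}\sigma_{2}(\bar\kappa)>0$, so from now on we assume $n\geq 6$.

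Next I would record the sign reductions the Case hypotheses buy us. Cases A and B1 both force $\sigma_{n-2}(\bar\kappa)\leq 0$: in Case A this is the very assumption, and in Case B1 the identity $\sigma_{n-2}(\kappa)=\kappa_i\sigma_{n-3}(\bar\kappa)+\sigma_{n-2}(\bar\kappa)$ together with $\kappa_i\sigma_{n-3}(\bar\kappa)\geq(1+\delta_0)\sigma_{n-2}(\kappa)$ gives $\sigma_{n-2}(\bar\kappa)\leq-\delta_0\sigma_{n-2}(\kappa)<0$. Hence in those two cases $-\sigma_{n-2}(\bar\kappa)\sigma_{n-6}(\mu)\geq 0$ and it suffices to prove $\sigma_{n-2}(\mu)\,\sigma_{n-6}(\mu)\leq\frac{1}{40}\sigma_{n-3}(\bar\kappa)\sigma_{n-5}(\bar\kappa)$; since $\sigma_{n-2}(\mu)=\prod_{l\neq i,j}\kappa_l$, this is trivial when $\sigma_{n-2}(\mu)\leq 0$, so the real content is the subcase $\sigma_{n-2}(\mu)>0$. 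In Case B2 I would not discard the first summand but, using $\sigma_{n-2}(\bar\kappa)=\sigma_{n-2}(\mu)+\kappa_j\sigma_{n-3}(\mu)$, rewrite the whole left side as $-\bigl(2\sigma_{n-2}(\mu)+\kappa_j\sigma_{n-3}(\mu)\bigr)\sigma_{n-6}(\mu)+\frac{1}{40}\sigma_{n-3}(\bar\kappa)\sigma_{n-5}(\bar\kappa)$, turning the inequality into a comparison of the ``diagonal'' product $\kappa_1\cdots\kappa_{n-2}$ hidden in $\sigma_{n-2}(\mu)$ with $\sigma_{n-3}(\bar\kappa)\sigma_{n-5}(\bar\kappa)$, where the Case B2 bound $\kappa_1\cdots\kappa_{n-2}\geq 2(n-2)\sigma_{n-2}(\kappa)$ is precisely the extra handle.

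For the core estimate I would use the Newton and Maclaurin inequalities of Section 2 together with identity (x). Whenever $\mu\in\Gamma_{n-3}$ --- which holds if $\kappa_j<0$ (then $\sigma_{n-3}(\mu)=\sigma_{n-3}(\bar\kappa)-\kappa_j\sigma_{n-4}(\mu)>0$) or if $\mu$ is all-positive --- identity (x) with $k=n-3$, $s=n-5$, $r=1$ gives $\sigma_{n-6}(\mu)\sigma_{n-2}(\mu)\leq\frac{n-5}{4(n-2)}\sigma_{n-5}(\mu)\sigma_{n-3}(\mu)$, already a constant below $\frac14$. To push the constant down to $\frac1{40}$ I would split on the location of $j$. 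If $j\notin\{n-1,n\}$, then $\kappa_j>0$ and $\mu$ retains all (at most two) non-positive entries of $\kappa$: in Case A one has $\sigma_{n-2}(\mu)=\kappa_{n-1}\kappa_n\prod_{l\leq n-2,\,l\neq i,j}\kappa_l$ (and $\sigma_{n-2}(\mu)>0$ forces $\kappa_{n-1}\kappa_n>0$), which one compares against $\sigma_{n-3}(\bar\kappa)\geq\sigma_{n-2}(\kappa)/\kappa_i$ and the Lemma~\ref{lemm10}-type lower bound for $\sigma_{n-5}(\bar\kappa)$, while in Case B2 one feeds in the hypothesis on $\kappa_1\cdots\kappa_{n-2}$. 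If instead $j\in\{n-1,n\}$ and $\sigma_{n-2}(\mu)>0$, then $\mu$ has no negative entry, so $\sigma_{n-3}(\mu)\geq\frac{n-2}{\kappa_1}\sigma_{n-2}(\mu)$; writing $-\sigma_{n-2}(\bar\kappa)=|\kappa_j|\sigma_{n-3}(\mu)-\sigma_{n-2}(\mu)$ and playing this against the (x)-estimate, one splits once more on whether $|\kappa_j|\geq\frac{2\kappa_1}{n-2}$ (then the bracket $|\kappa_j|\sigma_{n-3}(\mu)-2\sigma_{n-2}(\mu)$ is already $\geq0$) or $|\kappa_j|<\frac{2\kappa_1}{n-2}$ (then $\kappa_j$ is so small that, in the regime $\kappa_1$ large with $\sigma_{n-2}(\kappa)\leq N$, passing from $\sigma_m(\mu)$ to $\sigma_m(\bar\kappa)$ costs only a factor $1+o(1)$, restoring the gap). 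All invocations of ``$\kappa_1$ sufficiently large'' only absorb the finitely many lower-order corrections produced by the Newton inequalities and by the slack in $\kappa_i>\kappa_1-\sqrt{\kappa_1}/n$.

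The step I expect to be the main obstacle is exactly this last bookkeeping in the subcase $\sigma_{n-2}(\mu)>0$: removing a negative $\kappa_j$ \emph{increases} the elementary symmetric functions, so the positive term $\frac{1}{40}\sigma_{n-3}(\bar\kappa)\sigma_{n-5}(\bar\kappa)$ cannot be produced by naive monotonicity and must instead be recovered either from the magnitude of $-\sigma_{n-2}(\bar\kappa)\sigma_{n-6}(\mu)$ (Cases A and B1) or from the Case B2 inequality; it is there that the particular constants $\frac{1}{40}$ and $\delta_0=\frac{1}{32n(n-2)}$ are calibrated, and where one must also separate the sub-ranges $n=6$ and $n\geq 7$ because of the differing behaviour of $\sigma_{n-6}$.
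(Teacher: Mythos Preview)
Your high-level organization is sensible, but the core estimate has two genuine gaps that cannot be repaired by ``bookkeeping''.

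\textbf{Case A with $j\leq n-2$.} You discard the term $-\sigma_{n-2}(\bar\kappa)\sigma_{n-6}(\mu)\ge 0$ and propose to prove $\sigma_{n-2}(\mu)\sigma_{n-6}(\mu)\le\frac{1}{40}\sigma_{n-3}(\bar\kappa)\sigma_{n-5}(\bar\kappa)$ by itself. This inequality is \emph{false} in the admissible range. Take $n=7$, $\kappa_1=\cdots=\kappa_5=\lambda$, $\kappa_6=\kappa_7=-c\lambda$ with $c=0.11$ (so $\kappa\in\Gamma_5$ and $\sigma_5(\bar\kappa)<0$), $i=1$, $j=2$: a direct computation gives $\sigma_5(\mu)\sigma_1(\mu)\approx 0.034\lambda^6$ while $\frac{1}{40}\sigma_4(\bar\kappa)\sigma_2(\bar\kappa)\approx 0.025\lambda^6$. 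The full inequality of the lemma still holds here precisely because $-\sigma_{n-2}(\bar\kappa)\sigma_{n-6}(\mu)$ is the dominant term. The paper never discards it: instead it factors $(-\sigma_{n-2}(\bar\kappa)-\sigma_{n-2}(\mu))\sigma_{n-6}(\mu)$ and proves $-\sigma_{n-2}(\bar\kappa)-\sigma_{n-2}(\mu)>0$ directly, using Lemma~\ref{lemm9}(b) to bound $-(\kappa_{n-1}+\kappa_n)$ by $2\sigma_{n-3}(\bar\kappa|n,n-1)/\sigma_{n-4}(\bar\kappa|n,n-1)$ and then expanding. Your (x)-route also stalls here for a second reason: with $j\le n-2$ the vector $\mu$ keeps both non-positive entries, so $\mu\in\Gamma_{n-3}$ is not guaranteed and (x) is unavailable; and even if it were, the constant it produces is $\frac{n-5}{4(n-2)}$, which exceeds $\frac{1}{40}$ for every $n\ge 7$.

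\textbf{Cases B1/B2 with $j=n$.} Your dichotomy $|\kappa_n|\gtrless\frac{2\kappa_1}{n-2}$ is vacuous: Lemma~\ref{lemm9}(a) gives $|\kappa_n|<\frac{2}{n-2}\kappa_1$ always in $\Gamma_{n-2}$, so you are permanently in the ``small $\kappa_n$'' branch. But $|\kappa_n|<\frac{2}{n-2}\kappa_1$ does not make $\kappa_n$ small relative to $\kappa_1$, so passing from $\sigma_m(\mu)$ to $\sigma_m(\bar\kappa)$ is \emph{not} a $(1+o(1))$ perturbation and your gap is not restored. The paper's argument here is structurally different: it eliminates $\kappa_n$ via the identity for $\sigma_{n-2}(\kappa)$, uses Lemma~\ref{lemm20} to recognize $\kappa_i\tilde\sigma_{n-3}^2-2\kappa_i\tilde\sigma_{n-2}\tilde\sigma_{n-4}=\kappa_i\sigma_{n-3}(\bar\kappa^2|n)$, and then splits on whether $-\kappa_n\geq\delta_0'\kappa_i$ or $\kappa_{n-1}\geq\delta_0'\kappa_i$ (invoking Lemma~\ref{lemm27}) versus neither (where the specific hypotheses of B1 or B2 finally enter to control $\sigma_{n-2}(\kappa)\tilde\sigma_{n-3}$). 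That is where $\delta_0=\frac{1}{32n(n-2)}$ is actually used, and no Newton/Maclaurin shortcut replaces it.
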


\begin{proof} We use the notation $L$ to denote the expression of the left hand side of the above inequality. Let's prove the non negativity of $L$ case by case.

{\bf Case A:} If the index $j=n$ or $n-1$, using $\kappa_n\leq \kappa_{n-1}\leq 0$, we have
$$\sigma_{n-2}(\bar{\kappa}|j)=\sigma_{n-2}(\kappa|ij)\leq 0.$$
Note
that $\sigma_{n-2}(\bar{\kappa})\leq 0$, we obtain $L\geq 0$
immediately.

If the index $j\leq n-2$, by Lemma \ref{lemm9} and $\bar{\kappa}\in\Gamma_{n-3}$, we have
\begin{align}\label{a6.10}
-(\kappa_n+\kappa_{n-1})<&\dfrac{2\sigma_{n-3}(\bar{\kappa}|n,n-1)}{\sigma_{n-4}(\bar{\kappa}|n,n-1)}.
\end{align}
Thus, we get
\begin{align*}
&-\sigma_{n-2}(\bar{\kappa})-\sigma_{n-2}(\bar{\kappa}|j)\\
=&-\kappa_n\kappa_{n-1}\sigma_{n-4}(\bar{\kappa}|n,n-1)-(\kappa_n+\kappa_{n-1})\sigma_{n-3}(\bar{\kappa}|n,n-1)\\
&-\kappa_n\kappa_{n-1}\sigma_{n-4}(\bar{\kappa}|n,n-1,j)\\
=&-\kappa_n\kappa_{n-1}[\sigma_{n-4}(\bar{\kappa}|n,n-1)+\sigma_{n-4}(\bar{\kappa}|n,n-1,j)]-(\kappa_n+\kappa_{n-1})\sigma_{n-3}(\bar{\kappa}|n,n-1)\\
\geq&-(\kappa_n+\kappa_{n-1})\Big[\dfrac{(\kappa_n+\kappa_{n-1})}{4}[\sigma_{n-4}(\bar{\kappa}|n,n-1)+\sigma_{n-4}(\bar{\kappa}|n,n-1,j)]\\
&+\sigma_{n-3}(\bar{\kappa}|n,n-1)\Big]\\
\geq&\dfrac{-(\kappa_n+\kappa_{n-1})}{4}\Big[-\dfrac{2\sigma_{n-3}(\bar{\kappa}|n,n-1)}{\sigma_{n-4}(\bar{\kappa}|n,n-1)}[\sigma_{n-4}(\bar{\kappa}|n,n-1)+\sigma_{n-4}(\bar{\kappa}|n,n-1,j)]\\
&+4\sigma_{n-3}(\bar{\kappa}|n,n-1)\Big]\\
=&\dfrac{-(\kappa_n+\kappa_{n-1})}{2}\sigma_{n-3}(\bar{\kappa}|n,n-1)\Big[2-\dfrac{\sigma_{n-4}(\bar{\kappa}|n,n-1)+\sigma_{n-4}(\bar{\kappa}|n,n-1,j)}{\sigma_{n-4}(\bar{\kappa}|n,n-1)}\Big]>0.
\end{align*}
Here, in the first inequality, we have used
$\kappa_n\kappa_{n-1}\leq\dfrac{(\kappa_n+\kappa_{n-1})^2}{4}$, in
the second inequality, we have used (\ref{a6.10}), and in the last
inequality, we have used
$$\sigma_{n-4}(\bar{\kappa}|n,n-1)=\kappa_j\sigma_{n-5}(\bar{\kappa}|n,n-1,j)+\sigma_{n-4}(\bar{\kappa}|n,n-1,j)>\sigma_{n-4}(\bar{\kappa}|n,n-1,j).$$
Thus $L\geq 0$.

\par

{\bf Case B1:} If the index $j<n$, we have
$\sigma_{n-2}(\bar{\kappa}|j)\leq 0$. Note that our assumption
$\sigma_{n-2}(\bar{\kappa})\leq 0$, then we obtain $L\geq 0$ immediately.
Thus, in the following, we only need to consider the non negativity of $L$ for $j=n$.

It is clear that
\begin{align}\label{L}
L=&[-\sigma_{n-2}(\bar{\kappa})-\sigma_{n-2}(\bar{\kappa}|n)]\sigma_{n-6}(\bar{\kappa}|n)+\dfrac{1}{40}\sigma_{n-3}(\bar{\kappa})\sigma_{n-5}(\bar{\kappa})\\
=&[-\kappa_n\sigma_{n-3}(\bar{\kappa}|n)-2\sigma_{n-2}(\bar{\kappa}|n)]\sigma_{n-6}(\bar{\kappa}|n)+\dfrac{1}{40}\sigma_{n-3}(\bar{\kappa})\sigma_{n-5}(\bar{\kappa}).\nonumber
\end{align}
For simplification purpose, we denote $\tilde\sigma_m=\sigma_m(\bar{\kappa}|n)$ in the proof of this lemma. Using the formula
\begin{align*}
\sigma_{n-2}(\kappa)=&\kappa_i\sigma_{n-3}(\bar{\kappa})+\sigma_{n-2}(\bar{\kappa})\\
=&\kappa_i(\kappa_n\tilde\sigma_{n-4}+\tilde\sigma_{n-3})+\kappa_n\tilde\sigma_{n-3}+\tilde\sigma_{n-2},
\end{align*}
we have
\begin{align*}
\kappa_n=\dfrac{\sigma_{n-2}(\kappa)-\tilde\sigma_{n-2}-\kappa_i\tilde\sigma_{n-3}}{\kappa_i\tilde\sigma_{n-4}+\tilde\sigma_{n-3}}.
\end{align*}
Inserting the above formula into \eqref{L}, we get
\begin{align}\label{s6.09}
&(\kappa_i\tilde\sigma_{n-4}+\tilde\sigma_{n-3})L\\
=&[\kappa_i\tilde\sigma_{n-3}^2-2\kappa_i\tilde\sigma_{n-2}\tilde\sigma_{n-4}-\tilde\sigma_{n-2}\tilde\sigma_{n-3}-\sigma_{n-2}(\kappa)\tilde\sigma_{n-3}]\tilde\sigma_{n-6}\nonumber\\
&+\dfrac{1}{40}\sigma_{n-3}(\bar{\kappa})\sigma_{n-5}(\bar{\kappa})(\kappa_i\tilde\sigma_{n-4}+\tilde\sigma_{n-3})\nonumber\\
=&[\kappa_i\sigma_{n-3}(\bar{\kappa}^2|n)-\tilde\sigma_{n-2}\tilde\sigma_{n-3}-\sigma_{n-2}(\kappa)\tilde\sigma_{n-3}]\tilde\sigma_{n-6}\nonumber\\
&+\dfrac{1}{40}\sigma_{n-3}(\bar{\kappa})\sigma_{n-5}(\bar{\kappa})(\kappa_i\tilde\sigma_{n-4}+\tilde\sigma_{n-3}).\nonumber
\end{align}
Here, in the second equality, we have used Lemma \ref{lemm20}.
\par
Let $\delta_0'=\dfrac{\delta_0}{2(n-2)^2}$. Firstly, we further suppose that
$-\kappa_n\geq\delta_0'\kappa_i$ or $\kappa_{n-1}\geq
\delta_0'\kappa_i$ holds, which implies
$-\kappa_n\geq\delta_0'\kappa_1/2$ or $\kappa_{n-1}\geq
\delta_0'\kappa_1/2$ holds, if $\kappa_1$ is sufficiently large.
 Using
$\kappa_{n-2}+\kappa_{n-1}+\kappa_{n}>0$ and
$\kappa_{n-2}\geq\kappa_{n-1}$, we have
\begin{eqnarray}\label{new6.3}
\kappa_{n-2}\geq\dfrac{\delta_0'}{4}\kappa_1,
\end{eqnarray}
if $\kappa_1$ is sufficiently large.
Then, we get
\begin{align}\label{new6.4}
&\kappa_i\sigma_{n-3}(\bar{\kappa}^2|n)-\tilde\sigma_{n-2}\tilde\sigma_{n-3}\\
=&\kappa_i\dsum_{j\neq i,n}\sigma_{n-3}(\kappa^2|ijn)-\sigma_{n-2}(\kappa|in)\dsum_{j\neq i,n}\sigma_{n-3}(\kappa|ijn)\nonumber\\
=&\dsum_{j\neq i,n}(\kappa_i-\kappa_j)\dfrac{\kappa_1^2\cdots\kappa_{n-1}^2}{\kappa_i^2\kappa_j^2}\nonumber\\
\geq&
-\dfrac{(n-2)\sqrt{\kappa_1}}{n}\dfrac{\kappa_1^2\cdots\kappa_{n-1}^2}{\kappa_i^2\kappa_j^2}
\geq-\kappa_1^{2n-5-1/2}.\nonumber
\end{align}
Here, in the first inequality, we have used
$\kappa_i\geq\kappa_1-\sqrt{\kappa_1}/n$ and $\kappa_j\leq\kappa_1$
and in the last inequality, we have used
$\kappa_{n-1}\leq\cdots\leq\kappa_2\leq\kappa_1$. It is easy to see
$\tilde\sigma_{n-6}\leq C_{n-2}^{n-6}\kappa_1^{n-6}$ and
$\tilde{\sigma}_{n-3}\leq (n-2)\kappa_1^{n-3}$, then combining with
\eqref{new6.4}, we get
\begin{align}\label{s6.10}
[\kappa_i\sigma_{n-3}(\bar{\kappa}^2|n)-\tilde\sigma_{n-2}\tilde\sigma_{n-3}-\sigma_{n-2}(\kappa)\tilde\sigma_{n-3}]\tilde\sigma_{n-6}\geq-2C_{n-2}^{n-6}\kappa_1^{3n-11-1/2},
\end{align}
if $\kappa_1$ is sufficiently large.
By Lemma \ref{lemm27} and the assumption that $-\kappa_n\geq\delta_0'\kappa_1/2$ or $\kappa_{n-1}\geq
\delta_0'\kappa_1/2$ holds, we have
$$\sigma_{n-3}(\bar{\kappa})\geq (\delta_1')^{n-3}\kappa_1^{n-3},$$ where $\delta'_1$ is some small positive constant only depending on $\delta_0'$. Using Lemma \ref{lemm10}, $\bar{\kappa}\in \Gamma_{n-3}$ and \eqref{new6.3}, we  have
$$\sigma_{n-5}(\bar{\kappa})\geq \kappa_2\cdots\kappa_{n-4}\geq (\delta_2')^{n-5}\kappa_1^{n-5},\text{ and }
\sigma_{n-4}(\bar{\kappa})\geq(\delta_2')^{n-4}\kappa_1^{n-4},$$
where $\delta_2'=\delta_0'/4$.
We select $$\delta_3'=\min\{\delta_1',\delta_2'\}.$$ Thus, we get
\begin{align}\label{s6.11}
\dfrac{1}{40}\sigma_{n-3}(\bar{\kappa})\sigma_{n-5}(\bar{\kappa})\kappa_i\tilde\sigma_{n-4}\geq\frac{(\delta_3')^{3n-11}}{40}\kappa_1^{3n-11},
\end{align}
if $\kappa_1$ is sufficiently large.
Inserting (\ref{s6.10}) and (\ref{s6.11}) into (\ref{s6.09}), we obtain
$L\geq 0$ if $\kappa_1$ is sufficiently large.
\par
Secondly, we consider the rest case, in which we suppose
 $-\kappa_n<\delta_0'\kappa_i$ and $\kappa_{n-1}<
\delta_0'\kappa_i$ both hold, which implies $i\leq n-2$. Then using \eqref{new6.4}, we have
\begin{align}\label{a6.14}
&\kappa_i\sigma_{n-3}(\bar{\kappa}^2|n)-\tilde\sigma_{n-2}\tilde\sigma_{n-3}\\
=&(\kappa_i-\kappa_{n-1})\dfrac{\kappa_1^2\cdots\kappa_{n-2}^2}{\kappa_i^2}
+\dsum_{j<n-1;j\neq i}(\kappa_i-\kappa_j)\dfrac{\kappa_1^2\cdots\kappa_{n-1}^2}{\kappa_i^2\kappa_j^2}\nonumber\\
\geq&(1-\delta_0')\kappa_i\dfrac{\kappa_1^2\cdots\kappa_{n-2}^2}{\kappa_i^2}
-\dfrac{(n-3)\sqrt{\kappa_1}}{n}\dfrac{\kappa_1^2\cdots\kappa_{n-2}^2}{\kappa_i^2}\nonumber\\
\geq&(1-2\delta_0')\kappa_i\dfrac{\kappa_1^2\cdots\kappa_{n-2}^2}{\kappa_i^2}\nonumber
\end{align}
if $\kappa_1$ is sufficiently large, where in the first inequality,
we have used $\kappa_{n-1}<\delta_0'\kappa_i,\kappa_i\geq
\kappa_1-\sqrt{\kappa_1}/n$ and $\kappa_j\leq \kappa_1,
\kappa_{n-1}\leq \kappa_j$. Using the assumption of Case B1, we
have
$$\kappa_i\sigma_{n-3}(\bar{\kappa})\geq(1+\delta_0)\sigma_{n-2}(\kappa)=(1+\delta_0)[\kappa_i\sigma_{n-3}(\bar{\kappa})+\sigma_{n-2}(\bar{\kappa})],$$
which implies
$$-\sigma_{n-2}(\bar\kappa)\geq \frac{\delta_0\kappa_i\sigma_{n-3}(\bar\kappa)}{1+\delta_0}\geq\delta_0\sigma_{n-2}(\kappa).$$
Then we get
$$\sigma_{n-2}(\kappa)\leq \frac{-\sigma_{n-2}(\bar{\kappa})}{\delta_0}\leq -\frac{n-2}{\delta_0}\frac{\kappa_1\cdots\kappa_{n-2}\kappa_{n}}{\kappa_i}
.$$
Note that
\begin{align}\label{a6.15}
\tilde\sigma_{n-3}\leq
(n-2)\dfrac{\kappa_1\cdots\kappa_{n-2}}{\kappa_i}.
\end{align}
 Using
(\ref{a6.14}) and the above two inequalities, we get
\begin{align}\label{s6.12}
\kappa_i\sigma_{n-3}(\bar{\kappa}^2|n)-\tilde\sigma_{n-2}\tilde\sigma_{n-3}-\sigma_{n-2}(\kappa)\tilde\sigma_{n-3}
\geq\Big[(1-2\delta_0')\kappa_i+\dfrac{(n-2)^2}{\delta_0}\kappa_n\Big]\dfrac{\kappa_1^2\cdots\kappa_{n-2}^2}{\kappa_i^2},
\end{align}
if $\kappa_1$ is sufficiently large.
Since
$-\kappa_n<\delta_0'\kappa_i=\dfrac{\delta_0}{2(n-2)^2}\kappa_i$,
(\ref{s6.12}) is nonnegative. Combing with (\ref{s6.09}), we obtain
$L\geq 0$, if $\kappa_1$ is sufficiently large.

\par
{\bf Case B2:} In view of the proof of  Case B1, the condition $\kappa_i\sigma_{n-3}(\bar{\kappa})\geq(1+\delta_0)\sigma_{n-2}(\kappa)$  is only used in the argument with the further assumption that
 $-\kappa_n<\delta_0'\kappa_i$ and $\kappa_{n-1}<
\delta_0'\kappa_i$ both hold. Thus, we only need to give a new proof for this case with the new condition
$$\kappa_1\cdots\kappa_{n-2}\geq2(n-2)\sigma_{n-2}(\kappa).$$
In fact, by
(\ref{a6.14}), (\ref{a6.15}) and the above inequality, we have
\begin{align}\label{s6.13}
&\kappa_i\sigma_{n-3}(\bar{\kappa}^2|n)-\tilde\sigma_{n-2}\tilde\sigma_{n-3}-\sigma_{n-2}(\kappa)\tilde\sigma_{n-3}\\
\geq&(1-2\delta_0')\kappa_i\dfrac{\kappa_1^2\cdots\kappa_{n-2}^2}{\kappa_i^2}-(n-2)\dfrac{\kappa_1\cdots\kappa_{n-2}}{\kappa_i}\sigma_{n-2}(\kappa)\nonumber\\
\geq&\dfrac{\kappa_1\cdots\kappa_{n-2}}{\kappa_i}[(1-2\delta_0')\kappa_1\cdots\kappa_{n-2}-(n-2)\sigma_{n-2}(\kappa)]\geq
0,\nonumber
\end{align}
if $\kappa_1$ is sufficiently large. Therefore we obtain $L\geq 0$, if $\kappa_1$ is sufficiently large.

\end{proof}

\begin{lemm}\label{lemm31} For any given index $1\leq i\leq n$, assume $\bar\kappa=(\kappa|i)\in\Gamma_{n-3}$.
For any vector $\xi=(\xi_1,\cdots,\xi_n)\in \mathbb{R}^n$, we define
a quadratic form:
\begin{align*}
\textbf{H}_{i}=&\dsum_{j\neq
i}\sigma_{n-3}(\bar{\kappa}^2|j)\xi_j^2+\dsum_{p,q\neq i;p\neq
q}\Big[\dfrac{2\sigma_{n-3}(\bar{\kappa})}{3\sigma_{n-5}(\bar{\kappa})}\sigma_{n-5}(\bar{\kappa}|pq)\sigma_{n-3}(\bar{\kappa}|pq)-\sigma_{n-3}^2(\bar{\kappa}|pq)\Big]\xi_p\xi_q.\nonumber
\end{align*}
Then we have
\begin{align}\label{s6.15}
\textbf{H}_{i}\geq&\dfrac{2\sigma_{n-3}(\bar{\kappa})}{3\sigma_{n-5}(\bar{\kappa})}\dsum_{j\neq
i}\Big[
\sigma_{n-5}(\bar{\kappa}|j)\sigma_{n-3}(\bar{\kappa}|j)-4\sigma_{n-6}(\bar{\kappa}|j)\sigma_{n-2}(\bar{\kappa}|j)
-\dfrac{4}{3}\sigma_{n-5}(\bar{\kappa})\sigma_{n-3}(\bar{\kappa})\Big]\xi_j^2.
\end{align}
\end{lemm}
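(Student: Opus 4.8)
The plan is to reduce the bound on $\textbf{H}_{i}$ to an inequality that lives entirely in the $(n-1)$-dimensional vector $\bar\kappa=(\kappa|i)$, and then to handle the resulting off-diagonal quadratic form by completing squares in the style already used in the proof of Lemma \ref{lemm25}. First I would set $m=n-1$ and work with $\bar\kappa=(\bar\kappa_1,\dots,\bar\kappa_m)\in\Gamma_{m-2}$, and abbreviate $\tilde\sigma_s=\sigma_s(\bar\kappa)$. Using Lemma \ref{lemm20} I would rewrite the diagonal coefficient $\sigma_{n-3}(\bar\kappa^2|j)=\sigma_{m-2}(\bar\kappa^2|j)$ via the Newton-type identity as $\tilde\sigma_{m-2}(\bar\kappa|j)$-data; more precisely expand $\sigma_{m-2}(\bar\kappa^2|j)$ into $\sigma$-polynomials of $(\bar\kappa|j)$ and isolate the term $\sigma_{m-3}(\bar\kappa|j)\sigma_{m-1}(\bar\kappa|j)$ and the correction $-2\sigma_{m-4}(\bar\kappa|j)\sigma_{m}(\bar\kappa|j)$ exactly as in Lemma \ref{lemm23}. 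The point of this step is that after substituting, $\textbf{H}_i$ becomes a quadratic form whose coefficients are products of $\sigma$'s of $\bar\kappa$ of consecutive orders, and the claimed lower bound $\eqref{s6.15}$ is precisely the statement that the off-diagonal part plus a controlled diagonal remainder is nonnegative.

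Next I would peel off the scalar factor $\dfrac{2\sigma_{n-3}(\bar\kappa)}{3\sigma_{n-5}(\bar\kappa)}$. Since $\bar\kappa\in\Gamma_{n-3}$ we have $\sigma_{n-3}(\bar\kappa),\sigma_{n-5}(\bar\kappa)>0$, so dividing through by this positive constant is harmless, and $\eqref{s6.15}$ is equivalent to showing that
\begin{align*}
&\dfrac{3\sigma_{n-5}(\bar\kappa)}{2\sigma_{n-3}(\bar\kappa)}\dsum_{j\neq i}\sigma_{n-3}(\bar\kappa^2|j)\xi_j^2
+\dsum_{p,q\neq i;p\neq q}\Big[\sigma_{n-5}(\bar\kappa|pq)\sigma_{n-3}(\bar\kappa|pq)-\dfrac{3\sigma_{n-5}(\bar\kappa)}{2\sigma_{n-3}(\bar\kappa)}\sigma_{n-3}^2(\bar\kappa|pq)\Big]\xi_p\xi_q\\
&\qquad\geq\dsum_{j\neq i}\Big[\sigma_{n-5}(\bar\kappa|j)\sigma_{n-3}(\bar\kappa|j)-4\sigma_{n-6}(\bar\kappa|j)\sigma_{n-2}(\bar\kappa|j)-\dfrac{4}{3}\sigma_{n-5}(\bar\kappa)\sigma_{n-3}(\bar\kappa)\Big]\xi_j^2.
\end{align*}
Then I would apply Lemma \ref{lemm24} with $s=n-3$ to the vector $\bar\kappa$, which gives that $\sum_j\sigma_{n-4}^2(\bar\kappa|j)\xi_j^2+\sum_{p\neq q}[\sigma_{n-4}^2(\bar\kappa|pq)-\sigma_{n-3}(\bar\kappa|pq)\sigma_{n-5}(\bar\kappa|pq)]\xi_p\xi_q\geq0$ — note this is exactly the off-diagonal combination $\sigma_{n-3}(\bar\kappa|pq)\sigma_{n-5}(\bar\kappa|pq)$ appearing above with the opposite sign. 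Combining this with Lemma \ref{lemm21} (applied to $(\kappa|i)$ with $s=n-4$, using that $\bar\kappa\in\Gamma_{n-3}\subset\bar\Gamma_{n-3}$ makes the relevant sub-vectors admissible) lets me absorb the pure square-sum terms $\sigma_{n-4}(\bar\kappa^2|j)$ and $\sigma_{n-4}(\bar\kappa^2|pq)$ coming out of the Lemma \ref{lemm20} expansion; these are nonnegative on the Gårding cone and can simply be dropped after being matched against the corresponding diagonal terms.

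The residual inequality is then diagonal-dominant: after all the Newton substitutions the off-diagonal coefficient $\sigma_{n-5}(\bar\kappa|pq)\sigma_{n-3}(\bar\kappa|pq)-\tfrac{3\sigma_{n-5}(\bar\kappa)}{2\sigma_{n-3}(\bar\kappa)}\sigma_{n-3}^2(\bar\kappa|pq)$ is, by Newton's inequality (viii) and Lemma \ref{lemm24}, bounded by $\sigma_{n-4}^2(\bar\kappa|pq)$ up to the factor, which is in turn controlled by $\sigma_{n-5}(\bar\kappa|p)\sigma_{n-5}(\bar\kappa|q)$-type bounds; the factor-$\tfrac43$ slack in the diagonal remainder on the right-hand side of $\eqref{s6.15}$ is exactly what is needed to dominate the cross terms via Cauchy--Schwarz, $2|\xi_p\xi_q|\leq\xi_p^2+\xi_q^2$, summed over the at most $n-1$ indices. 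The main obstacle I anticipate is keeping the bookkeeping of the Newton/Maclaurin estimates tight enough that the constants $\tfrac23$ and $\tfrac43$ genuinely close — in particular verifying that the term $-4\sigma_{n-6}(\bar\kappa|j)\sigma_{n-2}(\bar\kappa|j)$ on the right is the correct correction produced by Lemma \ref{lemm23} applied to $\sum_{s\neq j}\sigma_{n-4}^2(\bar\kappa|js)$, and that the $\sigma_{n-7}$-term there has a sign that can be discarded (it is nonnegative since $\bar\kappa\in\Gamma_{n-3}$ forces $\sigma_{n-7}(\bar\kappa|j)\geq0$ when $n-7\leq n-3$). Once those sign bookkeeping points are pinned down, the estimate follows by collecting the diagonal terms and applying Cauchy--Schwarz to the off-diagonal sum.
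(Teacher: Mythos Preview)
Your approach has a genuine gap: the final Cauchy--Schwarz step cannot close with the stated constants. Applying $2|\xi_p\xi_q|\le\xi_p^2+\xi_q^2$ to the full off-diagonal sum feeds each $\xi_j^2$ roughly $n-2$ cross terms, so the diagonal slack you would need scales with $n$, whereas the bound \eqref{s6.15} has the dimension-free constant $\tfrac{4}{3}$. The paper avoids this loss entirely by a completing-the-square decomposition that your outline misses: one writes
\[
\frac{2\sigma_{n-3}(\bar\kappa)}{3\sigma_{n-5}(\bar\kappa)}\sigma_{n-5}(\bar\kappa|pq)\sigma_{n-3}(\bar\kappa|pq)-\sigma_{n-3}^2(\bar\kappa|pq)
=-\Bigl(\sigma_{n-3}(\bar\kappa|pq)-\tfrac{\sigma_{n-3}(\bar\kappa)}{3\sigma_{n-5}(\bar\kappa)}\sigma_{n-5}(\bar\kappa|pq)\Bigr)^{2}
+\tfrac{\sigma_{n-3}^2(\bar\kappa)}{9\sigma_{n-5}^2(\bar\kappa)}\sigma_{n-5}^2(\bar\kappa|pq),
\]
and couples this with the exact identity $\sigma_{n-3}(\bar\kappa^2|j)=\sum_{s\neq i,j}\sigma_{n-3}^2(\bar\kappa|js)$ (valid because $(\bar\kappa|js)$ has precisely $n-3$ entries, so $\sigma_{n-3}$ of the squared vector equals the square of $\sigma_{n-3}$). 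This makes the diagonal coefficients match the off-diagonal structure termwise, and $\textbf{H}_i$ splits into a sum of $(\xi_p-\xi_q)^2$ terms, a sum of $(\xi_p+\xi_q)^2$ terms, and a purely diagonal remainder --- with no crude absolute-value bound and hence no factor of $n$.

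Your attribution of the two correction terms on the right of \eqref{s6.15} is also off. The term $-4\sigma_{n-6}(\bar\kappa|j)\sigma_{n-2}(\bar\kappa|j)$ does not come from Lemma~\ref{lemm23}; it is produced by Lemma~\ref{lemm22} (take $m=n-2$, $s=3$) when evaluating $\sum_{s\neq i,j}\sigma_{n-5}(\bar\kappa|js)\sigma_{n-3}(\bar\kappa|js)$. The constant $\tfrac{4}{3}$ in front of $\sigma_{n-5}(\bar\kappa)\sigma_{n-3}(\bar\kappa)$ is not a Cauchy--Schwarz artifact either: it comes from Lemma~\ref{lemm26} (with $m=n-1$), which gives the sharp inequality $\sum_{s\neq i,j}\sigma_{n-5}^2(\bar\kappa|js)\le 4\sigma_{n-5}^2(\bar\kappa)$, and the factor $4$ combines with the $\tfrac{1}{3}$ from the square-completion to yield exactly $\tfrac{4}{3}$. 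Neither Lemma~\ref{lemm24} nor Lemma~\ref{lemm21} plays a role here; the Newton-inequality route you sketch does not produce these precise constants.
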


\begin{proof}
We let
\begin{align*}
&\textbf{H1}_{i}\\
=&\dsum_{j\neq
i}\Big[\sigma_{n-3}(\bar{\kappa}^2|j)-\dfrac{2\sigma_{n-3}(\bar{\kappa})}{3\sigma_{n-5}(\bar{\kappa})}\dsum_{s\neq
j,i}\sigma_{n-5}(\bar{\kappa}|js)\sigma_{n-3}(\bar{\kappa}|js)
+\dfrac{\sigma_{n-3}^2(\bar{\kappa})}{\sigma_{n-5}^2(\bar{\kappa})}\dsum_{s\neq j,i}\dfrac{\sigma_{n-5}^2(\bar{\kappa}|js)}{9}\Big]\xi_j^2\\
&+\dsum_{p,q\neq i;p\neq
q}\Big[\dfrac{2\sigma_{n-3}(\bar{\kappa})}{3\sigma_{n-5}(\bar{\kappa})}\sigma_{n-5}(\bar{\kappa}|pq)\sigma_{n-3}(\bar{\kappa}|pq)
-\sigma_{n-3}(\bar{\kappa}^2|pq)-\dfrac{\sigma_{n-3}^2(\bar{\kappa})}{\sigma_{n-5}^2(\bar{\kappa})}\dfrac{\sigma_{n-5}^2(\bar{\kappa}|pq)}{9}\Big]\xi_p\xi_q,
\end{align*}
and
\begin{align*}
\textbf{H2}_{i}=&\dsum_{j\neq
i}\Big[\dfrac{2\sigma_{n-3}(\bar{\kappa})}{3\sigma_{n-5}(\bar{\kappa})}\dsum_{s\neq
j,i}\sigma_{n-5}(\bar{\kappa}|js)\sigma_{n-3}(\bar{\kappa}|js)
-\dfrac{\sigma_{n-3}^2(\bar{\kappa})}{\sigma_{n-5}^2(\bar{\kappa})}\dsum_{s\neq j,i}\dfrac{\sigma_{n-5}^2(\bar{\kappa}|js)}{9}\Big]\xi_j^2\\
&+\dsum_{p,q\neq i;p\neq
q}\dfrac{\sigma_{n-3}^2(\bar{\kappa})}{\sigma_{n-5}^2(\bar{\kappa})}\dfrac{\sigma_{n-5}^2(\bar{\kappa}|pq)}{9}
\xi_p\xi_q.
\end{align*}
Obviously, we have
\begin{align*}
\textbf{H}_{i}=\textbf{H1}_{i}+\textbf{H2}_{i}.
\end{align*}
Note that
$$
\sigma_{n-3}(\bar{\kappa}^2|pq)=\sigma_{n-3}^2(\bar{\kappa}|pq),
$$
and
$$
\sigma_{n-3}(\bar{\kappa}^2|j)=\dsum_{s\neq
j,i}\sigma_{n-3}(\bar{\kappa}^2|js)=\dsum_{s\neq
j,i}\sigma_{n-3}^2(\bar{\kappa}|js).
$$
Thus, using the above two identities, we can rewrite $\textbf{H1}_{i}$ to be
\begin{align*}
\textbf{H1}_{i}=&\dsum_{j\neq i}\Big[\dsum_{s\neq
j,i}\Big(\sigma_{n-3}(\bar{\kappa}|js)-\dfrac{\sigma_{n-3}(\bar{\kappa})}{3\sigma_{n-5}(\bar{\kappa})}\sigma_{n-5}(\bar{\kappa}|js)\Big)^2\Big]\xi_j^2\\
&-\dsum_{p,q\neq i;p\neq
q}\Big[\Big(\sigma_{n-3}(\bar{\kappa}|pq)-\dfrac{\sigma_{n-3}(\bar{\kappa})}{3\sigma_{n-5}(\bar{\kappa})}\sigma_{n-5}(\bar{\kappa}|pq)\Big)^2\Big]\xi_p\xi_q\\
=&\frac{1}{2}\sum_{p,q\neq i;p\neq
q}\Big[\Big(\sigma_{n-3}(\bar{\kappa}|pq)-\dfrac{\sigma_{n-3}(\bar{\kappa})}{3\sigma_{n-5}(\bar{\kappa})}\sigma_{n-5}(\bar{\kappa}|pq)\Big)^2\Big](\xi_p-\xi_q)^2
\geq 0.
\end{align*}
On the other hand, it is clear that
\begin{align*}
\textbf{H2}_{i} =&\dsum_{j\neq
i}\Big[\dfrac{\sigma_{n-3}^2(\bar{\kappa})}{\sigma_{n-5}^2(\bar{\kappa})}\dsum_{s\neq
j,i}\dfrac{\sigma_{n-5}^2(\bar{\kappa}|js)}{9}\Big]\xi_j^2+\dsum_{p,q\neq
i;p\neq q}\Big[\dfrac{\sigma_{n-3}^2(\bar{\kappa})}{\sigma_{n-5}^2(\bar{\kappa})}\dfrac{\sigma_{n-5}^2(\bar{\kappa}|pq)}{9}\Big]\xi_p\xi_q\\
&+\dsum_{j\neq
i}\Big[\dfrac{2\sigma_{n-3}(\bar{\kappa})}{3\sigma_{n-5}(\bar{\kappa})}\dsum_{s\neq
j,i}\sigma_{n-5}(\bar{\kappa}|js)\sigma_{n-3}(\bar{\kappa}|js)
-\dfrac{2\sigma_{n-3}^2(\bar{\kappa})}{9\sigma_{n-5}^2(\bar{\kappa})}\dsum_{s\neq
j,i}\sigma_{n-5}^2(\bar{\kappa}|js)\Big]\xi_j^2\\
=&\frac{1}{2}\sum_{p,q\neq
i;p\neq q}\Big[\dfrac{\sigma_{n-3}^2(\bar{\kappa})}{\sigma_{n-5}^2(\bar{\kappa})}\dfrac{\sigma_{n-5}^2(\bar{\kappa}|pq)}{9}\Big](\xi_p+\xi_q)^2\\
&+\sum_{j\neq
i}\Big[\dfrac{2\sigma_{n-3}(\bar{\kappa})}{3\sigma_{n-5}(\bar{\kappa})}\dsum_{s\neq
j,i}\sigma_{n-5}(\bar{\kappa}|js)\sigma_{n-3}(\bar{\kappa}|js)
-\dfrac{2\sigma_{n-3}^2(\bar{\kappa})}{9\sigma_{n-5}^2(\bar{\kappa})}\dsum_{s\neq
j,i}\sigma_{n-5}^2(\bar{\kappa}|js)\Big]\xi_j^2\\
\geq& \frac{2\sigma_{n-3}(\bar{\kappa})}{3\sigma_{n-5}(\bar{\kappa})}\sum_{j\neq
i}\Big[\dsum_{s\neq
j,i}\sigma_{n-5}(\bar{\kappa}|js)\sigma_{n-3}(\bar{\kappa}|js)
-\dfrac{\sigma_{n-3}(\bar{\kappa})}{3\sigma_{n-5}(\bar{\kappa})}\dsum_{s\neq
j,i}\sigma_{n-5}^2(\bar{\kappa}|js)\Big]\xi_j^2.
\end{align*}

Using Lemma \ref{lemm22} with $m=n-2, s=3$ and Lemma \ref{lemm26}
with $m=n-1$ respectively, we have
\begin{align*}
\dsum_{s\neq
j,i}\sigma_{n-5}(\bar{\kappa}|js)\sigma_{n-3}(\bar{\kappa}|js)=&\sigma_{n-5}(\bar{\kappa}|j)\sigma_{n-3}(\bar{\kappa}|j)-4\sigma_{n-6}(\bar{\kappa}|j)\sigma_{n-2}(\bar{\kappa}|j),\\
\dsum_{s\neq j,i}\sigma_{n-5}^2(\bar{\kappa}|js)\leq &4
\sigma_{n-5}^2(\bar{\kappa}).
\end{align*}
Thus, inserting the above two formulae into the last expression of $\textbf{H2}_i$,  we obtain (\ref{s6.15}).
\end{proof}

Now, we are in the position to prove Theorem \ref{theorem}.
\par

\noindent {\bf Proof of Theorem \ref{theorem}:} Firstly, let's prove (\ref{s6.01}).
 Using Lemma \ref{lemm20}, we
have
\begin{align}\label{a6.16}
&\kappa_j^2\sigma_{n-4}^2(\bar{\kappa}|j)-2\sigma_{n-4}(\bar{\kappa}|j)\sigma_{n-2}(\bar{\kappa}|j)\\
=&[\sigma_{n-3}(\bar{\kappa})-\sigma_{n-3}(\bar{\kappa}|j)]^2-2\sigma_{n-4}(\bar{\kappa}|j)\sigma_{n-2}(\bar{\kappa}|j)\nonumber\\
=&\sigma_{n-3}^2(\bar{\kappa})-2\sigma_{n-3}(\bar{\kappa})\sigma_{n-3}(\bar{\kappa}|j)+\sigma_{n-3}^2(\bar{\kappa}|j)-2\sigma_{n-4}(\bar{\kappa}|j)\sigma_{n-2}(\bar{\kappa}|j)\nonumber\\
=&\sigma_{n-3}^2(\bar{\kappa})-2\sigma_{n-3}(\bar{\kappa})\sigma_{n-3}(\bar{\kappa}|j)+\sigma_{n-3}(\bar{\kappa}^2|j).\nonumber
\end{align}

By Lemma \ref{lemm29}, we have
\begin{align}\label{s6.14}
&\dfrac{8\kappa_i^2}{9}\textbf{A}_{n-2;i}+\textbf{C}_{n-2;i}\\
\geq&\dfrac{2\sigma_{n-3}(\bar{\kappa})}{3\sigma_{n-5}(\bar{\kappa})}\dsum_{j\neq
i}[2\sigma_{n-5}(\bar{\kappa}|j)\sigma_{n-3}(\bar{\kappa}|j)-2\sigma_{n-6}(\bar{\kappa}|j)\sigma_{n-2}(\bar{\kappa}|j)]\xi_j^2\nonumber\\
&+\dfrac{2\sigma_{n-3}(\bar{\kappa})}{3\sigma_{n-5}(\bar{\kappa})}\dsum_{p,q\neq
i;p\neq q}\sigma_{n-5}(\bar{\kappa}|pq)\sigma_{n-3}(\bar{\kappa}|pq)\xi_p\xi_q\nonumber\\
&+\dsum_{j\neq
i}[\sigma_{n-3}^2(\bar{\kappa})-2\sigma_{n-3}(\bar{\kappa})\sigma_{n-3}(\bar{\kappa}|j)+\sigma_{n-3}(\bar{\kappa}^2|j)]\xi_j^2
-\dsum_{p,q\neq
i;p\neq q}\sigma_{n-3}^2(\bar{\kappa}|pq) \xi_p\xi_q\nonumber\\
=&\dfrac{2\sigma_{n-3}(\bar{\kappa})}{3\sigma_{n-5}(\bar{\kappa})}\dsum_{j\neq
i}[2\sigma_{n-5}(\bar{\kappa}|j)\sigma_{n-3}(\bar{\kappa}|j)-2\sigma_{n-6}(\bar{\kappa}|j)\sigma_{n-2}(\bar{\kappa}|j)]\xi_j^2\nonumber\\
&+\dsum_{j\neq
i}[\sigma_{n-3}^2(\bar{\kappa})-2\sigma_{n-3}(\bar{\kappa})\sigma_{n-3}(\bar{\kappa}|j)]\xi_j^2+\textbf{H}_{i},
\nonumber
\end{align}
when $\kappa_1$ is sufficiently large.
Here, in the first inequality, we have used
(\ref{a6.16}), the definition of $\textbf{C}_{n-2;i}$ and $\sigma_{n-2}(\kappa|ipq)=0$. Then inserting (\ref{s6.15}) into (\ref{s6.14}), we
obtain

\begin{align*}
&\dfrac{8\kappa_i^2}{9}\textbf{A}_{n-2;i}+\textbf{C}_{n-2;i}\\
\geq&\dfrac{2\sigma_{n-3}(\bar{\kappa})}{3\sigma_{n-5}(\bar{\kappa})}\dsum_{j\neq
i}[2\sigma_{n-5}(\bar{\kappa}|j)\sigma_{n-3}(\bar{\kappa}|j)-2\sigma_{n-6}(\bar{\kappa}|j)\sigma_{n-2}(\bar{\kappa}|j)]\xi_j^2\nonumber\\
&+\dsum_{j\neq
i}[\sigma_{n-3}^2(\bar{\kappa})-2\sigma_{n-3}(\bar{\kappa})\sigma_{n-3}(\bar{\kappa}|j)]\xi_j^2\\
&+\dfrac{2\sigma_{n-3}(\bar{\kappa})}{3\sigma_{n-5}(\bar{\kappa})}\dsum_{j\neq
i}\Big[
\sigma_{n-5}(\bar{\kappa}|j)\sigma_{n-3}(\bar{\kappa}|j)-4\sigma_{n-6}(\bar{\kappa}|j)\sigma_{n-2}(\bar{\kappa}|j)
-\dfrac{4}{3}\sigma_{n-5}(\bar{\kappa})\sigma_{n-3}(\bar{\kappa})\Big]\xi_j^2
\nonumber\\
=&\dfrac{\sigma_{n-3}(\bar{\kappa})}{\sigma_{n-5}(\bar{\kappa})}\dsum_{j\neq
i}\Big[
2\sigma_{n-5}(\bar{\kappa}|j)\sigma_{n-3}(\bar{\kappa}|j)-4\sigma_{n-6}(\bar{\kappa}|j)\sigma_{n-2}(\bar{\kappa}|j)\\
&
-2\sigma_{n-5}(\bar{\kappa})\sigma_{n-3}(\bar{\kappa}|j)+\dfrac{1}{9}\sigma_{n-5}(\bar{\kappa})\sigma_{n-3}(\bar{\kappa})
\Big]\xi_j^2\\
=&\dfrac{\sigma_{n-3}(\bar{\kappa})}{\sigma_{n-5}(\bar{\kappa})}\dsum_{j\neq
i}\Big[
-2\sigma_{n-6}(\bar{\kappa}|j)\sigma_{n-2}(\bar{\kappa})-2\sigma_{n-6}(\bar{\kappa}|j)\sigma_{n-2}(\bar{\kappa}|j)+\dfrac{1}{9}\sigma_{n-5}(\bar{\kappa})\sigma_{n-3}(\bar{\kappa})
\Big]\xi_j^2.
\end{align*}
Using Lemma \ref{lemm30}, we know, if $\kappa_1$ is sufficiently large,
$$-2\sigma_{n-6}(\bar{\kappa}|j)\sigma_{n-2}(\bar{\kappa})
-2\sigma_{n-6}(\bar{\kappa}|j)\sigma_{n-2}(\bar{\kappa}|j)+\dfrac{1}{20}\sigma_{n-5}(\bar{\kappa})\sigma_{n-3}(\bar{\kappa})\geq
0,$$ which implies (\ref{s6.01}).\\

Secondly, let's prove  (\ref{s6.02}). The inequality (\ref{s6.02}) is equivalent to
\begin{align}\label{s6.16}
&[\kappa_i
K\sigma_{n-3}(\bar{\kappa})-1]\Big[\dfrac{\kappa_i^2}{9}\textbf{A}_{n-2;i}+\sigma_{n-2}(\kappa)\textbf{B}_{n-2;i}+\dfrac{1}{20}\dsum_{j\neq
i} \sigma_{n-3}^2(\bar{\kappa})\xi_j^2\Big]-\textbf{D}_{n-2;i}\geq
0.
\end{align}

Since $\kappa_i\geq \kappa_1-\sqrt{\kappa_1}/n$, we have
$$\Big[\frac{1}{2}\kappa_i K\sigma_{n-3}(\bar{\kappa})-1\Big]\geq \Big(\frac{K}{4}\kappa_1 \sigma_{n-3}(\kappa|1)-1\Big)\geq \Big(\frac{K\theta}{4}N_0-1\Big),$$ if $\kappa_1$ is sufficiently large. Here $\theta$ is the constant defined in Lemma \ref{lemm11}.
Thus, $[\kappa_i K\sigma_{n-3}(\bar{\kappa})-1]$ can be sufficiently
large if the constant $K$ is large enough. Lemma \ref{lemm24} and
Lemma \ref{lemm25} imply that quadratic forms $\textbf{A}_{n-2;i}$,
$\textbf{B}_{n-2;i}$ are nonnegative respectively. Thus, if $K$ is sufficiently large, using $\kappa\in\Gamma_{n-2}$, we have
\begin{align*}
[\kappa_i
K\sigma_{n-3}(\bar{\kappa})-1]\dfrac{\kappa_i^2}{9}\textbf{A}_{n-2;i}
\geq &\dfrac{8\kappa_i^2}{9}\textbf{A}_{n-2;i},
\end{align*}
\begin{align*}
[\kappa_i
K\sigma_{n-3}(\bar{\kappa})-1]\sigma_{n-2}(\kappa)\textbf{B}_{n-2;i}&\geq
\dfrac{1}{2}\kappa_i
K\sigma_{n-3}(\bar{\kappa})\sigma_{n-2}(\kappa)\textbf{B}_{n-2;i}\geq
\kappa_i\sigma_{n-3}(\bar{\kappa})\textbf{B}_{n-2;i}\\
&=[\sigma_{n-2}(\kappa)-\sigma_{n-2}(\bar{\kappa})]\textbf{B}_{n-2;i}\geq
- \sigma_{n-2}(\bar{\kappa})\textbf{B}_{n-2;i},
\end{align*}
and
$$\dfrac{1}{20}[\kappa_i
K\sigma_{n-3}(\bar{\kappa})-1]\dsum_{j\neq i}
\sigma_{n-3}^2(\bar{\kappa})\xi_j^2\geq \dsum_{j\neq i}
\sigma_{n-3}^2(\bar{\kappa})\xi_j^2.$$

Therefore, in view of (\ref{s6.16}), we only need to show
\begin{align}\label{s6.17}
&\dfrac{8\kappa_i^2}{9}\textbf{A}_{n-2;i}-\sigma_{n-2}(\bar{\kappa})\textbf{B}_{n-2;i}+\dsum_{j\neq
i} \sigma_{n-3}^2(\bar{\kappa})\xi_j^2-\textbf{D}_{n-2;i}\geq 0,
\end{align}
if $\kappa_1$ is sufficiently large.

Using Lemma \ref{lemm20}, we have the equalities:

\begin{align*}
&-2\sigma_{n-2}(\bar{\kappa})\sigma_{n-4}(\bar{\kappa}|j)+\sigma_{n-3}^2(\bar{\kappa})-\sigma_{n-3}^2(\bar{\kappa}|j)\\
=&-2[\kappa_j\sigma_{n-3}(\bar{\kappa}|j)+\sigma_{n-2}(\bar{\kappa}|j)]\sigma_{n-4}(\bar{\kappa}|j)+\sigma_{n-3}^2(\bar{\kappa})-\sigma_{n-3}^2(\bar{\kappa}|j)\\
=&-2[\kappa_j\sigma_{n-4}(\bar{\kappa}|j)]\sigma_{n-3}(\bar{\kappa}|j)-2\sigma_{n-2}(\bar{\kappa}|j)\sigma_{n-4}(\bar{\kappa}|j)+\sigma_{n-3}^2(\bar{\kappa})-\sigma_{n-3}^2(\bar{\kappa}|j)\\
=&\sigma_{n-3}^2(\bar{\kappa}|j)-2\sigma_{n-4}(\bar{\kappa}|j)\sigma_{n-2}(\bar{\kappa}|j)-2\sigma_{n-3}(\bar{\kappa})\sigma_{n-3}(\bar{\kappa}|j)+\sigma_{n-3}^2(\bar{\kappa})\\
=&[\sigma_{n-3}(\bar{\kappa})-\sigma_{n-3}(\bar{\kappa}|j)]^2-2\sigma_{n-4}(\bar{\kappa}|j)\sigma_{n-2}(\bar{\kappa}|j)\\
=&\kappa_j^2\sigma_{n-4}^2(\bar{\kappa}|j)-2\sigma_{n-4}(\bar{\kappa}|j)\sigma_{n-2}(\bar{\kappa}|j),
\end{align*}
and
\begin{align*}
&\sigma_{n-2}(\bar{\kappa})\sigma_{n-4}(\bar{\kappa}|pq)-\sigma_{n-3}(\bar{\kappa}|p)\sigma_{n-3}(\bar{\kappa}|q)\\
=&[\kappa_p\kappa_q\sigma_{n-4}(\bar{\kappa}|pq)+(\kappa_p+\kappa_q)\sigma_{n-3}(\bar{\kappa}|pq)]\sigma_{n-4}(\bar{\kappa}|pq)\\
&-[\kappa_q\sigma_{n-4}(\bar{\kappa}|pq)+\sigma_{n-3}(\bar{\kappa}|pq)][\kappa_p\sigma_{n-4}(\bar{\kappa}|pq)+\sigma_{n-3}(\bar{\kappa}|pq)]\\
=&-\sigma_{n-3}^2(\bar{\kappa}|pq).
\end{align*}
Using the above two identities, we obtain
\begin{align*}
& \text{ LHS of }(\ref{s6.17})\\
=&\dfrac{8\kappa_i^2}{9}\textbf{A}_{n-2;i}-\sigma_{n-2}(\bar{\kappa})\Big[\sum_{j\neq
i}2\sigma_{n-4}(\bar{\kappa}|j)\xi_j^2-\sum_{p,q\neq
i}\sigma_{n-4}(\bar{\kappa}|pq)\xi_p\xi_q\Big]+\dsum_{j\neq
i}\sigma_{n-3}^2(\bar{\kappa})\xi_j^2\\
&-\Big[\sum_{j\neq
i}\sigma_{n-3}^2(\bar{\kappa}|j)\xi_j^2+\dsum_{p,q\neq
i}\sigma_{n-3}(\bar{\kappa}|p)\sigma_{n-3}(\bar{\kappa}|q)\xi_p\xi_q\Big]\\
=&\dfrac{8\kappa_i^2}{9}\textbf{A}_{n-2;i}+\dsum_{j\neq i}[
\kappa_j^2\sigma_{n-4}^2(\bar{\kappa}|j)-2\sigma_{n-4}(\bar{\kappa}|j)\sigma_{n-2}(\bar{\kappa}|j)]\xi_j^2
-\dsum_{p,q\neq i}\sigma_{n-3}^2(\bar{\kappa}|pq)\xi_p\xi_q\\
=&\dfrac{8\kappa_i^2}{9}\textbf{A}_{n-2;i}+\textbf{C}_{n-2;i},
\end{align*}
which is exactly the left hand side of (\ref{s6.01}). Thus, \eqref{s6.01} implies \eqref{s6.02}.

\section{The Proof of Theorem \ref{maintheo} for cases B3 and C}
This section will focus on  Cases B3 and C. We will use the idea and techniques developing in \cite{GRW,LRW}. Since the argument of the following lemma is similar to Lemma 9 in \cite{LRW}, we will omit some details in its proof.

\begin{lemm}\label{lem32} For any given index $1\leq i\leq n-3$ with $n\geq 5$,
assume $\kappa=(\kappa_1,\cdots,\kappa_n)\in\Gamma_{n-2}$,
$\kappa_1\geq\cdots\geq\kappa_n$,
$\kappa_i>\kappa_1-\sqrt{\kappa_1}/n$ and
$$\kappa_i\sigma_{n-3}(\kappa|i)\leq
(1+\delta_0)\sigma_{n-2}(\kappa),$$ where
$\delta_0=\dfrac{1}{32n(n-2)}$ defined in section 4. Further suppose that $\sigma_{n-2}(\kappa)$ has positive upper and lower bounds
$N_0\leq \sigma_{n-2}(\kappa)\leq N$. For $i\leq \mu \leq n-3$,
suppose there exists some positive constant $\delta<1$, satisfying
$\kappa_{\mu}/\kappa_1\geq\delta$. Then there exits another
sufficiently small positive constant $\delta'$ only depending on
$\delta$, such that, for any $\xi=(\xi_1,\xi_2,\cdots,\xi_n)\in\mathbb{R}^n$, if $\kappa_{\mu+1}/\kappa_1\leq\delta'$, we have
\begin{equation}\label{s7.1}
\kappa_i\Big[K\Big(\sum_j\sigma_{n-2}^{jj}(\kappa)\xi_j\Big)^2-\sigma_{n-2}^{pp,qq}(\kappa)\xi_p\xi_q\Big]-\sigma_{n-2}^{ii}(\kappa)\xi_i^2+\sum_{j\neq
i}a_j\xi_j^2\geq 0,
\end{equation}
when
$\kappa_1$ and $K$ are sufficiently large.
Here $a_j$ is defined by \eqref{aj}.
\end{lemm}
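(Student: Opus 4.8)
The plan is to adapt the proof of Lemma~9 in \cite{LRW}: the spectral gap between $\kappa_\mu$ and $\kappa_{\mu+1}$ lets one split the eigenvalues into a ``large block'' $I_1=\{1,\dots,\mu\}$ and a ``small block'' $I_2=\{\mu+1,\dots,n\}$ and treat \eqref{s7.1} block by block. Set $\Pi=\prod_{l\in I_1}\kappa_l$. The hypotheses give $\delta\kappa_1\le\kappa_l\le\kappa_1$ for $l\in I_1$; $\kappa_l\le\delta'\kappa_1$ for $l\in I_2$ (by monotonicity and $\kappa_{\mu+1}/\kappa_1\le\delta'$), with $\kappa_l\ge-\tfrac{2}{n-2}\kappa_1$ there by Lemma~\ref{lemm9}(a); and, since $\kappa\in\Gamma_{n-2}$ and deleting one entry lowers the cone index by one, $\hat\kappa:=(\kappa|1\cdots\mu)\in\Gamma_{n-2-\mu}$ (a vector with $n-\mu\ge 3$ entries). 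A feature peculiar to Case~B3 that I would use throughout is that $\kappa_i\sigma_{n-3}(\kappa|i)\le(1+\delta_0)\sigma_{n-2}(\kappa)\le(1+\delta_0)N$ forces $\sigma_{n-2}^{ii}(\kappa)=\sigma_{n-3}(\kappa|i)=O(\kappa_1^{-1})$, so the single negative diagonal term $-\sigma_{n-2}^{ii}(\kappa)\xi_i^2$ in \eqref{s7.1} is negligible and can be absorbed by any positive $a_j\xi_j^2$ at the end; this is what permits a direct, non-inductive argument here.

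The first step is to eliminate $\xi_i$ by completing the square in $\xi_i$ using the $K$-term (legitimate since $\kappa_iK(\sigma_{n-2}^{ii})^2-\sigma_{n-2}^{ii}>0$ by Lemma~\ref{lemm11}), exactly as in the derivation of \eqref{s4.03}; this reduces \eqref{s7.1} to the nonnegativity of an explicit quadratic form $Q$ in $(\xi_j)_{j\ne i}$. I would then expand every coefficient of $Q$ to leading order in the gap parameter. Splitting each $\sigma_l(\kappa|\cdots)$ according to how many of the chosen indices lie in $I_1$, the dominant contribution uses the maximal number of $I_1$-factors, and the subleading contributions are smaller by a factor $O(\delta'/\delta)$ \emph{except} when one must replace a large factor by one of the at most two genuinely negative entries of $\hat\kappa$ (of size $\sim\kappa_1$); those sub-cases need separate treatment, and are where most of the omitted details of \cite{LRW}, Lemma~9, sit. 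Modulo this one obtains, schematically, for $j\in I_1$, $\sigma_{n-3}(\kappa|j)\sim(\Pi/\kappa_j)\sigma_{n-2-\mu}(\hat\kappa)$, $a_j\sim(2\Pi/\kappa_j)\sigma_{n-2-\mu}(\hat\kappa)$ and $-\kappa_i\sigma_{n-4}(\kappa|pq)\sim-(\kappa_i\Pi/\kappa_p\kappa_q)\sigma_{n-2-\mu}(\hat\kappa)$ for $p,q\in I_1$; and for $j\in I_2$, $\sigma_{n-3}(\kappa|j)\sim\Pi\sigma_{n-3-\mu}(\hat\kappa|j)$, $a_j\sim 2\Pi\sigma_{n-3-\mu}(\hat\kappa|j)$ and $-\kappa_i\sigma_{n-4}(\kappa|jk)\sim-\kappa_i\Pi\sigma_{n-4-\mu}(\hat\kappa|jk)$ for $j,k\in I_2$, with analogous expansions for mixed pairs.

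Plugging these into $Q$, factoring out $\Pi$, and discarding the $O(\kappa_1^{-1})$ term splits $Q$ into an $I_1$-block, an $I_2$-block, $I_1$--$I_2$ cross terms, and the rank-one $K$-square. For the $I_1$-block I would substitute $\xi_j=\kappa_j\eta_j$; up to the common positive factor $\Pi\sigma_{n-2-\mu}(\hat\kappa)$ its leading part is $\sum_{j\in I_1}(2\kappa_j+\kappa_i)\eta_j^2-\kappa_i\big(\sum_{j\in I_1}\eta_j\big)^2$, whose unique indefinite direction $\big(\sum_{j\in I_1}\eta_j\big)^2$ is precisely the one spanned by $(\sigma_{n-3}(\kappa|l))_{l\in I_1}$ and is covered by the $K$-square once $K\sigma_{n-2}(\kappa)$ is large, leaving $\sum_{j\in I_1}(2\kappa_j+\kappa_i)\eta_j^2\ge 0$. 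The $I_2$-block carries a far larger indefinite part, but $\sigma_{n-3}(\kappa|j)>0$ for all $j$ (property~(vii)) together with $\Pi(\delta'\kappa_1)^{n-2-\mu}\to\infty$ lets the $K$-square again absorb it for $\kappa_1$ large; alternatively one rewrites the $I_2$-block via the identities of Lemma~\ref{lemm18} and invokes Lemma~\ref{lemm25}, Lemma~\ref{lemm24} and Lemma~\ref{lemm21} for $\hat\kappa\in\Gamma_{n-2-\mu}$, which are the ``convex-type'' algebraic facts used in \cite{GRW}. The cross terms, coming from both the $K$-square and $-\kappa_i\sigma_{n-4}(\kappa|pq)$ with $p\in I_1$, $q\in I_2$, are handled by Cauchy--Schwarz, distributing their mass among the $I_1$-diagonal, the $O(\delta')$-small $I_2$-coefficients, and the $K$-square. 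Choosing $\delta'$ small in terms of $n,\delta$, then $\kappa_1$ and $K$ large, makes every error subordinate and yields $Q\ge 0$, hence \eqref{s7.1}.

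I expect the main obstacle to be exactly this bookkeeping made rigorous: verifying that the residual indefinite direction of the $I_1$-block is, to leading order, the rank-one direction the $K$-square controls; that the genuinely large indefiniteness of the $I_2$-block is absorbed uniformly; and that a single choice of $\delta'$, independent of $\kappa_1$ and $K$, works. In particular one must carefully track the at most two negative entries of $\hat\kappa$, which destroy the naive ``$I_2$ is $O(\delta'\kappa_1)$'' picture and force the case analysis inherited from \cite{LRW}, Lemma~9; this is presumably why the present paper states the argument is similar to that reference and omits some details.
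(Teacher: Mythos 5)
Your plan takes a genuinely different route from the paper. The paper does \emph{not} complete the square in $\xi_i$ here and does not expand coefficients in the gap parameter; it applies the concavity inequality of Lemma~\ref{Guan} to the pair $(\sigma_{n-2},\sigma_\mu)$, which bounds the whole expression $F=K\big(\sum_j\sigma_{n-2}^{jj}\xi_j\big)^2-\sigma_{n-2}^{pp,qq}\xi_p\xi_q$ from below by a quadratic form built from $\sigma_\mu$ alone (formula \eqref{s7.2}); since $\sigma_\mu$ sees only the large eigenvalues, the spectral gap then makes the off-diagonal Hessian of $\sigma_\mu$ controllable. That single step replaces all of your block-by-block leading-order bookkeeping, and in particular neutralizes the issue of the up to two negative entries of size $\Theta(\kappa_1)$, which you explicitly defer and which is where the real difficulty of your route sits.

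Beyond the difference in route, two of your structural claims are wrong. First, $-\sigma_{n-2}^{ii}(\kappa)\xi_i^2$ cannot be ``absorbed by any positive $a_j\xi_j^2$'': those are coefficients of different variables, and the form must be nonnegative for $\xi$ supported on the $i$-th slot alone. More importantly, the hypothesis $\kappa_i\sigma_{n-3}(\kappa|i)\le(1+\delta_0)\sigma_{n-2}(\kappa)$ is not used to make $\sigma_{n-2}^{ii}$ small; it is used quantitatively, via $\sigma_{n-2}\ge\kappa_i\sigma_{n-2}^{ii}/(1+\delta_0)$ and the margin $1+\alpha/4>1+\delta_0$ coming from Lemma~\ref{Guan}, to show that the $\xi_i^2$-coefficient in the lower bound for $\kappa_iF$ is at least $\sigma_{n-2}^{ii}$, i.e.\ exactly enough to cancel the bad term (see \eqref{s7.4} and \eqref{s7.13}). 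Second, the $K$-square is rank one and cannot ``absorb'' the indefiniteness of the whole small-index block; in the paper the negative contributions on indices $a>\mu$ are dominated not by the $K$-square but by the external terms $a_j\ge\sigma_{n-2}^{jj}\ge\theta\sigma_{n-2}/\kappa_{\mu+1}$ against $\kappa_iNC_\epsilon(\sigma_\mu^{jj}/\sigma_\mu)^2\le C/(\delta^2\kappa_1)$ (estimate \eqref{s7.14}), and it is precisely this comparison that dictates how small $\delta'$ must be. As written, your proposal is missing the key mechanism on both the $\xi_i$-direction and the small-index block, so it does not constitute a proof.
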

\begin{proof}
For the sake of simplification, we
let
$$\sigma_s=\sigma_s(\kappa),\ \  \sigma_s^{pp}=\sigma_s^{pp}(\kappa), \ \ \sigma_s^{pp,qq}=\sigma_s^{pp,qq}(\kappa), s=1,2\cdots,n,$$ in the proof. We further denote
$$F=K\Big(\sum_j\sigma_{n-2}^{jj}\xi_j\Big)^2-\sigma_{n-2}^{pp,qq}\xi_p\xi_q.$$

Same as (3.20) in \cite{LRW}, by Lemma \ref{Guan}, we have,
\begin{align}\label{s7.2}
&F\\
\geq&\frac{\sigma_{n-2}}{\sigma_{\mu}^2}\Big[\Big(1+\frac{\alpha}{2}\Big)\sum_{a}(\sigma_{\mu}^{aa}\xi_a)^2+\frac{\alpha}{2}\sum_{a\neq
b}\sigma_{\mu}^{aa}\sigma_{\mu}^{bb}\xi_a\xi_b+\sum_{a\neq
b}(\sigma_{\mu}^{aa}\sigma_{\mu}^{bb}-\sigma_{\mu}\sigma_{\mu}^{aa,bb})\xi_a\xi_b\Big].\nonumber
\end{align}
Here we let $\alpha=\dfrac{1}{2n(n-2)}<1$. For $\mu=1$, which implies that $i=1$, same as (3.21) in \cite{LRW}, we have
\begin{align}\label{s7.3}
\Big(1+\frac{\alpha}{2}\Big)\sum_{a,b} \xi_a\xi_b
\geq&\Big(1+\frac{\alpha}{4}\Big)\xi_i^2-C_{\alpha}\sum_{a\neq i}\xi_a^2,
\end{align}
where $C_{\alpha}$ is some positive constant only depending on $\alpha$.
Then, we get
\begin{align}\label{s7.4}
F\geq&\frac{\sigma_{n-2}}{\sigma_1^2}\Big(1+\frac{\alpha}{4}\Big)\xi_i^2
-\frac{C_{\alpha}}{\sigma_1^2}\sum_{a\neq i}\xi_a^2\\
\geq&\frac{\sigma_{n-2}^{ii}}{\kappa_i(1+\delta_0)(1+\sum_{j\neq
i}\kappa_j/\kappa_i)^2}\Big(1+\frac{\alpha}{4}\Big)\xi_i^2
-\frac{C_{\alpha}}{\sigma_1^2}\sum_{a\neq i}\xi_a^2\nonumber\\
\geq&\dfrac{\sigma_{n-2}^{ii}}{\kappa_i}\xi_i^2 -\frac{C_{\alpha}
}{\sigma_1^2}\sum_{a\neq i}\xi_a^2.\nonumber
\end{align}
 In the last two inequalities we have used $(1+\delta_0)\sigma_{n-2}\geq \kappa_i\sigma_{n-2}^{ii}$ and let $\delta'$ be sufficiently small to satisfy
$$ 1+\frac{\alpha}{4}\geq (1+\delta_0)[1+(n-1)\delta']^2.$$
For $\mu>1$, using Lemma \ref{lemm10} and
$|\kappa_{n-1}|+|\kappa_n|\leq 3\kappa_{n-2}$,
according to the argument of (3.25)-(3.27) in \cite{LRW}, we get
 for indices $1\leq a,b\leq \mu$,
\begin{equation}\label{s7.7}
\sigma_{\mu-1}^2(\kappa|ab)-\sigma_{\mu}(\kappa|ab)\sigma_{\mu-2}(\kappa|ab)\leq
C_1\Big(\frac{\kappa_{\mu+1}}{\kappa_b}\sigma_{\mu}^{aa}\Big)^2.
\end{equation}
Here $C_1$ is an absolutely constant. Same as (3.28) in \cite{LRW}, by (\ref{s7.7}), for any undetermined
 positive constant $\epsilon$, we have
\begin{align}\label{s7.8}
&\sum_{a\neq b; a,b\leq
\mu}(\sigma_{\mu}^{aa}\sigma_{\mu}^{bb}-\sigma_{\mu}\sigma_{\mu}^{aa,bb})\xi_a\xi_b
\geq -\epsilon\sum_{a\leq\mu}(\sigma_{\mu}^{aa}\xi_a)^2
\end{align}
if  $\delta'$ is sufficiently small and its upper bound depends on $\epsilon$ and $\delta$. Same as (3.30) and
(3.31) in \cite{LRW}, by (\ref{s7.7}), we also have
\begin{align}\label{s7.10}
&2\sum_{a\leq \mu; b> \mu}\Big[\Big(1+\dfrac{\al}{2}\Big)\sigma_{\mu}^{aa}\sigma_{\mu}^{bb}-\sigma_{\mu}\sigma_{\mu}^{aa,bb}\Big]\xi_a\xi_b\\
\geq&-\epsilon\sum_{a\leq \mu;
b>\mu}(\sigma_{\mu}^{aa}\xi_a)^2-\frac{2}{\epsilon}\sum_{a\leq \mu;
b>\mu}(\sigma_{\mu}^{bb}\xi_b)^2\nonumber,
\end{align}
and
\begin{align}\label{s7.11}
\sum_{a\neq b; a,b> \mu}\Big[\Big(1+\frac{\al}{2}\Big)\sigma_{\mu}^{aa}\sigma_{\mu}^{bb}-\sigma_{\mu}\sigma_{\mu}^{aa,bb}\Big]\xi_a\xi_b
\geq& -\sum_{a\neq b; a,b>\mu}\Big(1+\dfrac{\al}{2}\Big)(\sigma_{\mu}^{aa}\xi_a)^2.
\end{align}
On the other hand, we have
\begin{align}\label{s7.9}
&\frac{\alpha}{2}\sum_{a\neq
b;a,b\leq\mu}\sigma_{\mu}^{aa}\sigma_{\mu}^{bb}\xi_a\xi_b\\
=&\alpha\sum_{a\neq
i;a\leq\mu}\sigma_{\mu}^{ii}\sigma_{\mu}^{aa}\xi_i\xi_a+\frac{\alpha}{2}\sum_{a\neq
b;a,b\neq i;a,b\leq\mu}\sigma_{\mu}^{aa}\sigma_{\mu}^{bb}\xi_a\xi_b\nonumber\\
\geq&-\dfrac{\alpha}{4}(\sigma_{\mu}^{ii}\xi_i)^2-n\al\sum_{a\leq\mu;a\neq
i}(\sigma_{\mu}^{aa}\xi_a)^2.\nonumber
\end{align}
Hence, combing (\ref{s7.2}), (\ref{s7.8}),
(\ref{s7.10}),(\ref{s7.11}) with (\ref{s7.9}),  we get
\begin{align*}
F
\geq&\frac{\sigma_{n-2}}{\sigma_{\mu}^2}\Big[\Big(1+\dfrac{\al}{4}-n\epsilon\Big)(\sigma_{\mu}^{ii}\xi_i)^2+(1-n\al-n\epsilon)\sum_{a\leq
\mu;a\neq
i}(\sigma_{\mu}^{aa}\xi_a)^2-C_{\epsilon}\sum_{a>\mu}(\sigma_{\mu}^{aa}\xi_a)^2\Big],
\end{align*}
where $C_{\epsilon}$ is a constant only depending on $\epsilon$, $\alpha$. For $a>\mu$, by Lemma \ref{lemm10} and
$|\kappa_{n-1}|+|\kappa_n|\leq 3\kappa_{n-2}$, we have
\begin{align}\label{s7.12}
\sigma_{\mu}^{aa}\leq C_2\kappa_1\cdots\kappa_{\mu-1},\ \ \text{ and
} \sigma_{\mu}\geq \kappa_1\cdots\kappa_{\mu}.
\end{align}
For $a\leq \mu$, again using
$|\kappa_{n-1}|+|\kappa_n|\leq 3\kappa_{n-2}$, we have
\begin{eqnarray}\label{new7}
\sigma_{\mu}(\kappa|a)\leq C_3\frac{\kappa_1\cdots\kappa_{\mu+1}}{\kappa_a}.
\end{eqnarray}
Here, $C_2,C_3$ are two absolutely constants.
We let $\epsilon$ satisfy
$
\al/8\geq n\epsilon,
$
which implies $1-n\al-n\epsilon\geq 0$.
Then, we have
\begin{align}\label{s7.13}
F
\geq&\frac{\kappa_i\sigma_{n-2}^{ii}}{(1+\delta_0)\sigma_{\mu}^2}\Big(1+\dfrac{\al}{4}-n\epsilon\Big)(\sigma_{\mu}^{ii}\xi_i)^2
-\frac{\sigma_{n-2}C_{\epsilon}}{\sigma_{\mu}^2}\sum_{a>\mu}(\sigma_{\mu}^{aa}\xi_a)^2\\
\geq&\frac{\Big(1+\dfrac{\al}{4}-n\epsilon\Big)\sigma_{n-2}^{ii}}{(1+\delta_0)\kappa_i}\Big(\dfrac{\kappa_i\sigma_{\mu}^{ii}}{\sigma_{\mu}}\xi_i\Big)^2
-\frac{NC_{\epsilon}}{\sigma_{\mu}^2}\sum_{a>\mu}(\sigma_{\mu}^{aa}\xi_a)^2\nonumber\\
\geq&\frac{\Big(1+\dfrac{\al}{4}-n\epsilon\Big)\sigma_{n-2}^{ii}}{(1+\delta_0)\kappa_i}\Big(1-\dfrac{C_3\kappa_{\mu+1}}{\kappa_i}\Big)^2\xi_i^2
-\frac{NC_{\epsilon}}{\sigma_{\mu}^2}\sum_{a>\mu}(\sigma_{\mu}^{aa}\xi_a)^2\nonumber\\
\geq&\frac{\sigma_{n-2}^{ii}}{\kappa_i}\xi_i^2-\frac{NC_{\epsilon}}{\sigma_{\mu}^2}\sum_{a>\mu}(\sigma_{\mu}^{aa}\xi_a)^2.\nonumber
\end{align}
Here, in the third inequality, we have used $\sigma_{\mu}=\kappa_i\sigma_{\mu}^{ii}+\sigma_{\mu}(\kappa|i)$, \eqref{s7.12}, \eqref{new7}, and in the last inequality, we select constants
$\delta'$ and $\epsilon$ satisfying
\begin{align*}
 \delta'C_3\leq
\epsilon\delta ,\quad \Big(1+\dfrac{\al}{8}\Big)(1-\epsilon)^2\geq
1+\frac{\al}{16}=1+\delta_0.
\end{align*}

In view of Lemma \ref{lemm9}, we have $-\kappa_n\leq2\kappa_1/(n-2)$, if $\kappa_n\leq 0$. Thus, we get, for $j=n$ or $n-1$,
$$a_j=\sigma^{jj}_{n-2}+(\kappa_i+\kappa_j)\sigma^{jj,ii}_{n-2}\geq \sigma_{n-2}^{jj},$$ if $\kappa_1$ is sufficiently large. It is obvious that
$a_j\geq\sigma_{n-2}^{jj}$ for any index $j<n-1$.

Thus, at last, by (\ref{s7.12}) and Lemma \ref{lemm11},
 if the index $j>\mu$, we have
\begin{align}\label{s7.14}
a_j-\kappa_i\frac{NC_{\epsilon}}{\sigma_{\mu}^2}(\sigma_{\mu}^{jj})^2\geq&\sigma_{n-2}^{jj}-\frac{\kappa_iNC_{\epsilon}C_2^2}{\kappa_{\mu}^2}
\geq\sigma_{n-3}(\kappa|\mu+1)-\frac{NC_{\epsilon}C_2^2}{\delta^2\kappa_{1}}\\
\geq&\dfrac{\theta\sigma_{n-2}}{\kappa_{\mu+1}}-\frac{NC_{\epsilon}C_2^2}{\delta^2\kappa_{1}}\geq
0,\nonumber
\end{align}
when the constant $\delta'$ is selected to be sufficiently small and $\kappa_1$ is sufficiently large. Here $\theta$ is the constant defined in Lemma \ref{lemm11}.

Combing (\ref{s7.13}), (\ref{s7.14}) with (\ref{s7.1}), we have proved our
lemma.
\end{proof}

Based on Lemma \ref{lem32}, an induction argument being similar to the proof of
Corollary 10 in \cite{LRW} can be applied here. Thus, we have
\begin{coro}\label{cor1} With the same assumptions as Lemma \ref{lem32},
there exists a finite sequences of positive numbers
$\{\delta_j\}_{j=i}^{n-2}$, such
that, if the following two inequalities hold for some index $1\leq r\leq
n-3$,
\begin{eqnarray}\label{last}
\dfrac{\kappa_r}{\kappa_1}\geq\delta_r,\quad and \quad
\dfrac{\kappa_{r+1}}{\kappa_1}\leq\delta_{r+1},
\end{eqnarray}
then, when  $K$ and $\kappa_1$ is sufficiently large,
(\ref{s7.1}) holds.
\end{coro}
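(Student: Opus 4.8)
The plan is to build the finite sequence $\{\delta_j\}_{j=i}^{n-2}$ by a short recursion on Lemma \ref{lem32}, and then to read off \eqref{s7.1} from a single application of that lemma at the index $\mu=r$ where the gap \eqref{last} is located. This is the same induction as in the proof of Corollary 10 of \cite{LRW}, so I would only sketch its shape.

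First I would fix the base value $\delta_i:=\frac12$. Since the standing hypothesis $\kappa_i>\kappa_1-\sqrt{\kappa_1}/n$ forces $\kappa_i/\kappa_1>1-\frac1{n\sqrt{\kappa_1}}\geq\frac12$ once $\kappa_1$ is large, and since $\kappa_1\geq\cdots\geq\kappa_i$, one has $\kappa_j/\kappa_1\geq\frac12=\delta_i$ for every $j\leq i$; so the base value is automatically available under the assumptions of the corollary. Next, for $j=i,i+1,\dots,n-3$ I would recurse as follows: given $\delta_j>0$, apply Lemma \ref{lem32} with $\mu=j$ and $\delta=\delta_j$ (legitimate since $i\leq j\leq n-3$), obtaining a constant $\delta'>0$ that depends only on $\delta_j$, hence ultimately only on $n$ (and $k=n-2$); then set $\delta_{j+1}:=\min\{\delta_j,\delta'\}$. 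This produces a non-increasing sequence of positive constants $\{\delta_j\}_{j=i}^{n-2}$ depending only on $n$. It is important that this construction uses only the part of Lemma \ref{lem32} asserting that $\delta'$ depends on $\delta$ alone; no largeness of $K$ or $\kappa_1$ is invoked here, so the recursion is well posed.

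For the conclusion, suppose \eqref{last} holds for some $1\leq r\leq n-3$. One may assume $r\geq i$: for $r<i$ the second inequality $\kappa_{r+1}/\kappa_1\leq\delta_{r+1}=\frac12$ contradicts $\kappa_{r+1}/\kappa_1\geq\frac12$. Now apply Lemma \ref{lem32} with $\mu=r$ and $\delta=\delta_r$: its hypothesis $\kappa_r/\kappa_1\geq\delta_r$ is the first inequality of \eqref{last}, and by construction the constant $\delta'$ furnished by the lemma at this step satisfies $\delta_{r+1}\leq\delta'$. The second inequality of \eqref{last} then gives $\kappa_{r+1}/\kappa_1\leq\delta_{r+1}\leq\delta'$, so Lemma \ref{lem32} yields \eqref{s7.1} whenever $K$ and $\kappa_1$ are sufficiently large, which is the claim.

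I do not expect a genuine obstacle here: the proof is bookkeeping, with all the analytic work residing in Lemma \ref{lem32}, which itself rests on the G\r{a}rding-cone concavity inequality of Lemma \ref{Guan} together with the block-decomposition estimates of \cite{LRW}. The only two points I would be careful about are (i) that the constant $\delta'$ produced by Lemma \ref{lem32} is independent of $\kappa$, so the recursion defining $\{\delta_j\}$ really makes sense — this is already built into the statement of Lemma \ref{lem32}; and (ii) matching the admissible range of $r$ with $\{i,\dots,n-3\}$ and observing that if no such $r$ satisfies \eqref{last} then necessarily $\kappa_{n-2}/\kappa_1\geq\delta_{n-2}$, i.e. one is in the "comparable curvatures'' regime treated separately in the cases B3 and C.
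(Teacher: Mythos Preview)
Your proposal is correct and follows essentially the same route the paper indicates: the paper itself only says ``an induction argument being similar to the proof of Corollary 10 in \cite{LRW} can be applied here,'' and your recursive construction of $\{\delta_j\}$ from the $\delta'$ produced by Lemma~\ref{lem32} is exactly that argument spelled out. One small cosmetic point: since the sequence in the statement is indexed from $i$ to $n-2$, the condition \eqref{last} is only meaningful for $r\geq i$ anyway, so your discussion of the case $r<i$ (which tacitly assumes $\delta_{r+1}$ is defined) is unnecessary rather than incorrect.
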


{\bf Proof of the Theorem \ref{maintheo} for cases B3 and C:} Firstly, suppose $1\leq i\leq n-3$. In
view of Corollary \ref{cor1} and $\kappa_i\geq\kappa_1-\sqrt{\kappa_1}/n$, if we can
show that $\kappa_i\sigma_{n-3}(\kappa|i)\leq(1+\delta_0)\sigma_{n-2}(\kappa)$
and $\kappa_{n-2}$ has an upper bound, by an inductive argument, there exists some index
$i\leq r\leq n-3$ satisfying \eqref{last}, which implies our Theorem. Thus, in the following, we will prove needed two requirements case by case.

\textbf{Case B3}: By our assumption $\kappa_1\cdots\kappa_{n-2}<
2(n-2)\sigma_{n-2}(\kappa)$ and $\kappa_1\geq
\cdots\geq\kappa_{n-2}$, it is clear that $\kappa_{n-2}$ has an upper bound.

\textbf{Case C}: Since $\sigma_{n-2}(\kappa|i)\geq 0$, we have
\begin{align}\label{a7.13}
\sigma_{n-2}(\kappa)=\kappa_i\sigma_{n-3}(\kappa|i)+\sigma_{n-2}(\kappa|i)\geq\kappa_i\sigma_{n-3}(\kappa|i).
\end{align}
Combing $\sigma_{n-2}(\kappa|i)\geq 0$ with $\kappa\in\Gamma_{n-2}$,
we have $(\kappa|i)\in\bar\Gamma_{n-2}$. Using Lemma \ref{lemm10} in
(\ref{a7.13}), we get
$$
\kappa_1\cdots\kappa_{n-2}\leq\kappa_i\sigma_{n-3}(\kappa|i)\leq
\sigma_{n-2}(\kappa),
$$
which also implies that $\kappa_{n-2}$ has an upper bound.
\par
Secondly, suppose $n-2\leq i\leq n$. Thus, we have
$$\kappa_{n-2}\geq \kappa_i\geq\kappa_1-\sqrt{\kappa_1}/n.$$ However, we have proved that $\kappa_{n-2}$ always has an upper bound for both Case B3 and Case C.
Therefore, if $\kappa_1$ is sufficiently large, we have a contradiction.

\bigskip

\noindent {\it Acknowledgement:} The authors wish to thank  Professor Pengfei Guan for his valuable suggestions and comments. Part of the work was done while the authors were visiting McGill University. They would like to
 thank  their support and hospitality.

\end{document}